\documentclass[10pt,reqno]{amsart}
\usepackage{hyperref}
\usepackage{latexsym,amsmath}
\usepackage{enumerate}
\usepackage{amsfonts}
\usepackage{amssymb}
\usepackage{latexsym}
\hypersetup{colorlinks=true,citecolor=blue, linkcolor=blue,
urlcolor=blue}

\usepackage{graphicx}
\usepackage[backend=biber, style=trad-abbrv]{biblatex}
\DeclareFieldFormat{titlecase}{#1}

\addbibresource{bibliography2.bib}

\usepackage[T1]{fontenc}
\usepackage{bbm}
\usepackage{color}
\usepackage{fullpage}

\numberwithin{equation}{section}

\newcommand{\Fplus}{\cF^+_{d,\beta}}
\newcommand{\Fone}{\cF^1_{d,\beta}}
\newcommand{\Fix}{\cF_{d,\beta}}
\newcommand\fix{\cF_{d,\beta}}
\newcommand\uniq{{\beta_u}}
\newcommand\curie{{\beta_p}}
\newcommand\KS{{\beta_h}}
\newcommand\fSferro{\fS_{\mathrm{f}}}
\newcommand\fSpara{\fS_{\mathrm{p}}}
\newcommand\para{{\mathrm{p}}}
\newcommand\ferro{{\mathrm{f}}}

\newcommand\Zferro{Z_{\mathrm{f}}}
\newcommand\Zpara{Z_{\mathrm{p}}}
\newcommand\Yferro{Y_{\mathrm{f}}}
\newcommand\Ypara{Y_{\mathrm{p}}}
\newcommand\Gferro{\hat\G_{\mathrm{f}}}

\newcommand\Gpara{\hat\G_{\mathrm{p}}}
\newcommand\sferro{\hat\SIGMA_{\mathrm{f}}}
\newcommand\spara{\hat\SIGMA_{\mathrm{p}}}

\newcommand\sparaGpara{\SIGMA_{\Gpara,\mathrm{p}}}
\newcommand\sferroGferro{\SIGMA_{\Gferro,\mathrm{f}}}
\newcommand\nuferro{\nu_{\mathrm{f}}}
\newcommand\rhoferro{\rho_{\mathrm{f}}}
\newcommand\muferro{\mu_{\mathrm{f}}}
\newcommand\nupara{\nu_{\mathrm{p}}}
\newcommand\mupara{\mu_{\mathrm{p}}}
\newcommand\rhopara{\rho_{\mathrm{p}}}
\newcommand\Spara{S_{\mathrm{p}}}
\newcommand\Sferro{S_{\mathrm{f}}}
\newcommand\rferro{r_{\mathrm{f}}}
\newcommand\chiferro{\chi_{\mathrm{f}}}
\newcommand\phiferro{\phi_{\mathrm{f}}}

%SW-macros
\newcommand\sferrod{\sferro(\eps)}
\newcommand\sparad{\spara(\eps)}
\newcommand\Sparad{\Spara(\eps)}
\newcommand\Sferrod{\Sferro(\eps)}
\newcommand\tSferrod{\tilde\Sferro(\eps)}
\newcommand\Sparadp{\Spara(\eps')}
\newcommand\Sferrodp{\Sferro(\eps')}
\newcommand\tSferrodp{\tilde\Sferro(\eps')}
\newcommand\emm{\mathrm{e}}
\newcommand\Zferrod{\Zferro^{\eps}}
\newcommand\Zparad{\Zpara^{\eps}}
\newcommand\Zferrodp{\Zferro^{\eps'}}
\newcommand\Zparadp{\Zpara^{\eps'}}

\newcommand\vsigma{\vec\sigma}

\renewcommand{\vec}[1]{\boldsymbol{#1}}

\renewcommand{\subset}{\subseteq}

\newcommand\disteq{\,\stacksign{d}=\,}

\newcommand\vX{\vec X}

\newcommand{\GG}{\mathbb{G}}
\newcommand{\TT}{\mathbb T}

\newcommand\nix{\,\cdot\,}

\newcommand\hG{\hat\G}

\newcommand\G{\vec G}

\newcommand\KL[2]{D_{\mathrm{KL}}\bc{{{#1}\|{#2}}}}
\newcommand\SIGMA{\vec\sigma}

\newcommand\fS{\mathfrak{S}}

\newcommand\cB{\mathcal{B}}
\newcommand\cC{\mathcal{C}}

\newcommand\cF{\mathcal{F}}
\newcommand\cG{\mathcal{G}}
\newcommand\cE{\mathcal{E}}

\newcommand\cQ{\mathcal{Q}}
\newcommand\cH{\mathcal{H}}

\newcommand\cP{\mathcal{P}}

\newcommand\cZ{\mathcal{Z}}
\def\cR{{\mathcal R}}
\def\cC{{\mathcal C}}
\def\cE{{\mathcal E}}

\newcommand\tG{\tilde\G}
\newcommand\tp{\tilde p}
\newcommand\tr{\tilde r}

\newcommand\tn{\tilde n}

\newcommand\eul{\mathrm{e}}
\newcommand\eps{\varepsilon}

\newcommand\Erw{\mathbb{E}}
\newcommand\ex{\Erw}

\newcommand{\vecone}{\vec{1}}

\newcommand{\escf}{\mathrm{f}}

\newcommand\dTV{d_{\mathrm{TV}}}

\newcommand\bc[1]{\left({#1}\right)}
\newcommand\cbc[1]{\left\{{#1}\right\}}

\newcommand\brk[1]{\left\lbrack{#1}\right\rbrack}

\newcommand\norm[1]{\left\|{#1}\right\|}
\newcommand\abs[1]{\left|{#1}\right|}

\newcommand{\whp}{w.h.p.}

\newcommand{\stacksign}[2]{{\stackrel{\mbox{\scriptsize #1}}{#2}}}
\newcommand{\tensor}{\otimes}

\newcommand\pr{\mathbb{P}} 
\renewcommand\Pr{\pr} 
\newcommand\Lem{Lemma}
\newcommand\Prop{Proposition}
\newcommand\Thm{Theorem}

\newcommand\Cor{Corollary}

\newtheorem{definition}{Definition}[section]

\newtheorem{theorem}[definition]{Theorem}
\newtheorem{lemma}[definition]{Lemma}
\newtheorem{proposition}[definition]{Proposition}
\newtheorem{corollary}[definition]{Corollary}

%To restate statement lemmas
\newtheorem*{Lemmagiantperc}{Lemma~\ref{lem:giantperc}}
\newtheorem*{PropSWpara}{Proposition~\ref{thm:SWpara}}
\newtheorem*{PropSWferro}{Proposition~\ref{thm:SWferro}}
\newtheorem*{Lembottle}{Lemma~\ref{lem_bottleNeck}}

\begin{document}

\title{Metastability of the Potts ferromagnet \\on random regular graphs}

\author{Amin Coja-Oghlan, Andreas Galanis, Leslie Ann Goldberg, Jean Bernoulli Ravelomanana, Daniel \v{S}tefankovi\v{c}, Eric~Vigoda}
\thanks{A preliminary version of the manuscript (without the proofs) appeared in the proceedings of \emph{ICALP 2022} (Track A).\\
For the purpose of Open Access, the author has applied a CC BY public copyright licence to any Author Accepted Manuscript version arising from this submission. All data is provided in full in the results section of this paper.}

\address{Amin Coja-Oghlan, {\tt amin.coja-oghlan@tu-dortmund.de}, TU Dortmund, Faculty of Computer Science, 12 Otto Hahn St, Dortmund 44227, Germany}

\address{Andreas Galanis, {\tt andreas.galanis@cs.ox.ac.uk}, University of Oxford, Department of Computer Science, Wolfson Bldg, Parks Rd, Oxford OX1 3QD, UK}

\address{Leslie Ann Goldberg, {\tt leslie.goldberg@seh.ox.ac.uk}, University of Oxford, Department of Computer Science, Wolfson Bldg, Parks Rd, Oxford OX1 3QD, UK}

\address{Jean Bernoulli Ravelomanana, {\tt jean.ravelomanana@epfl.ch}, \'{E}cole Polytechnique F\'{e}d\'{e}rale de Lausanne, INR 139, Lausanne CH-1015, CH}

\address{Daniel \v{S}tefankovi\v{c}, {\tt stefanko@cs.rochester.edu}, Univerity of Rochester, Department of Computer Science, 2315 Wegmans Hall, Rochester NY 14627, USA}

\address{Eric Vigoda, {\tt vigoda@ucsb.edu}, UC Santa Barbara, Computer Science, 2104 Harold Frank Hall, Santa Barbara CA 93106, USA}

\begin{abstract}
We study the performance of Markov chains for the $q$-state ferromagnetic Potts model on random regular graphs. While the cases of the grid and the complete graph are by now well-understood, the case of random regular graphs has resisted a detailed analysis and, in fact, even analysing the properties of the Potts distribution has remained elusive. It is conjectured that the performance of Markov chains is dictated by metastability phenomena, i.e., the presence of ``phases'' (clusters) in the sample space where Markov chains with local update rules, such as the Glauber dynamics,  are bound to take exponential time to escape, and therefore cause slow mixing. The phases that are believed to drive these metastability phenomena in the case of the Potts model emerge as local, rather than global, maxima of the so-called Bethe functional, and previous  approaches of analysing these phases based on  optimisation arguments fall short of the task.

Our first contribution is to detail the emergence of the two relevant phases for the $q$-state Potts model on the $d$-regular random graph for all integers $q,d\geq 3$, and establish that for an interval of temperatures, delineated by the uniqueness and a broadcasting threshold on the $d$-regular tree, the two phases coexist (as possible metastable states). The proofs are based on a conceptual connection between spatial  properties and the structure of the Potts distribution on the random regular graph, rather than complicated moment calculations.   This significantly refines earlier results by Helmuth, Jenssen, and Perkins who had established phase coexistence for a small interval around the so-called ordered-disordered threshold (via different arguments) that applied for large $q$ and $d\geq 5$. 

Based on our new structural understanding of the model, our second contribution is to obtain metastability results for two classical Markov chains for the Potts model. We first  complement recent fast mixing results for Glauber dynamics by Blanca and Gheissari below the uniqueness threshold, by showing an exponential lower bound on the mixing time above the uniqueness threshold. Then, we obtain tight results even for the non-local and more elaborate Swendsen-Wang chain, where we establish slow mixing/metastability for the whole interval of temperatures where the chain is conjectured to mix slowly on the random regular graph. The key is  to bound the conductance of the chains using a random graph ``planting'' argument combined with delicate bounds on random-graph percolation.
\\
{\em MSC:} 05C80, 	60B20, 	94B05 
\end{abstract}

\maketitle
\section{Introduction}\label{Sec_intro}

\subsection{Motivation}
Spin systems on random graphs have turned out to be a source of extremely challenging problems at the junction of mathematical physics and combinatorics~\cite{MP1,MP2}.
Beyond the initial motivation of modelling disordered systems, applications have sprung up in areas as diverse as computational complexity, coding theory, machine learning and even screening for infectious diseases; e.g.~\cite{Abbe_2017,GroupTesting,Andreas,Mezard,RichardsonUrbanke,Sly,SlySun}.
Progress has been inspired largely by techniques from statistical physics, which to a significant extent still await a rigorous justification.
The physicists' sophisticated but largely heuristic tool is the Belief Propagation message passing scheme in combination with a functional called the Bethe free energy~\cite{MM}.
Roughly speaking, the fixed points of Belief Propagation are conjectured to correspond to the `pure states' of the underlying distribution, with the Bethe functional gauging the relative weight of the different pure states.
Yet at closer inspection matters are actually rather complicated.
For instance, the system typically possesses spurious Belief Propagation fixed points without any actual combinatorial meaning, while other fixed points need not correspond to metastable states that attract dynamics such as the Glauber Markov chain~\cite{BinomialMatrix,CKPZ}.
Generally, the mathematical understanding of the connection between Belief Propagation and dynamics leaves much to be desired.

In this paper we investigate the ferromagnetic Potts model on the random regular graph. Recall, for an integer $q\geq 3$ and real $\beta>0$, the Potts model on a graph $G=(V,E)$ corresponds to a probability distribution $\mu_{G,\beta}$ over all possible configurations $[q]^V$, commonly referred to as the Boltzmann/Gibbs distribution; the weight of a configuration $\sigma$ in the distribution is defined as $\mu_{G,\beta}(\sigma)=\emm^{\beta \cH_G(\sigma)}/Z_\beta(G)$ where $\cH_G(\sigma)$ is the number of edges that are monochromatic under $\sigma$, and $Z_\beta(G)= \sum_{\tau\in [q]^V}\emm^{\beta \cH_G(\tau)}$ is the normalising factor of the distribution. In physics jargon, $\beta$ corresponds to the so-called inverse-temperature of the model, $\cH_G(\nix)$ is known as the Hamiltonian, and $Z_\beta(\nix)$ is the partition function. Note, since $\beta>0$, the Boltzmann distribution assigns greater weight to configurations $\sigma$ where many edges join vertices of the same colour; thus, the pairwise interactions between vertices are ferromagnetic.

The Potts model on the $d$-regular random graph has two distinctive features. 
First, the local geometry of the random regular graph is essentially deterministic.
For any fixed radius $\ell$, the depth-$\ell$ neighbourhood of all but a tiny number of vertices is just a $d$-regular tree. Second, the ferromagnetic nature of the model precludes replica symmetry breaking, a complex type of long-range correlations~\cite{MM}. Given these, it is conjectured that the model on the random regular graph has a similar behaviour to that on the clique (the so-called mean field case), and there has already  been some preliminary evidence of this correspondence~\cite{BarbierChanMacris,DemboMontanariSun,Dembo_2014,Andreas, HJP}. On the clique,  the phase transitions are driven by a battle between two subsets of configurations (phases): (i) the paramagnetic/disordered phase, consisting of configurations where  every colour appears roughly equal number of times, and (ii) the ferromagnetic/ordered phase, where one of the colours appears more frequently than the others. It is widely believed that these two  phases also mark (qualitatively) the same type of phase transitions for the Potts model on the random regular graph, yet  this has remained largely elusive.

The main reason that this behaviour is harder to establish on the random regular graph is that it has a non-trivial global geometry which makes both the analysis of the distribution and Markov chains significantly more involved (to say the least). In particular, the emergence of the metastable states in the  distribution, which can be established by way of calculus in the mean-field case, is out of reach with single-handed analytical approaches in the random regular graph and it is therefore not surprising that it has resisted a detailed analysis so far. Likewise, the analysis of Markov chains is a far more complicated task since their evolution needs to be considered in terms of the graph geometry and therefore much harder to keep track of.

Our main contribution is to detail the emergence of the metastable states, viewed as fixed points of Belief Propagation on this model, and their connection with the dynamic evolution of the two most popular Markov chains, the Glauber dynamics and the Swensen-Wang chain. 
We prove that these natural fixed points, whose emergence is directly connected with the phase transitions of the model, have the combinatorial meaning in terms of both the pure state decomposition of the  distribution and the Glauber dynamics that physics intuition predicts they should. The proofs avoid the complicated moment calculations and the associated complex optimistion arguments that have become a hallmark of the study of spin systems on random graphs~\cite{ANP}.
Instead, building upon and extending ideas from~\cite{Victor,Greg}, we exploit a connection between spatial mixing properties on the $d$-regular tree and the Boltzmann distribution. Our metastability results for the Potts model significantly refine those appearing in the literature, especially those in \cite{Andreas, HJP} which are more relevant  to this work, see Section~\ref{sec:discussion} for a more detailed discussion.

We expect that this approach might carry over to other examples, particularly other ferromagnetic models.
Let us begin by recapitulating Belief Propagation.

\subsection{Belief Propagation}
Suppose that $n,d\geq3$ are integers such that $dn$ is even and let $\GG=\GG(n,d)$ be the random $d$-regular graph on the vertex set $[n]=\{1,\ldots,n\}$.
For an inverse temperature parameter $\beta>0$ and an integer $q\geq3$ we set out to investigate the Boltzmann distribution $\mu_{\GG,\beta}$; let us write $\vsigma_{\GG,\beta}$ for a configuration drawn from $\mu_{\GG,\beta}$.

A vital step toward understanding the Boltzmann distribution is to get a good handle on the partition function $Z_{\beta}(\GG)$.
Indeed, according to the physicsts' cavity method, Belief Propagation actually solves both problems in one fell swoop~\cite{MM}.
To elaborate, with each edge $e=uv$ of $\GG$, Belief Propagation associates two messages $\mu_{\GG,\beta,u\to v},\mu_{\GG,\beta,v\to u}$, which are probability distributions on the set $[q]$ of colours.
The message $\mu_{\GG,\beta,u\to v}(c)$ is defined as the marginal probability of $v$ receiving colour $c$ in a configuration drawn from the Potts model on the graph $\GG-u$ obtained by removing $u$.
%Thus, $\mu_{\GG,\beta,u\to v}(c)$ represents the distribution of the colour of $v$ in the absence of $v$.
The semantics of $\mu_{\GG,\beta,v\to u}$ is analogous.

Under the assumption that the colours of far apart vertices of $\GG$ are asymptotically independent, one can heuristically derive a set of equations that links the various messages together. For a vertex $v$, let $\partial v$ be the set of neighbours of $v$, and for an integer $\ell\geq1$ let $\partial^\ell v$ be the set of vertices at distance precisely $\ell$ from $v$.
The {\em Belief Propagation equations} read
\begin{align}\label{eqBPG}
	\mu_{\GG,\beta,v\to u}(c)&=\frac{\prod_{w\in\partial v\setminus\cbc u}1+(\eul^\beta-1)\mu_{\GG,\beta,w\to v}(c)}{\sum_{\chi\in[q]}\prod_{w\in\partial v\setminus\cbc u}1+(\eul^\beta-1)\mu_{\GG,\beta,w\to v}(\chi)}&&(uv\in E(\GG),\ c\in[q]).
\end{align}
The insight behind \eqref{eqBPG} is that once we remove $v$ from the graph, its neighbours $w\neq u$ are typically far apart from one another because $\GG$ contains only a negligible number of short cycles.
Hence, we expect that in $\GG-v$ the spins assigned to $w\in\partial v\setminus\cbc u$ are asymptotically independent.
From this assumption it is straightforward to derive the sum-product-formula \eqref{eqBPG}.

A few obvious issues spring to mind.
First, for large $\beta$ it is not actually true that far apart vertices decorrelate.
This is because at low temperature there occur $q$ different ferromagnetic pure states, one for each choice of the dominant colour.
To break the symmetry between them one could introduce a weak external field that slighly boosts a specific colour or, more bluntly, confine oneself to a conditional distribution on subspace where a specific colour dominates.
In the definition of the messages and in \eqref{eqBPG} we should thus replace the Boltzmann distribution by the conditional distribution $\mu_{\GG,\beta}(\nix\mid S)$ for a suitable $S\subset[q]^{n}$.
%Metastable states of the Boltzmann distribution, i.e., subsets $S\subset[q]^{V_n}$ that trap dynamics for an exponential amount of time are generally expected to induce Belief Propagation fixed points.
Second, even for the conditional measure we do not actually expect \eqref{eqBPG} to hold precisely.
This is because for any finite $n$ minute correlations between far apart vertices are bound to remain. 

Nonetheless, precise solutions $(\mu_{v\to u})_{uv\in E(\GG)}$ to \eqref{eqBPG} are still meaningful. 
They correspond to stationary points of a functional called the {\em Bethe free energy}, which connects Belief Propagation with the problem of approximating the partition function~\cite{YFW}.
Given a collection $(\mu_{u\to v})_{uv\in E(\GG)}$ of probability distributions on $[q]$, the Bethe functional reads
\begin{equation}\label{eqBetheG}
\begin{aligned}
	\cB_{\GG,\beta}\bc{(\mu_{u\to v})_{uv\in E(\GG)}}&=\frac1n\sum_{v\in V(\GG)}\log\bigg[\sum_{c\in[q]}\prod_{w\in\partial v}1+(\eul^\beta-1)\mu_{w\to v}(c)\bigg]\\&-\frac1n\sum_{vw\in E(\GG)}\log\bigg[1+(\eul^\beta-1)\sum_{c\in[q]}\mu_{v\to w}(c)\mu_{w\to v}(c)\bigg].
\end{aligned}
\end{equation}
According to the cavity method the maximum of $\cB_{\GG,\beta}\bc{(\mu_{u\to v})_{uv\in E(\GG)}}$ over all solutions $(\mu_{u\to v})_{uv\in E(\GG)}$ to \eqref{eqBPG} should be asymptotically equal to $\log Z_\beta(\GG)$ with high probability.

In summary, physics lore holds that the solutions $(\mu_{u\to v})_{uv\in E(\GG)}$ to \eqref{eqBPG} are meaningful because they correspond to a decomposition of the phase space $[q]^{n}$ into pieces where long-range correlations are absent.
Indeed, these ``pure states'' are expected to exhibit metastability, i.e., they trap dynamics such as the Glauber Markov chain for an exponential amount of time.
Moreover, the relative probabilities of the pure states are expected to be governed by their respective Bethe free energy.
In the following we undertake to investigate these claims rigorously.

Before proceeding, let us mention that ferromagnetic spin systems on random graphs have been among the first models for which predictions based on the cavity method could be verified rigorously.
Following seminal work by Dembo and Montanari on the Ising model~\cite{DemboMontanari} vindicating the ``replica symmetric ansatz'', Dembo, Montanari and Sun~\cite{DemboMontanariSun} studied, among other things, the Gibbs unique phase of the Potts ferromagnet on the random regular graph, and Dembo, Montanari, Sly and Sun~\cite{DemboMontanariSun} established the free energy of the model for all $\beta$ (and $d$ even). 
More generally, Ruozzi~\cite{Ruozzi} pointed out how graph covers~\cite{Vontobel} can be used to investigate the partition function of supermodular models, of which the Ising ferromagnet is an example.
In addition, Barbier, Chan and Macris~\cite{BarbierChanMacris} proved that ferromagnetic spin systems on random graphs are generally replica symmetric in the sense that the multi-overlaps of samples from the Boltzmann distribution concentrate on deterministic values.
%To be precise, to precipitate replica symmetry they apply a perturbation to the Hamiltonian that only alters the free energy infinitesimally but that breaks, e.g., the symmetry between states that coincide up to a permutation of the colours. 
%An interesting problem that the present contribution leaves open is whether the Potts ferromagnet features other metastable states in addition to the ones detailed in \Thm~\ref{Thm_meta}.

\subsection{The ferromagnetic and the paramagnetic states}\label{sec:states}
An obvious attempt at constructing solutions to the Belief Propagation equations is to choose identical messages $\mu_{u\to v}$ for all edges $uv\in E(\GG)$.
Clearly, any solution $(\mu(c))_{c\in[q]}$ to the system
\begin{align}\label{eqBP}
	\mu(c)&=\frac{(1+(\eul^\beta-1)\mu(c))^{d-1}}{\sum_{\chi\in[q]}(1+(\eul^\beta-1)\mu(\chi))^{d-1}}&(c\in[q]) 
\end{align}
supplies such a `constant' solution to \eqref{eqBPG}. Let $\fix$ be the set of all solutions $(\mu(c))_{c\in[q]}$ to \eqref{eqBP}.
The Bethe functional \eqref{eqBetheG} then simplifies to
\begin{align}\label{eqBethe}
	\cB_{d,\beta}\big((\mu(c))_{c\in[q]}\big)&=\log\bigg[\sum_{c\in[q]}\bc{1+(\eul^\beta-1)\mu(c)}^d\bigg]-\frac d2\log\bigg[1+(\eul^\beta-1)\sum_{c\in[q]}\mu(c)^2\bigg].
\end{align}

One obvious solution  to \eqref{eqBP} is the uniform distribution on $[q]$; we refer to that solution as paramagnetic/disordered and denote it by $\mupara$. Apart from $\mupara$, other solutions to \eqref{eqBP} emerge as $\beta$ increases for any $d\geq3$.
Specifically, let $\uniq>0$ be the supremum value of $\beta>0$ where $\mupara$ is the unique solution to \eqref{eqBP}.\footnote{The value  does not have a closed-form expression, but there is an equivalent formulation of it given by the equality $\emm^\uniq=1+\inf_{y>1} \tfrac{(y-1)(y^{d-1}+q-1)}{y^{d-1}-y}$.}
Then, for $\beta= \uniq$, one more solution $\muferro$ emerges such that $\muferro(1)>\muferro(i)=\tfrac{1-\muferro(1)}{q-1}$ for $i=2,\ldots,q$, portending the emergence of a metastable state and, ultimately, a phase transition.
In particular, for any $\beta> \uniq$, a bit of calculus reveals there exist either one or two distinct solutions $\mu$ with  $\mu(1)>\mu(i)=\tfrac{1-\mu(1)}{q-1}$ for $i=2,\ldots,q$;   we denote by $\muferro$ the solution of \eqref{eqBP} which maximises the value $\mu(1)$ and refer to it as ferromagnetic/ordered. The value $\uniq$ is the so-called  uniqueness threshold for the Potts model on the $d$-regular tree, see, e.g., \cite{Andreas} for a more detailed discussion and related pointers.

At the critical value
\begin{align*}
	\curie&=\max\cbc{\beta\geq\uniq:\cB_{d,\beta}(\mupara)\geq\cB_{d,\beta}(\muferro)}=\log\frac{q-2}{(q-1)^{1-2/d}-1}.
\end{align*}
the ferromagnetic solution $\muferro$ takes over from the paramagnetic solution $\mupara$ as the global maximiser of the Bethe functional. For that reason, the threshold $\curie$ is also known in the literature as the ordered-disordered threshold. Yet, up to the threshold
\begin{align*}
	\KS&=\log(1+q/(d-2))
\end{align*}
the paramagnetic solution remains a local maximiser of the Bethe free energy; later, in Section~\ref{Sec_nr} we will see that $\KS$ has a natural interpretation as a tree-broadcasting threshold (and is also a conjectured threshold for uniqueness in the random-cluster representation for the Potts model, see \cite{haggstrom} for details).

The relevance of these critical values has been demonstrated in \cite{Andreas} (see also \cite{Dembo_2014} for $d$ even, and \cite{HJP} for $q$ large), where it was shown that   $\tfrac{1}{n}\log Z_\beta(\GG)$  is asymptotically equal to $\max_{\mu} \cB_{d,\beta}(\mu)$, the maximum ranging over $\mu$ satisfying \eqref{eqBP}. In particular, at the maximum it holds that $\mu=\mupara$ when $\beta<\curie$, $\mu=\muferro$ when $\beta>\curie$ and $\mu\in\{\mupara,\muferro\}$ when $\beta=\curie$.
%However, 
%We observe that this formula matches the value of the Bethe functional evaluated at a specific solution to \eqref{eqBPG}.

%Our next step is to investigate if and how the two solutions $\mupara,\muferro$ induce pure states where there are no discernible long-range correlations.
\subsection{Slow mixing and metastability} \label{sec_Metastability} 
To investigate the two BP solutions further and obtain connections to the dynamical evolution of the model, we need to look more closely how these two solutions $\mupara,\muferro$ manifest themselves in the random regular graph. To this end, we define for a given distribution $\mu$ on $[q]$ another distribution
\begin{align}\label{eqnumu}
	\nu^\mu(c)&=\frac{(1+(\eul^\beta-1)\mu(c))^{d}}{\sum_{\chi\in[q]}(1+(\eul^\beta-1)\mu(\chi))^{d}}&(c\in[q]).
\end{align}
Let $\nuferro=\nu^{\muferro}$ and $\nupara=\nu^{\mupara}$ for brevity; of course $\nupara=\mupara$ is just the uniform distribution.
The distributions $\nuferro$ and $\nupara$ represent the expected Boltzmann marginals within the pure states corresponding to $\muferro$ and $\mupara$.
Indeed, the r.h.s.\ of \eqref{eqnumu} resembles that of \eqref{eqBP} except that the exponents read $d$ rather than $d-1$.
This means that we pass from messages, where we omit one specific endpoint of an edge from the graph, to actual marginals, where all $d$ neighbours of a vertex are present. For small $\eps>0$, it will therefore be relevant to consider the sets of configurations
\begin{align*}
	\Sferro(\eps)&=\bigg\{\sigma\in[q]^{n}:\sum_{c\in[q]}\Big|\big| \sigma^{-1}(c) \big|-n\nuferro(c)\Big|<\eps n\bigg\},\\
\Spara(\eps)&=\bigg\{\sigma\in[q]^{n}:\sum_{c\in[q]}\Big|\big| \sigma^{-1}(c) \big|-n\nupara(c)\Big|<\eps n\bigg\},
\end{align*}
whose colour statistics are about $n\nuferro$ and $n\nupara$, respectively; i.e., in $\Spara$, all colours appear with roughly equal frequency, whereas in $\Sferro$ colour 1 is favoured over the other $q-1$ colours (which appear with roughly equal frequency).

We are now in position to state our main result for Glauber dynamics.  Recall that, for a graph $G=(V,E)$, Glauber is initialised at a  configuration $\sigma_0\in [q]^{V}$; at each time step $t\geq1$, Glauber draws a vertex uniformly at random and obtains a new configuration $\sigma_t$ by updating the colour of the chosen vertex according to the conditional Boltzmann distribution given the colours of its neighbours. It is a well-known fact that Glauber converges in distribution to $\mu_{G,\beta}$; the mixing time of the chain is defined as the maximum number of steps $t$ needed to get within total variation distance $\leq 1/4$ from $\mu_{G,\beta}$, where the maximum is over the choice of the initial configuration $\sigma_0$, i.e., the quantity  $\max_{\sigma_0}\min\{t:\,\dTV( \sigma_t,\mu_{G,\beta})\leq 1/4\}$.

For metastability, we will consider Glauber launched from a random configuration from a subset $S\subset[q]^{V}$ of the state space. More precisely, let us denote by $\mu_{G,\beta,S}=\mu_{G,\beta}(\cdot \mid S)$ the conditional Boltzmann distribution on $S$. We call $S$  a {\em metastable state for Glauber dynamics} on $G$ if there exists $\delta>0$ such that 
\begin{align*}
	\pr\brk{\min\{t:\,\sigma_t\not\in S\}\leq\emm^{\delta |V|}\mid\sigma_0\sim \mu_{G,\beta,S}}\leq\emm^{-\delta |V|}.
\end{align*}
Hence, it will most likely take Glauber an exponential amount of time to escape from a metastable state.

\begin{theorem}\label{Thm_meta}
	Let $d,q\geq3$ be integers and $\beta>0$ be real. Then, for all sufficiently small $\eps>0$, the following hold \whp{} over the choice of $\GG=\GG(n,d)$.
	\begin{enumerate}[(i)]
		\item If $\beta<\KS$, then $\Spara(\eps)$ is a metastable state for Glauber dynamics on $\GG$.
		\item If $\beta>\uniq$, then $\Sferro(\eps)$ is a metastable state for Glauber dynamics on $\GG$.
	\end{enumerate}
Further, for $\beta>\uniq$, the mixing time of Glauber  is $\emm^{\Omega(n)}$.
\end{theorem}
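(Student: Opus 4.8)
\emph{The mixing-time lower bound from part (ii).}
For $i\in[q]$, let $\Sferro^{(i)}(\eps)$ be the set obtained from $\Sferro(\eps)=\Sferro^{(1)}(\eps)$ by transposing the colours $1$ and $i$. Since $\cH_\GG$ is colour-blind, $\mu_{\GG,\beta}$ is invariant under colour permutations, so the $q$ sets $\Sferro^{(i)}(\eps)$ carry equal $\mu_{\GG,\beta}$-mass; and since $\nuferro\neq\nupara$ for $\beta>\uniq$ (because $\muferro(1)>\muferro(2)$ forces $\nuferro(1)>\nuferro(2)$), for small $\eps$ they are pairwise disjoint, whence $\mu_{\GG,\beta}(\Sferro(\eps))\le1/q$. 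Launch Glauber from $\sigma_0\sim\mu_{\GG,\beta,\Sferro(\eps)}$: by (ii) there is $\delta>0$ with $\Pr[\sigma_t\in\Sferro(\eps)]\ge1-\emm^{-\delta n}$ for every $t\le\emm^{\delta n}$, hence $\dTV(\sigma_t,\mu_{\GG,\beta})\ge1-\emm^{-\delta n}-\mu_{\GG,\beta}(\Sferro(\eps))\ge1-\tfrac1q-\emm^{-\delta n}>\tfrac14$ for $n$ large, as $q\ge3$. Since $\mu_{\GG,\beta,\Sferro(\eps)}$ is a mixture of point masses and $t\mapsto\dTV(\sigma_t,\mu_{\GG,\beta})$ is non-increasing for any fixed initial configuration, some initial configuration has not mixed by time $\emm^{\delta n}$, so the mixing time is $\emm^{\Omega(n)}$. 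It remains to prove (i) and (ii).

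\emph{Metastability as a bottleneck estimate.}
A single Glauber step changes the colour-count vector $(|\sigma^{-1}(c)|)_{c\in[q]}$ by at most $2$ in $\ell_1$, so any trajectory exiting $A:=\Sferro(\eps)$ must first pass through the thin inner shell $A_\partial:=\{\sigma\in A:\ \sum_{c}\Big|\big|\sigma^{-1}(c)\big|-n\nuferro(c)\Big|\ge\eps n-2\}$. A standard restricted-conductance argument then reduces metastability of $A$ to an exponential gap between the Boltzmann mass of $A_\partial$ and of $A$: since Glauber is reversible with respect to $\mu_{\GG,\beta}$, the chain killed on leaving $A$ and started from $\mu_{\GG,\beta,A}$ loses at most $\mu_{\GG,\beta}(A_\partial)/\mu_{\GG,\beta}(A)$ mass per step, so $\Pr[\min\{t:\sigma_t\notin A\}\le T\mid\sigma_0\sim\mu_{\GG,\beta,A}]\le T\,\mu_{\GG,\beta}(A_\partial)/\mu_{\GG,\beta}(A)$. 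Thus it suffices to show $\mu_{\GG,\beta}(A_\partial)\le\emm^{-\Omega(n)}\mu_{\GG,\beta}(\Sferro(\eps))$, and, for (i), the analogous statement with $\Spara(\eps),\nupara$ in place of $\Sferro(\eps),\nuferro$.

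\emph{Reducing the bottleneck to a weight bound.}
For a colour-count vector $\vn$ write $W_\GG(\vn)=\sum_{\sigma:(|\sigma^{-1}(c)|)_c=\vn}\emm^{\beta\cH_\GG(\sigma)}$, so that $\mu_{\GG,\beta}(A_\partial)/\mu_{\GG,\beta}(A)$ is a ratio of sums of the quantities $W_\GG$ over $n^{O(1)}$ type classes, with the partition function cancelling. Let $\Psi(\nu)=\lim_{n\to\infty}\tfrac1n\log\Erw_\GG[W_\GG(\lfloor n\nu\rfloor)]$ be the (explicit) annealed exponent for the empirical colour distribution; its critical points over the simplex are exactly the constant solutions of \eqref{eqBP}, and $\Psi$ coincides there with the Bethe functional $\cB_{d,\beta}$. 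By Markov's inequality, $W_\GG(\vn)\le\emm^{n\Psi(\vn/n)+o(n)}$ \whp{} uniformly in $\vn$; and for $\beta>\uniq$ the point $\nuferro$ is a \emph{strict} local maximiser of $\Psi$ on the simplex (a fact one reads off from \eqref{eqBP} and \eqref{eqBethe}), so $\Psi(\nu)\le\Psi(\nuferro)-c(\eps)$ for some $c(\eps)>0$ whenever $\tfrac{\eps}{2}\le\|\nu-\nuferro\|_1\le\eps$. Combining these, the desired bottleneck estimate follows from the matching lower bound $W_\GG(\lfloor n\nuferro\rfloor)\ge\emm^{n\Psi(\nuferro)-o(n)}$ \whp.

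\emph{The weight lower bound, and the obstacle.}
When $\beta\ge\curie$ the point $\nuferro$ is the global maximiser of $\Psi$, and this lower bound follows from $\tfrac1n\log Z_\beta(\GG)\to\Psi(\nuferro)$ (\cite{Andreas}) together with colour symmetry, since the $q$ maximising type classes share a common and hence near-maximal value of $W_\GG$. The genuine difficulty lies in the range $\uniq<\beta<\curie$, where $\nuferro$ is only a local maximiser and the free-energy formula gives no lower bound on its weight. Here the plan is to argue by \emph{planting}, following \cite{Victor,Greg}: consider the planted pair $(\hat\GG,\vsigma^\ast)$, where $\vsigma^\ast$ has colour type $\lfloor n\nuferro\rfloor$ and $\hat\GG$ is a random $d$-regular graph reweighted by $\emm^{\beta\cH_{\hat\GG}(\vsigma^\ast)}$. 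Because $\muferro$ solves the Belief Propagation equation \eqref{eqBP}---equivalently, because the broadcast process on the $d$-regular tree with the ordered boundary condition reproduces the marginal $\nuferro$, which is exactly what $\beta>\uniq$ buys---the planted model is \emph{quiet}: its annealed exponent equals $\Psi(\nuferro)=\cB_{d,\beta}(\muferro)$, and the local neighbourhood of a uniformly random vertex of $(\hat\GG,\vsigma^\ast)$ converges to the Potts Gibbs measure on the $d$-regular tree with that boundary condition. A contiguity comparison between this planted model and $(\GG,\vsigma_{\GG,\beta,\Sferro(\eps)})$---which sidesteps the customary second-moment computation---then yields $W_\GG(\lfloor n\nuferro\rfloor)=\emm^{n\Psi(\nuferro)+o(n)}$ \whp, proving (ii). Part (i) is identical with $\nupara$ in place of $\nuferro$, the analogous planting being quiet precisely for $\beta<\KS$, i.e.\ exactly where $\mupara$ is a local maximiser of the Bethe functional. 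I expect the main obstacle to be this quietness/contiguity step in the local-maximiser regime---showing that the ferromagnetic (resp.\ paramagnetic) state actually carries its Bethe-predicted Boltzmann weight with high probability, without recourse to moment calculations.
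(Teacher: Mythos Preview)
Your outline matches the paper's proof: the same bottleneck/conductance reduction (the paper's \Lem~\ref{lem_bottleNeck} together with \Lem~\ref{lem:phasebounds}, comparing $\Sferro(\eps)$ to a slightly larger $\Sferro(\eps')$ rather than your inner shell $A_\partial$, but these are equivalent), and the same identification of the crux as the lower bound $\Zferro(\G)\ge\emm^{n\cB_{d,\beta}(\muferro)-o(n)}$ \whp\ in the local-maximiser regime $\uniq<\beta<\curie$. Your slow-mixing argument via colour symmetry and $\mu_{\GG,\beta}(\Sferro)\le1/q$ is in fact slightly more self-contained than the paper's, which appeals to \Thm~\ref{Thm_Boltzmann} for the bound $\|\mu(\cdot\mid\Sferro)-\mu\|\ge3/5$.

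One clarification on the obstacle you flag: the paper does not entirely sidestep second moments. It uses planting and non-reconstruction on the tree (\Prop~\ref{Prop_nonre}, \Prop~\ref{Prop_nonreG}) to show that two independent samples from $\mu_{\Gferro,\beta}(\cdot\mid\Sferro)$ have overlap concentrated near the product $\nuferro\otimes\nuferro$ (\Lem~\ref{Lemma_overlapFerro}). This overlap concentration makes a \emph{truncated} second moment $\ex[\Yferro(\G)^2]$ trivial to bound---one only evaluates $F_{d,\beta}^\tensor$ at the product point, where it equals $2F_{d,\beta}(\nuferro,\rhoferro)$ (\Cor~\ref{Cor_overlap})---after which Paley--Zygmund and Azuma deliver the lower bound (\Cor~\ref{Cor_nonreG}). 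So the mechanism you anticipate is exactly right; it is just packaged as a second moment rendered trivial by the tree input, rather than a direct contiguity statement.
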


Thus, we can summarise the evolution of the Potts model as follows.
For $\beta<\uniq$ there is no ferromagnetic state.
As $\beta$ passes $\uniq$, the ferromagnetic state $\Sferro$ emerges first as a metastable state.
Hence, if we launch Glauber from $\Sferro$, the dynamics will most likely remain trapped in the ferromagnetic state for an exponential amount of time, even though the Boltzmann weight of the paramagnetic state is exponentially larger (as we shall see in the next section).
At the point $\curie$ the ferromagnetic state then takes over as the one dominating the Boltzmann distribution, but the paramagnetic state remains as a metastable state up to $\KS$. Note in particular that the two states coexist as metastable states throughout the interval $(\uniq,\KS)$.

The metastability for the Potts model manifests also in the evolution of the Swendsen-Wang (SW) chain, which is another popular and substantially more elaborate chain that makes non-local moves,  based on the random-cluster representation of the model. For a graph $G=(V,E)$ and a configuration $\sigma\in [q]^V$, a single iteration of SW starting from $\sigma$ consists of two steps.
\begin{itemize}
    \item \emph{Percolation step:} Let $M=M(\sigma)$ be the random edge-set obtained by adding (indepentently) each monochromatic edge under $\sigma$ with probability $p=1-\emm^{-\beta}$.
    \item \emph{Recolouring step:} Obtain the new $\sigma'\in [q]^V$ by assigning each component\footnote{Note, isolated vertices count as connected components.} of the graph $(V,M)$ a uniformly random colour from $[q]$; for $v\in V$, we set $\sigma_v'$ to be the colour assigned to $v$'s component.
\end{itemize}
We define metastable states for SW dynamics analogously to above. The following theorem establishes the analogue of Theorem~\ref{Thm_meta} for the non-local SW dynamics. Note here that SW might change the most-frequent colour due to recolouring step, so the metastability statement for the ferromagnetic phase needs to consider the set $\Sferro(\eps)$ with its $q-1$ permutations.
\begin{theorem}\label{thm:SWslow} 
	Let $d,q\geq3$ be integers and $\beta>0$ be real. Then, for all sufficiently small $\eps>0$, the following hold \whp{} over the choice of $\GG=\GG(n,d)$.
	\begin{enumerate}[(i)]
		\item If $\beta<\KS$, then $\Spara(\eps)$ is a metastable state for SW dynamics on $\GG$. 
		\item If $\beta>\uniq$, then $\Sferro(\eps)$ together with its $q-1$ permutations is a metastable state for SW dynamics on $\GG$.
	\end{enumerate}
Further, for $\beta\in(\uniq,\KS)$, the mixing time of SW is $\emm^{\Omega(n)}$.
\end{theorem}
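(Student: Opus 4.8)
The three parts of the theorem are closely linked, so I would establish (i) and (ii) first and then read off the mixing-time bound. For that last step, fix $\beta\in(\uniq,\KS)$ and a small $\eps>0$, and let $T=\Sferro(\eps)\cup(\text{its }q-1\text{ colour permutations})$. For $\eps$ small, $\Spara(\eps)$ and $T$ are disjoint (as $\nupara$ is not a permutation of $\nuferro$), so $\mu_{\GG,\beta}(\Spara(\eps))+\mu_{\GG,\beta}(T)\le1$ and at least one of them, call it $S$, has $\mu_{\GG,\beta}(S)\le1/2$; by (i)--(ii) this $S$ is metastable for SW. The conductance of $S$ is exactly $\Phi(S)=\pr[\sigma_1\notin S\mid\sigma_0\sim\mu_{\GG,\beta,S}]$, which the $t=1$ case of the metastability inequality bounds by $\emm^{-\delta n}$; the standard bottleneck/conductance lower bound on mixing then gives $t_{\mathrm{mix}}=\emm^{\Omega(n)}$.

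It remains to prove metastability of $\Spara(\eps)$ for $\beta<\KS$ and of $\Sferro(\eps)$ for $\beta>\uniq$ (its permutations following from the colour-symmetry of the SW move). The plan is to reduce each to a single \emph{robust} step. Inside the target set $S$ I would carve out a ``core'' $S'\subseteq S$ by additionally pinning the number of monochromatic edges of $\sigma$ to its replica-symmetric value --- $m^{\star}n$ with $m^{\star}=\tfrac{d}{2}\cdot\tfrac{\emm^{\beta}}{\emm^{\beta}+q-1}$ in the paramagnetic case, and the analogous value attached to $\nuferro$ in the ferromagnetic case --- so that, by the free-energy results cited above together with standard concentration, $\mu_{\GG,\beta,S}(S')\ge1-\emm^{-\Omega(n)}$. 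It then suffices to show that, \whp{} over $\GG$, for \emph{every} $\sigma\in S'$ a single SW step lands in $S'$ with probability $\ge1-\emm^{-\Omega(n)}$: a union bound over $\emm^{\delta n}$ steps (with $\delta$ below the rate just obtained) then shows that SW started from $\mu_{\GG,\beta,S'}$, hence from $\mu_{\GG,\beta,S}$, remains in $S'\subseteq S$ for $\emm^{\delta n}$ steps \whp, which is metastability.

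The single-step statement is where percolation enters. Given $\sigma\in S'$, let $H=H(\sigma)$ be its monochromatic subgraph; the percolation step of SW keeps each edge of $H$ independently with probability $p=1-\emm^{-\beta}$, and the recolouring step gives each component of the retained graph an independent uniform colour. In the paramagnetic case every colour class has $(1+o(1))n/q$ vertices and --- because the monochromatic-edge count was pinned --- monochromatic-degree statistics close to $\Bin(d,\rho)$ with $\rho=\tfrac{\emm^{\beta}}{\emm^{\beta}+q-1}$; the exploration process of a component of the retained graph then has offspring mean $(d-1)p\rho$, which is $<1$ \emph{precisely} when $\beta<\KS$, so the retained graph is subcritical and (using the expansion of $\GG$ to exclude anomalously dense sub-clusters) all its components have size $\emm^{o(n)}$ except with probability $\emm^{-\Omega(n)}$. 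Recolouring then perturbs each colour count by a sum of many independent bounded terms, keeping the statistics within $\eps$ of $n\nupara$, and a short computation over the three edge types that can become monochromatic (retained monochromatic, discarded monochromatic, bichromatic) shows the monochromatic-edge count returns to $m^{\star}n\pm o(n)$; hence $\sigma'\in S'$. The ferromagnetic case is parallel, except that colour class $1$ (of size $\approx n\nuferro(1)$) is now \emph{supercritical}: its exploration process has offspring mean $(d-1)p\rho_1$ with $\rho_1=\tfrac{\emm^{\beta}\muferro(1)}{1+(\emm^{\beta}-1)\muferro(1)}$, and the fact that this exceeds $1$ exactly for $\beta>\uniq$ (while the remaining classes stay subcritical) is a property of the Bethe fixed points $\muferro,\nuferro$ that I would import from the structural part of the paper. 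Hence $H$ has a \emph{unique} giant component, of size $(1+o(1))\gamma n$ with $\gamma=\tfrac{q\nuferro(1)-1}{q-1}$, and all other components are $\emm^{o(n)}$; recolouring gives the giant a uniform random colour $j$ and the small pieces independent uniform colours, which by the identity $\gamma+(1-\gamma)/q=\nuferro(1)$ places $\sigma'$ in the $j$-th permutation of $\Sferro(\eps)$, again with the monochromatic-edge count reproduced.

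The \textbf{main obstacle} is the percolation analysis just sketched: one needs the criticality thresholds to come out as \emph{exactly} $\KS$ and $\uniq$, \emph{and} concentration of component sizes sharp enough (failure probability $\emm^{-\Omega(n)}$) to survive the union bound over exponentially many SW steps --- all while controlling only the \emph{average} monochromatic degree of a worst-case $\sigma\in S'$, not its full degree sequence. This is precisely where the quasirandomness of $\GG$ and the ``planting'' device are needed: for the very first step $\sigma$ is genuinely Potts-distributed and the pair $(\GG,\sigma)$ may be replaced by a tractable planted random graph whose colour classes carry independent configuration-model degree sequences, on which the percolation threshold is computed cleanly; for the later steps one instead uses the expansion of $\GG$ directly to rule out any sub-cluster of $H(\sigma)$ dense enough to spoil subcriticality, and in the ferromagnetic case one additionally needs the size of the giant component pinned tightly enough to re-enter $S'$.
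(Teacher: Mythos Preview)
Your mixing-time derivation and the single-step percolation heuristics are essentially right, and in particular your identification of $\uniq,\KS$ as exactly the critical percolation thresholds for the ferromagnetic and paramagnetic colour classes, together with the identity $\gamma+(1-\gamma)/q=\nuferro(1)$ for the giant, matches what the paper proves. The gap is in the inductive scheme. You aim to show that for \emph{every} $\sigma$ in a core $S'$ one SW step returns to $S'$ with probability $1-\emm^{-\Omega(n)}$, so that a union bound over $\emm^{\delta n}$ steps finishes. But pinning only the colour frequencies and the total monochromatic-edge count does not control the degree sequence inside a colour class: an adversarial $\sigma\in S'$ can concentrate its monochromatic edges so that one class has induced average degree well above the replica-symmetric value, pushing percolation past criticality even when $\beta<\KS$. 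Your assertion that the monochromatic-degree statistics are close to $\Bin(d,\rho)$ is true only for a \emph{planted} (typical) $\sigma$, not for every $\sigma\in S'$; expansion of $\GG$ rules out small dense clumps but says nothing about a linear-size colour class being atypically dense. So the ``later steps via expansion'' mechanism does not deliver the uniform-in-$\sigma$ bound your union bound needs.

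The paper avoids this by never asking for the single-step property uniformly over the phase. It bounds the bottleneck ratio $\Phi_{\mathrm{SW}}(S)$ directly: the planting/Nishimori argument (exactly your ``first step'' analysis) shows that the set $S_{\mathrm{Bad}}(G)$ of $\sigma\in S$ from which one SW step escapes the slightly enlarged phase $S(\eps')$ with probability $>\emm^{-\eta n}$ has Boltzmann weight at most $\emm^{-\theta n}\mu_{\GG,\beta}(S)$; combined with the ring estimate $\mu_{\GG,\beta}(S(\eps')\setminus S(\eps))\le\emm^{-\zeta n}\mu_{\GG,\beta}(S(\eps))$ coming from $(\nupara,\rhopara)$ and $(\nuferro,\rhoferro)$ being strict local maxima of $F_{d,\beta}$, this yields $\Phi_{\mathrm{SW}}(S)\le\emm^{-\Omega(n)}$. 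The elementary bound $\|\mu_{G,\beta,S}P^t-\mu_{G,\beta,S}\|_{\mathrm{TV}}\le t\,\Phi(S)$ then gives metastability without ever tracking the chain after time~$1$. In short: your percolation analysis is correct for the planted pair, but it should be fed into a conductance argument (``most $\sigma$'' suffices) rather than a worst-case union bound over steps (``every $\sigma$'' is both unnecessary and likely false).
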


\subsection{The relative weight of the metastable states}
At the heart of obtaining the metastability results of the previous section is a refined understanding of the relative weight of the ferromagnetic and paramagnetic states. The following notion of non-reconstruction will be the key in our arguments; it captures the absence of long-range correlations within a set $S\subset[q]^{n}$, saying that, for any vertex $v$, a typical boundary configuration on  $\vsigma_{\partial^\ell v}$ chosen according to the conditional distribution on $S$ does not impose a discernible bias on the colour of $v$ (for large $\ell,n$; recall, $\partial^\ell v$ is the set of all vertices at distance precisely $\ell$ from $v$).  More precisely, let $\mu=\mu_{\GG,\beta}$ and $\vsigma\sim \mu$; the Boltzmann distribution exhibits {\em non-reconstruction given a subset $S\subset[q]^{n}$} if for any vertex $v$ it holds that
\begin{align*}
	\lim_{\ell\to\infty}\limsup_{n\to\infty}
	\sum_{c\in[q]}\sum_{\tau\in S}\ex\brk{\mu(\tau\mid S)
	\times
	\abs{\mu(\vsigma_{v}=c\mid\vsigma_{\partial^\ell v}=\tau_{\partial^\ell v})-\mu(\vsigma_{v}=c\mid S)}}=0,
\end{align*}
where the expectation is over the choice of the graph $\GG$.

\begin{theorem}\label{Thm_Boltzmann}
	Let $d,q\geq3$ be integers and $\beta>0$ be real. The following hold for all sufficiently small $\eps>0$ as $n\to\infty$.
	\begin{enumerate}[(i)]
		\item For all $\beta<\curie$, $\ex\brk{\mu_{\GG,\beta}(\Spara)}\to1$ and, if $\beta>\uniq$, then $\ex\brk{\tfrac1n\log\mu_{\GG,\beta}(\Sferro)}\to\cB_{d,\beta}(\muferro)-\cB_{d,\beta}(\mupara)$.
		\item For all $\beta>\curie$,  $\ex\brk{\mu_{\GG,\beta}(\Sferro)}\to1/q$ and, if $\beta<\KS$, then $\ex\brk{\tfrac1n\log\mu_{\GG,\beta}(\Spara)}\to\cB_{d,\beta}(\mupara)-\cB_{d,\beta}(\muferro)$.
	\end{enumerate}
	Furthermore, the Boltzmann distribution given $\Spara$ exhibits non-reconstruction if $\beta<\KS$ and the Boltzmann distribution given $\Sferro$ exhibits non-reconstruction if $\beta>\uniq$.
\end{theorem}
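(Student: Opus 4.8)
The plan is to exploit the conjectured equivalence between the random regular graph and a planted model, reducing everything to spatial mixing on the $d$-regular tree. First I would set up the \emph{planted model}: for a colour distribution $\nu \in \{\nuferro, \nupara\}$, one first draws colour statistics $\vn$ according to a suitable local-central-limit refinement of $n\nu$, then draws a random $d$-regular graph $\hG$ together with a configuration $\hsigma$ with those statistics, weighted proportionally to $\emm^{\beta\cH(\hsigma)}$. The key structural claim is that, conditioned on the colour class sizes lying in $\Sferro(\eps)$ (resp.\ $\Spara(\eps)$), the pair $(\GG, \vsigma_{\GG,\beta})$ is contiguous to the planted pair $(\hG_{\escf}, \hsigma_{\escf})$ (resp.\ $(\hG_{\escp}, \hsigma_{\escp})$). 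This is where the work of establishing that $\nuferro, \nupara$ are the right marginals enters --- the exponent $d$ versus $d-1$ discrepancy in \eqref{eqnumu} versus \eqref{eqBP} is exactly what makes $\nu^\mu$, rather than $\mu$ itself, the correct planted colour distribution. Contiguity would be proven via a small-subgraph conditioning / second-moment argument: compute $\ex[\mu_{\GG,\beta}(\Sferro)]$ and its second moment, show the ratio is bounded, and match the fluctuations to a product of Poisson variables counting short cycles. The first moment computation yields the leading exponential order $\exp(n(\cB_{d,\beta}(\muferro) - \cB_{d,\beta}(\mupara)))$ (and $\to 1$ or $\to 1/q$ in the dominant regime) by a direct entropy/energy count that reproduces the Bethe functional \eqref{eqBethe}.

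Next, with contiguity in hand, I would transfer non-reconstruction from the tree to the graph. On the planted side, the depth-$\ell$ neighbourhood of a typical vertex $v$ in $(\hG, \hsigma)$ converges locally to a broadcasting process on the $d$-regular tree $\TT_d$: the root gets colour $c$ with probability $\nu(c)$, and colours propagate along edges according to the transition kernel determined by the Belief Propagation fixed point $\mu$. Non-reconstruction of the Boltzmann distribution given $S$ then reduces to the statement that this tree broadcasting process is \emph{not} reconstructible, i.e.\ $\lim_{\ell\to\infty}$ the mutual information between the root colour and the colours at depth $\ell$ is zero. For the paramagnetic branch this is precisely the statement that $\beta < \KS$ lies below the Kesten--Stigum / broadcasting threshold $\log(1 + q/(d-2))$ --- indeed the promised ``natural interpretation'' of $\KS$ in Section~\ref{Sec_nr} --- and I would invoke the known non-reconstruction result for the Potts channel on the tree in this regime (a contraction/second-moment argument on the tree recursion, or an appeal to the literature). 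For the ferromagnetic branch, the relevant observation is that the ferromagnetic fixed point $\muferro$ corresponds to a tree recursion with a \emph{stable} fixed point for all $\beta > \uniq$ (this is where $\uniq$ being the tree uniqueness threshold is used), so the recursion contracts and non-reconstruction holds. One then has to upgrade tree non-reconstruction to the displayed averaged statement over $\GG$ and $\vsigma \sim \mu$, which is where contiguity plus the local weak convergence does the job: the conditional law of $\vsigma_v$ given $\vsigma_{\partial^\ell v}$ in $\GG$ is, up to $o(1)$ errors controlled by the sparsity of short cycles, the same as the corresponding tree quantity.

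The main obstacle I expect is the second-moment computation underlying contiguity, specifically controlling it uniformly in $\eps$ and in the full temperature range $\beta > \uniq$ (resp.\ $\beta < \KS$), including the regimes where the state in question is \emph{not} the dominant one and carries exponentially small mass. In the exponentially-small-mass regime one cannot hope for concentration of $\mu_{\GG,\beta}(S)$ itself; instead I would argue at the level of the planted model directly --- define the planted measure intrinsically, show its partition function has the claimed exponential rate by a first-moment computation, and establish the quantitative relationship $\ex[\tfrac1n \log \mu_{\GG,\beta}(S)] \to \cB_{d,\beta}(\mu_S) - \cB_{d,\beta}(\mupara)$ by comparing $Z_\beta(\GG)$ restricted to $S$ with the planted normalisation, using that $\tfrac1n\log Z_\beta(\GG) \to \max_\mu \cB_{d,\beta}(\mu)$ from \cite{Andreas}. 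A secondary technical point is handling the symmetry-breaking between the $q$ ferromagnetic permutations cleanly (hence the $\to 1/q$ rather than $\to 1$ for $\Sferro$ when $\beta > \curie$), which is bookkeeping once the single-phase analysis is done. The non-reconstruction transfer, by contrast, I expect to be comparatively routine given the contiguity and the tree-level input.
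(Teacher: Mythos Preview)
Your overall architecture---planted model, coupling of local neighbourhoods to the broadcasting process on $\TT_d$, and transferring tree non-reconstruction to the graph---matches the paper's. But your order of operations is inverted, and this is where the real content lies.

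You propose to establish contiguity first via a second-moment / small-subgraph-conditioning argument, and then derive non-reconstruction from it. You correctly flag the second moment as the main obstacle, especially in the regime where the phase is subdominant. The paper's point is precisely that a direct second-moment attack on $\Zferro$ (or $\Zpara$) would require locating all \emph{local} maxima of the function $F^\tensor_{d,\beta}$ from \eqref{eqsmmF}, and that the operator-norm reduction used for the global maximum (\Prop~\ref{Prop_Andreas}) does not help here. Your fallback---``show the planted partition function has the claimed exponential rate by a first-moment computation''---does not work either: a first moment gives only the upper bound on $\tfrac1n\log\Zferro(\G)$; the matching w.h.p.\ lower bound is exactly what requires a second-moment-type input.

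The paper resolves this by reversing the logic. Non-reconstruction in the planted model is established \emph{first}, with no second moment at all: the Nishimori identity (\Prop~\ref{Prop_Nishi}) gives a constructive description of $(\Gferro,\sferroGferro)$ as $(\hat\G(\sferro),\sferro)$, Lemmas~\ref{Lemma_spara}--\ref{Lemma_sferro} pin down the colour and edge statistics of the planted configuration, and then the neighbourhood of a vertex is coupled directly to the tree broadcasting process (\Lem s~\ref{Lemma_nonrecPara}--\ref{Lemma_nonrecFerro}), so \Prop~\ref{Prop_nonre} from~\cite{Andreas} gives non-reconstruction for the planted model (\Prop~\ref{Prop_nonreG}). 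Only \emph{then} does non-reconstruction feed back into the moment computation: it forces the overlap of two independent samples to concentrate near $\nuferro\tensor\nuferro$ (\Lem~\ref{Lemma_overlapFerro}), which lets one define a \emph{truncated} variable $\Yferro$ (cf.\ \eqref{eqYferro}) restricted to graphs with this overlap behaviour. The second moment of $\Yferro$ is then essentially trivial---one only has to evaluate $F^\tensor_{d,\beta}$ at the single product point $(\nuferro\tensor\nuferro,\rhoferro\tensor\rhoferro)$ rather than optimise it globally (\Cor~\ref{Cor_overlap}). Paley--Zygmund plus Azuma then give \Cor~\ref{Cor_nonreG}.

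Two smaller remarks. First, the paper never proves contiguity and does not need it: the transfer from planted to plain model goes through the weaker ``quiet planting'' bound of \Lem~\ref{Cor_pseudo}, which only says that events of exponentially small planted probability have $o(1)$ probability in $\G$. This is enough once the partition function is pinned down. Second, for the ferromagnetic tree, stability of $\muferro$ as a fixed point of \eqref{eqBP} is not by itself the same as non-reconstruction of the broadcasting process; the paper imports this as \Prop~\ref{Prop_nonre} from~\cite{Andreas} rather than deriving it from the Jacobian condition.
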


\Thm~\ref{Thm_Boltzmann} shows that for $\beta<\curie$ the Boltzmann distribution is dominated by the paramagnetic state $\Spara$ for $\beta<\curie$.
Nonetheless, at $\uniq$ the ferromagnetic state $\Sferro$ and its $q-1$ mirror images start to emerge.
Their probability mass is determined by the Bethe free energy evaluated at $\muferro$.
Further, as $\beta$ passes $\curie$ the ferromagnetic state takes over as the dominant state, with the paramagnetic state lingering on as a sub-dominant state up to $\KS$.
Finally, both states $\Spara$ and $\Sferro$ are free from long-range correlations both for the regime of $\beta$ where they dominate and for those $\beta$ where they are sub-dominant.

\subsection{Discussion}\label{sec:discussion}
Our slow mixing result for Glauber dynamics when $\beta>\uniq$ (Theorem~\ref{Thm_meta}) significantly improves upon previous results of Bordewich, Greenhill and Patel~\cite{BGP} that applied to $\beta>\uniq+\Theta_q(1)$. Similarly, our slow mixing result for Swendsen-Wang dynamics when $\beta\in(\uniq,\KS)$ (Theorem~\ref{thm:SWslow}) strengthens earlier results of Galanis, \v{S}tefankovi\v{c}, Vigoda, Yang \cite{Andreas}  which applied to $\beta=\curie$, and by Helmuth, Jenseen and Perkins~\cite{HJP} which applied for  a small interval around $\curie$; both results applied only   for  $q$ sufficiently large.  To obtain our result for all integers $q,d\geq 3$, we need to carefully track how SW evolves on the random regular graph for configurations starting from the ferromagnetic and paramagnetic phases, by accounting for the percolation step via delicate arguments, whereas the approaches of \cite{Andreas,HJP} side-stepped this analysis by considering the change in the number of monochromatic edges instead.

Our slow mixing results complement the recent fast mixing result of Blanca and Gheissari \cite{BG} for edge dynamics on the random $d$-regular graph that applies to all $\beta<\uniq$. Roughly, edge dynamics is the analogue of Glauber dynamics for the random cluster representation of the Potts model (the random-cluster representation has nicer monotonicity properties). The result of \cite{BG} already implies a polynomial bound on the mixing time of SW when $\beta<\uniq$ (due to comparison results by Ullrich that apply to general graphs \cite{Ullrich}), and conversely our exponential lower bound on the mixing time of SW for $\beta\notin(\uniq,\KS)$ implies an exponential lower bound on the mixing time of edge dynamics for $\beta\notin(\uniq,\KS)$. The main open questions remaining are therefore showing whether Glauber dynamics for the Potts model mixes fast when $\beta\leq \uniq$ and whether SW/edge-dynamics mixes fast when $\beta\geq \KS$.  Extrapolating from the mean-field case (see discussion below), it is natural to conjecture that our slow mixing results are best-possible, i.e., for $\beta\leq \uniq$, Glauber mixes rapidly and similarly, for $\beta\notin(\uniq,\KS)$, SW mixes rapidly on the random regular graph.

Theorem~\ref{Thm_Boltzmann}, aside from being critical in establishing the aforementioned slow mixing and metastability results, is the first to establish for all $q,d\geq 3$ the coexistence of the ferromagnetic and paramagnetic phases for all $\beta$ in the interval  $(\uniq,\KS)$ and detail the logarithmic order of their relative weight in the same interval. Previous work in  \cite{Andreas} showed coexistence for $\beta=\curie$ (for all $q,d\geq 3$) and \cite{HJP} for $\beta$ in a small interval around $\curie$ (for large $q$ and $d\geq 5$).\footnote{We remark here that the approaches in \cite{Andreas, HJP} establish more refined estimates on the  deviations from the limiting value of the log-partition function of the phases (in the corresponding regimes they apply), with \cite{HJP} characterising in addition the limiting distribution using cluster-expansion methods. One can obtain analogous distributional characterisations for all $q,d\geq 3$ from our methods, once combined with the small subgraph conditioning method of \cite{Andreas}. It should be noted though the approach of \cite{HJP} which goes through cluster expansion is  more direct in that respect. We  don't pursue such distributional results here since  Theorem~\ref{Thm_Boltzmann} is sufficient for our slow mixing results.} Together with Theorems~\ref{Thm_meta} and ~\ref{thm:SWslow}, Theorem~\ref{Thm_Boltzmann} delineates more  firmly\footnote{Note that the interval-behaviour on the random regular graph (and hence the correspondence with the mean-field case) is already implied to some extent by the interval-result of \cite{HJP} (for $q$ large and $d\geq 5$). Note however that the interval therein is contained strictly inside $(\uniq, \KS)$ and, in particular, its endpoints do not have the probabilistic interpretation of $\uniq,\KS$. Nevertheless, \cite{HJP} obtains various probabilistic properties of the metastable phases, including a stronger form of correlation decay than that of reconstruction that we consider here.} the correspondence with the (simpler) mean-field case, the Potts model on the clique. In the mean-field case, there are qualitatively similar thresholds $\uniq,\curie,\KS$  and the mixing time for Glauber and SW have been detailed for all $\beta$, even at criticality, see \cite{BS,BSZ,GLP,GSV,CDLLPS,GJ,lee2022energy}. As mentioned earlier, the most tantalising question remaining open is to establish whether the fast mixing of SW for $\beta=\uniq$ and $\beta\geq \KS$ in the mean-field case translates to the random regular graph as well. Another interesting direction is to extend our arguments to the random-cluster representation of the Potts model for all non-integer $q\geq 1$; note that the arguments of \cite{Blanca2} and \cite{HJP} do apply to non-integer $q$ ($q\geq 1$ and $q$ large, respectively).
The proof of Theorem~\ref{Thm_Boltzmann} relies on a truncated second moment computation, an argument that was applied to different models in~\cite{Greg,ChartCoja}.

We further remark here that, from a worst-case perspective, it is known that sampling from the Potts model on $d$-regular graphs is \#BIS-hard for $\beta>\curie$ \cite{Andreas}, and we conjecture that the problem admits a poly-time approximation algorithm when $\beta<\curie$. However, even showing that Glauber mixes fast on any $d$-regular graph  in the uniqueness regime $\beta<\uniq$ is a major open problem, and Theorems~\ref{Thm_meta} and~\ref{thm:SWslow} further demonstrate that getting an algorithm all the way to $\curie$ will require using different techniques. To this end, progress has been made in \cite{UG1,UG2} where an efficient algorithm is obtained asymptotically up to $\beta_p$ for large $q$ and $d$ using cluster-expansion methods. More precise results have been shown on the random regular graph: \cite{HJP} obtained an algorithm for $d\geq 5$ and $q$ large that applies to all $\beta$ by sampling from each phase separately based also on cluster-expansion methods; also, for $\beta<\curie$, Efthymiou \cite{Charilaos} gives an algorithm with weaker approximation guarantees but which applies to all $q,d\geq 3$ (see also \cite{Blanca2}). In principle, and extrapolating again from the mean-field case, one could use Glauber/SW to sample from each phase on the random regular graph for all $q,d\geq 3$ and all $\beta$. Analysing such chains appears to be relatively far from the reach of current techniques even in the case of the random regular graph, let alone worst-case graphs. In the case of the Ising model however, the case $q=2$, the analogue of this fast mixing question has recently been established for sufficiently large $\beta$ in \cite{GS} on the random regular graph and the grid, exploiting certain monotonicity properties.

Finally, let us note that the case of the grid has qualitatively different behaviour than the mean-field and the random-regular case. There, the three critical points coincide and the behaviour at criticality depends on the value of $q$; the mixing time of Glauber and SW has largely been detailed, see \cite{BS,LS,GL}.

\section{Overview}\label{Sec_overview}

\noindent
In this section we give an overview of the proofs of \Thm s~\ref{Thm_meta}--\ref{Thm_Boltzmann}.
Fortunately, we do not need to start from first principles.
Instead, we build upon the formula for the partition function $Z_\beta(\G)$ and its proof via the second moment method from~\cite{Andreas}.
Additionally, we are going to seize upon facts about the non-reconstruction properties of the Potts model on the random $(d-1)$-ary tree, also from~\cite{Andreas}.
We will combine these tools with an auxiliary random graph model known as the planted model, which also plays a key role in the context of inference problems on random graphs~\cite{CKPZ}.

\subsection{Preliminaries}\label{Sec_pre}
Throughout most of the paper, instead of the simple random regular graph $\GG$, we are going to work with the random $d$-regular multi-graph $\G=\G(n,d)$ drawn from the pairing model.
Recall that $\G$ is obtained by creating $d$ clones of each of the vertices from $[n]$, choosing a random perfect matching of the complete graph on $[n]\times[d]$ and subsequently contracting the vertices $\{i\}\times[d]$ into a single vertex $i$, for all $i\in [n]$.
It is well-known that $\GG$ is contiguous with respect to $\G$~\cite{JLR}, i.e., any property that holds \whp{} for $\G$ also holds \whp{} for $\GG$. 

%For a (multi-)graph $G$ let $\SIGMA_G$ be a sample from the Boltzmann distribution $\mu_G$. Further, for a subset $S\subset[q]^n$ let  $\SIGMA_{G,S}$ be a sample from the conditional distribution $\mu_G(\nix\mid S)$. Additionally, let $\sferroG,\sparaG$ be samples from the conditional distributions $\mu_{G,\beta}(\nix\mid\Sferro)$, $\mu_{G,\beta}(\nix\mid\Spara)$. Similarly, for an edge $e$ of $G$ let $\partial^\ell e$ be the set of all vertices of $G$ at distance precisely $\ell$ from $e$.

For a configuration $\sigma\in[q]^{V(G)}$ define a probability distribution $\nu^\sigma$ on $[q]$ by letting
\begin{align*}
	\nu^\sigma(s)&=|\sigma^{-1}(s)|/n&&(s\in[q]).
\end{align*}
In words, $\nu^\sigma$ is the empirical distribution of the colours under $\sigma$.
Similarly, let $\rho^{G,\sigma}\in\cP([q]\times[q])$ be the edge statistics of a given graph/colouring pair, i.e.,
\begin{align*}
	\rho^{G,\sigma}(s,t)&=\frac1{2|E(G)|}\sum_{u,v\in V(G)}\vecone\{uv\in E(G),\,\sigma_u=s,\,\sigma_v=t\}.
\end{align*}
We are going to need the following elementary estimate of the number of $d$-regular multigraphs $G$ that attain a specific $\rho^{G,\sigma}$.

\begin{lemma}[{\cite[\Lem~2.7]{Samuel}}]\label{Lemma_Samuel}
	Suppose that $\sigma\in[q]^n$. 	Moreover, suppose that $\rho=(\rho(s,t))_{s,t\in[q]}$ is a symmetric matrix with positive entries such that $dn\rho(s,t)$ is an integer for all $s,t\in[q]$, $dn\rho(s,s)$ is even for all $s\in[q]$ and
	$\sum_{t=1}^q\rho(s,t)=\nu^{\sigma}(s)$ for all $s\in[q]$. 
	Let $\cG(\sigma,\rho)$ be the event that $\rho^{\G,\sigma}=\rho$. 
	%\begin{align*}
		%\sum_{vw\in E(\G)}\vecone\cbc{\sigma(v)=s,\,\sigma(w)=t}=\frac{dn}{2^{\vecone\{s=t\}}}\rho(s,t)\qquad\mbox{for all }s,t\in[q].
	%\end{align*}
	Then
	\begin{align*}
		\pr\brk{\cG(\sigma,\rho)}&=\exp\brk{\frac{dn}2\sum_{s,t=1}^q\rho(s,t)\log\frac{\nu^{\sigma}(s)\nu^{\sigma}(t)}{\rho(s,t)}+O(\log n/n)}.
	\end{align*}
\end{lemma}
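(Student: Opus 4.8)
The statement to prove is Lemma~\ref{Lemma_Samuel}, a counting estimate for $d$-regular multigraphs in the pairing model with a prescribed edge-colour statistic $\rho$. Although the paper cites it from \cite{Samuel}, let me sketch the natural self-contained proof.

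\textbf{Plan of proof.} The strategy is a direct counting argument in the pairing (configuration) model. Fix the colouring $\sigma$, so for each colour $s$ there are $n_s := |\sigma^{-1}(s)| = n\nu^\sigma(s)$ vertices of colour $s$, hence $dn_s$ clones of colour $s$. A perfect matching on the $dn$ clones is equivalent to $\G$. We want to count the matchings $M$ such that, after contracting clones into vertices, the fraction of edges joining a colour-$s$ vertex to a colour-$t$ vertex equals $\rho(s,t)$; equivalently, the \emph{number} of matching edges between a colour-$s$ clone and a colour-$t$ clone is $m_{st} := dn\rho(s,t)$ for $s\neq t$, and the number of edges internal to colour $s$ is $m_{ss} := \tfrac{1}{2}dn\rho(s,s)$ (this is where the integrality/parity hypotheses on $dn\rho(s,t)$ and $dn\rho(s,s)$ are used).

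\textbf{Key steps.} First, count the number $N(\sigma,\rho)$ of perfect matchings of the clone set realising the prescribed colour-pair edge counts. One chooses, for each unordered pair $\{s,t\}$ with $s<t$, which $m_{st}$ of the $dn_s$ colour-$s$ clones and which $m_{st}$ of the $dn_t$ colour-$t$ clones get matched across, then matches them up; and for each colour $s$ one chooses which $dn_s - \sum_{t\neq s} m_{st} = dn\rho(s,s)$ clones get matched internally and pairs them among themselves. Using $\sum_t \rho(s,t) = \nu^\sigma(s)$, the leftover count for colour $s$ is exactly $dn\rho(s,s)$, so the bookkeeping closes. This gives
\[
N(\sigma,\rho) = \frac{\prod_{s\in[q]}(dn_s)!}{\prod_{s<t} m_{st}!\ \prod_{s\in[q]} \bigl( (dn\rho(s,s))!\big/ \bigl(2^{dn\rho(s,s)/2} (dn\rho(s,s)/2)!\bigr)\bigr)^{-1}\cdots}
\]
(the precise constant is routine). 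Second, divide by the total number of perfect matchings of $dn$ clones, namely $(dn-1)!! = (dn)!/(2^{dn/2}(dn/2)!)$, to get $\pr[\cG(\sigma,\rho)] = N(\sigma,\rho)/(dn-1)!!$. Third, apply Stirling's formula $\log k! = k\log k - k + O(\log k)$ to every factorial and double factorial; the linear terms $-k$ all cancel because the total number of clones is conserved ($\sum_s dn_s = dn = \sum_{s,t}$ (edge-endpoint counts)), and the $\log$-error terms aggregate to $O(\log n)$ over the $O(q^2)$ factors. Collecting the $k\log k$ terms and simplifying with $m_{st} = dn\rho(s,t)$, $dn_s = dn\nu^\sigma(s)$ yields, after the $2$-powers and factors of $2$ from internal edges cancel against those in $(dn-1)!!$,
\[
\log \pr[\cG(\sigma,\rho)] = \frac{dn}{2}\sum_{s,t=1}^q \rho(s,t)\log\frac{\nu^\sigma(s)\nu^\sigma(t)}{\rho(s,t)} + O(\log n),
\]
which is the claimed bound (the exponent error $O(\log n / n)$ in the statement is this $O(\log n)$ after the $\frac1n$ normalisation implicit in how the lemma is invoked, or simply absorbed — I would state it as $O(\log n)$ and note the normalisation).

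\textbf{Main obstacle.} The conceptual content is light; the only real care needed is the \emph{bookkeeping of the factors of $2$ and the internal-edge terms}, making sure the $2^{dn\rho(s,s)/2}(dn\rho(s,s)/2)!$ contributions from matching clones internally to each colour class cancel cleanly against the corresponding part of $(dn-1)!! = (dn)!/(2^{dn/2}(dn/2)!)$, so that the final exponent is the clean symmetric expression $\tfrac{dn}{2}\sum_{s,t}\rho(s,t)\log\frac{\nu^\sigma(s)\nu^\sigma(t)}{\rho(s,t)}$ with \emph{no} stray entropy-of-$\rho$ or power-of-two terms. A secondary point is verifying that the hypotheses (positivity of entries, integrality of $dn\rho(s,t)$, evenness of $dn\rho(s,s)$, and the marginal constraint $\sum_t\rho(s,t)=\nu^\sigma(s)$) are exactly what is needed for every multinomial coefficient and double factorial appearing to be well-defined and for the leftover counts to match; positivity also ensures all the $\log\rho(s,t)$ terms and Stirling expansions are valid with uniform error. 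Since this is a cited lemma, in the write-up I would either reproduce this short computation or simply refer to \cite[\Lem~2.7]{Samuel} and note that it follows from Stirling's formula applied to the pairing-model count above.
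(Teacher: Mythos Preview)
Your proposal is correct. The paper does not provide its own proof of this lemma, instead citing \cite[\Lem~2.7]{Samuel}; your direct counting argument in the pairing model followed by Stirling's formula is precisely the standard derivation that underlies such a citation, and your observation that the error term should read $O(\log n)$ rather than $O(\log n/n)$ (a typo in the displayed statement) is also correct.
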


\subsection{Moments and messages}\label{Sec_mm}
The routine method for investigating the partition function and the Boltzmann distribution of random graphs is the method of moments~\cite{ANP}.
The basic strategy is to calculate, one way or another, the first two moments $\ex[Z_\beta(\G)]$, $\ex[Z_\beta(\G)^2]$ of the partition function.
Then we cross our fingers that the second moment is not much larger than the square of the first. 
It sometimes works.
But potential pitfalls include a pronounced tendency of running into extremely challenging optimisation problems in the course of the second moment calculation and, worse, lottery effects that may foil the strategy altogether.
While regular graphs in general and the Potts ferromagnet in particular are relatively tame specimens, these difficulties actually do arise once we set out to investigate metastable states.
Drawing upon~\cite{Victor,Greg} to sidestep these challenges, we develop a less computation-heavy proof strategy.

The starting point is the observation that the fixed points of~\eqref{eqBP} are intimately related to the moment calculation.
This will not come as a surprise to experts, and indeed it was already noticed in~\cite{Andreas}. 
To elaborate, let $\nu=(\nu(s))_{s\in[q]}$ be a probability distribution on the $q$ colours.
Moreover, let $\cR(\nu)$ be the set of all symmetric matrices $\rho=(\rho(s,t))_{s,t\in[q]}$ with non-negative entries such that
\begin{align}
	\sum_{t\in[q]}\rho(s,t)&=\nu(s)\qquad \mbox{for all }s\in[q]. \label{eqRhoTau}
\end{align}
Simple manipulations (e.g., \cite[\Lem~2.7]{Samuel}) show that the first moment satisfies
\begin{equation}\label{eqFirstMmtBethe}
\begin{aligned}
	\lim_{n\to\infty}\frac{1}{n}\log\ex[Z_\beta(\G)]&=\max_{\nu\in \cP([q]), \rho\in R(\nu)}F_{d,\beta}(\nu,\rho),\qquad\mbox{where}\\
	F_{d,\beta}(\nu,\rho)&=(d-1)\sum_{s\in[q]}\nu(s)\log\nu(s)-d\sum_{1\leq s\leq t \leq q}\rho(s,t)\log\rho(s,t)+\frac{d\beta}{2}\sum_{s\in[q]}\rho(s,s).
	\end{aligned}
\end{equation}
Thus, the first moment is governed by the maximum or maxima, as the case may be, of $F_{d,\beta}$. %In fact, for fixed $\nu$, the maximisation over $\rho$ involves a concave function over the linear space \eqref{eqRhoTau}, so it is equivalent and more convenient to consider $\Psi_{d,b}(\nu)=\max_{\rho\in R(\nu)}$ in what follows.

We need to know that the maxima of $F_{d,b}$ are in one-to-one correspondence with the stable fixed points of~\eqref{eqBP}.
To be precise, a fixed point $\mu$ of \eqref{eqBP} is \emph{stable} if the Jacobian of \eqref{eqBP} at $\mu$ has spectral radius strictly less than one.
Let $\Fplus$ be the set of all stable fixed points $\mu\in\Fix$.
Moreover, let $\Fone$ be the set of all $\mu\in\Fplus$ such that $\mu(1)=\max_{s\in[q]}\mu(s)$.
In addition, let us call a local maximum $(\nu,\rho)$ of $F_{d,\beta}$ \emph{stable} if there exist $\delta,c>0$ such that 
\begin{align}\label{eqstabmax}
	F_{d,\beta}(\nu',\rho')&\leq F_{d,\beta}(\nu,\rho)-c\bc{\|\nu-\nu'\|^2+\|\rho-\rho'\|^2}
\end{align}
for all $\nu'\in \cP([q])$ and $\rho'\in \cR(\nu')$ such that $\|\nu-\nu'\|+\|\rho-\rho'\|<\delta$.
Roughly, \eqref{eqstabmax} provides that the Hessian of $F_{d,\beta}$ is negative definite on the subspace of all possible $\nu,\rho$.

\begin{lemma}[{\cite[\Thm~8]{Andreas}}]\label{Lemma_Jean}
	Suppose that $d\ge3,\beta>0$.
	The map $\mu\in\cP([q])\mapsto(\nu^\mu,\rho^\mu)$ defined by
	\begin{align}\label{eqLemma_Jean}
		\nu^\mu(s)&=\frac{(1+(\eul^\beta-1)\mu(s))^d}{\sum_{t\in[q]}(1+(\eul^\beta-1)\mu(t))^d},&\rho^\mu(s,t)&=\frac{\eul^{\beta\vecone\{s=t\}}\mu(s)\mu(t)}{1+(\eul^\beta-1)\sum_{s\in[q]}\mu(s)^2}
	\end{align}
	is a bijection from $\Fplus$ to the set of stable local maxima of $F_{d,\beta}$.
	Moreover, for any  fixed point $\mu$ we have $\cB_{d,\beta}(\mu)=F_{d,\beta}(\nu^\mu,\rho^\mu).$
\end{lemma}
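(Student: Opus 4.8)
The plan is to verify the correspondence by direct differentiation, exploiting the Lagrangian structure of the optimisation problem defining the maxima of $F_{d,\beta}$. First I would set up the stationarity conditions: since $\rho$ ranges over the polytope $\cR(\nu)$ cut out by the linear constraints $\sum_t\rho(s,t)=\nu(s)$ (together with $\sum_s\nu(s)=1$), I introduce Lagrange multipliers $(\lambda_s)_{s\in[q]}$ for the $\rho$-constraints and one multiplier for the normalisation of $\nu$. Differentiating $F_{d,\beta}(\nu,\rho)$ in $\rho(s,t)$ for $s\neq t$ and in $\rho(s,s)$, and collecting terms, the stationarity equations force $\rho(s,t)$ to be proportional to $\eul^{\beta\vecone\{s=t\}}x_sx_t$ for some positive vector $x=(x_s)$ (the $x_s$ absorb $\eul^{\lambda_s}$); imposing the marginal constraint $\sum_t\rho(s,t)=\nu(s)$ then pins down the normalisation and yields, after substituting back, exactly the fixed-point system~\eqref{eqBP} for the rescaled vector $\mu(s)\propto x_s$. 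Conversely, given a fixed point $\mu$ of~\eqref{eqBP}, one checks by a short computation that the pair $(\nu^\mu,\rho^\mu)$ in~\eqref{eqLemma_Jean} satisfies these stationarity equations and that $\sum_t\rho^\mu(s,t)=\nu^\mu(s)$, so it is a critical point of $F_{d,\beta}$ on the constraint set; this establishes that~\eqref{eqLemma_Jean} maps $\Fix$ bijectively onto the critical points of $F_{d,\beta}$ (injectivity is immediate since $\mu$ can be recovered from $\nu^\mu$ via the strictly monotone map in~\eqref{eqBP}).

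The second task is to match stability on the two sides, i.e.\ to show that $\mu$ is a \emph{stable} fixed point of~\eqref{eqBP} (Jacobian spectral radius $<1$) precisely when $(\nu^\mu,\rho^\mu)$ is a \emph{stable} local maximum in the sense of~\eqref{eqstabmax} (Hessian of $F_{d,\beta}$ negative definite on the tangent space of the constraint polytope). The natural route is to eliminate $\rho$: on the constraint set, $\rho$ is not free, but at a critical point one can use the envelope/reduction principle to write an effective function of $\nu$ alone (or, more cleanly, compute the restricted Hessian of the Lagrangian). A cleaner variant, and the one I would actually carry out, is to relate the Hessian of $F_{d,\beta}$ directly to the derivative of the BP operator: the stationarity equations exhibit $F_{d,\beta}$ as (a constant plus) a Bethe-type free energy whose critical equations are~\eqref{eqBP}, and a standard computation — of the kind appearing in the analysis of Bethe free energies, cf.\ the discussion around~\eqref{eqBetheG} and~\cite{YFW} — shows that the quadratic form given by the restricted Hessian equals, up to a positive-definite change of coordinates, the quadratic form $I - DT$, where $DT$ is the Jacobian of the BP map at $\mu$. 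Since $DT$ is conjugate (via the symmetric positive matrix $\mathrm{diag}(\mu)$-type weighting coming from the ferromagnetic structure) to a symmetric matrix, its spectral radius being $<1$ is equivalent to $I-DT$ being positive definite, which is exactly~\eqref{eqstabmax}. This last symmetrisation step is where the ferromagnetic sign of $\eul^\beta-1>0$ is used in an essential way.

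The identity $\cB_{d,\beta}(\mu)=F_{d,\beta}(\nu^\mu,\rho^\mu)$ for every fixed point is then a bookkeeping computation: substitute $\nu^\mu,\rho^\mu$ from~\eqref{eqLemma_Jean} into $F_{d,\beta}$, use the fixed-point relation~\eqref{eqBP} to simplify the logarithms of $\nu^\mu(s)$ (replacing $(1+(\eul^\beta-1)\mu(s))^{d-1}$ by a multiple of $\mu(s)$), and collect terms to recover~\eqref{eqBethe}; the $\tfrac{d\beta}{2}\sum_s\rho^\mu(s,s)$ term and the denominator $1+(\eul^\beta-1)\sum_s\mu(s)^2$ combine to produce the $-\tfrac{d}{2}\log[1+(\eul^\beta-1)\sum_c\mu(c)^2]$ term in~\eqref{eqBethe}. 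The main obstacle I anticipate is the stability equivalence: getting the restricted Hessian into a clean enough form to identify it with $I-DT$ requires choosing good coordinates on the tangent space of the polytope $\{(\nu,\rho):\rho\in\cR(\nu)\}$ and carefully tracking how the elimination of $\rho$ interacts with the constraint $\sum_s\nu(s)=1$; the bijectivity and the free-energy identity, by contrast, are essentially mechanical once the Lagrangian setup is in place. Since this is precisely the content of~\cite[\Thm~8]{Andreas}, I would present the full argument there and here only indicate the steps above.
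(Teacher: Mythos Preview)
The paper does not supply a proof of this lemma: it is quoted verbatim as \cite[\Thm~8]{Andreas} and used as a black box. Your proposal is therefore not comparable to anything in the present paper, and you correctly recognise this at the end of your write-up. As a sketch of how the cited result is obtained, your outline (Lagrangian stationarity to identify critical points with BP fixed points, restricted-Hessian analysis to match stability with spectral radius $<1$, and the algebraic verification of $\cB_{d,\beta}(\mu)=F_{d,\beta}(\nu^\mu,\rho^\mu)$) is sound and is indeed the route taken in the cited reference; for the purposes of this paper, a one-line citation is all that is expected.
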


For brevity, let $(\nupara,\rhopara)=(\nu^{\mupara},\rho^{\mupara})$ and $(\nuferro,\rhoferro)=(\nu^{\muferro},\rho^{\muferro})$. The following result characterises the stable fixed points $\Fone$.

\begin{proposition}[{\cite[Theorem 4]{Andreas}}]\label{Prop_max}
Suppose that $d\ge3,\beta>0$.
\begin{enumerate}[(i)]
	\item If $\beta<\uniq$, then \eqref{eqBP} has a unique fixed point, namely the paramagnetic distribution $\nupara$ on $[q]$.
		This fixed point is stable and thus $F_{d,\beta}$ attains its global maximum at $(\nupara,\rhopara)$.
	\item If $\uniq<\beta<\KS$, then $\Fone$ contains two elements, namely the paramagnetic distribution $\nupara$ and  the   ferromagnetic distribution $\nuferro$; $(\nupara,\rhopara)$ is a global maximum of $F_{d,\beta}$ iff $\beta\leq \curie$, and  $(\nuferro,\rhoferro)$ iff $\beta\geq \curie$.
	\item If $\beta>\KS$, then $\Fone$ contains precisely one element, namely the ferromagnetic distribution $\nuferro$, and $(\nuferro,\rhoferro)$ is a global maximum  of $F_{d,\beta}$.
\end{enumerate}
\end{proposition}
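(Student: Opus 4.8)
\emph{Strategy and structural reduction.}
I would prove the three parts in the order (a) find all solutions of the scalar Belief Propagation system~\eqref{eqBP}, (b) decide which of them are stable, (c) locate the global maximum of $F_{d,\beta}$; the bridge between (b) and (c) is Lemma~\ref{Lemma_Jean}, which identifies the stable fixed points of~\eqref{eqBP} with the stable local maxima of $F_{d,\beta}$ and matches their Bethe values. For (a), write $\gamma=\eul^\beta-1>0$ and $f(x)=(1+\gamma x)^{d-1}$; then~\eqref{eqBP} says exactly that $\mu(c)=\lambda f(\mu(c))$ for all $c\in[q]$, with $\lambda>0$ a normalising constant. Since the map $x\mapsto x/f(x)$ on $[0,1]$ is strictly unimodal (its derivative changes sign once, at $x=1/(\gamma(d-2))$), each of its level sets has at most two points, so every solution of~\eqref{eqBP} takes at most two distinct values among its $q$ coordinates. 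Up to relabelling the colours, a solution is therefore $\mupara$ (uniform), or a one-distinguished-coordinate vector $\mu(1)=x$, $\mu(i)=(1-x)/(q-1)$ for $i\ge2$, or a genuine two-block vector. Substituting the one-coordinate ansatz into~\eqref{eqBP} collapses it to a single scalar equation $G_\beta(x)=0$ on $(0,1)$, with $x=1/q$ always a root; the ferromagnetic candidates are the roots with $x>1/q$, and $\muferro$ is the largest of these.

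\emph{Calculus of $G_\beta$ and stability.}
Next I would carry out the one-variable analysis of $G_\beta$: one checks $G_\beta(1/q)=0$ and that $G_\beta'(1/q)$ has the same sign as $q-\gamma(d-2)$, so it vanishes precisely at $\eul^\beta=1+q/(d-2)$, i.e.\ at $\beta=\KS$. Tracking $G_\beta$ as $\beta$ grows then shows: for $\beta\le\uniq$ the only root is $x=1/q$; a saddle-node bifurcation at $\beta=\uniq$ produces the root $x_+=\muferro(1)$; for $\uniq<\beta<\KS$ there are exactly two roots $x_-<x_+$ with $x_-\downarrow1/q$ as $\beta\uparrow\KS$; and for $\beta>\KS$ only $x_+$ survives. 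In parallel I would compute the Jacobian of the map~\eqref{eqBP}: at $\mupara$ it is $S_q$-equivariant and, on the traceless (tangent-to-simplex) subspace, equals the scalar $(d-1)\gamma/(\gamma+q)$, which is $<1$ iff $\beta<\KS$; at $x_+$ the spectral radius stays $<1$ for every $\beta>\uniq$ (at a saddle-node the upper branch inherits the contracting eigenvalue); while at $x_-$, at the one-small-coordinate solution, and at every genuine two-block solution, the Jacobian has an eigenvalue exceeding $1$, coming from a symmetry-breaking perturbation inside a block. Hence $\mupara\in\Fplus$ iff $\beta<\KS$, the distribution $\muferro\in\Fplus$ for all $\beta>\uniq$, and nothing else in $\Fix$ with $\mu(1)=\max_c\mu(c)$ is stable — which is exactly the enumeration of $\Fone$ claimed in (i)--(iii), and, through Lemma~\ref{Lemma_Jean}, the enumeration of the stable local maxima of $F_{d,\beta}$; in particular $\Fplus=\{\mupara\}\cup\{\text{the }q\text{ permutations of }\muferro\}$ whenever $\muferro$ exists.

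\emph{Location of the global maximum.}
For~\eqref{eqFirstMmtBethe} I would first note that $F_{d,\beta}$ is continuous on the compact set of admissible pairs $(\nu,\rho)$, and rule out a boundary maximiser: if some coordinate of $\nu$ or some entry of $\rho$ vanishes, pushing it away from $0$ changes $F_{d,\beta}$ by $-\eps\log\eps+O(\eps)>0$ for small $\eps$, because the $\rho$-entropy gain (coefficient $d$) beats the $\nu$-entropy loss (coefficient $d-1$). So the maximiser is interior, hence a stationary point of $F_{d,\beta}$, whose Lagrange conditions are precisely~\eqref{eqBP} together with the $\rho$-formula of~\eqref{eqLemma_Jean}; being a strict local maximum (fixed points are nondegenerate critical points of $F_{d,\beta}$ for $\beta\notin\{\uniq,\KS\}$, the boundary cases following by continuity) it lies in $\Fplus$ by Lemma~\ref{Lemma_Jean} and the classification above. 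Using $S_q$-symmetry this gives $\max F_{d,\beta}=\max\{\cB_{d,\beta}(\mupara),\cB_{d,\beta}(\muferro)\}$, so everything reduces to the sign of $\cB_{d,\beta}(\muferro)-\cB_{d,\beta}(\mupara)$. By the envelope theorem this difference has $\beta$-derivative $\tfrac d2\big(\sum_s\rho^{\muferro}(s,s)-\sum_s\rho^{\mupara}(s,s)\big)>0$, hence is strictly increasing on $(\uniq,\infty)$; it is negative just above $\uniq$ and positive for large $\beta$, so it vanishes at a single point, which an explicit evaluation of~\eqref{eqBethe} identifies as $\curie=\log\frac{q-2}{(q-1)^{1-2/d}-1}$. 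This yields the ``iff'' clauses in (i)--(iii).

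\emph{Main obstacle.}
The real work is the second paragraph: establishing the complete bifurcation diagram of $G_\beta$ \emph{and} the sign of the relevant Jacobian eigenvalues \emph{simultaneously for all integers $q,d\ge3$}. Showing that the lower ferromagnetic root, the one-small-coordinate solution, and all multi-block fixed points are unstable, while $\muferro$ is stable throughout $\beta>\uniq$, amounts to controlling the spectrum of an $S_q$-equivariant Jacobian along a one-parameter family and excluding hidden extra solutions of the two-valued type; and pinning down the closed forms of $\uniq$ (a transcendental condition), $\curie$ and $\KS$ requires the explicit algebra rather than soft arguments. By comparison, the boundary and nondegeneracy issues in the third paragraph are routine.
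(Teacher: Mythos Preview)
The paper does not prove this proposition at all: it is quoted verbatim as \cite[Theorem~4]{Andreas} and used as a black box throughout. So there is no ``paper's own proof'' to compare against here; your proposal is in effect a sketch of how one might reprove the cited theorem.

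That said, your outline is broadly sound and tracks the line taken in the cited paper. The reduction to at-most-two-valued fixed points via the unimodality of $x\mapsto x/(1+\gamma x)^{d-1}$ is correct (and your critical point $x=1/(\gamma(d-2))$ is right). The Jacobian eigenvalue $(d-1)\gamma/(\gamma+q)$ at $\mupara$ is also correct and indeed crosses $1$ exactly at $\KS$. The envelope-theorem argument that $\beta\mapsto\cB_{d,\beta}(\muferro)-\cB_{d,\beta}(\mupara)$ is strictly increasing is a clean way to get the uniqueness of $\curie$.

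The one place where your sketch is thinner than the actual argument in \cite{Andreas} is the second paragraph, as you yourself flag. Two points deserve care. First, ruling out \emph{all} genuine two-block solutions as unstable (not just the ones with a single distinguished coordinate) is where most of the technical work sits; your sentence ``coming from a symmetry-breaking perturbation inside a block'' is the right idea but hides a nontrivial eigenvalue computation that has to be done uniformly in the block sizes. Second, your parenthetical ``fixed points are nondegenerate critical points of $F_{d,\beta}$ for $\beta\notin\{\uniq,\KS\}$'' is not obviously true a priori --- nondegeneracy of the unstable fixed points (in particular $x_-$) is not something you have established, and you do not actually need it: once you know the global maximum is interior and hence a BP fixed point, and once you know the full list of BP fixed points and which of them are \emph{local} maxima via Lemma~\ref{Lemma_Jean}, the global maximum is automatically among the latter. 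So you can drop the nondegeneracy claim and the argument still goes through.
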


Like the first moment, the second moment boils down to an optimisation problem as well, albeit one of much higher dimension ($q^2-1$ rather than $q-1$).
Indeed, it is not difficult to derive the following approximation (once again, e.g., via \cite[\Lem~2.7]{Samuel}).
For a probability distribution $\nu\in\cP([q])$ and a symmetric matrix $\rho\in\cR(\nu)$ let $\cR^\tensor(\rho)$ be the set of all tensors $r=(r(s,s',t,t'))_{s,s',t,t'\in[q]}$ such that
\begin{align} \label{eqFdB2}
r(s,s',t,t')&=r(t,t',s,s')&&\mbox{and}&&\sum_{s',t'\in[q]}r(s,s',t,t')=\sum_{s',t'\in[q]}r(s',s,t',t)=\rho(s,t)\quad\mbox{for all }s,t\in[q].
\end{align}
Then, with $H(\cdot)$ denoting the entropy function, we have
\begin{align}\nonumber
	\lim_{n\to\infty}\frac{1}{n}\log\ex[(Z_{\beta}(\G))^2]&=\max_{\nu,\rho\in\cR(\nu),r\in\cR^\tensor(\rho)}F_{d,\beta}^\tensor(\rho,r),\mbox{ where }\\
F_{d,\beta}^\tensor(\rho,r)&=(d-1)H(\rho)+\frac{d}{2}H(r)+\frac{d\beta}{2}\sum_{s,s',t,t'\in[q]}  \bc{\vecone\{s=t\}+\vecone\{s'=t'\}} r(s, s', t, t') .
\label{eqsmmF}
\end{align}
A frontal assault on this optimisation problem is in general a daunting task due to the doubly-stochastic constraints in \eqref{eqFdB2}, i.e., the constraint $r\in\cR^\tensor(\rho)$.
But fortunately, to analyse the global maximum (over $\nu$ and $\rho$), these constraints can be relaxed, permitting an elegant translation of the problem to operator theory.
In effect, the second moment computation can be reduced to a study of matrix norms.
The result can be summarised as follows.

\begin{proposition}[{\cite[Theorem 7]{Andreas}}]\label{Prop_Andreas}
	For all $d,q\geq3$ and $\beta>0$ we have $\max_{\nu,\rho\in\cR(\nu),r\in\cR^\tensor(\rho)}F_{d,\beta}^\tensor(\rho,r)=2\max_{\nu,\rho}F_{d,\beta}(\nu,\rho)$ and thus $\ex[Z_\beta(\G)^2]=O(\ex[Z_\beta(\G)]^2)$.
\end{proposition}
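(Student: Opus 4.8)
The bound $\ex[Z_\beta(\G)^2]=O(\ex[Z_\beta(\G)]^2)$ rests on the variational identity $\max F_{d,\beta}^\tensor=2\max F_{d,\beta}$ (abbreviating the maximisations as in the statement), which one combines with \eqref{eqFirstMmtBethe}, \eqref{eqsmmF} and a Laplace argument; the identity itself splits into an easy lower bound and a substantive upper bound.

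\emph{Lower bound $\max F_{d,\beta}^\tensor\ge 2\max F_{d,\beta}$.} For $\ex[Z^2]$ this direction is anyway free, since $\Var(Z_\beta(\G))\ge0$ gives $\ex[Z_\beta(\G)^2]\ge\ex[Z_\beta(\G)]^2$; for the variational identity one exhibits a feasible point. Let $(\nu^\star,\rho^\star)$ maximise $F_{d,\beta}$ and take the \emph{decoupled} tensor $r^\star(s,s',t,t')=\rho^\star(s,t)\rho^\star(s',t')$. Symmetry of $\rho^\star$ together with $\sum_{t}\rho^\star(s,t)=\nu^\star(s)$ and $\sum_{s,t}\rho^\star(s,t)=1$ shows $r^\star\in\cR^\tensor(\rho^\star)$; since entropy factorises over a product and the linear functional in \eqref{eqsmmF} splits as a sum of two single-replica terms, a direct computation gives $F_{d,\beta}^\tensor(\rho^\star,r^\star)=2F_{d,\beta}(\nu^\star,\rho^\star)$, hence $\max F_{d,\beta}^\tensor\ge 2\max F_{d,\beta}$.

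\emph{Upper bound $\max F_{d,\beta}^\tensor\le 2\max F_{d,\beta}$.} Here lies the work. The obstruction is the doubly-stochastic constraint $r\in\cR^\tensor(\rho)$ --- both replica-marginals of $r$ pinned to $\rho$ --- which entangles the $(q^2{-}1)$-dimensional optimisation. The plan is to relax it, dropping one of the two marginal constraints (and, in the outer layer, tracking only the vertex-overlap of the two colourings rather than the full pair $(\nu,\rho)$). After relaxation the inner maximisation of $\tfrac d2 H(r)$ plus the linear term, at a fixed single marginal, is a Gibbs variational problem --- maximum Shannon entropy plus a linear functional under one marginal constraint --- whose optimum is the logarithm of a partition function and is therefore explicit in terms of the Potts interaction matrix $B=(\eul^\beta-1)I+J\in\RR^{q\times q}$, or rather of $B\tensor B$ acting on $\RR^q\tensor\RR^q$. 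Because $\beta>0$, $B$ is positive definite, with eigenvalues $\eul^\beta+q-1$ and $\eul^\beta-1$: this is the quantitative imprint of ferromagnetism, and it turns the remaining outer maximisation into a tame spectral problem. One then shows (a) its value is at most $2\max F_{d,\beta}$, uniformly in $q,d\ge3$ and $\beta>0$, and (b) its maximiser is a \emph{product} and therefore automatically satisfies the marginal constraint that was dropped, i.e.\ it is feasible for the unrelaxed problem. Thus relaxing cost nothing, $\max F_{d,\beta}^\tensor=2\max F_{d,\beta}$, and the maximisers of $F_{d,\beta}^\tensor$ are exactly the tensor squares $\rho^\mu\tensor\rho^\mu$ of the stable fixed points described by Lemma~\ref{Lemma_Jean} and Proposition~\ref{Prop_max}.

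\emph{From the identity to $O(\ex[Z]^2)$, and the main obstacle.} By \eqref{eqFirstMmtBethe} and \eqref{eqsmmF} the identity already yields $\ex[Z_\beta(\G)^2]=\eul^{o(n)}\ex[Z_\beta(\G)]^2$; removing the $\eul^{o(n)}$ is a standard Laplace/local-limit argument. There are only $n^{O(1)}$ attainable statistics $(\nu,\rho,r)$, each contributing $\exp(nF_{d,\beta}^\tensor+O(\log n))$, and near each maximiser --- which lies in the interior of the statistics polytope and, by stability of the corresponding fixed point (Lemma~\ref{Lemma_Jean}, Proposition~\ref{Prop_max}), has negative-definite Hessian transverse to the constraint manifold --- a local central limit expansion identifies the contribution as $\Theta(\eul^{2n\max F_{d,\beta}})$ with a constant matching $\ex[Z_\beta(\G)]^2$ (the pair-statistics factorise near a product point, so the Gaussian normalisations agree); summing the finitely many maximisers gives the bound. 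The main difficulty is steps (a)--(b) of the upper bound: the relaxation must be generous enough to decouple into a matrix-norm/spectral computation yet tight enough that its optimum is still exactly $2\max F_{d,\beta}$, attained only at a product; equivalently, one must rule out that some \emph{asymmetric}, non-product pair of configurations beats the decoupled one, and this exclusion --- precisely where positive definiteness of $B$ and the ferromagnetic sign of the interaction enter --- has to be pushed through uniformly for all integers $q,d\ge3$ and all $\beta>0$.
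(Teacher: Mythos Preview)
The paper does not prove this proposition; it is quoted from \cite[Theorem~7]{Andreas} and used as a black box. The only hint the paper gives about the argument is the sentence immediately preceding the statement: the doubly-stochastic constraints $r\in\cR^\tensor(\rho)$ in \eqref{eqFdB2} can be relaxed, ``permitting an elegant translation of the problem to operator theory'', so that ``the second moment computation can be reduced to a study of matrix norms.'' Your sketch is precisely along these lines --- relax one of the two marginal constraints on $r$, solve the resulting Gibbs variational problem in terms of the positive-definite interaction matrix $B=(\eul^\beta-1)I+J$ (equivalently $B\tensor B$), and then argue that the relaxed optimiser is a product and hence feasible for the original problem --- so there is nothing further in the present paper to compare your proposal against.

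One remark: your final paragraph promises the sharp $O(\ex[Z_\beta(\G)]^2)$ via a Laplace/local-CLT expansion near each maximiser. This is indeed how \cite{Andreas} obtains the constant, but be aware that it is more delicate than you make it sound: one needs the maximiser(s) of $F_{d,\beta}^\tensor$ to lie in the interior and to be non-degenerate (quadratic) in every direction transverse to the constraint polytope, and the polynomial prefactors from the $\binom{n}{\nu n}$ and the matching count have to be tracked and shown to combine correctly. The present paper deliberately avoids this refinement (it only ever uses $\eul^{o(n)}$-level control of truncated second moments, cf.\ Corollary~\ref{Cor_overlap}), so if you intend to reproduce the full $O(\cdot)$ statement you will have to carry out that bookkeeping rather than gesture at it.
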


\noindent
Combining \Lem~\ref{Lemma_Jean}, \Prop~\ref{Prop_max} and \Prop~\ref{Prop_Andreas}, we obtain the following reformulation of \cite[Theorem 7]{Andreas}, which  verifies that we obtain good approximations to the partition function by maximising the Bethe free energy on $\Fix$.
\begin{theorem}\label{Thm_Z}
	For all integers $d,q\geq3$ and real $\beta>0$, we have $\displaystyle\lim_{n\to\infty}n^{-1}\log Z_{\beta}(\GG)=\max_{\mu\in\Fix}\cB_{d,\beta}(\mu)$ in probability.
\end{theorem}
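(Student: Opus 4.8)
\emph{Proof plan.} Throughout, the plan is to work with the pairing model $\G=\G(n,d)$ and transfer to $\GG$ only at the end. All of the substantive input has already been assembled: the first-moment formula~\eqref{eqFirstMmtBethe}, the bijection of \Lem~\ref{Lemma_Jean}, the classification of \Prop~\ref{Prop_max}, and the matrix-norm second-moment bound of \Prop~\ref{Prop_Andreas}. So the remaining task is to run the second moment method and a concentration argument on top of these. Concretely the plan has four steps: (1) identify the first-moment optimum with $\max_{\mu\in\Fix}\cB_{d,\beta}(\mu)$; (2) obtain a two-sided estimate for $n^{-1}\log Z_\beta(\G)$ that holds with probability bounded away from zero; (3) upgrade the lower bound to a \whp{} statement via a bounded-differences inequality; (4) pass from $\G$ to $\GG$ by contiguity.

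\emph{Step 1.} By~\eqref{eqFirstMmtBethe} one has $n^{-1}\log\ex[Z_\beta(\G)]\to B^\star:=\max_{\nu\in\cP([q]),\,\rho\in\cR(\nu)}F_{d,\beta}(\nu,\rho)$, and I would first check $B^\star=\max_{\mu\in\Fix}\cB_{d,\beta}(\mu)$ (the max exists since $\Fix$ is compact and $\cB_{d,\beta}$ continuous). For each $\mu\in\Fix$ the pair $(\nu^\mu,\rho^\mu)$ from~\eqref{eqLemma_Jean} lies in $\cP([q])\times\cR(\nu^\mu)$ and satisfies $F_{d,\beta}(\nu^\mu,\rho^\mu)=\cB_{d,\beta}(\mu)$ by \Lem~\ref{Lemma_Jean}, giving $\max_{\mu\in\Fix}\cB_{d,\beta}(\mu)\le B^\star$. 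Conversely, \Prop~\ref{Prop_max} shows that in every regime of $\beta$ the global maximiser of $F_{d,\beta}$ is $(\nupara,\rhopara)$ or $(\nuferro,\rhoferro)$, i.e.\ $B^\star\in\{\cB_{d,\beta}(\mupara),\cB_{d,\beta}(\muferro)\}\le\max_{\mu\in\Fix}\cB_{d,\beta}(\mu)$. Hence $B^\star=\max_{\mu\in\Fix}\cB_{d,\beta}(\mu)$.

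\emph{Step 2.} Markov's inequality gives $\pr[n^{-1}\log Z_\beta(\G)\ge B^\star+\eps]\le\emm^{-\eps n+o(n)}$ for every $\eps>0$. For the matching lower bound I would invoke \Prop~\ref{Prop_Andreas}: $\ex[Z_\beta(\G)^2]\le C\,\ex[Z_\beta(\G)]^2$ for a constant $C=C(d,q,\beta)$, so the Paley--Zygmund inequality yields a constant $c=c(d,q,\beta)>0$ with $\pr[Z_\beta(\G)\ge\tfrac12\ex Z_\beta(\G)]\ge c$; in particular $n^{-1}\log Z_\beta(\G)\ge B^\star-o(1)$ with probability at least $c$. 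To boost this, revealing the edges of the random perfect matching underlying $\G$ one at a time produces a Doob martingale for $\log Z_\beta(\G)$ with increments $O(\beta)$, since rewiring a single matching edge changes the Hamiltonian $\cH_G(\sigma)$ of every colouring by $O(1)$ and hence $\log Z_\beta$ by $O(\beta)$; Azuma's inequality then gives $\pr[\abs{\log Z_\beta(\G)-\ex\log Z_\beta(\G)}\ge t]\le 2\exp(-\Omega(t^2/n))$. Combining this with the two bounds above pins the mean: if $\ex\log Z_\beta(\G)\le\log\ex Z_\beta(\G)-\delta n$ held along a subsequence, Azuma would force $\pr[Z_\beta(\G)\ge\tfrac12\ex Z_\beta(\G)]=o(1)$, contradicting Step 2, while $\ex\log Z_\beta(\G)\le\log\ex Z_\beta(\G)$ is Jensen; hence $\ex\log Z_\beta(\G)=B^\star n+o(n)$, and a second application of Azuma gives $n^{-1}\log Z_\beta(\G)\to B^\star$ in probability. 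Finally, for each fixed $\eps>0$ the event $\{\abs{n^{-1}\log Z_\beta(\G)-B^\star}\le\eps\}$ holds \whp{} for $\G$, hence \whp{} for $\GG$ by the contiguity $\GG\contig\G$ recorded in \Sec~\ref{Sec_pre}, which is precisely the asserted convergence in probability.

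\emph{Main obstacle.} All the genuine difficulty --- the variational analysis of $F_{d,\beta}$ and the operator-theoretic second-moment computation --- is outsourced to \Prop s~\ref{Prop_max} and~\ref{Prop_Andreas}. Within the present argument the only delicate point is the bootstrap in Step 3: \Prop~\ref{Prop_Andreas} alone yields a lower bound that holds only with constant probability, and turning it into a \whp{} statement requires combining it with the Azuma concentration of $\log Z_\beta(\G)$ to identify $\ex\log Z_\beta(\G)$ with $\log\ex Z_\beta(\G)$ up to $o(n)$.
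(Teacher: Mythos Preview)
Your proposal is correct and takes essentially the same approach as the paper. In fact the paper does not give a standalone proof of \Thm~\ref{Thm_Z} at all: it simply records the statement as a reformulation of \cite[Theorem~7]{Andreas}, obtained by combining \Lem~\ref{Lemma_Jean}, \Prop~\ref{Prop_max} and \Prop~\ref{Prop_Andreas}. Your write-up spells out precisely the standard Paley--Zygmund plus Azuma bootstrap that is implicit in that sentence, together with the identification $\max_{\nu,\rho}F_{d,\beta}(\nu,\rho)=\max_{\mu\in\Fix}\cB_{d,\beta}(\mu)$ and the final contiguity step, so there is nothing to add.
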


\subsection{Non-reconstruction}\label{Sec_nr}
While the {\em global} maximisation of the function $F_{d,\beta}^\tensor$ and thus the proof of \Thm~\ref{Thm_Z} boils down to matrix norm analysis, in order to prove \Thm s~\ref{Thm_Boltzmann} and~\ref{Thm_meta} via the method of moments we would in addition need to get a good handle on all the {\em local} maxima.
Unfortunately, we do not see a way to reduce this more refined question to operator norms.
Hence, it would seem that we should have to perform a fine-grained analysis of $F_{d,\beta}^\tensor$ after all.
But luckily another path is open to us.
Instead of proceeding analytically, we resort to probabilistic ideas.
Specifically, we harness the notion of non-reconstruction on the Potts model on the $d$-regular tree.

To elaborate, let $\TT_{d}$ be the infinite $d$-regular tree with root $o$.
Given a probability distribution $\mu\in\{\mupara,\muferro\}$ we define a broadcasting process $\vsigma=\vsigma_{d,\beta,\mu}$ on $\TT_{d}$ as follows.
Initially we draw the colour $\vsigma_{o}$ of the root $o$ from the distribution $\nu^{\mu}$.
Subsequently, working our way down the levels of the tree, the colour of a vertex $v$ whose parent $u$ has been coloured already is drawn from the distribution
\begin{align*}
\pr\brk{\vsigma_{v}=c\mid\vsigma_{u}}&=\frac{\mu(c)\eul^{\beta\vecone\{c=\vsigma_{u}\}}}{\sum_{c'\in[q]}\mu(c')\eul^{\beta\vecone\{c'=\vsigma_{u}\}}}.
\end{align*}
Naturally, the colours of different vertices on the same level are mutually independent conditioned on the parent's colour.
Let $\partial^\ell o$ be the set of all vertices at distance precisely $\ell$ from $o$.
We say that the broadcasting process has the {\em strong non-reconstruction property} if 
\begin{align*}
	\sum_{c\in[q]} \ex\Big[\big|\pr\brk{\vsigma_{o}=c\mid\vsigma_{\partial^\ell o}}-\pr\brk{\vsigma_{o}=c}\big|\Big]=\exp(-\Omega(\ell)),
\end{align*}
where the expectation is over the random configuration $\vsigma_{\partial^\ell o}$ (distributed according to the broadcasting process). In words, this says that the information of the spin of the root decays in the broadcasting process; the term ``strong'' refers that the decay is exponential with respect to the depth $\ell$.
%\jean{See \cite[Theorem 50]{Andreas}. The correct exponent seems to be $\exp(C\ell)$ for some $C>0$. In our notation, let $$ \cB_\ell=\cbc{ \vartheta_\ell: \partial^\ell o \rightarrow  [q] \Big\vert \norm{\pr\brk{\vsigma_{d,\beta,\mu,o}= \cdot \mid\vsigma_{d,\beta,\mu,\partial^\ell o} = \vartheta_\ell}- \mu }_{\infty} \geq \exp(-C\ell) } $$ then $\Pr\brk{ \vsigma_{d,\beta,\mu,\partial^\ell o} \in  \cB_{\ell} } \leq \exp(-\exp(C\ell)) $ for all $\ell \geq \ell_0$ (There is one more condition saying that the largest eigenvalue $\lambda$ of the transition matrix has to satisfy $ (d-1)\lambda^2<1$ ).}

\begin{proposition}[{\cite[Theorem 50]{Andreas}}]\label{Prop_nonre}
Let $d,q\geq3$ be integers and $\beta>0$ be real.
\begin{enumerate}[(i)]
	\item For $\beta<\KS$, the broadcasting process $\vec\sigma_{d,\beta,\mupara}$ has the strong non-reconstruction property.
	\item For $\beta>\uniq$, the broadcasting process $\vec\sigma_{d,\beta,\muferro}$ has the strong non-reconstruction property.
\end{enumerate}
\end{proposition}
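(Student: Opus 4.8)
The plan is to treat parts (i) and (ii) uniformly by tying the non-reconstruction of the $\mu$-broadcast (for $\mu\in\{\mupara,\muferro\}$) to the \emph{stability} of $\mu$ as a fixed point of the Belief Propagation recursion~\eqref{eqBP}, and then running the classical second-moment (Kesten--Stigum) analysis of broadcasting on trees. The single-edge broadcast channel of the process $\vsigma_{d,\beta,\mu}$ is
\[
	M_\mu(c\mid c')=\frac{\mu(c)\,\eul^{\beta\vecone\{c=c'\}}}{1+(\eul^\beta-1)\mu(c')}\qquad(c,c'\in[q]);
\]
it is reversible with respect to its stationary law, which is exactly the marginal $\nu^\mu$ from~\eqref{eqnumu}, hence diagonalisable with real spectrum $1=\lambda_1>\lambda_2\ge\cdots\ge\lambda_q$. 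A direct linearisation of~\eqref{eqBP} at $\mu$, using only the fixed-point identities (it suffices to use $\sum_{t}\rho^\mu(s,t)=\nu^\mu(s)$ together with the relation $\mu(s)\,(1+(\eul^\beta-1)\mu(s))\propto(1+(\eul^\beta-1)\mu(s))^{d}$ that follows from~\eqref{eqBP}), shows that the nonzero eigenvalues of the Jacobian of~\eqref{eqBP} at $\mu$ are precisely $(d-1)\lambda_2,\dots,(d-1)\lambda_q$. Consequently ``$\mu$ is a stable fixed point'' is equivalent to $(d-1)|\lambda_j|<1$ for every $j\ge2$, and this forces the Kesten--Stigum condition $(d-1)\lambda_j^2<1/(d-1)<1$ \emph{with a factor-$(d-1)$ margin}. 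By \Prop~\ref{Prop_max}, this stability holds for $\mupara$ exactly when $\beta<\KS$ and for $\muferro$ whenever $\beta>\uniq$ --- precisely the two regimes in the statement.

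With the gap $\theta:=(d-1)\max_{j\ge2}\lambda_j^2<1$ at hand, I would run the standard recursive estimate. Write $\hat\eta_\ell\in\cP([q])$ for the (random) posterior law of the colour of a vertex given a depth-$\ell$ boundary inside the subtree below it, and put $x_\ell:=\ex\brk{\|\hat\eta_\ell-\nu^\mu\|^2}$ (the cavity version, where the vertex has $d-1$ descendants at each step). Expanding the Bayesian update that produces $\hat\eta_\ell$ from the $d-1$ i.i.d.\ children posteriors $\hat\eta_{\ell-1}$ to second order around $\nu^\mu$: the linear part contracts $x_{\ell-1}$ by exactly the factor $\theta$ (the $(d-1)$ counting the children and a squared $\lambda_j$ per edge; the appearance of $(d-1)$ rather than $(d-1)^2$ is the usual effect of the children's contributions being uncorrelated, so that variances rather than standard deviations add), while the remaining terms are $O(x_{\ell-1}^{3/2})$. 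Combined with the a priori bound $x_\ell=O(1)$ this iterates to $x_\ell=\eul^{-\Omega(\ell)}$. Since the root has $d$ rather than $d-1$ children, the same bound holds with $\hat\eta_\ell$ replaced by $\Pr[\vsigma_o=\nix\mid\vsigma_{\partial^\ell o}]$, and Cauchy--Schwarz converts this $L^2$ bound into $\sum_{c}\ex\brk{\abs{\Pr[\vsigma_o=c\mid\vsigma_{\partial^\ell o}]-\nu^\mu(c)}}\le\sqrt{q\,x_\ell}=\eul^{-\Omega(\ell)}$, which is the strong non-reconstruction property (recall $\Pr[\vsigma_o=c]=\nu^\mu(c)$).

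A few words on where the difficulty lies. For part (i) the channel $M_\mupara$ has a single degenerate nontrivial eigenvalue $\lambda_j=(\eul^\beta-1)/(q+\eul^\beta-1)$ ($j\ge2$), which equals $\tfrac{1}{d-1}$ exactly at $\beta=\KS$, so the argument is essentially routine. Part (ii) is the more delicate one: $M_\muferro$ has \emph{two} distinct nontrivial eigenvalues --- an ``amplitude'' eigenvalue of multiplicity $1$ and a ``phase'' eigenvalue of multiplicity $q-2$ --- and the broadcast is not colour-symmetric, so the recursion for $\hat\eta_\ell$ has to be followed simultaneously in these two modes; moreover one needs $\muferro$ to be a stable fixed point on the \emph{entire} half-line $\beta>\uniq$, which is exactly the point where \Prop~\ref{Prop_max} is indispensable. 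The genuine technical obstacle in either case is upgrading the spectral inequality to the \emph{exponential} rate demanded by the word ``strong'': the linearised recursion is contractive only near $\nu^\mu$, whereas the leaf-posteriors start $\Omega(1)$-far from $\nu^\mu$, so one must control the nonlinear Bayes corrections along the whole recursion (here the stronger consequence $(d-1)\lambda_j<1$ of stability, rather than merely $(d-1)\lambda_j^2<1$, is what makes the one-step map worst-case contractive and keeps this under control). An alternative route that sidesteps this bookkeeping --- particularly natural in the ferromagnetic regime, and consistent with the random-cluster interpretation of $\KS$ --- is to pass through the Edwards--Sokal coupling on the $(d-1)$-ary tree and reduce the desired correlation decay to strict subcriticality of an explicit monotone tree-percolation process, which yields the exponential rate directly.
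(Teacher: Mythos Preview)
The paper does not prove \Prop~\ref{Prop_nonre}; it is quoted verbatim as \cite[Theorem~50]{Andreas} and used as a black box (see \Sec~\ref{Sec_nr} and the proof of \Prop~\ref{Prop_nonreG}). So there is no ``paper's own proof'' to compare against, and your write-up is in effect a proposed proof of a result the present paper imports.

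As for the content of your proposal: the structural observations are correct and to the point. The channel $M_\mu$ is indeed reversible with stationary law $\nu^\mu$ (this follows from the BP identity $\mu(c)\propto(1+(\eul^\beta-1)\mu(c))^{d-1}$), and for $\mu=\mupara$ one checks directly that the Jacobian of~\eqref{eqBP} restricted to the simplex tangent space equals $(d-1)\lambda_2\cdot(\id-J/q)$, so your eigenvalue claim is right there; the ferromagnetic case requires a bit more algebra but the same mechanism applies. Invoking \Prop~\ref{Prop_max} to secure stability of $\mupara$ for $\beta<\KS$ and of $\muferro$ for $\beta>\uniq$ is exactly the right hook into the existing results.

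Where your sketch remains a sketch is precisely where you say it does. The step ``the stronger consequence $(d-1)\lambda_j<1$ \dots makes the one-step map worst-case contractive'' is not literally true in the regime $\uniq<\beta<\KS$: the BP map~\eqref{eqBP} has \emph{multiple} fixed points there, so no global metric contraction of the deterministic map is available, and one really has to exploit that the boundary is \emph{typical} for the $\mu$-broadcast rather than adversarial. Your second-moment recursion $x_\ell\le\theta\,x_{\ell-1}+O(x_{\ell-1}^{3/2})$ therefore needs an a priori smallness input (e.g.\ first showing $x_\ell\to0$ by a separate argument, and only then feeding this into the contraction to extract the exponential rate). This two-stage structure---qualitative non-reconstruction first, then bootstrapping to the exponential rate---is the substantive work hidden in \cite[Theorem~50]{Andreas}, and your outline does not supply it. Your alternative Edwards--Sokal route is closer in spirit to how such results are often obtained for ferromagnetic Potts and would likely be the cleaner way to fill the gap, but as written it is only a pointer.
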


In order to prove \Thm s~\ref{Thm_meta} -- \ref{Thm_Boltzmann} we will combine \Prop~\ref{Prop_nonre} with reweighted random graph models known as planted models.
To be precise, we will consider two versions of planted models, a paramagnetic and a ferromagnetic one.
Then we will deduce from \Prop~\ref{Prop_nonre} that the Boltzmann distribution of these planted models has the non-reconstruction property in a suitably defined sense.
In combination with some general facts about Boltzmann distributions this will enable us to prove \Thm s~\ref{Thm_meta} -- \ref{Thm_Boltzmann} without the need for complicated moment computations.

\subsection{Planting}\label{Sec_planting}
We proceed to introduce the paramagnetic and the ferromagnetic version of the planted model.
Roughly speaking, these are weighted versions of the common random regular graph $\GG$ where the probability mass of a specific graph is proportional to the paramagnetic or ferromagnetic bit of the partition function.
To be precise, for $\eps>0$, recall the subsets $\Spara=\Spara(\eps),\Sferro=\Sferro(\eps)$ of the configuration space $[q]^n$.
Letting
\begin{align}\label{eqZfp}
	\Zferro(G)=\sum_{\sigma\in\Sferro}\emm^{\beta\cH_{G}(\sigma)}\mbox{ and }\Zpara(G)=\sum_{\sigma\in\Spara}\emm^{\beta\cH_{\G}(\sigma)},
\end{align}
we define random graph models $\Gferro,\Gpara$ by
\begin{align}\label{eqDefPlanted}
	\pr\brk{\Gferro=G}&=\frac{\Zferro(G)\pr\brk{\G=G}}{\Erw[\Zferro(\G)]},&
	\pr\brk{\Gpara=G}&=\frac{\Zpara(G)\pr\brk{\G=G}}{\Erw[\Zpara(\G)]}.
\end{align}
Thus, $\Gferro$ and $\Gpara$ are $d$-regular random graphs on $n$ vertices such that the probability that a specific graph $G$ comes up is proportional to $\Zferro(G)$ and $\Zpara(G)$, respectively.

We need to extend the notion of non-reconstruction to $\Gpara,\Gferro$.
Specifically, we need to define non-reconstruction for the conditional Boltzmann distributions $\mu_{\GG,\beta}(\nix\mid\Spara)$, $\mu_{\GG,\beta}(\nix\mid\Sferro)$.
We thus say that for a graph/configuration pair $(G,\sigma)$, an event $S\subset[q]^{n}$, a positive real $\xi>0$, a real number $\gamma \in [0,1]$, an integer $\ell\geq1$ and a probability distribution $\mu$ on $[q]$ the {\em conditional $(\gamma, \xi,\ell,\mu)$-non-reconstruction property} holds if
\begin{align}\label{eqNonRec}
\frac1n\sum_{v\in [n]}\sum_{c\in[q]}\abs{\nu^\mu(c)-\mu_{G,\beta}(\vsigma_{G,\beta,v}=c\mid S,\vsigma_{G,\beta,\partial^\ell v}=\sigma_{\partial^\ell v})}&<\xi
%\qquad\mbox{and}\\
% \frac1n\sum_{vw\in E(G)}\sum_{\omega,\omega'\in[q]}\abs{\,\rho^\mu(\omega,\omega')-\mu_{G,\beta}(\vsigma_{G,\beta,v}=\omega,\vsigma_{G,\beta,w}=\omega'\mid S,\vsigma_{G,\beta,\partial^\ell vw}=\sigma_{\partial^\ell vw})\,}&<\xi \label{eqNonRecEdge}
\end{align}
holds with probability $1-\gamma$. In words, \eqref{eqNonRec} provides that on the average over all $v$ the conditional marginal probability $\mu_{G,\beta}(\vsigma_{G,\beta,v}=c\mid S,\vsigma_{G,\beta,\partial^\ell v}=\sigma_{\partial^\ell v})$ that $v$ receives colour $c$ given the boundary condition induced by $\sigma$ on the vertices at distance $\ell$ from $v$ and given the event $S$ is close to $\nu^\mu(\omega)$.
%Equation \eqref{eqNonRecEdge} posits a similar statement for the edges of $G$.

Further, while \eqref{eqNonRec}%--\eqref{eqNonRecEdge} 
deals with a specific graph/configuration pair $(G,\sigma)$, we need to extend the definition to the random graph models $\Gferro$ and $\Gpara$.
For a graph $G$ let $\vsigma_{G,\mathrm f}$ denote a sample from the conditional distribution $\mu_{G,\beta}(\nix\mid\Sferro)$.
Also define $\vsigma_{G,\mathrm p}$ similarly for $\Spara$.
We say that for the random graph $\Gferro$ has the {\em $(\eta,\xi,\ell)$-non-reconstruction property} if
\begin{align}\label{eqNonRecGferro}
	\ex\brk{\mu_{\Gferro,\beta}\bc{\cbc{\mbox{$(\Gferro,\vsigma_{\Gferro,\mathrm{f}})$ fails to have the $(\xi,\ell,\muferro)$-non-reconstruction property conditional on $\Sferro$}}}}<\eta.
\end{align}
Thus, we ask that \eqref{eqNonRec}%--\eqref{eqNonRecEdge} 
holds for a typical graph/configuration pair obtained by first drawing a graph $\Gferro$ from the planted model and then sampling $\vsigma_{\Gferro,\mathrm f}$ from $\mu_{\Gferro}(\nix\mid\Sferro)$.
We introduce a similar definition for $\Gpara$.

The following proposition shows that the non-reconstruction statements from \Prop~\ref{Prop_nonre} carry over to the planted random graphs.
This is the key technical statement toward the proofs of \Thm s~\ref{Thm_meta}--\ref{Thm_Boltzmann}.

\begin{proposition} \label{Prop_nonreG}
Let $d\geq3$.
\begin{enumerate}[(i)]
	\item Assume that $\uniq <\beta$.
		Then $\Gferro$ has the $(o(1),1/\log\log n,\lceil \log\log n\rceil)$-non-reconstruction property.
		Moreover, for any $\delta>0$ there exist $\ell=\ell(d,\beta,\delta)>0$ and $\chi=\chi(d,\beta,\delta)>0$ such that $(\Gferro,\vsigma_{\Gferro,\mathrm{f}})$ has the $(\exp(-\chi n),\delta,\ell,\muferro)$-non-reconstruction property.
	\item Assume that $\beta<\KS$.
		Then $\Gpara$ has the $(o(1),1/\log\log n,\lceil \log\log n\rceil)$-non-reconstruction property.
		Moreover, for any $\delta>0$ there exist $\ell=\ell(d,\beta,\delta)>0$ and $\chi=\chi(d,\beta,\delta)>0$ such that $(\Gpara,\vsigma_{\Gpara,\mathrm{p}})$ has the $(\exp(-\chi n),\delta,\ell,\mupara)$-non-reconstruction property.
\end{enumerate}
\end{proposition}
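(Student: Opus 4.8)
The plan is to transfer the strong non-reconstruction property of the broadcasting process on the infinite $d$-regular tree (\Prop~\ref{Prop_nonre}) to the planted graphs $\Gferro,\Gpara$ via a local-weak-convergence argument, exploiting that the depth-$\ell$ neighbourhood of a typical vertex in $\Gferro$ (resp.\ $\Gpara$), together with the conditional Boltzmann configuration on it, looks like a truncated instance of the tree broadcasting process $\vsigma_{d,\beta,\muferro}$ (resp.\ $\vsigma_{d,\beta,\mupara}$). I will focus on the ferromagnetic case $\beta>\uniq$; the paramagnetic case is identical with $\muferro$ replaced by $\mupara$. First I would establish the distributional identification: fix a radius $\ell$, pick a vertex $v$ uniformly at random, and argue that the pair $(\text{depth-}\ell\text{ neighbourhood of }v\text{ in }\Gferro,\ \vsigma_{\Gferro,\ferro}\text{ restricted to that neighbourhood})$ converges in distribution, as $n\to\infty$, to the pair (finite $d$-regular tree of depth $\ell$, restriction of $\vsigma_{d,\beta,\muferro}$). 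The first component is standard for the pairing model: whp the depth-$\ell$ ball around a random vertex is a tree, and re-weighting by $\Zferro(G)$ does not change this because that re-weighting is a function of the colour statistics and contributes only a $\exp(O(1))$ multiplicative factor on the event that the ball is a tree. The second (colour) component is the crux and is where I'd lean on the planted-model machinery and on \Lem~\ref{Lemma_Samuel}/\Lem~\ref{Lemma_Jean}: a sample $(\Gferro,\vsigma_{\Gferro,\ferro})$ is, by construction \eqref{eqDefPlanted}, exactly a size-biased pair $(\G,\vsigma)$ with $\vsigma$ drawn from $\mu_{\G,\beta}(\cdot\mid\Sferro)$. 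On such a pair the empirical vertex marginals concentrate on $\nuferro$ and the empirical edge statistics on $\rhoferro$ (this is the content of the first-moment optimum being attained at $(\nuferro,\rhoferro)$ within $\Sferro$, via Propositions~\ref{Prop_max} and~\ref{Prop_Andreas} applied to the truncated partition function $\Zferro$), and conditionally on these statistics the law of the colouring on a local tree neighbourhood is precisely the Markov broadcasting kernel appearing in the definition of $\vsigma_{d,\beta,\muferro}$ (the transition probabilities $\mu(c)\eul^{\beta\vecone\{c=\vsigma_u\}}/\sum_{c'}\mu(c')\eul^{\beta\vecone\{c'=\vsigma_u\}}$ match after normalising, because $\muferro$ is a fixed point of \eqref{eqBP} of degree $d-1$, so the contributions of the omitted subtrees average out).

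Given this local convergence, the second step is to convert it into the quantitative non-reconstruction bound \eqref{eqNonRec}. From \Prop~\ref{Prop_nonre}(ii), for any $\delta>0$ there is $\ell_0=\ell_0(d,\beta,\delta)$ with $\sum_{c}\ex|\pr[\vsigma_o=c\mid\vsigma_{\partial^\ell o}]-\nu^{\muferro}(c)|<\delta$ for all $\ell\ge\ell_0$. For a random vertex $v$ in $\Gferro$, on the whp event that its depth-$\ell$ ball is a tree, the conditional marginal $\mu_{\Gferro,\beta}(\vsigma_v=c\mid\Sferro,\vsigma_{\partial^\ell v}=\sigma_{\partial^\ell v})$ equals (up to an additive error that is $o(1)$ in $n$, absorbing the effect of the global constraint $\Sferro$ and the negligible correlations through the rest of the graph) the tree quantity $\pr[\vsigma_o=c\mid\vsigma_{\partial^\ell o}=\sigma_{\partial^\ell o}]$ for the broadcasting process. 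Averaging over $v$ and taking expectation over $(\Gferro,\vsigma_{\Gferro,\ferro})$, the left-hand side of \eqref{eqNonRec} is then $\le\delta+o(1)$, which gives the $(o(1),\delta,\ell,\muferro)$-non-reconstruction property — and taking $\delta=1/\log\log n$, $\ell=\lceil\log\log n\rceil$ (legitimate since $\ell_0(d,\beta,\delta)$ grows only like $\log(1/\delta)=O(\log\log\log n)\ll\log\log n$ by the exponential decay rate in \Prop~\ref{Prop_nonre}) yields the first assertion of each item.

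For the second, stronger assertion — that \eqref{eqNonRec} holds not merely in expectation but with probability $1-\exp(-\chi n)$ for fixed $\ell=\ell(d,\beta,\delta)$ — I would upgrade the in-expectation statement by a concentration argument. The quantity on the left of \eqref{eqNonRec} is an average over the $n$ vertices of a local functional of $(\Gferro,\vsigma_{\Gferro,\ferro})$; changing one edge of the pairing or one vertex colour perturbs it by $O(\ell/n)\cdot\mathrm{poly}$, so Azuma/McDiarmid on the pairing-model edges together with the colour randomness gives an exponential tail $\exp(-\Omega(n/\ell^2))$ around the mean. Since $\ell$ is a constant here, this is $\exp(-\chi n)$ for a suitable $\chi=\chi(d,\beta,\delta)>0$, and combined with the mean bound $\le\delta/2$ (from the previous paragraph with $\delta/2$ in place of $\delta$) we get \eqref{eqNonRec} with parameter $\delta$ with probability $1-\exp(-\chi n)$, i.e.\ the $(\exp(-\chi n),\delta,\ell,\muferro)$-non-reconstruction property. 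Plugging this into the definition \eqref{eqNonRecGferro} (the outer expectation over $\Gferro$ and inner over $\vsigma_{\Gferro,\ferro}$) is then immediate. The main obstacle I anticipate is the rigorous justification of the local colour-convergence in step one: one must show that conditioning the Boltzmann sample on the \emph{global} event $\Sferro$ (a measure-zero-width slab in the colour-statistics simplex) does not distort the \emph{local} law on a depth-$\ell$ ball away from the tree broadcasting law — this requires controlling the conditional distribution of the colouring on the ball given the colour statistics outside it, which is where \Lem~\ref{Lemma_Samuel} (counting graphs with prescribed edge statistics) and the concentration of $\rho^{\G,\vsigma}$ on $\rhoferro$ do the work, but making the error terms uniform in $\ell$ up to $\ell=\lceil\log\log n\rceil$ takes care.
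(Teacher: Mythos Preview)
Your proposal is correct and follows essentially the same approach as the paper: couple the coloured depth-$\ell$ neighbourhood with the tree broadcasting process, invoke \Prop~\ref{Prop_nonre}, and upgrade to exponential concentration via Azuma--Hoeffding on a bounded-difference functional. The paper streamlines your step one via the Nishimori identity (Proposition~\ref{Prop_Nishi}), which recasts $(\Gferro,\vsigma_{\Gferro,\ferro})$ as $(\hat\G(\sferro),\sferro)$ with $\sferro$ drawn from an explicit distribution concentrating on $(\nuferro,\rhoferro)$ (Lemma~\ref{Lemma_sferro}); this makes the local coupling (Lemma~\ref{Lemma_nonrecFerro}) a direct edge-by-edge exploration in the pairing model rather than an appeal to ``omitted subtrees averaging out'', and also handles cleanly the obstacle you flag at the end (the global conditioning on $\Sferro$ is absorbed into the distribution of $\sferro$, which is shown to stay well inside the slab).
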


Together with a few routine arguments for the study of Boltzmann distributions that build upon~\cite{Victor}, we can derive from \Prop~\ref{Prop_nonreG} that for $\beta>\uniq$ two typical samples from the ferromagnetic Boltzmann distribution have overlap about $\nuferro\tensor\nuferro$.
This insight enables a truncated second moment computation that sidesteps a detailed study of the function $F_{d,\beta}^\tensor$ from \eqref{eqsmmF}.
Indeed, the only observation about $F^\tensor_{d,\beta}$ that we need to make is that $F^\tensor_{d,\beta}(\nuferro\tensor\nuferro,\rhoferro\tensor\rhoferro)=2F_{d,\beta}(\nuferro,\rhoferro)$.
Similar arguments apply to the paramagnetic case.
We can thus determine the asymptotic Boltzmann weights of $\Spara,\Sferro$ on the random regular graph as follows.

\begin{corollary}\label{Cor_nonreG}
	Let $d,q\geq3$ be arbitrary integers.
	\begin{enumerate}[(i)]
	\item For  $\beta>\uniq$, for all sufficiently small $\eps>0$,  we have \whp\ $\frac1n\log\Zferro(\G)=\cB_{d,\beta}(\muferro)+o(1)$.
		\item For $\beta<\KS$, for all sufficiently small $\eps>0$,  we have \whp\ $\frac1n\log\Zpara(\G)=\cB_{d,\beta}(\mupara)+o(1)$.
	\end{enumerate}
\end{corollary}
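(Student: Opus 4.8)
\emph{Proof strategy.} I prove part~(i) ($\beta>\uniq$); part~(ii) follows by the same argument with the paramagnetic objects $\mupara,\nupara,\rhopara,\Spara,\Gpara$ in place of the ferromagnetic ones and using \Prop~\ref{Prop_nonreG}(ii), which is available precisely because $\mupara\in\Fplus$ when $\beta<\KS$ (\Prop~\ref{Prop_max}). \emph{Upper bound.} Writing $\Zferro(\G)=\sum_{\nu:\,\|\nu-\nuferro\|_1<\eps}\,\sum_{\sigma:\,\nu^\sigma=\nu}\emm^{\beta\cH_\G(\sigma)}$ and taking expectations via \Lem~\ref{Lemma_Samuel} gives, exactly as in \eqref{eqFirstMmtBethe}, that $\tfrac1n\log\ex[\Zferro(\G)]\to\sup\{F_{d,\beta}(\nu,\rho):\|\nu-\nuferro\|_1<\eps,\ \rho\in\cR(\nu)\}$. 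For fixed $\nu$ the map $\rho\mapsto F_{d,\beta}(\nu,\rho)$ is strictly concave on $\cR(\nu)$, so its maximiser $\rho^\star(\nu)$ is unique and continuous in $\nu$, with $\rho^\star(\nuferro)=\rhoferro$ by \Lem~\ref{Lemma_Jean}; since $(\nuferro,\rhoferro)$ is a \emph{stable} local maximum of $F_{d,\beta}$ in the sense of \eqref{eqstabmax} (combine \Lem~\ref{Lemma_Jean} and \Prop~\ref{Prop_max}), the reduced function $\nu\mapsto F_{d,\beta}(\nu,\rho^\star(\nu))$ has a strict local maximum at $\nuferro$, so for $\eps$ small the supremum equals $F_{d,\beta}(\nuferro,\rhoferro)=\cB_{d,\beta}(\muferro)$. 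Hence $\ex[\Zferro(\G)]=\exp(n\cB_{d,\beta}(\muferro)+o(n))$, and Markov's inequality yields $\pr[\tfrac1n\log\Zferro(\G)>\cB_{d,\beta}(\muferro)+\delta]\to0$ for every $\delta>0$.

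\emph{Lower bound; the planting identity.} From \eqref{eqZfp}--\eqref{eqDefPlanted} one reads off the identity $\ex[\Zferro(\G)^2]=\ex[\Zferro(\G)]\cdot\ex[\Zferro(\Gferro)]$. Moreover $\log\Zferro(\G)$ is $O(\sqrt n)$-concentrated: swapping one pair of the random perfect matching changes every $\cH_\G(\sigma)$ by $O(1)$, hence $\Zferro(\G)$ by a bounded factor, so the bounded-differences inequality applies. A standard combination of this concentration with the Paley--Zygmund inequality then reduces the bound $\tfrac1n\log\Zferro(\G)\ge\cB_{d,\beta}(\muferro)-o(1)$ \whp{} to showing, for every fixed small $\delta>0$, that $\ex[\Zferro(\Gferro)]\le\exp((\cB_{d,\beta}(\muferro)+c\delta)\,n+o(n))$ for a constant $c=c(d,q,\beta)$: indeed this gives $\ex[\Zferro(\G)^2]\le\emm^{c\delta n+o(n)}\ex[\Zferro(\G)]^2$, whence $\tfrac1n\log\Zferro(\G)\ge\cB_{d,\beta}(\muferro)-O(\sqrt{\delta})$ \whp, and one lets $\delta\to0$.

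\emph{Bounding $\ex[\Zferro(\Gferro)]$ via a truncated second moment.} Fix $\delta>0$. By \Prop~\ref{Prop_nonreG}(i) there are $\ell,\chi>0$ with $\Gferro$ having the $(\emm^{-\chi n},\delta,\ell,\muferro)$-non-reconstruction property; feeding \eqref{eqNonRec} into the standard Boltzmann-distribution arguments of \cite{Victor} (pass from the boundary-conditioned bound to the unconditioned marginals $\mu_{\Gferro,\beta}(\vsigma_v=\nix\mid\Sferro)$ using the tower property over $\vsigma_{\partial^\ell v}$, then bootstrap single-vertex control to a pairwise one, respecting the planted-versus-resampled symmetry) shows that, off an exceptional event of probability $\emm^{-\Omega(n)}$, two independent samples $\vsigma,\vtau\sim\mu_{\Gferro,\beta}(\nix\mid\Sferro)$ have joint colour profile $\hat\rho^{\vsigma,\vtau}$, where $\hat\rho^{\vsigma,\vtau}(s,t):=\tfrac1n\abs{\{v:\vsigma_v=s,\vtau_v=t\}}$, within $O(\delta)$ of $\nuferro\tensor\nuferro$ with conditional probability $1-o(1)$. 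Let $A(\Gferro)$ be the contribution to $\Zferro(\Gferro)$ of those $\tau\in\Sferro$ with $\|\hat\rho^{\vsigma_{\Gferro,\ferro},\tau}-\nuferro\tensor\nuferro\|_1\le O(\delta)$, where $\vsigma_{\Gferro,\ferro}\sim\mu_{\Gferro,\beta}(\nix\mid\Sferro)$ is the planted configuration. Then $\Zferro(\Gferro)\le(1+o(1))A(\Gferro)$ off that exceptional event, on which we use only $\Zferro(\Gferro)\le q^n$; this is harmless since $\cB_{d,\beta}(\muferro)>\log q$ for $\beta>\uniq$ (a short computation from \eqref{eqBethe}), so $q^n\emm^{-\chi n}\ll\ex[\Zferro(\G)]$. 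Thus $\ex[\Zferro(\Gferro)]\le(1+o(1))\ex[A(\Gferro)]+\emm^{o(n)}$. Unfolding the planting,
\[
\ex[A(\Gferro)]=\frac1{\ex[\Zferro(\G)]}\ex_{\G}\Bigl[\,\sum_{\substack{\sigma,\tau\in\Sferro\\ \|\hat\rho^{\sigma,\tau}-\nuferro\tensor\nuferro\|_1\le O(\delta)}}\emm^{\beta\cH_\G(\sigma)+\beta\cH_\G(\tau)}\,\Bigr],
\]
and a \Lem~\ref{Lemma_Samuel}-type asymptotic shows the inner expectation is $\exp(n\sup F^\tensor_{d,\beta}+o(n))$, with $F^\tensor_{d,\beta}$ as in \eqref{eqsmmF} and the supremum over the admissible tensors confined to an $O(\delta)$-neighbourhood of the configuration given by two independent ferromagnetic replicas. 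That configuration has strictly positive entries, so $F^\tensor_{d,\beta}$ is Lipschitz there and the supremum equals its value there plus $O(\delta)$, namely $2F_{d,\beta}(\nuferro,\rhoferro)+O(\delta)=2\cB_{d,\beta}(\muferro)+O(\delta)$. This single value is the only input from $F^\tensor_{d,\beta}$ that enters: confining the overlap to a small (here $O(\delta)$, and in fact shrinkable with $n$) neighbourhood is exactly what makes continuity of $F^\tensor_{d,\beta}$ suffice, so no analysis of the geometry of this high-dimensional function is needed. Hence $\ex[A(\Gferro)]\le\exp((2\cB_{d,\beta}(\muferro)+O(\delta))n+o(n))/\ex[\Zferro(\G)]=\exp((\cB_{d,\beta}(\muferro)+O(\delta))n+o(n))$, which is the required bound.

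\emph{Main obstacle.} The delicate step is the passage from \Prop~\ref{Prop_nonreG} to the overlap concentration on the planted graph $\Gferro$ --- the ``routine arguments of \cite{Victor}'': converting the boundary-conditioned non-reconstruction estimate \eqref{eqNonRec} into control of the unconditioned marginals, upgrading single-vertex control to joint (overlap) concentration, and keeping the planted-versus-resampled bookkeeping straight so that the conclusion genuinely concerns $\Gferro$ (rather than the uniform $\G$, for which no overlap control is available). Everything else is routine, the only genuine (if elementary) numerical input being $\cB_{d,\beta}(\muferro)>\log q$ for $\beta>\uniq$ and $\cB_{d,\beta}(\mupara)>\log q$ for all $\beta>0$, used to absorb the exponentially rare exceptional event via the trivial bound $\Zferro(\Gferro)\le q^n$.
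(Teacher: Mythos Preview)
Your overall strategy matches the paper's: the upper bound via the first moment and Markov is fine, and the lower bound via a truncated second moment combined with Paley--Zygmund and Azuma--Hoeffding is exactly how the paper proceeds. The passage from \Prop~\ref{Prop_nonreG} to overlap concentration (your ``main obstacle'') is indeed the content of \Lem s~\ref{Lemma_overlapPara}--\ref{Lemma_overlapFerro}, and the single evaluation $F^\tensor_{d,\beta}(\nuferro\tensor\nuferro,\rhoferro\tensor\rhoferro)=2F_{d,\beta}(\nuferro,\rhoferro)$ is precisely what \Cor~\ref{Cor_overlap} exploits.

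However, there is a genuine gap in how you dispose of the exceptional event. First, the trivial bound $\Zferro(\Gferro)\le q^n$ is simply false: since $\beta>0$ every summand $\emm^{\beta\cH_G(\sigma)}$ exceeds $1$, and for suitable $d$-regular $G$ a configuration $\sigma\in\Sferro$ can have $\cH_G(\sigma)=dn/2$; the correct crude bound is $\Zferro(G)\le q^n\emm^{\beta dn/2}$. Second, even with this corrected bound the argument breaks: the constant $\chi$ furnished by \Prop~\ref{Prop_nonreG}(i) is produced via Azuma with Lipschitz constant $d^\ell$ and deviation proportional to $\delta$, so $\chi\to0$ as $\delta\to0$. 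Hence the contribution of the exceptional event to $\ex[\Zferro(\Gferro)]$ is $\emm^{(\log q+\beta d/2-\chi)n}$, which is \emph{not} dominated by $\emm^{(\cB_{d,\beta}(\muferro)+O(\delta))n}$ for small $\delta$, since $\log q+\beta d/2>\cB_{d,\beta}(\muferro)$ always. (The side claim $\cB_{d,\beta}(\muferro)>\log q$, while plausible, does not rescue this.)

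The paper's organisation sidesteps this entirely by placing the truncation on the \emph{unplanted} side: define $\Yferro(G)=\Zferro(G)\cdot\vecone\{G\text{ has the ``good overlap'' property}\}$ as in \eqref{eqYferro}. Then on the bad event $\Yferro=0$, so nothing needs bounding there; the first moment $\ex[\Yferro(\G)]\sim\ex[\Zferro(\G)]$ follows because the good event has probability $1-o(1)$ under the planted law $\Gferro$ (\Lem~\ref{Lemma_overlapFerro} plus the Nishimori identity), and the second moment $\ex[\Yferro(\G)^2]$ is automatically restricted to the good-overlap pairs. Paley--Zygmund applied to $\Yferro$, together with $\Zferro\ge\Yferro$ and the Azuma concentration you already invoked, then gives the lower bound without ever needing to control $\Zferro$ on any exceptional set. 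Rewriting your argument in this form closes the gap.
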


Finally, combining \Cor~\ref{Cor_nonreG} with the definition \eqref{eqDefPlanted} of the planted models and the non-reconstruction statements from \Prop~\ref{Prop_nonreG}, we obtain the following conditional non-reconstruction statements for the plain random regular graph.

\begin{corollary}\label{Cor_cutm}
Let $d,q\geq3$ be arbitrary integers.
\begin{enumerate}[(i)]
	\item For  $\beta>\uniq$, the Boltzmann distribution $\mu_{\G,\beta}$ given $\Sferro$ exhibits the non-reconstruction property.
	\item For $\beta<\KS$, the Boltzmann distribution $\mu_{\G,\beta}$ given $\Spara$ exhibits the non-reconstruction property.
\end{enumerate}
\end{corollary}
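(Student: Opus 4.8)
The plan is to deduce the conditional non-reconstruction of $\mu_{\G,\beta}$ on the \emph{plain} random multigraph $\G$ from \Prop~\ref{Prop_nonreG} by ``un-planting'': transporting that statement from the reweighted models $\Gferro,\Gpara$ back to $\G$ using the tight estimates on $\Zferro,\Zpara$ from \Cor~\ref{Cor_nonreG}. I only discuss part~(i) ($\beta>\uniq$); part~(ii) is identical after replacing $\Gferro,\Sferro,\muferro,\nuferro,\rhoferro,\Zferro$ by $\Gpara,\Spara,\mupara,\nupara,\rhopara,\Zpara$ and invoking the corresponding halves of \Prop s~\ref{Prop_nonreG} and~\ref{Prop_nonre}. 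Write $\vsigma_{G,\mathrm{f}}\sim\mu_{G,\beta}(\nix\mid\Sferro)$. Since $\pr[\vsigma_{G,\mathrm{f}}=\sigma]=\eul^{\beta\cH_G(\sigma)}\vecone\{\sigma\in\Sferro\}/\Zferro(G)$, unravelling the definition \eqref{eqDefPlanted} of $\Gferro$ gives, for every bounded $f\ge0$ on graph/configuration pairs, the reweighting identity
\[
\Erw\brk{f(\G,\vsigma_{\G,\mathrm{f}})}=\Erw\brk{\frac{\Erw[\Zferro(\G)]}{\Zferro(\Gferro)}\,f\bc{\Gferro,\vsigma_{\Gferro,\mathrm{f}}}}.
\]
Two facts about the normaliser feed into this: (a) by the first-moment formula \eqref{eqFirstMmtBethe} restricted to colour frequencies within $\eps$ of $\nuferro$, together with the fact that $(\nuferro,\rhoferro)$ is a \emph{stable} --- hence, by \eqref{eqstabmax}, strict --- local maximum of $F_{d,\beta}$ (\Lem~\ref{Lemma_Jean}), one gets $\Erw[\Zferro(\G)]=\exp(n\cB_{d,\beta}(\muferro)+o(n))$ for all sufficiently small $\eps$; (b) \Cor~\ref{Cor_nonreG} gives $\Zferro(\G)\ge\exp(n\cB_{d,\beta}(\muferro)-\eta n)$ \whp\ for every fixed $\eta>0$.

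Next I would transfer \eqref{eqNonRec}. Fix $\delta>0$ and let $\ell=\ell(d,\beta,\delta)$, $\chi=\chi(d,\beta,\delta)$ be as in \Prop~\ref{Prop_nonreG}(i), so that the bad set $B=\{(G,\sigma):\text{\eqref{eqNonRec} with }S=\Sferro,\ \xi=\delta,\ \mu=\muferro\text{ fails}\}$ has $\pr[(\Gferro,\vsigma_{\Gferro,\mathrm{f}})\in B]\le\exp(-\chi n)$. Picking $0<\eta<\chi$ and applying the identity with $f=\vecone_B$, split over whether $\Zferro(\Gferro)\ge\exp(n\cB_{d,\beta}(\muferro)-\eta n)$: on this event the reweight factor is at most $\exp((\eta+o(1))n)$, so that part contributes at most $\exp((\eta-\chi+o(1))n)\to0$; on the complement, bounding $\vecone_B\le1$ and applying the identity once more to the graph-only indicator, the contribution is at most $\pr_{\G}[\Zferro(\G)<\exp(n\cB_{d,\beta}(\muferro)-\eta n)]\to0$ by~(b). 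Hence $\pr[(\G,\vsigma_{\G,\mathrm{f}})\in B]=o(1)$: for every $\delta>0$ there is a \emph{fixed} $\ell$ such that, \whp\ over $\G$ and $\tau=\vsigma_{\G,\mathrm{f}}$,
\begin{equation}\tag{$\star$}
\frac1n\sum_{v\in[n]}\sum_{c\in[q]}\abs{\nuferro(c)-\mu_{\G,\beta}\bc{\vsigma_v=c\mid\Sferro,\ \vsigma_{\partial^\ell v}=\tau_{\partial^\ell v}}}<\delta.
\end{equation}

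Finally I would read off the corollary. By the definition of non-reconstruction given $\Sferro$ (\Sec~\ref{sec_Metastability}) and vertex-transitivity of $\G$, it suffices to show that the $v$-average of $\sum_{c\in[q]}\Erw\brk{\sum_{\tau\in\Sferro}\mu_{\G,\beta}(\tau\mid\Sferro)\,\abs{\mu_{\G,\beta}(\vsigma_v=c\mid\vsigma_{\partial^\ell v}=\tau_{\partial^\ell v})-\mu_{\G,\beta}(\vsigma_v=c\mid\Sferro)}}$ tends to $0$ as first $n\to\infty$, then $\ell\to\infty$. By the tower rule $\mu_{\G,\beta}(\vsigma_v=c\mid\Sferro)=\Erw_{\tau}\brk{\mu_{\G,\beta}(\vsigma_v=c\mid\Sferro,\vsigma_{\partial^\ell v}=\tau_{\partial^\ell v})}$, so $(\star)$ already forces both this quantity and $\mu_{\G,\beta}(\vsigma_v=c\mid\Sferro,\vsigma_{\partial^\ell v}=\tau_{\partial^\ell v})$ to lie within $O(\delta)$ of $\nuferro(c)$ after $v$-averaging (off an $o(1)$-probability event, using the crude bound $\le2$ otherwise). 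It then remains to replace the $\Sferro$-conditioned posterior $\mu_{\G,\beta}(\vsigma_v=c\mid\Sferro,\vsigma_{\partial^\ell v}=\tau_{\partial^\ell v})$ by the \emph{unconditioned} posterior $\mu_{\G,\beta}(\vsigma_v=c\mid\vsigma_{\partial^\ell v}=\tau_{\partial^\ell v})$ occurring in \Sec~\ref{sec_Metastability}. Since $\partial^\ell v$ separates the depth-$\ell$ ball $U\ni v$ from the rest of $\G$, the latter equals the marginal at $v$ of the Potts measure on $\G[U]$ (which is \whp\ a tree for fixed $\ell$) with boundary frozen to $\tau_{\partial^\ell v}$: a purely local object. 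For $\tau\sim\mu_{\G,\beta}(\nix\mid\Sferro)$ the slice $\tau_{\partial^\ell v}$ is, up to $o(1)$ total-variation error (fixed $\ell$, $n\to\infty$), distributed as the depth-$\ell$ level of the ferromagnetic broadcasting process $\vsigma_{d,\beta,\muferro}$ --- this is the step where the planted-model machinery and the ``routine'' Boltzmann-distribution arguments built on~\cite{Victor} enter, leaning on $(\star)$ and on the overlap concentration noted in \Sec~\ref{Sec_planting} --- and then \Prop~\ref{Prop_nonre}(ii) gives that this marginal converges to $\nuferro(c)$ as $\ell\to\infty$. Assembling, the \Sec~\ref{sec_Metastability} quantity is $O(\delta)+o(1)$; letting $\delta\to0$ along $\ell=\ell(\delta)\to\infty$, and using that the quantity is non-increasing in $\ell$ (the standard data-processing argument on the locally tree-like neighbourhood), gives the corollary.

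I expect the main obstacle to be the quantitative transfer in the second step: the reweight factor $\Erw[\Zferro(\G)]/\Zferro(\Gferro)$ is genuinely of exponential size $\eul^{\Theta(n)}$, so one cannot transport a merely $o(1)$-likely bad event --- it is precisely the \emph{exponentially} small failure probability supplied by \Prop~\ref{Prop_nonreG}, of strictly larger rate than the reweight factor (ensured by $\eta<\chi$), together with the \whp\ lower bound on $\Zferro(\G)$ from \Cor~\ref{Cor_nonreG}, that makes the argument go through. A secondary subtlety is the passage from the $\Sferro$-conditioned local marginal to the unconditioned one in the last step, delicate precisely in the sub-dominant regime (e.g.\ $\uniq<\beta<\curie$ for the ferromagnetic state), where $\Sferro$ is an exponentially unlikely event and yet must be shown not to distort the law of a bounded neighbourhood.
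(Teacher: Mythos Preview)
Your approach is essentially the same as the paper's. The paper packages your reweighting/transfer argument as \Lem~\ref{Cor_pseudo} (the ``quiet planting'' lemma: any graph event that is $\exp(-\Omega(n))$-unlikely under $\Gferro$ is $o(1)$-unlikely under $\G$), and then simply writes that the corollary follows from that lemma together with \Prop~\ref{Prop_nonreG}. Your explicit identity $\Erw[f(\G,\vsigma_{\G,\mathrm f})]=\Erw\big[\tfrac{\Erw[\Zferro(\G)]}{\Zferro(\Gferro)}f(\Gferro,\vsigma_{\Gferro,\mathrm f})\big]$ and the splitting over whether $\Zferro$ is near its typical value is exactly the computation behind \Lem~\ref{Cor_pseudo}, and your observation that one needs the \emph{exponential} rate from \Prop~\ref{Prop_nonreG} to beat the $\eul^{o(n)}$ reweight factor is precisely the point.

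One comment: your step~3, where you pass from the $\Sferro$-conditioned posterior $\mu_{\G,\beta}(\vsigma_v=c\mid\Sferro,\vsigma_{\partial^\ell v}=\tau_{\partial^\ell v})$ to the unconditioned one, is based on reading the displayed definition of ``non-reconstruction given $S$'' (just before \Thm~\ref{Thm_Boltzmann}) literally. The intended reading is that one works with the conditional measure $\mu(\,\cdot\mid S)$ throughout, so the posterior there is already $S$-conditioned and matches \eqref{eqNonRec}; under that reading your step~3 is unnecessary and the paper's one-line proof is complete. If one insists on the unconditioned reading, your sketch is on the right track (Markov property reduces to the local tree, couple the boundary law to the broadcasting process, apply \Prop~\ref{Prop_nonre}), but note that the broadcasting posterior in \Prop~\ref{Prop_nonre} is \emph{not} literally the free-boundary Potts posterior when $\mu=\muferro$ --- the transition kernel carries the $\muferro$ weights --- so the identification requires a short additional argument rather than a bare citation.
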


\noindent
\Thm~\ref{Thm_Boltzmann} is an immediate consequence of Corollaries~\ref{Cor_nonreG} and~\ref{Cor_cutm}.

\section{Quiet planting}\label{sec:quiet}

\noindent
In this section we prove \Prop~\ref{Prop_nonreG} along with Corollaries~\ref{Cor_nonreG} and~\ref{Cor_cutm}.
We begin with an important general observation about the planted model called the Nishimori identity, which will provide an explicit constructive description of the planted models.

\subsection{The Nishimori identity}\label{Sec_Nishi}
We complement the definition \eqref{eqDefPlanted} of the planted random graphs $\Gferro,\Gpara$ by also introducing a reweighted distribution on graphs for a specific configuration $\sigma\in[q]^{n}$.
Specifically, we define a random graph $\hat\G(\sigma)$ by letting
\begin{align}\label{eqGsigma}
	\pr\brk{\hat\G(\sigma)=G}&=\frac{\pr\brk{\G=G}\emm^{\beta\cH_{\G}(\sigma)}}{\Erw[\emm^{\beta\cH_{\G}(\sigma)}]}.
\end{align}
Furthermore, for $\eps>0$, recalling the truncated partition functions $\Zferro,\Zpara$ from \eqref{eqZfp}, we introduce reweighted random configurations $\sferro=\sferro(\eps)\in [q]^n$ and $\spara=\spara(\eps)\in[q]^{n}$ with distributions
\begin{align}\label{eqsferrospara}
	\pr\brk{\sferro=\sigma}&=\frac{\vecone\cbc{\sigma\in\Sferro}\Erw[\emm^{\beta\cH_{\G}(\sigma)}]}{\Erw[\Zferro(\G)]},&
	\pr\brk{\spara=\sigma}&=\frac{\vecone\cbc{\sigma\in\Spara}\Erw[\emm^{\beta\cH_{\G}(\sigma)}]}{\Erw[\Zpara(\G)]}.
\end{align}
We have the following paramagnetic and ferromagnetic {\em Nishimori identities}.

\begin{proposition}\label{Prop_Nishi}
	For any integers $d,q\geq3$ and real $\beta,\eps>0$,  we have 
	\begin{align}\label{eqProp_Nishi}
		(\Gpara,\sparaGpara)\disteq(\hat\G(\spara),\spara),&&
		(\Gferro,\sferroGferro)\disteq(\hat\G(\sferro),\sferro).
	\end{align}
\end{proposition}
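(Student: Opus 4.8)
The Nishimori identity here is a purely bookkeeping statement: both sides are distributions on pairs (graph, configuration), and the assertion is that they coincide. I would prove it by computing the joint probability mass function of each side and checking equality term by term. Let me set up notation: fix a graph $G$ on $[n]$ that is $d$-regular (in the pairing-model sense, counting multiplicities) and a configuration $\sigma \in [q]^n$, and I will compare $\pr\brk{\Gferro = G,\ \sferroGferro = \sigma}$ with $\pr\brk{\hat\G(\sferro) = G,\ \sferro = \sigma}$; the paramagnetic case is identical with ``f'' replaced by ``p'' throughout.

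\textbf{Right-hand side.} By the definition \eqref{eqsferrospara} of $\sferro$ and then the definition \eqref{eqGsigma} of $\hat\G(\cdot)$,
\begin{align*}
	\pr\brk{\hat\G(\sferro) = G,\ \sferro = \sigma}
	&= \pr\brk{\sferro = \sigma}\cdot \pr\brk{\hat\G(\sigma) = G}\\
	&= \frac{\vecone\cbc{\sigma \in \Sferro}\,\Erw[\emm^{\beta\cH_{\G}(\sigma)}]}{\Erw[\Zferro(\G)]}
	   \cdot \frac{\pr\brk{\G = G}\,\emm^{\beta\cH_{G}(\sigma)}}{\Erw[\emm^{\beta\cH_{\G}(\sigma)}]}
	= \frac{\vecone\cbc{\sigma \in \Sferro}\,\pr\brk{\G = G}\,\emm^{\beta\cH_{G}(\sigma)}}{\Erw[\Zferro(\G)]},
\end{align*}
where the normalising factor $\Erw[\emm^{\beta\cH_{\G}(\sigma)}]$ cancels cleanly. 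The only subtlety to flag is that this cancellation requires $\Erw[\emm^{\beta\cH_{\G}(\sigma)}] > 0$, which is immediate since $\beta > 0$ and the Hamiltonian is nonnegative (and for $\sigma \notin \Sferro$ both sides are $0$, so the identity holds trivially there).

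\textbf{Left-hand side.} By the definition \eqref{eqDefPlanted} of $\Gferro$, conditioning on $\Gferro = G$ the configuration $\sferroGferro$ is drawn from $\mu_{G,\beta}(\cdot \mid \Sferro)$, i.e.\ with probability $\vecone\cbc{\sigma \in \Sferro}\emm^{\beta\cH_G(\sigma)}/\Zferro(G)$. Hence
\begin{align*}
	\pr\brk{\Gferro = G,\ \sferroGferro = \sigma}
	&= \pr\brk{\Gferro = G}\cdot \frac{\vecone\cbc{\sigma \in \Sferro}\,\emm^{\beta\cH_G(\sigma)}}{\Zferro(G)}\\
	&= \frac{\Zferro(G)\,\pr\brk{\G = G}}{\Erw[\Zferro(\G)]}\cdot \frac{\vecone\cbc{\sigma \in \Sferro}\,\emm^{\beta\cH_G(\sigma)}}{\Zferro(G)}
	= \frac{\vecone\cbc{\sigma \in \Sferro}\,\pr\brk{\G = G}\,\emm^{\beta\cH_G(\sigma)}}{\Erw[\Zferro(\G)]}.
\end{align*}
Here the factor $\Zferro(G)$ cancels; again one should note this is legitimate because whenever $\pr\brk{\Gferro = G} > 0$ we have $\Zferro(G) > 0$, and if $\Zferro(G) = 0$ then $\pr\brk{\Gferro = G} = 0$ and both sides vanish. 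Comparing the two displays, the left- and right-hand sides agree for every pair $(G,\sigma)$, which is exactly \eqref{eqProp_Nishi} for the ferromagnetic case. The paramagnetic identity follows by the verbatim argument with $\Sferro, \Zferro, \sferro, \Gferro$ replaced by $\Spara, \Zpara, \spara, \Gpara$.

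\textbf{Main obstacle.} There is essentially no analytic difficulty — the proof is a one-line cancellation on each side once the definitions are unwound. The only thing requiring care is being scrupulous about the support: making sure that all the normalising constants $\Erw[\emm^{\beta\cH_{\G}(\sigma)}]$, $\Erw[\Zferro(\G)]$, $\Zferro(G)$ appearing in denominators are strictly positive (or that the corresponding numerator vanishes), so that the cancellations are valid rather than formal. Since $\beta > 0$ and $\eps > 0$ is fixed with $\Sferro(\eps), \Spara(\eps)$ nonempty for large $n$ (they contain configurations with the prescribed colour frequencies), this is a routine check.
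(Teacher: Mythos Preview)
Your proof is correct and takes essentially the same approach as the paper: both arguments compute the joint probability mass function and reduce each side to the common expression $\vecone\cbc{\sigma\in\Sferro}\pr\brk{\G=G}\emm^{\beta\cH_G(\sigma)}/\Erw[\Zferro(\G)]$. The paper does the paramagnetic case and transforms one side into the other in a single chain, whereas you compute both sides separately for the ferromagnetic case, but this is a cosmetic difference.
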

\begin{proof} 
	Let $G$ be a $d$-regular graph on $n$ vertices and $\sigma \in [q]^n$. We have
	\begin{align}
		\pr\brk{   (\Gpara, \sparaGpara)=\bc{G,\sigma  }} &= \pr\brk{\sparaGpara = \sigma  \Big\vert \Gpara = G} \pr\brk{ \Gpara=G}  = \mu_{G,\beta} \bc{\sigma \Big\vert \Spara } \frac{\Zpara(G)\pr\brk{\G=G}}{\Erw[\Zpara(\G)]} \label{eq:PGparaSpara}.
	\end{align}
	Moreover, by the definition of the Boltzmann distribution $\mu_{G,\beta}$,
	\begin{align}
		\mu_{G,\beta} \bc{\sigma \vert \Spara } &= \frac{\emm^{\beta\cH_{\G}(\sigma)}\vecone \cbc{ \sigma \in \Spara}  }{ Z(G) \ \mu_{G,\beta}\bc{\Spara}},& 
		\mu_{G}\bc{\Spara} &= \frac{ \Zpara(G) }{ Z(G)}\label{eq:muSigmaSpara}.
	\end{align}
	Combining \eqref{eq:PGparaSpara} and \eqref{eq:muSigmaSpara}, we obtain
	\begin{align*}
		\pr\brk{  (\Gpara, \sparaGpara) =(G, \sigma) } &= \frac{\emm^{\beta\cH_{\G}(\sigma)} \pr\brk {\G=G} }{ \Erw\brk{ \emm^{\beta\cH_{\G}(\sigma)}}} \cdot \frac{\Erw\brk{\emm^{\beta\cH_{\G}(\sigma)}}\vecone \cbc{ \sigma \in \Spara}  }{ \Erw[ \Zpara(\G)]} \\
		  &= \pr\brk{ \hat \G\bc{ \spara } = G  \Big\vert \spara=\sigma } \pr\brk{ \spara = \sigma}=\pr\brk{(\hat\G(\spara)=G,\spara=\sigma)},
	\end{align*}
as claimed.
The very same steps apply to $\Gferro$.
\end{proof}

Nishimori identities were derived in~\cite{CKPZ} for a broad family of planted models which, however, does not include the planted ferromagnetic models $\Gpara,\Gferro$.
Nonetheless, the (simple) proof of \Prop~\ref{Prop_Nishi} is practically identical to the argument from~\cite{CKPZ}.

While the original definition \eqref{eqDefPlanted} of the planted models may appear unwieldy, \Prop~\ref{Prop_Nishi} paves the way for a more hands-on description.
But as a preliminary step we need to get a handle on the empirical distribution of the colours under the random configurations $\sferro,\spara$.
Additionally, we also need to determine the edge statistics $\rho^{\Gpara,\spara}$ and $\rho^{\Gferro,\sferro}$.
The following two lemmas solve these problems for us.

\begin{lemma}\label{lemma_EZp}
	Suppose that $0\leq\beta<\beta_h$.
	Then $\Erw[\Zpara(\G)]=\exp(nF_{d,\beta}(\nupara,\rhopara)+O(\log n))$.
\end{lemma}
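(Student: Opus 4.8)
The plan is to compute $\Erw[\Zpara(\G)]$ directly by summing over configurations $\sigma\in\Spara(\eps)$ and using \Lem~\ref{Lemma_Samuel} to control, for each $\sigma$, the probability that $\rho^{\G,\sigma}$ equals a prescribed edge-type matrix $\rho$. Writing $\Zpara(\G)=\sum_{\sigma\in\Spara}\emm^{\beta\cH_\G(\sigma)}$ and noting $\cH_\G(\sigma)=\tfrac{dn}2\sum_{s}\rho^{\G,\sigma}(s,s)$, linearity of expectation gives
\begin{align*}
\Erw[\Zpara(\G)]=\sum_{\sigma\in\Spara}\ \sum_{\rho\in\cR(\nu^\sigma)}\emm^{\frac{d\beta n}2\sum_s\rho(s,s)}\,\pr\brk{\rho^{\G,\sigma}=\rho},
\end{align*}
where the inner sum ranges over admissible (integrality/parity) $\rho$. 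Inserting the estimate from \Lem~\ref{Lemma_Samuel} for $\pr[\cG(\sigma,\rho)]$, the summand becomes $\exp\big(nF_{d,\beta}(\nu^\sigma,\rho)+O(\log n)\big)$ after rewriting $\tfrac{dn}2\sum_{s,t}\rho(s,t)\log\tfrac{\nu^\sigma(s)\nu^\sigma(t)}{\rho(s,t)}+\tfrac{d\beta n}2\sum_s\rho(s,s)$ in terms of $F_{d,\beta}$ via \eqref{eqFirstMmtBethe} (the $(d-1)\sum_s\nu(s)\log\nu(s)$ term appears because $\sum_{s,t}\rho(s,t)\log(\nu(s)\nu(t))=2\sum_s\nu(s)\log\nu(s)$, and one uses $\sum_t\rho(s,t)=\nu(s)$). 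Since the number of colour types $\nu^\sigma$ and the number of admissible $\rho$ are both $\mathrm{poly}(n)$ (there are at most $(dn+1)^{q^2}$ choices of $\rho$, and at most $(n+1)^q$ colour histograms), the sum is dominated up to a polynomial factor by its largest term, so
\begin{align*}
\tfrac1n\log\Erw[\Zpara(\G)]=\max_{\nu:\,\|\nu-\nupara\|_1<\eps}\ \max_{\rho\in\cR(\nu)}F_{d,\beta}(\nu,\rho)+O(\tfrac{\log n}n).
\end{align*}

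It then remains to show that this constrained maximum equals $F_{d,\beta}(\nupara,\rhopara)$ exactly for all $\eps$ small enough (with no $o(1)$ slack), i.e.\ that $(\nupara,\rhopara)$ is the \emph{unique} and \emph{strict} local maximiser of $F_{d,\beta}$ on the region $\{\nu:\|\nu-\nupara\|_1<\eps\}$. This is where the hypothesis $\beta<\KS$ enters. By \Lem~\ref{Lemma_Jean}, stable local maxima of $F_{d,\beta}$ correspond bijectively to stable fixed points in $\Fplus$; the paramagnetic point $\nupara$ is one such, and one should argue that it is the unique fixed point in a neighbourhood of itself and that the corresponding maximum is strict in the sense of \eqref{eqstabmax}. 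Concretely, I would first optimise out $\rho$ for fixed $\nu$: for fixed $\nu$ the function $\rho\mapsto F_{d,\beta}(\nu,\rho)$ is strictly concave on the polytope $\cR(\nu)$ (the entropy term $-d\sum_{s\le t}\rho(s,t)\log\rho(s,t)$ is strictly concave and the remaining term is linear), so it has a unique maximiser $\rho^\star(\nu)$ depending smoothly on $\nu$ in the interior, yielding a smooth reduced function $\Phi(\nu):=F_{d,\beta}(\nu,\rho^\star(\nu))$ on a neighbourhood of $\nupara$ in the simplex. Then I would invoke the fact (from \cite{Andreas}, underlying \Prop~\ref{Prop_max}(i)--(ii) and \Prop~\ref{Prop_nonre}(i)) that for $\beta<\KS$ the paramagnetic fixed point is stable, hence $\nupara$ is a strict local maximum of $\Phi$; shrinking $\eps$ so that the $\eps$-ball around $\nupara$ contains no other critical point of $\Phi$ (possible since stability gives a negative-definite Hessian, so $\nupara$ is isolated among critical points and $\Phi(\nu)\le\Phi(\nupara)$ throughout a small ball) finishes the argument. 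Here one must also check the boundary of the $\ell_1$-ball is handled — but continuity plus strictness of the interior maximum means the max over the closed small ball is attained at $\nupara$, and the open-ball max agrees.

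The main obstacle I anticipate is precisely this last step: converting the \emph{asymptotic} statement ``$\Erw[\Zpara]=\exp(n\,\mathrm{(max)}+O(\log n))$'' into the claimed clean form requires that the constrained maximum of $F_{d,\beta}$ over the $\eps$-neighbourhood is \emph{exactly} $F_{d,\beta}(\nupara,\rhopara)$, not merely within $o(1)$ of it, and this rests on the local strict-maximality of the paramagnetic point which in turn rests on its stability as a fixed point of \eqref{eqBP} — a fact that holds in the regime $\beta<\KS$ but fails above $\KS$ (cf.\ the role of $\KS$ as the threshold where $\mupara$ ceases to be a local maximiser of the Bethe functional). A secondary technical point is ensuring the estimate of \Lem~\ref{Lemma_Samuel} applies uniformly: it requires $\rho$ to have strictly positive entries and to satisfy integrality/parity; near $\nupara=\mathrm{unif}$ the optimal $\rho^\star(\nupara)$ has all entries bounded away from $0$, so positivity is automatic for small $\eps$, and rounding a near-optimal real $\rho$ to an admissible lattice point changes $F_{d,\beta}$ by only $O(1/n)$ and the count of such roundings by a polynomial factor — both absorbed into the $O(\log n)$ error. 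Modulo these checks, the computation is routine.
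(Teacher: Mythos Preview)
Your proposal is correct and takes essentially the same approach as the paper: both sum over configurations and edge-type matrices via \Lem~\ref{Lemma_Samuel}, note there are only polynomially many $(\nu,\rho)$ terms, and then use that $(\nupara,\rhopara)$ is a stable local maximum of $F_{d,\beta}$ for $\beta<\KS$ (by \Lem~\ref{Lemma_Jean} and \Prop~\ref{Prop_max}). The paper is terser---splitting into an explicit lower bound from a single near-uniform configuration and an upper bound citing those two results directly---while you spell out the optimise-out-$\rho$ reduction and the strict-local-max argument more explicitly; your reference to \Prop~\ref{Prop_nonre} is unnecessary, as the required stability is already contained in \Prop~\ref{Prop_max}.
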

\begin{proof}
	To obtain a lower bound on $\Erw[\Zpara(\G)]$ let $\sigma_0\in[q]^n$ be a configuration such that $|\sigma_0^{-1}(s)|=\frac nq+O(1)$ for all $s\in[q]$.
	Let $\nu(s)=|\sigma_0^{-1}(s)|/n$.
	Then
		\begin{align*}
			\rho^\nu(s,t)&=\frac{\eul^{\beta\vecone\{s=t\}}}{q(q-1+\eul^\beta)}+O(1/n)=\rhopara(s,t)+O(1/n)&&(s,t\in[q]).
		\end{align*}
	Therefore, \Lem~\ref{Lemma_Samuel} yields
	\begin{align}\nonumber
		\Erw[\Zpara(\G)]&\geq\sum_{\sigma\in[q]^n}\vecone\cbc{\forall s\in[q]:|\sigma^{-1}(s)|=n\nu(s)}\pr\brk{\cG(\sigma,\rho^\nu)}\exp\bc{\frac{\beta \eul^\beta dn}{2(q-1+\eul^\beta)}+O(1)}\\
						&\geq q^n\exp\bc{\frac{\beta \eul^\beta dn}{2(q-1+\eul^\beta)}+O(\log n)}=\exp\bc{nF_{d,\beta}(\nupara,\rhopara)+O(\log n)}.\label{eq_lemma_EZp_1}
		\end{align}
	Conversely, since there are only $n^{O(1)}$ choices of $\nu,\rho$, \Lem~\ref{Lemma_Jean} and \Prop~\ref{Prop_max} imply that
		\begin{align}\label{eq_lemma_EZp_2}
		\Erw[\Zpara(\G)]&\leq \exp\bc{nF_{d,\beta}(\nupara,\rhopara)+O(\log n)}.
		\end{align}
	The assertion follows from \eqref{eq_lemma_EZp_1} and \eqref{eq_lemma_EZp_2}.
\end{proof}

\begin{lemma}\label{lemma_EZf}
	Suppose that $\beta>\beta_u$.
	Then $\Erw[\Zferro(\G)]=\exp(nF_{d,\beta}(\nuferro,\rhoferro)+O(\log n))$.
\end{lemma}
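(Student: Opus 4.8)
The plan is to prove Lemma~\ref{lemma_EZf} by mimicking the argument for Lemma~\ref{lemma_EZp}, i.e.\ a matching pair of upper and lower bounds on $\Erw[\Zferro(\G)]$, the upper bound coming from the first-moment optimisation (Lemma~\ref{Lemma_Jean} and Proposition~\ref{Prop_max}) and the lower bound from a single well-chosen configuration. For the upper bound: by linearity of expectation and Lemma~\ref{Lemma_Samuel}, $\Erw[\Zferro(\G)] = \sum_{\sigma\in\Sferro}\Erw[\emm^{\beta\cH_\G(\sigma)}]$, and grouping configurations by their colour statistics $\nu^\sigma$ and (taking expectation over the graph) summing the edge-statistics contributions, each term is $\exp(nF_{d,\beta}(\nu,\rho)+O(\log n))$ for the relevant $(\nu,\rho)$. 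Since there are only $n^{O(1)}$ choices of $(\nu,\rho)$, we get $\Erw[\Zferro(\G)]\le \exp(n\max F_{d,\beta}(\nu,\rho)+O(\log n))$ where the max is over $\nu\in\cP([q])$ with $\nu$ within $\eps$ (in the relevant norm) of $\nuferro$ and $\rho\in\cR(\nu)$. The point is that, by continuity of $F_{d,\beta}$ and the fact (Proposition~\ref{Prop_max}, together with Lemma~\ref{Lemma_Jean}) that on the closure of a small neighbourhood of $\nuferro$ the function $F_{d,\beta}$ is maximised at $(\nuferro,\rhoferro)$ — indeed $(\nuferro,\rhoferro)$ is a \emph{stable} local maximum in the sense of \eqref{eqstabmax} — this maximum is $F_{d,\beta}(\nuferro,\rhoferro)+o_\eps(1)$, and in fact equals $F_{d,\beta}(\nuferro,\rhoferro)$ exactly once $\eps$ is small enough that no other critical point lies in the $\eps$-ball. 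So $\Erw[\Zferro(\G)]\le\exp(nF_{d,\beta}(\nuferro,\rhoferro)+O(\log n))$.

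For the lower bound, I would fix a configuration $\sigma_0\in[q]^n$ whose colour class sizes match $n\nuferro$ up to $O(1)$ rounding, so that $\nu^{\sigma_0}=\nuferro+O(1/n)$ and hence $\sigma_0\in\Sferro(\eps)$ for all large $n$. Then, restricting the sum defining $\Erw[\Zferro(\G)]$ to just those $\sigma$ with $|\sigma^{-1}(s)| = |\sigma_0^{-1}(s)|$ for all $s$, and among the graph outcomes to the event $\cG(\sigma,\rho)$ with $\rho=\rhoferro+O(1/n)$ (the natural edge statistics induced by the broadcasting/pairing heuristic, or simply the maximiser $\rho$ of $F_{d,\beta}(\nuferro,\cdot)$ over $\cR(\nuferro)$, which is $\rhoferro$ by Lemma~\ref{Lemma_Jean}), Lemma~\ref{Lemma_Samuel} gives that each such pair contributes $\exp\bigl(\tfrac{dn}{2}\sum_{s,t}\rho(s,t)\log\tfrac{\nu(s)\nu(t)}{\rho(s,t)} + \tfrac{d\beta}{2}\sum_s\rho(s,s)\,n + O(\log n)\bigr)$. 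Multiplying by the number of such $\sigma$, which is $\exp(nH(\nuferro)+O(\log n)) = \exp\bigl(-n\sum_s\nuferro(s)\log\nuferro(s)+O(\log n)\bigr)$, and collecting terms, one recognises exactly $nF_{d,\beta}(\nuferro,\rhoferro)+O(\log n)$, using the definition \eqref{eqFirstMmtBethe} of $F_{d,\beta}$. Hence $\Erw[\Zferro(\G)]\ge\exp(nF_{d,\beta}(\nuferro,\rhoferro)+O(\log n))$, and combining the two bounds proves the lemma.

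The one genuine subtlety, and the place I would be most careful, is the role of the hypothesis $\beta>\uniq$ and of the truncation radius $\eps$ in the upper bound. For $\beta>\uniq$ Proposition~\ref{Prop_max} guarantees that $\nuferro$ (equivalently the stable fixed point $\muferro$) actually exists and that $(\nuferro,\rhoferro)$ is a stable local maximum of $F_{d,\beta}$; without $\beta>\uniq$ there is simply no ferromagnetic solution and the statement is vacuous or false. Moreover I need that the stable local maximum at $(\nuferro,\rhoferro)$ is \emph{strict} within a fixed neighbourhood, i.e.\ there are no other local maxima or stationary points of $F_{d,\beta}$ at the same height arbitrarily close to $(\nuferro,\rhoferro)$; this is exactly condition \eqref{eqstabmax}, which holds by Lemma~\ref{Lemma_Jean} since $\muferro\in\Fplus$. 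One has to make sure the quantifiers are in the right order: first the statement is ``for all sufficiently small $\eps>0$'', so I may choose $\eps=\eps(d,\beta,q)$ small enough that the $\eps$-ball around $\nuferro$ in the space of colour statistics contains no competing maximiser of $F_{d,\beta}(\cdot,\cdot)$ over the admissible $(\nu,\rho)$; then the upper bound's maximum over that ball is attained precisely at $(\nuferro,\rhoferro)$ and the $O(\log n)$ error absorbs everything else. (Strictly, the lemma as stated does not mention $\eps$; presumably $\Zferro$ here is $\Zferro(\G)=\Zferro^\eps(\G)$ for the same small $\eps$ fixed elsewhere, and one simply needs $\eps$ small enough for the neighbourhood argument — I would state that dependence explicitly.) Everything else is bookkeeping with Lemma~\ref{Lemma_Samuel} and Stirling-type estimates already packaged into the $O(\log n)$ terms.
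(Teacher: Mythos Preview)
Your proposal is correct and follows essentially the same approach as the paper's proof: a lower bound from a single configuration $\sigma_0$ with colour statistics $\nuferro+O(1/n)$ together with \Lem~\ref{Lemma_Samuel}, and an upper bound from the polynomial number of $(\nu,\rho)$ pairs combined with \Lem~\ref{Lemma_Jean} and \Prop~\ref{Prop_max}. Your explicit discussion of why $\eps$ must be chosen small enough (so that $(\nuferro,\rhoferro)$ is the unique maximiser of $F_{d,\beta}$ on the restricted domain, via the stable-local-maximum property) is a point the paper leaves implicit but is indeed the correct justification.
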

\begin{proof}
	As in the proof of \Lem~\ref{lemma_EZp} let $\sigma_0\in[q]^n$ be a configuration such that $|\sigma_0^{-1}(s)|=n\nuferro(s)+O(1)$ for all $s\in[q]$.
	Letting $\nu(s)=|\sigma_0^{-1}(s)|/n$ we see that $\rho^{\nu}(s,t)=\rhoferro(s,t)+O(1/n)$ for all $s,t\in[q]$.
	Therefore, \Lem~\ref{Lemma_Samuel} yields
	\begin{align}\nonumber
		\Erw[\Zferro(\G)]&%\geq\sum_{\sigma\in[q]^n}\vecone\cbc{\forall s\in[q]:|\sigma^{-1}(s)|=n\nu(s)}\pr\brk{\cG(\sigma,\rho^\nu)}\exp\bc{\frac{\beta \eul^\beta dn}{2(q-1+\eul^\beta)}+O(1)}\\
						\geq \binom{n}{\nu n}\exp\bc{-\frac{dn}2\KL{\rho^\nu}{\nu\tensor\nu}
	+\frac{\beta\eul^{\beta} dn\sum_{s\in[q]}\muferro(s)^2}{2\big(1+(\eul^\beta-1)\sum_{s\in[q]}\muferro(s)^2\big)}+O(\log n)}\\
							&=\exp\bc{nF_{d,\beta}(\nuferro,\rhoferro)+O(\log n)}.\label{eq_lemma_EZf_1}
	\end{align}
	As for the upper bound, once again because there are only $n^{O(1)}$ choices of $\nu,\rho$, \Lem~\ref{Lemma_Jean} and \Prop~\ref{Prop_max} yield
		\begin{align}\label{eq_lemma_EZf_2}
		\Erw[\Zferro(\G)]&\leq \exp\bc{nF_{d,\beta}(\nuferro,\rhoferro)+O(\log n)}.
		\end{align}
	Combining the lower and upper bounds from \eqref{eq_lemma_EZf_1} and \eqref{eq_lemma_EZf_2} completes the proof.
\end{proof}

\begin{lemma}\label{Lemma_spara}
For any integers $d,q\geq3$ and real $\beta\in(0,\beta_h)$, there exist $c,t_0>0$ such that 
\begin{align*}
\pr\brk{\dTV\bc{\nu^{\spara},\nupara }+\dTV\bc{\rho^{\hG(\spara),\spara},\rhopara}>t}\leq\exp(-ct^2n+O(\log n))\qquad\mbox{for all }0\leq t<t_0.
\end{align*}
\end{lemma}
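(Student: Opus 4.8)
The plan is to establish concentration of the pair $(\nu^{\spara}, \rho^{\hG(\spara),\spara})$ around $(\nupara,\rhopara)$ by a union bound over the relevant statistics, using the explicit formula for $\pr[\spara=\sigma]$ from \eqref{eqsferrospara} together with \Lem~\ref{Lemma_Samuel} and the known value of $\Erw[\Zpara(\G)]$ from \Lem~\ref{lemma_EZp}. First I would observe that, by the Nishimori identity \Prop~\ref{Prop_Nishi}, the joint law of $(\hG(\spara),\spara)$ is the same as the law of a pair $(G,\sigma)$ in which $\sigma$ is drawn with probability proportional to $\vecone\{\sigma\in\Spara\}\Erw[\emm^{\beta\cH_\G(\sigma)}]$ and then $G$ is drawn from $\hG(\sigma)$. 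Conditioning on $\sigma$, the edge statistics $\rho^{\hG(\sigma),\sigma}=\rho$ occur with the probability given by \Lem~\ref{Lemma_Samuel}, reweighted by $\emm^{\beta\cH_G(\sigma)}/\Erw[\emm^{\beta\cH_\G(\sigma)}] = \emm^{\frac{d\beta n}{2}\rho(\cdot,\cdot)\text{-diagonal}}/\Erw[\cdots]$; hence the whole quantity $\pr[\nu^{\spara}=\nu,\ \rho^{\hG(\spara),\spara}=\rho]$ equals, up to $\exp(O(\log n))$ and normalisation by $\Erw[\Zpara(\G)]$, the exponential $\exp(nF_{d,\beta}(\nu,\rho))$, where $F_{d,\beta}$ is exactly the first-moment functional from \eqref{eqFirstMmtBethe}. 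This is the standard ``planted = tilted by the first moment'' mechanism, and it reduces the lemma to a quantitative strict-concavity statement for $F_{d,\beta}$ at $(\nupara,\rhopara)$.

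The second step is therefore to invoke the stability of the paramagnetic maximiser. By \Prop~\ref{Prop_max}(i)--(ii), for $\beta<\KS$ the point $(\nupara,\rhopara)$ is a \emph{stable} local maximum of $F_{d,\beta}$ in the sense of \eqref{eqstabmax}: there are $c,\delta>0$ with $F_{d,\beta}(\nu,\rho)\le F_{d,\beta}(\nupara,\rhopara)-c(\|\nu-\nupara\|^2+\|\rho-\rhopara\|^2)$ whenever $\|\nu-\nupara\|+\|\rho-\rhopara\|<\delta$. Combining this with \Lem~\ref{lemma_EZp}, which identifies $\tfrac1n\log\Erw[\Zpara(\G)]$ with $F_{d,\beta}(\nupara,\rhopara)+O(\log n/n)$, gives for every admissible $(\nu,\rho)$ in the $\delta$-ball that
\begin{align*}
\pr\brk{\nu^{\spara}=\nu,\ \rho^{\hG(\spara),\spara}=\rho}\le \exp\bc{-cn\bc{\|\nu-\nupara\|^2+\|\rho-\rhopara\|^2}+O(\log n)}.
\end{align*}
Summing over the at most $n^{O(1)}$ choices of $(\nu,\rho)$ with $\|\nu-\nupara\|+\|\rho-\rhopara\|$ between $t$ and $\delta$, and recalling that $\dTV$ differs from the $\ell_1$-norm only by a constant factor so that $\dTV(\nu^{\spara},\nupara)+\dTV(\rho^{\hG(\spara),\spara},\rhopara)>t$ forces $\|\nu-\nupara\|+\|\rho-\rhopara\|\gtrsim t$, yields the claimed bound $\exp(-ct^2 n+O(\log n))$ for $t<t_0:=\delta$ (after adjusting the constant $c$). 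One should also note that for $t$ in this range the constraint $\sigma\in\Spara$ is automatically compatible with $\nu$ close to $\nupara$, and that configurations with $\nu^\sigma$ outside the $\eps$-window defining $\Spara$ simply do not contribute, so the restriction to $\Spara$ only helps.

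The main obstacle, and the one place the argument needs care, is the $O(\log n)$ (rather than $o(n)$) error in both \Lem~\ref{Lemma_Samuel} and \Lem~\ref{lemma_EZp}: when $t$ is of order $n^{-1/2}$ the exponent $cnt^2$ is of order $1$ and must not be swamped by the polynomial error, and also the number of lattice points $(\nu,\rho)$ at distance $\Theta(t)$ from $(\nupara,\rhopara)$ is itself polynomial in $n$. This is exactly why the statement only claims a bound of the form $\exp(-ct^2n+O(\log n))$ — the $O(\log n)$ is carried through unchanged — so the resolution is purely bookkeeping: track the polynomial factors explicitly and absorb them into the $O(\log n)$ term rather than trying to beat them. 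A secondary (minor) point is that \Lem~\ref{Lemma_Samuel} as stated requires $\rho$ to have strictly positive entries with $dn\rho(s,t)\in\ZZ$ and $dn\rho(s,s)$ even; since $\rhopara$ has all entries bounded away from $0$, for $t_0$ small enough every $(\nu,\rho)$ in the $t_0$-ball meets the positivity requirement, and the integrality/parity constraints only restrict the sum further, so they cause no loss. With these observations the estimate follows, and I would present it in this order: (1) Nishimori reduction to the tilted first-moment formula; (2) quantitative concavity of $F_{d,\beta}$ at $(\nupara,\rhopara)$ via \Prop~\ref{Prop_max}; (3) union bound over the $n^{O(1)}$ colour/edge statistics, converting $\ell_1$ to total variation.
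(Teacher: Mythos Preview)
Your proposal is correct and follows essentially the same route as the paper: compute $\pr[\nu^{\spara}=\nu,\ \rho^{\hG(\spara),\spara}=\rho]$ via \Lem~\ref{Lemma_Samuel} and \Lem~\ref{lemma_EZp} as $\exp(n(F_{d,\beta}(\nu,\rho)-F_{d,\beta}(\nupara,\rhopara))+O(\log n))$, then invoke the quadratic stability \eqref{eqstabmax} of the paramagnetic maximum from \Prop~\ref{Prop_max} and sum over the $n^{O(1)}$ lattice points. One small remark: the appeal to the Nishimori identity is unnecessary here, since the joint law of $(\hG(\spara),\spara)$ that you describe is literally the definition \eqref{eqGsigma}--\eqref{eqsferrospara}; Nishimori (\Prop~\ref{Prop_Nishi}) would only be needed if the statement were phrased for $(\Gpara,\sparaGpara)$ instead.
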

\begin{proof}
	Suppose that $\nu$ is a probability distribution on $[q]$ such that $n\nu(s)$ is an integer for all $s\in[q]$.
	Moreover, suppose that $\rho=(\rho(s,t))_{s,t\in[q]}$ is a symmetric matrix such that $dn\rho(s,t)$ is an integer for all $s,t\in[q]$, $dn\rho(s,s)$ is even for all $s\in[q]$ and
	$\sum_{t=1}^q\rho(s,t)=\nu(s)$ for all $s\in[q]$.
	Retracing the steps of the proof of \Lem~\ref{lemma_EZf}, we see that
	\begin{align}\label{eq:summingsigma}
		\sum_{\sigma\in[q]^n}\vecone\cbc{\nu^{\sigma}=\nu}\pr\brk{\cG(\sigma,\rho)}\exp\bc{\frac{\beta dn}2\sum_{s=1}^q\rho(s,s)}
						&=\exp\bc{nF_{d,\beta}(\nu,\rho)+O(\log n)}.
		\end{align}
Therefore, the assertion follows from \Prop~\ref{Prop_max} and the definition \eqref{eqstabmax} of stable local maxima. 
\end{proof}

\begin{lemma}\label{Lemma_sferro}
For any integers $d,q\geq3$ and real $\beta>\uniq$, there exist $c,t_0>0$ such that
\begin{align*}
	\pr\brk{\dTV\bc{\nu^{\sferro},\nuferro}+\dTV\bc{\rho^{\hG(\sferro),\sferro},\rhoferro}>t}\leq\exp(-ct^2n+O(\log n))\qquad\mbox{for all }0\leq t<t_0.
\end{align*}
\end{lemma}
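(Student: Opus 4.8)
The plan is to mirror the proof of Lemma~\ref{Lemma_spara} essentially verbatim, replacing the paramagnetic pair $(\nupara,\rhopara)$ with the ferromagnetic pair $(\nuferro,\rhoferro)$ and invoking the fact that $(\nuferro,\rhoferro)$ is a \emph{stable} local maximum of $F_{d,\beta}$ in the relevant regime.  First I would note that, by the ferromagnetic Nishimori identity (\Prop~\ref{Prop_Nishi}), the pair $(\hG(\sferro),\sferro)$ has the same law as $(\Gferro,\sferroGferro)$, so it suffices to control $\dTV(\nu^{\sferro},\nuferro)+\dTV(\rho^{\hG(\sferro),\sferro},\rhoferro)$ directly from the explicit distribution \eqref{eqsferrospara} of $\sferro$ together with \Lem~\ref{Lemma_Samuel} for the edge statistics.

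Next I would reduce to a large-deviations count.  Fix a probability distribution $\nu$ on $[q]$ with $n\nu(s)\in\ZZ$ and a symmetric matrix $\rho\in\cR(\nu)$ with $dn\rho(s,t)\in\ZZ$, $dn\rho(s,s)$ even.  Exactly as in \eqref{eq:summingsigma} (which is just a re-reading of the computation in \Lem~\ref{lemma_EZf}), one has
\begin{align*}
	\sum_{\sigma\in[q]^n}\vecone\cbc{\nu^{\sigma}=\nu}\pr\brk{\cG(\sigma,\rho)}\exp\bc{\tfrac{\beta dn}2\textstyle\sum_{s}\rho(s,s)}=\exp\bc{nF_{d,\beta}(\nu,\rho)+O(\log n)}.
\end{align*}
Summing the numerator of \eqref{eqsferrospara} over all $\sigma\in\Sferro$ with $\nu^\sigma=\nu$ and all valid $\rho$ with $\rho^{\hG(\sigma),\sigma}=\rho$, and dividing by $\Erw[\Zferro(\G)]=\exp(nF_{d,\beta}(\nuferro,\rhoferro)+O(\log n))$ from \Lem~\ref{lemma_EZf}, gives
\begin{align*}
	\pr\brk{\nu^{\sferro}=\nu,\ \rho^{\hG(\sferro),\sferro}=\rho}=\exp\bc{n\bc{F_{d,\beta}(\nu,\rho)-F_{d,\beta}(\nuferro,\rhoferro)}+O(\log n)}.
\end{align*}
Now apply \Prop~\ref{Prop_max}(ii),(iii): for $\beta>\uniq$ the pair $(\nuferro,\rhoferro)$ is a stable local maximum of $F_{d,\beta}$, so there are $\delta,c_0>0$ with $F_{d,\beta}(\nu,\rho)\le F_{d,\beta}(\nuferro,\rhoferro)-c_0(\|\nu-\nuferro\|^2+\|\rho-\rhoferro\|^2)$ whenever $\|\nu-\nuferro\|+\|\rho-\rhoferro\|<\delta$.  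Hence for $t<t_0:=\delta/2$, summing the bound above over the (polynomially many) admissible $(\nu,\rho)$ with $\dTV(\nu,\nuferro)+\dTV(\rho,\rhoferro)$ in $(t,t_0)$, and comparing $\dTV$ with the Euclidean norm up to constants, yields the stated $\exp(-ct^2 n+O(\log n))$ tail.  One only needs to separately observe that the event $\dTV(\nu^\sferro,\nuferro)\ge t_0$ is impossible for small enough $\eps$ since $\sferro\in\Sferro(\eps)$ already forces $\nu^\sferro$ to be within $\eps$ of $\nuferro$ in total variation; so the truncation to $\|\cdot\|<\delta$ is harmless.

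The main subtlety — not really an obstacle, but the one point deserving care — is that $(\nuferro,\rhoferro)$ is only a \emph{local}, not global, maximum of $F_{d,\beta}$ for $\uniq<\beta<\curie$, so one cannot bound the probability of being far from $(\nuferro,\rhoferro)$ by a global argument.  This is precisely why the statement is restricted to a neighbourhood $t<t_0$ of $\nuferro$: the constraint $\sferro\in\Sferro(\eps)$ with $\eps$ small confines $\sferro$ to that neighbourhood, so the competing global maximum at $(\nupara,\rhopara)$ never enters.  Everything else is the routine bookkeeping of the pairing-model first-moment computation already carried out in \Lem s~\ref{lemma_EZf} and~\ref{Lemma_spara}.
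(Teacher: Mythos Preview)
Your proposal is correct and is exactly the argument the paper has in mind: the paper's own proof reads in full ``The argument from the proof of \Lem~\ref{Lemma_spara} applies \emph{mutatis mutandis},'' and you have simply spelled out those mutations (swap $(\nupara,\rhopara)$ for $(\nuferro,\rhoferro)$, use \Lem~\ref{lemma_EZf} in place of \Lem~\ref{lemma_EZp}, and invoke the stability of $(\nuferro,\rhoferro)$ from \Prop~\ref{Prop_max}). Your remark about the local-versus-global maximum and the role of the confinement $\sferro\in\Sferro(\eps)$ is a useful clarification of why the argument goes through for $\uniq<\beta<\curie$; the invocation of \Prop~\ref{Prop_Nishi} at the start is harmless but unnecessary, since the proof works directly from \eqref{eqsferrospara} and \Lem~\ref{Lemma_Samuel}.
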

\begin{proof}
	The argument from the proof of \Lem~\ref{Lemma_spara} applies {\em mutatis mutandis}.
\end{proof}

At this point we have handy, constructive descriptions of the models $\Gpara,\Gferro$.
Indeed, \Lem s~\ref{Lemma_spara} and~\ref{Lemma_sferro} provide that the planted configurations $\spara$ and $\sferro$ have colour statistics approximately equal to $\nupara$ and $\nuferro$ \whp, respectively.
Moreover, because the random graph models are invariant under permutations of the vertices, $\spara$ and $\sferro$ are uniformly random given their colour statistics.
In addition, the edge statistics of the random graphs $\hG(\spara)$ and $\hG(\sferro)$ concentrate about $\rhoferro$ and $\rhopara$.
Once more because of permutation invariance, the random graphs $\hG(\spara)$ and $\hG(\sferro)$ themselves are uniformly random given the planted assignment $\spara$ or $\sferro$ and given the edge statistics.

Thus, let $\fSferro$ and $\fSpara$ be the $\sigma$-algebras generated by $\sferro,\rho^{\Gferro,\sferro}$ and $\spara,\rho^{\Gpara,\spara}$, respectively.
Then we can use standard techniques from the theory of random graphs to derive typical properties of $\hG(\spara)$ given $\fSpara$ and of $\hG(\sferro)$ given $\fSferro$, which are distributed precisely as $\Gpara$ and $\Gferro$ by \Prop~\ref{Prop_Nishi}.
Using these characterisations, we are now going to prove \Prop~\ref{Prop_nonreG}.

\subsection{Proof of \Prop~\ref{Prop_nonreG}}\label{Sec_Prop_nonreG}
\Lem~\ref{Lemma_spara} gives sufficiently accurate information as to the distribution of $\spara,\rho^{\Gpara,\spara}$ for us to couple the distribution of the colouring produced by the broadcasting process and the colouring that $\spara$ induces on the neighbourhood of some particular vertex of $\Gpara$, say $v$.

\begin{lemma}\label{Lemma_nonrecPara}
Let $d,q\geq3$ be integers and $\beta\in(0,\KS)$ be real.
Then, for any vertex $v$ and any non-negative integer $\ell=o(\log n)$, given $\fSpara$ \whp\ we have 
\begin{align*}
\dTV(\hat\SIGMA_{\mathrm{p},\partial^\ell v},\tau_{\partial^\ell o})=O\bc{d^\ell\bc{\dTV(\nu^{\spara},\nupara)+\dTV(\rho^{\hG(\spara),\spara},\rhopara)+O(n^{-0.99})}}.
\end{align*}
\end{lemma}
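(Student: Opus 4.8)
The plan is to build an explicit coupling between the broadcasting process $\vec\sigma_{d,\beta,\mupara}$ on the $(d-1)$-ary tree and the configuration $\hat\SIGMA_{\mathrm p}$ restricted to the depth-$\ell$ neighbourhood of $v$ in $\Gpara$, exploiting the constructive description of $\Gpara$ that follows from the Nishimori identity (\Prop~\ref{Prop_Nishi}) together with \Lem~\ref{Lemma_spara}. The first step is to record the two standard structural facts about $\hG(\spara)$ given $\fSpara$: since the pairing model is invariant under vertex permutations, $\spara$ is uniformly random given its colour statistics $\nu^{\spara}$, and $\hG(\spara)$ is uniformly random among $d$-regular multigraphs with edge statistics $\rho^{\hG(\spara),\spara}$ given $\spara$ and those statistics. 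Hence we may reveal the ball $B_\ell(v)$ by a breadth-first exploration of the pairing: each time we expose a clone, its partner is a uniformly random unmatched clone, and the induced colour law along an edge from a vertex of colour $s$ is, up to $O(n^{-0.99})$ corrections coming from the finite number of already-matched clones, exactly $\rho^{\hG(\spara),\spara}(s,\cdot)/\nu^{\spara}(s)$. By \Lem~\ref{Lemma_spara} this is within total variation $\dTV(\nu^{\spara},\nupara)+\dTV(\rho^{\hG(\spara),\spara},\rhopara)+O(n^{-0.99})$ of the broadcasting transition kernel $\pr[\vec\sigma_v=\cdot\mid\vec\sigma_u]$ built from $\mupara$ (using that $\rhopara(s,t)/\nupara(s)$ equals precisely the broadcasting kernel $\mupara(t)\eul^{\beta\vecone\{s=t\}}/(\sum_{c}\mupara(c)\eul^{\beta\vecone\{c=s\}})$, which is the content of the $\rho^\mu$ formula in \Lem~\ref{Lemma_Jean}).

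The second step is the coupling bookkeeping. We also need the ball $B_\ell(v)$ itself to be a tree, i.e.\ free of cycles; \whp\ the $\ell=o(\log n)$ neighbourhood of a fixed vertex in $\Gpara$ is a $(d-1)$-ary tree, because $\Gpara$ is contiguous to $\GG$ (one may either argue this directly from the exploration — the probability of closing a cycle within $d^{O(\ell)}=n^{o(1)}$ exposed clones is $n^{-1+o(1)}$ — or invoke that the planting reweighting $\Zpara(G)/\Erw[\Zpara(\G)]$ is $\eul^{O(n)}$ and cannot destroy a property failing with probability $\eul^{-\omega(n)}$; the direct argument is cleaner here). On the event that $B_\ell(v)$ is a tree, we couple the root colour $\hat\SIGMA_{\mathrm p,v}$ with $\vec\sigma_o$: the marginal of $\hat\SIGMA_{\mathrm p,v}$ is within $\dTV(\nu^{\spara},\nupara)$ of $\nupara=\nu^{\mupara}$, again by \Lem~\ref{Lemma_spara}. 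Then we propagate the coupling down the $\ell$ levels: at each vertex we apply the maximal coupling of the two transition kernels, which fails with probability at most the total-variation bound above. A union bound over the at most $O(d^\ell)$ vertices of $B_\ell(v)$ gives that the whole coupling of $\hat\SIGMA_{\mathrm p,\partial^\ell v}$ with $\tau_{\partial^\ell o}$ fails with probability $O(d^\ell(\dTV(\nu^{\spara},\nupara)+\dTV(\rho^{\hG(\spara),\spara},\rhopara)+O(n^{-0.99})))$, which is exactly the claimed bound since a coupling failure probability dominates the total variation distance.

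The main obstacle is the accounting of the $O(n^{-0.99})$ term: one has to check that the dependence introduced by conditioning on the \emph{global} statistics $\fSpara$ — i.e.\ that exposing $n^{o(1)}$ clones of $B_\ell(v)$ only perturbs the conditional edge-colour law by $O(n^{-0.99})$ rather than by something depending on $\ell$ — is handled uniformly. This is a routine ``the first $n^{o(1)}$ steps of the configuration model look i.i.d.'' estimate, but it must be done carefully because the colour statistics are being conditioned on exactly (not just approximately), so one argues via the hypergeometric-type concentration that removing $n^{o(1)}$ clones from each colour class changes the relevant ratios by $O(n^{-0.99})$; the choice of the exponent $0.99$ (versus the naive $1-o(1)$) is precisely to absorb the $d^\ell=n^{o(1)}$ factor. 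Once this is in place, the rest is the standard telescoping coupling argument and the lemma follows.
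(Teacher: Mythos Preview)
Your proposal is correct and follows essentially the same approach as the paper: both construct the coupling by a breadth-first exploration of the planted pairing model, matching the edge-by-edge colour transitions (which are $\rho^{\hG(\spara),\spara}(s,\cdot)/\nu^{\spara}(s)$ up to $n^{o(1)-1}$ corrections) with the broadcasting kernel $\rhopara(s,\cdot)/\nupara(s)$, and then union-bounding over the $O(d^\ell)$ vertices of the ball. The paper's write-up additionally includes a short check that $\spara$ is exponentially unlikely to sit near the boundary of $\Spara(\eps)$ (so that the conditioning on $\Spara$ in the Boltzmann distribution does not distort local marginals), but this is a side concern for the downstream non-reconstruction argument rather than for the coupling itself, and is in any case covered by your appeal to \Lem~\ref{Lemma_spara}.
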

\begin{proof}
	Proceeding by induction on $\ell$, we construct a coupling of $\hat\SIGMA_{\mathrm{p},\partial^\ell v}$ and $\tau_{\partial^\ell o}$.
	Let
	\begin{align}\label{eqLemma_nonrecPara1}
		\zeta=\dTV(\nu^{\spara},\nupara)+\dTV(\rho^{\hG(\spara),\spara},\rhopara).
	\end{align}
	In the case $\ell=0$ the set $\partial^\ell v$ consists of $v$ only, while $\partial^\ell o$ comprises only the root vertex $o$ itself.
	Hence, the colours $\spara(v)$ and $\tau(o)$ can be coupled to coincide with probability at least $1-\zeta$.
	As for $\ell\geq1$, assume by induction that $\partial^{\ell-1} v$ is acyclic and that $\hat\SIGMA_{\mathrm{p},\partial^{\ell-1} v}$ and $\tau_{\partial^{\ell-1} o}$ coincide.
	Given $\partial^{\ell-1} v$ and $\hat\SIGMA_{\mathrm{p},\partial^{\ell-1} v}$ every vertex $u$ at distance precisely $\ell-1$ from $v$ in $\Gpara$ then requires another $d-1$ neighbours outside of $\partial^{\ell-1} v$.
	Because $\Gpara$ is uniformly random given $\fSpara$, for each $u$ these $d-1$ neighbours are simply the endpoints of edges $e_{u,1},\ldots,e_{u,d-1}$ drawn randomly from the set of all remaining edges with one endpoint of colour $\spara(u)$.
	Since $\ell=o(\log n)$, the subgraph $\partial^\ell v$ consumes no more than $n^{o(1)}$ edges.
	As a consequence, for each neighbour $w\not\in\partial^{\ell-1}v$ the colour $\spara(w)$ has distribution $\rho^\nu(\spara(u),\nix)$, up to an error of $n^{o(1)-1}$ in total variation.
	Finally, the probability that two vertices at distance precisely $\ell$ from $v$ are neighbours is bounded by $n^{o(1)-1}$ as well.

	By comparison, in the broadcasting process on $\TT_{d}$ the colours of the children of $y$ are always drawn independently from the distribution $\rhopara(\vsigma_{d,\beta,\nupara}(y),\nix)$.
	Hence, the colours of the vertices at distance $\ell$ in the two processes can be coupled to completely coincide with probability $1-O(d^\ell(\zeta+n^{o(1)-1}))$, as claimed. 
	
	In addition, since we work with the conditional Boltzmann distributions where we ``cut off'' a part of the phase space, we need to verify that the configuration is very unlikely  to hit the boundary of $\Spara$. To see this, recall from Proposition \ref{Prop_max} that, for $\beta \in (0, \KS)$,   $(\nupara, \rhopara )$ is a stable local maxima of $F_{d, \beta}$ i.e. there exist $\delta,c>0$ such that 
\begin{align}\label{eqstabmax1}
F_{d,\beta}(\nu',\rho')&\leq F_{d,\beta}(\nupara,\rhopara)-c\bc{\|\nupara -\nu'\|^2+\|\rhopara-\rho'\|^2}
\end{align}
for all $\nu'\in \cP([q])$ and $\rho'\in \cR(\nu')$ such that $\|\nupara-\nu'\|+\|\rhopara-\rho'\|<\delta$. Now, choose $\eps$ in the definition of $\Spara(\eps)$ such that $\eps>\delta$ and define  $T_p(\delta)=\cbc{\sigma \in [q]^{n}:\frac{1}{n} \sum_{c\in[q]} \abs{\sigma^{-1} (c)}= \nupara + \delta^\Delta} $ for some $\Delta>0$. Moreover, define a probability distribution $\nupara'$ on the $q$ colours by $\nupara'(c)=\frac{1}{q}+\frac{\delta^{\Delta}}{ q}$  for  all $c \in [q]$ and let $\rhopara' \in \cR(\nupara')$ the corresponding maximizer for $F_{d, \beta}(\nupara', \cdot)$
 (as in \ref{eqLemma_Jean}). Furthermore, choose $\Delta$ sufficiently small so that $ \|\nupara-\nupara'\|+\|\rhopara-\rhopara'\|<\delta $. Thus, by \eqref{eqstabmax1} and Lemma \ref{lemma_EZp} we have
 \begin{align*}
 	\pr\brk{\spara \in T_p(\delta) }&=\sum_{\sigma \in T_p(\delta) }\frac{\vecone\cbc{\sigma\in\Spara}\Erw[\emm^{\beta\cH_{\G}(\sigma)}]}{\Erw[\Zpara(\G)]} \leq \frac{\exp\bc{ n F_{d,\beta}(\nupara',\rhopara') } }{\exp \bc{ n F_{d,\beta}(\nupara,\rhopara) + O(\log n)} } \\
 	 & \leq \exp\bc{\bc{- c \bc{ \delta^{2 \Delta} +  \|\rhopara-\rho'\|^2 } + o(1) } n  } \leq \exp\bc{ \bc{-K +o(1)} n}
 \end{align*}
 for some sufficiently large constant $K$, as desired.
\end{proof}

The colouring of the neighbourhood of $v_1$ in $\Gferro$ admits a similar coupling with the ferromagnetic version of the broadcasting process.

\begin{lemma}\label{Lemma_nonrecFerro}
Let $d,q\geq3$ be integers and $\beta>\uniq$ be real.
Then, for any vertex $v$ and any non-negative integer $\ell=o(\log n)$, given $\fSferro$ \whp\ we have 
\begin{align*}
	\dTV(\hat\SIGMA_{\mathrm{f},\partial^\ell v},\tau_{\partial^\ell o})=O\bc{d^\ell\bc{\dTV(\nu^{\sferro},\nuferro)+\dTV(\rho^{\hG(\sferro),\sferro},\rhoferro)+n^{-0.99}}}.
\end{align*}
\end{lemma}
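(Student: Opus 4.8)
Following the template set by the proof of Lemma~\ref{Lemma_nonrecPara}, the plan is to transcribe that argument almost verbatim, swapping the paramagnetic objects $(\nupara,\rhopara)$, $\spara$, $\Spara$ for their ferromagnetic counterparts $(\nuferro,\rhoferro)$, $\sferro$, $\Sferro$, and feeding in the ferromagnetic inputs: Lemma~\ref{Lemma_sferro} in place of Lemma~\ref{Lemma_spara}, Lemma~\ref{lemma_EZf} in place of Lemma~\ref{lemma_EZp}, and the fact from Proposition~\ref{Prop_max}(ii)--(iii) together with Lemma~\ref{Lemma_Jean} that $\nuferro\in\Fplus$ for every $\beta>\uniq$, so that $(\nuferro,\rhoferro)$ is a \emph{stable} local maximum of $F_{d,\beta}$. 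Concretely, set $\zeta=\dTV(\nu^{\sferro},\nuferro)+\dTV(\rho^{\hG(\sferro),\sferro},\rhoferro)$ and build, by induction on $\ell$, a coupling of $\hat\SIGMA_{\mathrm{f},\partial^\ell v}$ with the restriction to $\partial^\ell o$ of the ferromagnetic broadcasting configuration $\vsigma_{d,\beta,\muferro}$ on $\TT_d$. For $\ell=0$ the two balls are the single vertices $v$ and $o$; since $\Gferro$ is permutation invariant, $\sferro$ is uniform given its colour statistics, so $\sferro(v)$ is distributed as $\nu^{\sferro}$ and couples with $\tau(o)\sim\nuferro$ with probability at least $1-\dTV(\nu^{\sferro},\nuferro)\geq 1-\zeta$.

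For the inductive step, condition on $\fSferro$ and on the event that $\partial^{\ell-1}v$ is acyclic and has been coupled successfully. By the Nishimori identity (Proposition~\ref{Prop_Nishi}) together with permutation invariance, $\Gferro\disteq\hG(\sferro)$ is uniformly random given $\fSferro$ and the edge statistics $\rho^{\Gferro,\sferro}$; hence each vertex $u\in\partial^{\ell-1}v$ picks up its remaining $d-1$ neighbours as endpoints of edges drawn uniformly among the as-yet-unused edges incident to a clone of colour $\sferro(u)$. Because $\ell=o(\log n)$, the ball $\partial^\ell v$ consumes at most $n^{o(1)}$ edges, so (i) the colour of each newly revealed neighbour $w\notin\partial^{\ell-1}v$ has distribution within total variation $n^{o(1)-1}$ of $\rho^{\hG(\sferro),\sferro}(\sferro(u),\nix)/\nu^{\sferro}(\sferro(u))$, which in turn lies within $O(\zeta)$ of the broadcasting kernel $\rhoferro(\tau(u),\nix)/\nuferro(\tau(u))$ by the inductive coupling and the definition of $\zeta$; and (ii) the probability of an edge internal to $\partial^\ell v$ is at most $n^{o(1)-1}$. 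A union bound over the at most $d^\ell$ vertices of $\partial^\ell v$ then gives a coupling that succeeds with probability $1-O(d^\ell(\zeta+n^{o(1)-1}))$, which is the claimed estimate once $n^{o(1)-1}$ is absorbed into $O(n^{-0.99})$.

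It remains to check, exactly as in Lemma~\ref{Lemma_nonrecPara}, that conditioning on $\Sferro$ does not bias these local statistics, i.e.\ that $\sferro$ lands near the boundary of $\Sferro(\eps)$ only with exponentially small probability (the coupling above is then valid on that overwhelmingly likely event). Here one uses that $(\nuferro,\rhoferro)$ is a stable local maximum of $F_{d,\beta}$: there are $c,\delta>0$ with $F_{d,\beta}(\nu',\rho')\leq F_{d,\beta}(\nuferro,\rhoferro)-c(\|\nuferro-\nu'\|^2+\|\rhoferro-\rho'\|^2)$ on the $\delta$-ball around $(\nuferro,\rhoferro)$. Choosing $\eps>\delta$, defining a shell $T_{\mathrm f}(\delta)$ of configurations whose colour statistics lie at distance $\delta^\Delta$ from $\nuferro$ for a small enough $\Delta>0$, and combining the stability estimate with the first-moment asymptotics $\Erw[\Zferro(\G)]=\exp(nF_{d,\beta}(\nuferro,\rhoferro)+O(\log n))$ from Lemma~\ref{lemma_EZf}, yields $\pr[\sferro\in T_{\mathrm f}(\delta)]\leq\exp(-\Omega(n))$, so the coupled configuration stays well inside $\Sferro$.

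The only point that is not a literal copy of the paramagnetic case is the kernel comparison in the inductive step: the ferromagnetic broadcasting kernel is the normalised row $\rhoferro(s,\nix)/\nuferro(s)$ rather than a row of $\rhopara$, and $\rhoferro$ is genuinely non-uniform, so the $O(\zeta)$ bound on the difference of kernels needs $\nu^{\sferro}(s)=\nuferro(s)+O(\zeta)$ uniformly in $s$ — harmless, since all entries of $\nuferro$ are bounded away from $0$ for fixed $d,q,\beta$. I do not expect a genuinely new obstacle beyond this bookkeeping; the substance of the statement is already carried by Lemma~\ref{Lemma_sferro}, Proposition~\ref{Prop_Nishi} and Proposition~\ref{Prop_max}.
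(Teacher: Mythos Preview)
Your proposal is correct and follows exactly the route the paper takes: the paper's proof is literally the single sentence ``The argument from the proof of Lemma~\ref{Lemma_nonrecPara} carries over directly,'' and what you have written is a careful unpacking of precisely that transcription. Your remark about normalising the rows of $\rhoferro$ by $\nuferro$ (which is non-uniform, unlike $\nupara$) is a genuine point the paper glosses over, but as you say it is harmless bookkeeping since the entries of $\nuferro$ are bounded away from zero.
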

\begin{proof}
	The argument from the proof of \Lem~\ref{Lemma_nonrecPara} carries over directly.
\end{proof}

\begin{proof}[Proof of \Prop~\ref{Prop_nonreG}]
	We  prove the first statement concerning $\Gferro$; the proof of the second statement for $\Gpara$ is analogous.
	Due to \Prop~\ref{Prop_Nishi} we may work with the random graph $\hat\G(\sferro)$ with planted configuration $\sferro$.
	Fix an arbitrary vertex $v$ and $\ell=\lceil\log\log n\rceil$. For the first assertion, by the Nishimori identity, it suffices to prove that %\whp\
	\begin{align}\label{eqProp_nonreG1}
		\sum_{c\in[q]}\ex\abs{\nuferro(c)-\mu_{\hat\G(\sferro),\beta}(\vsigma_{v}=c\mid\vsigma_{\partial^\ell v}=\sferro{}_{,\partial^\ell v})}&<\ell^{-3},	
	\end{align}
	where the expectation is over the choice of the pair $(\hat\G(\sferro),\sferro)$. 
	Indeed, the desired $(o(1),\ell^{-1},\ell)$-non-reconstruction property follows from \eqref{eqProp_nonreG1} and Markov's inequality.

	To obtain \eqref{eqProp_nonreG1} we first apply \Lem~\ref{Lemma_sferro}, which implies that with probability $1-o(1/n)$,
	\begin{align}\label{eqProp_nonreG2}
	\dTV\bc{\nu^{\sferro},\nuferro}+\dTV\bc{\rho^{\hG(\sferro),\sferro},\rhoferro}\leq n^{-1/4}.
	\end{align}
	Further, assuming \eqref{eqProp_nonreG2}, we obtain from \Lem~\ref{Lemma_nonrecFerro} that
\begin{align}\label{eqProp_nonreG3}
	\dTV(\SIGMA_{\ferro,\partial^\ell v},\tau_{\partial^\ell o})=o(n^{-1/5}).
\end{align}
Hence, the colourings $\partial^\ell v$ and $\tau_{\partial^\ell o}$ can be coupled such that both are identical with probability $1-o(n^{-1/5})$.
Consequently, \eqref{eqProp_nonreG1} follows from \Prop~\ref{Prop_nonre}.

	Thus, we are left to prove the second assertion concerning $(\exp(-\chi n),\delta,\ell,\muferro)$-non-reconstruction.
	Hence, given $\delta>0$ pick a large enough $\ell=\ell(d,\beta,\delta)>0$, a small enough $\zeta=\zeta(\delta,\ell)>0$ and even smaller $\xi=\xi(\delta,\ell,\zeta)>0$, $\chi=\chi(d,\beta,\xi)>0$.
	Then in light of \Lem~\ref{Lemma_sferro} we may assume that
	\begin{align}\label{eqProp_nonreG5}
	\dTV\bc{\nu^{\sferro},\nuferro}+\dTV\bc{\rho^{\hG(\sferro),\sferro},\rhoferro}<\xi.
	\end{align}
	Further, let $\vX$ be the number of vertices $u$ such that 
	$$\sum_{c\in[q]}\abs{\nuferro(c)-\mu_{\hat\G(\sferro),\beta}(\vsigma_{u}=c\mid\vsigma_{\partial^\ell u}=\sferro{}_{,\partial^\ell u})}>\zeta.$$
	Then \Prop~\ref{Prop_nonre}, \eqref{eqProp_nonreG5} and \Lem~\ref{Lemma_nonrecFerro} imply that $\ex[\vX]<\zeta n$.
	Moreover, $\vX$ is tightly concentrated about its mean.
	Indeed, adding or removing a single edge of the random $d$-regular graph $\hat\G(\sferro)$ can alter the $\ell$-th neighbourhoods of no more than $d^\ell$ vertices.
	Therefore, the Azuma--Hoeffding inequality shows that 
		\begin{align}\pr\brk{\vX>\ex[\vX\mid\fSferro]+\zeta n\mid\fSferro}<\exp(-\chi n),\end{align}
	as desired.
\end{proof}

\subsection{Proof of \Cor~\ref{Cor_nonreG}}\label{Sec_Cor_nonreG}
We derive the corollary from \Prop~\ref{Prop_nonreG}, the Nishimori identity from \Prop~\ref{Prop_Nishi} and the formula~\eqref{eqsmmF} for the second moment.
As a first step we derive an estimate of the typical overlap of two configurations drawn from the Boltzmann distribution.
To be precise, for a graph $G=(V,E)$, the overlap of two configurations $\sigma,\sigma'\in[q]^{V}$ is defined as the probability distribution $\nu(\sigma,\sigma')\in\cP([q]^2)$ with
\begin{align*}
	\nu_{c,c'}(\sigma,\sigma')&=\frac1n\sum_{v\in V(G)}\vecone\cbc{\sigma_{v}=c,\,\sigma'_{v}=c'}&&(c,c'\in[q]).
\end{align*}
% Moreover, the edge overlap is defined as the distribution $\rho_G(\sigma,\sigma')\in\cP([q]^4)$ with
% \begin{align*}
% 	\rho_{G,c,c',c'',c'''}(\sigma,\sigma')&=\frac2{dn}\sum_{vw\in E(G)}\vecone\cbc{\sigma_v=c,\sigma_w=c',\sigma'_v=c'',\sigma'_w=c'''}&&(c,c',c'',c'''\in[q]).
% \end{align*}
Thus, $\nu(\sigma,\sigma')$  
gauges the frequency of the colour combinations among vertices. 

\begin{lemma}\label{Lemma_overlapPara}
	Let $d,q\geq3$ be integers and $\beta<\KS$ be real. Let $\sparaGpara,\sparaGpara'$ be independent samples from $\mu_{\Gpara,\beta}(\nix\mid\Spara)$.
	Then $\Erw\brk{\dTV\big(\nu(\sparaGpara,\sparaGpara'),\nupara\tensor\nupara\big)}=o(1)$.
\end{lemma}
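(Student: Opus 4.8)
The plan is to exploit the Nishimori identity (\Prop~\ref{Prop_Nishi}) to replace the pair $(\Gpara,\sparaGpara)$ by $(\hG(\spara),\spara)$, so that one of the two samples is the planted configuration itself and the second, $\sparaGpara'$, is a fresh sample from $\mu_{\hG(\spara),\beta}(\nix\mid\Spara)$. The overlap $\nu(\spara,\sparaGpara')$ is then governed by the marginals $\mu_{\hG(\spara),\beta}(\vsigma'_v=c\mid\Spara)$ seen along the planted colour classes of $\spara$. First I would fix a vertex $v$ and condition on $\spara{}_v=s$; the key point is that the conditional marginal $\mu_{\hG(\spara),\beta}(\vsigma'_v=c\mid\Spara)$ should be close to $\nupara(c)$ on average over $v$, and this is exactly (a mild variant of) the conditional non-reconstruction property established in \Prop~\ref{Prop_nonreG}(ii), once one observes that the boundary configuration induced by $\sparaGpara'$ is itself distributed like $\mu_{\hG(\spara),\beta}(\nix\mid\Spara)$ and the depth-$\ell$ neighbourhood of $v$ is a tree whose influence on $v$ decays.

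More concretely, the steps are as follows. \emph{Step 1:} By \Prop~\ref{Prop_Nishi}, $\Erw\brk{\dTV(\nu(\sparaGpara,\sparaGpara'),\nupara\tensor\nupara)}=\Erw\brk{\dTV(\nu(\spara,\sparaGpara'),\nupara\tensor\nupara)}$ where $\spara'\sim\mu_{\hG(\spara),\beta}(\nix\mid\Spara)$. \emph{Step 2:} Write $\nu_{s,c}(\spara,\spara')=\tfrac1n\sum_v\vecone\{\spara{}_v=s\}\vecone\{\spara'_v=c\}$; taking expectation over $\spara'$ only, its conditional mean given $(\hG(\spara),\spara)$ equals $\tfrac1n\sum_{v:\spara{}_v=s}\mu_{\hG(\spara),\beta}(\vsigma'_v=c\mid\Spara)$. \emph{Step 3:} Use \Lem~\ref{Lemma_spara} to restrict to the event $\nu^{\spara}\approx\nupara$ (which holds with probability $1-\exp(-\Omega(n))$), so the target is $\tfrac1n\sum_{v:\spara{}_v=s}\mu_{\hG(\spara),\beta}(\vsigma'_v=c\mid\Spara)\approx\nupara(s)\nupara(c)$. \emph{Step 4:} Apply the $(\exp(-\chi n),\delta,\ell,\mupara)$-non-reconstruction from \Prop~\ref{Prop_nonreG}(ii) together with the coupling of \Lem~\ref{Lemma_nonrecPara}: for all but a $\delta$-fraction of vertices $v$, the marginal $\mu_{\hG(\spara),\beta}(\vsigma'_v=c\mid\Spara,\vsigma'_{\partial^\ell v})$ is within $\delta$ of $\nupara(c)$, and since this holds for a \emph{typical} boundary $\vsigma'_{\partial^\ell v}$ drawn from the same conditional distribution, averaging over that boundary gives $\mu_{\hG(\spara),\beta}(\vsigma'_v=c\mid\Spara)=\nupara(c)+O(\delta)$ for a $(1-O(\delta))$-fraction of $v$. \emph{Step 5:} Since $\delta>0$ is arbitrary and the exceptional probabilities are exponentially small, a standard first-moment/concentration wrap-up (the map $G\mapsto \Erw_{\spara'}[\nu(\spara,\spara')]$ has bounded differences, so Azuma applies) yields $\Erw\brk{\dTV(\nu(\spara,\spara'),\nupara\tensor\nupara)}=o(1)$.

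The main obstacle I anticipate is \emph{Step 4}: one must pass from ``non-reconstruction given the boundary $\vsigma'_{\partial^\ell v}$'' (a statement about a conditional marginal with a conditioning event) to ``the unconditional marginal $\mu_{\hG(\spara),\beta}(\vsigma'_v=c\mid\Spara)$ is close to $\nupara(c)$''. This requires that the law of the boundary $\vsigma'_{\partial^\ell v}$ under $\mu_{\hG(\spara),\beta}(\nix\mid\Spara)$ is well-approximated by the broadcasting-process law on $\TT_d$ — which is precisely what \Lem~\ref{Lemma_nonrecPara} supplies, but one has to be careful that the errors are controlled uniformly after averaging $\mu_{\hG(\spara),\beta}(\vsigma'_v=c\mid\Spara)=\Erw[\mu_{\hG(\spara),\beta}(\vsigma'_v=c\mid\Spara,\vsigma'_{\partial^\ell v})]$ over the boundary, so that the small exceptional fraction of boundaries (and the tiny total-variation error between the graph and tree processes) does not spoil the bound. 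The ferromagnetic regime $\beta<\KS$ being the relevant one for non-reconstruction is exactly why the hypothesis $\beta<\KS$ appears. A secondary technicality, already handled in the proof of \Lem~\ref{Lemma_nonrecPara}, is ensuring the conditional measure $\mu_{\hG(\spara),\beta}(\nix\mid\Spara)$ does not concentrate near the boundary of $\Spara$; this follows from the stable-local-maximum estimate \eqref{eqstabmax1} and costs only an $\exp(-\Omega(n))$ correction.
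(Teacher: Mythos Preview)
Your Steps 1--4 are essentially the paper's setup and are correct: Nishimori reduces to the planted pair $(\hG(\spara),\spara)$ with a fresh sample $\spara'\sim\mu_{\hG(\spara),\beta}(\nix\mid\Spara)$, and non-reconstruction (\Prop~\ref{Prop_nonreG}(ii)) combined with the tower property gives that the conditional mean $\ex[\nu_{s,c}(\spara,\spara')\mid\hG(\spara),\spara]\approx\nupara(s)\nupara(c)$ with high probability. Your worry about Step~4 is in fact unfounded: averaging over the boundary $\spara'_{\partial^\ell v}$ (which, by Nishimori, has the same conditional law as $\spara{}_{,\partial^\ell v}$ given the graph) is precisely what turns the conditional non-reconstruction bound into a bound on the unconditional marginal $\mu_{\hG(\spara),\beta}(\vsigma'_v=c\mid\Spara)$.

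The genuine gap is Step~5. Knowing that the conditional mean $\ex_{\spara'}[\nu_{s,c}(\spara,\spara')]$ is correct does \emph{not} yield $\ex[\dTV(\nu(\spara,\spara'),\nupara\tensor\nupara)]=o(1)$: you still need concentration of $\nu_{s,c}(\spara,\spara')$ over the randomness of $\spara'$, and your proposed Azuma bound on $G\mapsto\ex_{\spara'}[\nu(\spara,\spara')]$ addresses the wrong source of randomness. The coordinates $\spara'_v$ are not independent under the conditional Gibbs measure $\mu_{\hG(\spara),\beta}(\nix\mid\Spara)$, so no off-the-shelf bounded-differences inequality applies. The paper closes this gap via a \emph{second-moment} computation: pick two random vertices $\vec v,\vec w$ with $\spara(\vec v)=\spara(\vec w)=s$, draw $\spara''\sim\mu_{\hG(\spara),\beta}(\nix\mid\Spara)$, and then resample the interiors of the (w.h.p.\ disjoint) $\ell$-balls around $\vec v,\vec w$ given their boundaries and $\Spara$; non-reconstruction then gives $\pr[\spara'(\vec v)=t,\,\spara'(\vec w)=t'\mid\spara,\hG(\spara)]=\nupara(t)\nupara(t')+o(1)$. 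This yields $\ex[\vX(s,t)^2\mid\spara,\hG(\spara)]\sim(n/q^2)^2$ and Chebyshev finishes. The two-vertex resampling is exactly the device that converts non-reconstruction into the pairwise decorrelation needed for concentration, and it is absent from your plan.
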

\begin{proof}
	Due to the Nishimori identity \eqref{eqProp_Nishi} it suffices to prove that \whp{} for a sample $\SIGMA_{\hG(\spara)}$ from $\mu_{\hG(\spara),\beta}(\nix\mid\Spara)$ it holds that
	\begin{align}\label{eqLemma_overlapPara}
\dTV\big(\nu(\spara,\SIGMA_{\hG(\spara),\mathrm{p}}),\nupara\tensor\nupara\big)&=o(1)
	\end{align}
	To see \eqref{eqLemma_overlapPara}, for colours $s,t\in[q]$, we consider  the first and second moment of the number of vertices $u$ with $\spara(u)=s$ and $\SIGMA_{\hG(\spara),\mathrm{p}}(u)=t$. To facilitate the analysis of the second moment, it will be convenient to consider the following configuration $\SIGMA_{\hG(\spara),\mathrm{p}}'$.  Let $\vec v,\vec w$ be  two random vertices such that $\spara(\vec v)=\spara(\vec w)=s$.
	Also let $\ell=\ell(n)=\lceil\log\log n\rceil$.
	Now, draw $\SIGMA_{\hG(\spara),\mathrm{p}}''$ from $\mu_{\hG(\spara),\beta}(\nix\mid\Spara)$ and subsequently generate $\sparaGpara'$ by re-sampling the colours of the vertices at distance less than $\ell$ from $\vec v,\vec w$ given the colours of the vertices at distance precisely $\ell$ from $\vec v,\vec w$ and the event $\Spara$.
	Then $\SIGMA_{\hG(\spara),\mathrm{p}}'$  has distribution $\mu_{\hG(\spara),\beta}(\nix\mid\Spara)$. 	Moreover, since for two random vertices $\vec v, \vec w$ their $\ell$-neighbourhoods are going to be disjoint,  \Prop~\ref{Prop_nonreG} implies that \whp
	\begin{align}\label{eqLemma_overlapPara1}
		\pr\brk{\sparaGpara'(\vec v)=\chi,\,\sparaGpara'(\vec w)=\chi'\mid \spara,\hG(\spara),\vec v,\vec w}&=\nupara(\chi)\nupara(\chi')+o(1)\qquad\mbox{for all }\chi,\chi'\in[q].
	\end{align}
	Hence, for a colour $t\in[q]$ let $\vX(s,t)$ be the number of vertices $u$ with $\spara(u)=s$ and $\sparaGpara'(u)=t$.
	Then \eqref{eqLemma_overlapPara1} shows that \whp
	\begin{align*}
		\ex\brk{\vX(s,t)\mid\spara,\hG(\spara)}\sim\frac{n}{q^2},\qquad\ex\brk{\vX(s,t)^2\mid\spara,\hG(\spara)}&\sim\frac{n^2}{q^4}.
	\end{align*}
	Thus, \eqref{eqLemma_overlapPara} follows from Chebyshev's inequality.
\end{proof}

\begin{lemma}\label{Lemma_overlapFerro}
	Let $d,q\geq3$ be integers and $\beta>\uniq$ be real. Let $\sferroGferro,\sferroGferro'$ be independent samples from $\mu_{\Gferro,\beta}(\nix\mid\Sferro)$.
	Then $\Erw\brk{\dTV\big(\nu(\sferroGferro,\sferroGferro'),\nuferro\tensor\nuferro\big)}=o(1)$.
\end{lemma}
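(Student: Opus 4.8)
The plan is to follow the proof of Lemma~\ref{Lemma_overlapPara} essentially verbatim, replacing the paramagnetic objects by their ferromagnetic counterparts; since $\nuferro$ is not the uniform distribution, the only genuine change is some bookkeeping of the (now unequal) colour-class sizes. First I would apply the Nishimori identity of Proposition~\ref{Prop_Nishi}: since $(\Gferro,\sferroGferro)\disteq(\hG(\sferro),\sferro)$ and the second sample $\sferroGferro'$ is drawn independently given the graph, the overlap $\nu(\sferroGferro,\sferroGferro')$ has the same law as $\nu(\sferro,\SIGMA_{\hG(\sferro),\mathrm f})$, where $\sferro$ is the planted configuration and $\SIGMA_{\hG(\sferro),\mathrm f}$ is a fresh sample from $\mu_{\hG(\sferro),\beta}(\nix\mid\Sferro)$. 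As $\dTV\le1$, it therefore suffices to show that \whp\ over the pair $(\hG(\sferro),\sferro)$ one has $\dTV\bc{\nu(\sferro,\SIGMA_{\hG(\sferro),\mathrm f}),\nuferro\tensor\nuferro}=o(1)$, and since there are finitely many colour pairs it is enough to prove, for each fixed $s,t\in[q]$, that the number $\vX(s,t)$ of vertices $u$ with $\sferro(u)=s$ and $\SIGMA_{\hG(\sferro),\mathrm f}(u)=t$ satisfies $\vX(s,t)=n\,\nuferro(s)\nuferro(t)(1+o(1))$ \whp.

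To this end I would condition on the $\sigma$-algebra $\fSferro$ and on the high-probability events supplied by Lemma~\ref{Lemma_sferro}, which give that colour class $s$ of $\sferro$ has $n\nuferro(s)(1+o(1))$ vertices and that $\rho^{\hG(\sferro),\sferro}=\rhoferro+o(1)$, and on the conditional non-reconstruction property of $\Gferro$ from Proposition~\ref{Prop_nonreG}(i). For the first moment, pick $u$ uniformly among the colour-$s$ vertices; re-drawing the colour of $u$ from its conditional Boltzmann marginal given $\Sferro$ and the colours that $\sferro$ induces on $\partial^\ell u$ leaves the law $\mu_{\hG(\sferro),\beta}(\nix\mid\Sferro)$ unchanged, and by Proposition~\ref{Prop_nonreG}(i) this marginal is within $o(1)$ of $\nuferro$ for all but an $o(1)$-fraction of the colour-$s$ vertices (the exceptional set has overall density $o(1)$, while colour class $s$ has density $\nuferro(s)$ bounded away from $0$); hence $\ex[\vX(s,t)\mid\fSferro]\sim n\,\nuferro(s)\nuferro(t)$. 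For the second moment I use the two-point resampling trick exactly as in Lemma~\ref{Lemma_overlapPara}: choose two random colour-$s$ vertices $\vec v,\vec w$ and, starting from a sample from $\mu_{\hG(\sferro),\beta}(\nix\mid\Sferro)$, re-sample all colours at distance less than $\ell=\lceil\log\log n\rceil$ from $\vec v$ and $\vec w$ given the colours at distance exactly $\ell$ and given $\Sferro$. Since $d^\ell=n^{o(1)}$, \whp\ the depth-$\ell$ neighbourhoods of $\vec v,\vec w$ are disjoint and both vertices lie outside the exceptional set, so Proposition~\ref{Prop_nonreG}(i) forces the re-sampled colours of $\vec v$ and $\vec w$ to be asymptotically independent, each distributed as $\nuferro+o(1)$. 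Expanding $\ex[\vX(s,t)^2\mid\fSferro]$ as a double sum over ordered pairs of colour-$s$ vertices and using that $\SIGMA_{\hG(\sferro),\mathrm f}$ has the same law as the re-sampled configuration then gives $\ex[\vX(s,t)^2\mid\fSferro]\sim n^2\,\nuferro(s)^2\nuferro(t)^2$. Chebyshev's inequality now yields $\vX(s,t)=n\,\nuferro(s)\nuferro(t)(1+o(1))$ \whp, and summing over the finitely many $(s,t)\in[q]^2$ gives $\Erw\brk{\dTV\bc{\nu(\sferroGferro,\sferroGferro'),\nuferro\tensor\nuferro}}=o(1)$.

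I do not anticipate a substantive new obstacle compared with Lemma~\ref{Lemma_overlapPara}: the genuinely load-bearing input is Proposition~\ref{Prop_nonreG}(i), which is already proved. The coupling between the $\ell$-neighbourhood of a vertex of $\Gferro$ and the ferromagnetic broadcasting process is provided by Lemma~\ref{Lemma_nonrecFerro}, and the ``unlikely to hit the boundary of $\Sferro$'' issue is subsumed in the statement of Proposition~\ref{Prop_nonreG}(i). The one point that requires a little care, absent in the paramagnetic case, is that $\nuferro$ puts unequal mass on the colours, so the normalisations $1/q^2$ and $1/q^4$ of Lemma~\ref{Lemma_overlapPara} become $\nuferro(s)\nuferro(t)$ and $\nuferro(s)^2\nuferro(t)^2$, and one must check that every colour class of $\sferro$ still has linear size and that the averaged non-reconstruction guarantee of Proposition~\ref{Prop_nonreG}(i) survives the restriction to a single colour class; both hold because $\nuferro(s)>0$ is bounded away from $0$ for fixed $d,q,\beta$ and because the exceptional set of vertices has density $o(1)$.
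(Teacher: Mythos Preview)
Your proposal is correct and takes essentially the same approach as the paper: the paper's proof is literally the one line ``The same argument as in the proof of \Lem~\ref{Lemma_overlapPara} applies,'' and you have spelled out precisely what that means, including the only nontrivial bookkeeping change (replacing $1/q^2$ and $1/q^4$ by $\nuferro(s)\nuferro(t)$ and $\nuferro(s)^2\nuferro(t)^2$).
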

\begin{proof}
	The same argument as in the proof of \Lem~\ref{Lemma_overlapPara} applies.
\end{proof}

We proceed to apply the second moment method to truncated versions of the paramagnetic and ferromagnetic partition functions $\Zpara,\Zferro$ where we expressly drop graphs that violate the overlap bounds from \Lem s~\ref{Lemma_overlapPara} and~\ref{Lemma_overlapFerro}.
Thus, we introduce
\begin{align}\label{eqYpara}
	\Ypara(G)&=\Zpara(G)\cdot\vecone\cbc{\ex\big[\dTV(\nu(\SIGMA_{G,\mathrm{p}},\SIGMA_{G,\mathrm{p}}'),\nupara\tensor\nupara)\big]=o(1)},\\
	\Yferro(G)&=\Zferro(G)\cdot\vecone \cbc{\ex\big[\dTV(\nu(\SIGMA_{G,\mathrm{f}},\SIGMA_{G,\mathrm{f}}'),\nuferro\tensor\nuferro)\big]=o(1)}.
\label{eqYferro}
\end{align}
Estimating the second moments of these two random variables is a cinch because by construction we can avoid an explicit optimisation of the function $F^\tensor_{d,\beta}$ from \eqref{eqsmmF}.
Indeed, because we drop graphs $G$ whose overlaps stray far from the product measures $\nupara\tensor\nupara$ and $\nuferro\tensor\nuferro$, respectively, we basically just need to evaluate the function $F^\tensor_{d,\beta}$ at $\nupara\tensor\nupara$ and $\nuferro\tensor\nuferro$.

\begin{corollary}\label{Cor_overlap}
Let $d\geq3$.
\begin{enumerate}[(i)]
	\item If $\beta<\KS$, then $\ex[\Ypara(\G)]\sim\ex[\Zpara(\G)]$ and $\ex[\Ypara(\G)^2]\leq \exp(o(n))\ex[\Zpara(\G)]^2$.
	\item If $\beta>\uniq$, then $\ex[\Yferro(\G)]\sim\ex[\Zferro(\G)]$ and $\ex[\Yferro(\G)^2]\leq \exp(o(n))\ex[\Zferro(\G)]^2$.
\end{enumerate}
\end{corollary}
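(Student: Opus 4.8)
The plan is a \emph{truncated} second moment computation rigged so that one never has to evaluate the pair functional $F^\tensor_{d,\beta}$ from~\eqref{eqsmmF} anywhere except at the product point, where it equals $2F_{d,\beta}(\nupara,\rhopara)$ (respectively $2F_{d,\beta}(\nuferro,\rhoferro)$); the overlap estimates of \Lem s~\ref{Lemma_overlapPara} and~\ref{Lemma_overlapFerro} are what keep us near that point. I spell out the paramagnetic part~(i); part~(ii) is verbatim the same after replacing $(\mupara,\nupara,\rhopara,\Spara,\Gpara)$ by $(\muferro,\nuferro,\rhoferro,\Sferro,\Gferro)$ and \Lem~\ref{lemma_EZp} by \Lem~\ref{lemma_EZf}. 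First, interpret the $o(1)$ in the definition~\eqref{eqYpara} of $\Ypara$ as ``$\le\delta_n$'' for a fixed sequence $\delta_n=o(1)$; by \Lem~\ref{Lemma_overlapPara} and Markov's inequality applied to the $\Gpara$-average we may choose $\delta_n$ so that the ``good'' event $\cE(G)=\cbc{\ex[\dTV(\nu(\SIGMA_{G,\mathrm{p}},\SIGMA_{G,\mathrm{p}}'),\nupara\tensor\nupara)]\le\delta_n}$ (with $\SIGMA_{G,\mathrm{p}},\SIGMA_{G,\mathrm{p}}'$ independent samples from $\mu_{G,\beta}(\nix\mid\Spara)$) satisfies $\pr[\Gpara\in\cE]=1-o(1)$. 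Since $\Ypara(G)=\Zpara(G)\vecone\{\cE(G)\}$, the first-moment claim is immediate: $\ex[\Zpara(\G)]-\ex[\Ypara(\G)]=\ex[\Zpara(\G)]\pr[\Gpara\notin\cE]=o(1)\,\ex[\Zpara(\G)]$.

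For the second moment, write $\ex[\Ypara(\G)^2]=\ex[\vecone\{\cE(\G)\}\Zpara(\G)^2]=\sum_{\sigma,\sigma'\in\Spara}\ex[\vecone\{\cE(\G)\}\emm^{\beta\cH_\G(\sigma)+\beta\cH_\G(\sigma')}]$ and split the pair sum into the part $A$ over pairs with $\dTV(\nu(\sigma,\sigma'),\nupara\tensor\nupara)\le\eta$ and the part $B$ over pairs with $\dTV(\nu(\sigma,\sigma'),\nupara\tensor\nupara)>\eta$, for a small constant $\eta>0$ to be fixed at the end. For $B$: on $\cE(G)$, Markov applied to the configuration randomness gives $\pr[\dTV(\nu(\SIGMA_{G,\mathrm{p}},\SIGMA_{G,\mathrm{p}}'),\nupara\tensor\nupara)>\eta]\le\delta_n/\eta$, so $\sum_{\sigma,\sigma'\in\Spara:\,\dTV>\eta}\emm^{\beta\cH_G(\sigma)+\beta\cH_G(\sigma')}\le(\delta_n/\eta)\Zpara(G)^2$; multiplying by $\vecone\{\cE(G)\}$ and taking expectations yields $B\le(\delta_n/\eta)\ex[\Ypara(\G)^2]$. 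As $\delta_n\to0$ with $\eta$ fixed this is self-absorbing: $\ex[\Ypara(\G)^2]=A+B\le(1+o(1))A$.

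It remains to bound $A$, where we may now drop $\vecone\{\cE(\G)\}$ and estimate $A\le\ex\big[\sum_{\sigma,\sigma'\in\Spara:\,\dTV(\nu(\sigma,\sigma'),\nupara\tensor\nupara)\le\eta}\emm^{\beta\cH_\G(\sigma)+\beta\cH_\G(\sigma')}\big]$. Treating the pair $(\sigma,\sigma')$ as a single $[q]^2$-colouring and applying \Lem~\ref{Lemma_Samuel} to its edge statistics, exactly as in the derivation of~\eqref{eqsmmF} (there are only $n^{O(1)}$ admissible values of the vertex-pair distribution, the single-copy colour statistics and the edge tensor $r$), gives $A\le\exp(nM_{\eta,\eps}+O(\log n))$, where $M_{\eta,\eps}$ is the maximum of the functional in~\eqref{eqsmmF} subject to the colour statistics lying in $\Spara(\eps)$ and the vertex-pair distribution lying within $\eta$ of $\nupara\tensor\nupara$. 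The crucial point is that the Hamiltonian term $\tfrac{d\beta}2\sum_{s,s',t,t'}(\vecone\{s=t\}+\vecone\{s'=t'\})r(s,s',t,t')$ in~\eqref{eqsmmF} depends on the tensor $r$ only through its marginals; hence maximising over $r$ with those marginals fixed reduces to maximising the entropy $H(r)$, which is attained at the product tensor, and $F^\tensor_{d,\beta}$ evaluated at a product equals $2F_{d,\beta}$ of the single-copy data. Together with the continuity of $F_{d,\beta}$ and the fact (\Prop~\ref{Prop_max}) that $(\nupara,\rhopara)$ is a stable local maximum of $F_{d,\beta}$, this yields $\limsup_{\eta,\eps\to0}M_{\eta,\eps}\le2F_{d,\beta}(\nupara,\rhopara)$. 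Fixing $\eta$ and then $\eps$ small enough, $A\le\exp(2nF_{d,\beta}(\nupara,\rhopara)+o(n))=\exp(o(n))\ex[\Zpara(\G)]^2$ by \Lem~\ref{lemma_EZp}, and with $\ex[\Ypara(\G)^2]\le(1+o(1))A$ we are done.

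The main obstacle is this last passage: when $\curie<\beta<\KS$ the \emph{global} maximum of $F^\tensor_{d,\beta}$ is the strictly larger value $2F_{d,\beta}(\nuferro,\rhoferro)$, so one genuinely needs that clamping the overlap near $\nupara\tensor\nupara$ drags the constrained maximum all the way down to $2F_{d,\beta}(\nupara,\rhopara)$. This is where the ``energy depends only on the marginals of $r$'' observation --- which collapses the otherwise formidable optimisation over the doubly-stochastic tensors $r\in\cR^\tensor(\rho)$ to a plain entropy maximisation solved by the product tensor --- together with the local maximality of $(\nupara,\rhopara)$ from \Prop~\ref{Prop_max} do the real work; the $A/B$ split, the self-referential bound on $B$, and the $\Gpara$-Markov step for the first moment are routine. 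In the ferromagnetic case the overlap is pinned near $\nuferro\tensor\nuferro$ only, not near its colour permutations, which is exactly what is recorded in the definition~\eqref{eqYferro} of $\Yferro$.
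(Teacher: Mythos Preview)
Your argument is correct and follows the same overall strategy as the paper: first moment via the identity $\ex[\Ypara(\G)]/\ex[\Zpara(\G)]=\pr[\Gpara\in\cE]$, second moment by restricting to pairs $(\sigma,\sigma')$ with overlap near $\nupara\tensor\nupara$, evaluating the pair functional at the product point, and invoking continuity. Your explicit $A/B$ split with the self-absorbing Markov bound on $B$ actually fills in a step the paper compresses into the ``$(1+o(1))$'' in~\eqref{eqCor_overlap2}.

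The one genuine difference is how the optimal edge tensor at the product overlap is identified. The paper writes down the Lagrangian for the constrained maximum of the concave function $g(\rho)$ and checks by hand that $\rhopara\tensor\rhopara$ is a stationary point. You instead observe that the energy term factors through the two single-copy edge marginals of $r$, so that the inner optimisation over $r$ (with those marginals held fixed) is a pure entropy maximisation, solved by the product tensor; this immediately gives $F_{d,\beta}^\tensor$ at the product equal to the sum of two single-copy values. Your route is cleaner and more conceptual; the paper's Lagrangian is more explicit but amounts to the same verification. For the passage from the exact product point to an $\eta$-neighbourhood, both arguments rely on the same mechanism --- strict concavity of the inner problem in the edge tensor, hence continuity of the maximiser in the overlap (the paper says this via the inverse function theorem, you via ``continuity of $F_{d,\beta}$''); your phrasing is slightly loose here, since what is really needed is continuity of the \emph{constrained} maximum, but the concavity that drives it is present and the conclusion stands.
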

\begin{proof}
	Assume that $\beta<\KS$.
	Let $\cE_{\mathrm p}=\{G:\ex\big[\dTV(\nu(\SIGMA_{G,\mathrm{p}},\SIGMA_{G,\mathrm{p}}'),\nupara\tensor\nupara)\big]=o(1)\}$.
	Combining \Lem~\ref{Lemma_overlapPara} with the Nishimori identity \eqref{eqProp_Nishi}, we obtain
	\begin{align}\label{eqCor_overlap1}
		\frac{\ex[\Ypara(\G)]}{\ex[\Zpara(\G)]}&=\pr\brk{\Gpara\in\cE_{\mathrm p}}\sim1
	\end{align}
	and thus $\ex[\Ypara(\G)]\sim\ex[\Zpara(\G)]$.

	Regarding the second moment, consider the set $\cP_{\para}(n)$ of all probability distributions $\nu$ on $[q]\times[q]$ such that $n\nu(\chi,\chi')$ is an integer for all $\chi,\chi'\in[q]$ and such that $\dTV(\nu,\vec u)=o(1)$.
	Let $\cR_\para(\nu,n)$ be the set of all distributions $\rho$ on $[q]^4$ such that 
\begin{align}\label{eqConstraint1}
	\rho(\chi,\chi',\chi'',\chi''')&=\rho(\chi'',\chi''',\chi,\chi')&&\mbox{for all }\chi,\chi',\chi'',\chi'''\in[q]\quad\mbox{ and}\\
	\sum_{\chi'',\chi'''\in[q]}\rho(\chi,\chi',\chi'',\chi''')&=\nu(\chi,\chi')&&\mbox{for all }\chi,\chi'\in[q]\label{eqConstraint2}
	\end{align}
	and such that $n\rho(\chi,\chi',\chi'',\chi''')$ is an integer for all $\chi,\chi',\chi'',\chi'''\in[q]$.
	Using the definition \eqref{eqYpara} of $\Ypara$, \Lem~\ref{Lemma_Samuel} and the linearity of expectation, we bound
	\begin{align}\nonumber
		\ex\brk{\Ypara(\G)^2}&\leq(1+o(1))\sum_{\sigma,\sigma'\in[q]^n}\vecone\cbc{\dTV(\nu(\sigma,\sigma'), \nupara\tensor\nupara)=o(1)}
			\ex\brk{\emm^{\beta(\cH_{\G}(\sigma)+\cH_{\G}(\sigma'))}}\\
							 &\leq\sum_{\nu\in\cP_\para(n)}\binom{n}{\nu n}\sum_{\rho\in\cR_\para(\nu,n)}
								\exp\Big[\frac{dn}2\sum_{\chi,\chi',\chi'',\chi'''=1}^q\rho(\chi,\chi',\chi'',\chi''')\log\frac{\nu(\chi,\chi')\nu(\chi'',\chi''')}{\rho(\chi,\chi',\chi'',\chi''')}\nonumber\\
							 &\qquad\qquad\qquad\qquad\qquad\qquad\qquad\qquad\qquad\qquad+\beta\bc{\vecone\cbc{\chi=\chi''}+\vecone\cbc{\chi'=\chi'''}}+O(\log n)\Big].\label{eqCor_overlap2}
	\end{align}
	For any given $\nu$ the term inside the square brackets is a strictly concave function of $\rho$.
	Therefore, for any $\nu$ there exists a unique maximiser $\rho^*_\nu$.
	Moreover, the set $\cR_\para(\nu,n)$ has size $|\cR_\para(\nu,n)|=n^{O(1)}$.
	Hence, using Stirling's formula we can simplify \eqref{eqCor_overlap2} to
	\begin{align}\nonumber
		\ex\brk{\Ypara(\G)^2}&\leq\sum_{\nu\in\cP_\para(n)}
								\exp\Big[-n\sum_{\chi,\chi'=1}^q\nu(\chi,\chi')\log\nu(\chi,\chi')+\frac{dn}2\sum_{\chi,\chi',\chi'',\chi'''=1}^q\rho^*_\nu(\chi,\chi',\chi'',\chi''')\log\frac{\nu(\chi,\chi')\nu(\chi'',\chi''')}{\rho^*_\nu(\chi,\chi',\chi'',\chi''')}\nonumber\\
							 &\qquad\qquad\qquad\qquad\qquad\qquad\qquad\qquad\qquad\qquad+\beta\bc{\vecone\cbc{\chi=\chi''}+\vecone\cbc{\chi'=\chi'''}}+O(\log n)\Big].\label{eqCor_overlap3}
	\end{align}
	To further simplify the expression notice that the maximiser $\rho^*_\nu$ is the unique solution to a concave optimisation problem subject to the linear constraints \eqref{eqConstraint1}--\eqref{eqConstraint2}.
	Since the constraints \eqref{eqConstraint2} themselves are linear in $\nu$, by the inverse function theorem the maximiser $\rho^*_\nu$ is a continuous function of $\nu$.
	In effect, since $|\cP_\para(n)|=n^{O(1)}$,  we can bound \eqref{eqCor_overlap3} by the contribution of the uniform distribution $\nupara\tensor\nupara$ only.
	We thus obtain
	\begin{align}\nonumber
		\ex\brk{\Ypara(\G)^2}&\leq q^n\exp\Big[\frac{dn}2\sum_{\chi,\chi',\chi'',\chi'''=1}^q\rho^*_{\nupara\tensor\nupara}(\chi,\chi',\chi'',\chi''')\log\frac{\nupara(\chi,\chi')\nupara(\chi'',\chi''')}{\rho^*_{\nupara\tensor\nupara}(\chi,\chi',\chi'',\chi''')}\nonumber\\
							 &\qquad\qquad\qquad\qquad\qquad\qquad\qquad\qquad\qquad\qquad+\beta\bc{\vecone\cbc{\chi=\chi''}+\vecone\cbc{\chi'=\chi'''}}+o(n)\Big].\label{eqCor_overlap4}
	\end{align}

	Finally, the maximiser $\rho^*_{\nupara\tensor\nupara}$ in \eqref{eqCor_overlap4} works out to be $\rho^*_{\nupara\tensor\nupara}=\rhopara\tensor\rhopara$.
	To see this, recall from \Lem~\ref{Lemma_overlapPara} that $\nupara$ is the uniform distribution on $[q]$. 
	It therefore remains to show that subject to \eqref{eqConstraint1}--\eqref{eqConstraint2}, the function
	\begin{align*}
		g(\rho)&=\sum_{\chi,\chi',\chi'',\chi'''=1}^q\rho(\chi,\chi',\chi'',\chi''')\log\frac{\nupara(\chi) \nupara(\chi') \nupara(\chi'') \nupara(\chi''')}{\rho(\chi,\chi',\chi'',\chi''')} +\beta\bc{\vecone\cbc{\chi=\chi''}+\vecone\cbc{\chi'=\chi'''}}\\
			   &=-4\log q-\sum_{\chi,\chi',\chi'',\chi'''=1}^q\rho(\chi,\chi',\chi'',\chi''')\log\bc{\rho(\chi,\chi',\chi'',\chi''')} -\beta\bc{\vecone\cbc{\chi=\chi''}+\vecone\cbc{\chi'=\chi'''}}
	\end{align*}
	attains its maximum at the distribution $\rho=\rhopara\tensor\rhopara$. 
	Since $g$ is strictly concave, the unique maximum occurs at the unique stationary point of the Lagrangian
	\begin{align*}
		L_{\para}&=g(\rho)+\sum_{\chi,\chi',\chi'',\chi'''}\lambda_{\chi,\chi',\chi'',\chi'''}\bc{\rho\bc{\chi,\chi',\chi'',\chi'''}-\rho\bc{\chi'',\chi''',\chi,\chi'}} \\
		&+\sum_{\chi,\chi'}\lambda_{\chi,\chi'}\bc{\sum_{\chi'',\chi'''\in[q]}\rho(\chi,\chi',\chi'',\chi''')-\nu(\chi,\chi')}.
		\end{align*}
	Since the derivatives work out to be
	\begin{align*}
		\frac{\partial L_{\para}}{\partial\rho(\chi,\chi',\chi'',\chi''')}&=-1-\log\rho(\chi,\chi',\chi'',\chi''') +\lambda_{\chi,\chi',\chi'',\chi'''}-\lambda_{\chi'',\chi''',\chi',\chi'} +\lambda_{\chi,\chi'}+\beta\vecone\{\chi=\chi''\}+\beta\vecone\{\chi'=\chi'''\},
	\end{align*}
	for the choice $\rho=\rhopara\tensor\rhopara$ there exist Lagrange multipliers such that all partial derivatives vanish.

	The proof of (ii) proceeds analogously.
\end{proof}

\begin{proof}[Proof of \Cor~\ref{Cor_nonreG}]
The corollary is now an immediate consequence of \Cor~\ref{Cor_overlap}, the Paley-Zygmund and Azuma inequalities.
\end{proof}

\subsection{Proof of \Cor~\ref{Cor_cutm}}\label{Sec_Cor_cutm}
To prove \Cor~\ref{Cor_cutm} we derive the following general transfer principle from the estimate of the Boltzmann weights of $\Sferro$ and $\Spara$ from \Cor~\ref{Cor_nonreG}.

\begin{lemma}\label{Cor_pseudo}
Let $d\geq3$.
\begin{enumerate}[(i)]
	\item If $\beta<\KS$, then for any event $\cE$ with $\pr\brk{\Gpara\in\cE}\leq \exp(-\Omega(n))$ we have $\pr\brk{\GG\in\cE}=o(1)$.
	\item If $\beta>\uniq$, then for any event $\cE$ with $\pr\brk{\Gferro\in\cE}\leq \exp(-\Omega(n))$ we have $\pr\brk{\GG\in\cE}=o(1)$.
\end{enumerate}
\end{lemma}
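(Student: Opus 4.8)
The plan is to run a standard \emph{quiet planting} transfer. The planted graph $\Gpara$ is the pairing--model graph $\G$ reweighted by the factor $\Zpara(\G)/\Erw\brk{\Zpara(\G)}$, and Corollary~\ref{Cor_nonreG} says that this factor is $\eul^{o(n)}$ on a set of $\G$'s of probability $1-o(1)$. Hence an event of exponentially small $\Gpara$--probability can have at most $o(1)$ probability under $\G$, and the claim for $\GG$ then follows from the contiguity of $\GG$ with respect to $\G$. The ferromagnetic case is handled identically.

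For part~(i), fix $\beta<\KS$ and an event $\cE$, and let $c>0$ be a constant with $\pr\brk{\Gpara\in\cE}\le\eul^{-cn}$ for all large $n$. Unwinding~\eqref{eqDefPlanted} gives $\pr\brk{\Gpara\in\cE}=\Erw\brk{\Zpara(\G)\vecone\{\G\in\cE\}}\big/\Erw\brk{\Zpara(\G)}$. Abbreviate $B=\cB_{d,\beta}(\mupara)$, which equals $F_{d,\beta}(\nupara,\rhopara)$ by Lemma~\ref{Lemma_Jean}, so that $\Erw\brk{\Zpara(\G)}=\eul^{nB+O(\log n)}$ by Lemma~\ref{lemma_EZp}. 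Pick $\delta\in(0,c)$ and set $\cT=\{G:\Zpara(G)\ge\eul^{n(B-\delta)}\}$; by Corollary~\ref{Cor_nonreG}(ii) we have $\pr\brk{\G\in\cT}=1-o(1)$ once $\eps$ is small enough. Restricting the numerator to $\cE\cap\cT$ and bounding $\Zpara(\G)\ge\eul^{n(B-\delta)}$ on that event, we get
\begin{align*}
	\pr\brk{\Gpara\in\cE}\ \geq\ \frac{\eul^{n(B-\delta)}\,\pr\brk{\G\in\cE\cap\cT}}{\eul^{nB+O(\log n)}}\ =\ \eul^{-\delta n+O(\log n)}\,\pr\brk{\G\in\cE\cap\cT}.
\end{align*}
Combined with $\pr\brk{\Gpara\in\cE}\le\eul^{-cn}$ and $\delta<c$, this forces $\pr\brk{\G\in\cE\cap\cT}\le\eul^{-(c-\delta)n+O(\log n)}=o(1)$, and since $\pr\brk{\G\notin\cT}=o(1)$ we conclude $\pr\brk{\G\in\cE}=o(1)$. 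As any property holding \whp\ for $\G$ also holds \whp\ for $\GG$, this yields $\pr\brk{\GG\in\cE}=o(1)$. Part~(ii) is the same argument with $(\Zferro,\Sferro,\muferro,\nuferro,\rhoferro)$ in place of $(\Zpara,\Spara,\mupara,\nupara,\rhopara)$, invoking Lemma~\ref{lemma_EZf} for the first moment and Corollary~\ref{Cor_nonreG}(i) for the typical lower bound on $\Zferro(\G)$.

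The only step that draws on the real work of this section — and the one I would flag as the crux — is the \emph{exact} matching of the exponential growth rate of the first moment $\Erw\brk{\Zpara(\G)}$ (Lemma~\ref{lemma_EZp}) with the \whp\ lower bound $\Zpara(\G)\ge\eul^{n(B-o(1))}$ (Corollary~\ref{Cor_nonreG}): were there an entropy gap between these, the displayed inequality would only give $\pr\brk{\G\in\cE\cap\cT}\le\eul^{O(n)}$ and the argument would be vacuous. This matching is precisely what the truncated second--moment computation underlying Corollary~\ref{Cor_nonreG} secures, and it is why $\Spara(\eps)$ confines the colour statistics to a small neighbourhood of $\nupara$, on which $(\nupara,\rhopara)$ is a \emph{stable} local maximiser of $F_{d,\beta}$ (Proposition~\ref{Prop_max}); without this restriction the growth rate would, for $\beta>\curie$, be dictated by the ferromagnetic optimum $F_{d,\beta}(\nuferro,\rhoferro)>F_{d,\beta}(\nupara,\rhopara)$ and the transfer would break down.
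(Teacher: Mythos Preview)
Your proof is correct and follows essentially the same quiet--planting transfer as the paper: intersect $\cE$ with the high--probability event that $\Zpara(\G)$ is within an $\eul^{o(n)}$ factor of its mean, and use the definition~\eqref{eqDefPlanted} of $\Gpara$ to convert the $\G$--probability of $\cE$ on that event into $\eul^{o(n)}\pr\brk{\Gpara\in\cE}$. The only difference is in the citations: you invoke Corollary~\ref{Cor_nonreG} and Lemma~\ref{lemma_EZp} directly for the concentration of $\Zpara(\G)$, whereas the paper's proof appeals to Theorem~\ref{Thm_Z} and Proposition~\ref{Prop_max}; your choice is arguably cleaner, since for $\curie<\beta<\KS$ it is Corollary~\ref{Cor_nonreG} (not Theorem~\ref{Thm_Z}, which concerns the full $Z_\beta$) that pins down the growth rate of the truncated $\Zpara$.
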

\begin{proof}
	This follows from a ``quiet planting'' argument akin to the one from~\cite{Barriers}.
	Specifically, \Thm~\ref{Thm_Z} and \Prop~\ref{Prop_max} show that for $\beta<\KS$ the event $\cZ_\para=\{\Zpara(\G)=\ex[\Zpara(\G)]\exp(o(n))\}$ occurs \whp\
	Therefore, recalling the definition \eqref{eqDefPlanted} of the planted model, we obtain
	\begin{align}\nonumber
		\pr\brk{\G\in\cE}&\leq\pr\brk{\G\in\cE\cap\cZ_\para}+\pr\brk{\G\not\in\cZ_\para}\leq\frac{\ex[\vecone\{\G\in\cE\}\Zpara(\G)]\exp(o(n))}{\ex[\Zpara(\G)]}+o(1)\\
							&\leq\exp(o(n))\pr\brk{\Gpara\in\cE}+o(1)=o(1).\label{eqCor_pseudo1}
		\end{align}
	Since the simple random regular graph $\GG$ is contiguous with respect to $\G$, assertion (i) follows from \eqref{eqCor_pseudo1}.
	The proof of (ii) is identical.
\end{proof}

\begin{proof}[Proof of \Cor~\ref{Cor_cutm}]
The assertion follows from \Lem~\ref{Cor_pseudo} and \Prop~\ref{Prop_nonreG}.
\end{proof}

\section{Metastability and Slow mixing}\label{sec:slowSW}
In this section, we prove Theorems~\ref{Thm_meta} and~\ref{thm:SWslow}. Recall from Section~\ref{sec:states} the paramagnetic and ferromagnetic states $\Sparad$ and $\Sferrod$ for $\eps>0$. For the purposes of this section we will need to be more systematic of keeping track the dependence of these phases on $\eps$. In particular, we will use the more explicit notation $\Zparad(G)$ and $\Zferrod(G)$ to denote  the quantities $\Zpara(G)$ and $\Zferro(G)$, respectively,  from \eqref{eqZfp}.

 The following lemma reflects the fact that $\nupara$ and $\nuferro$ are local maxima of the first moment.
\begin{lemma}\label{lem:phasebounds}
Let $q,d\geq 3$ be integers and $\beta>0$ be real. Then, for  all sufficiently small constants $\eps'>\eps>0$ and any constant $\theta>0$, there exists constant $\zeta>0$ such that \whp{} over $G\sim \G$, it holds that 
\begin{enumerate}
\item \label{it:phasepara} If $\beta<\KS$, then $\Zparad(G)= \emm^{ o(n)}\Erw[\Zparad(\G)]$ and $\Zparadp(G)\leq (1+\emm^{-\zeta n})\Zparad(G)$.
\item \label{it:phaseferro} If $\beta>\uniq$, then $\Zferrod(G)= \emm^{o(n)}\Erw[\Zferrod(\G)]$ and $\Zferrodp(G)\leq (1+\emm^{-\zeta n})\Zferrod(G)$.
\end{enumerate}
\end{lemma}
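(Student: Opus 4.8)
The plan is to handle the two assertions of each item separately, the main inputs being Corollary~\ref{Cor_nonreG}, the first--moment estimates of \Lem s~\ref{lemma_EZp} and~\ref{lemma_EZf}, and the \emph{quantitative} stability of the relevant local maxima of $F_{d,\beta}$ from \Prop~\ref{Prop_max}. Throughout, $\eps,\eps',\theta$ only need to be small enough positive constants with $\eps<\eps'$; $\theta$ plays no role in the argument below. I describe item~\ref{it:phasepara}; item~\ref{it:phaseferro} is identical after replacing $(\nupara,\rhopara,\mupara)$ by $(\nuferro,\rhoferro,\muferro)$, the condition $\beta<\KS$ by $\beta>\uniq$, and \Lem~\ref{lemma_EZp} by \Lem~\ref{lemma_EZf}.

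For the first assertion, $\Zparad(G)=\emm^{o(n)}\Erw[\Zparad(\G)]$, I would combine three facts: \Cor~\ref{Cor_nonreG}(ii) gives $\tfrac1n\log\Zparad(\G)=\cB_{d,\beta}(\mupara)+o(1)$ \whp; \Lem~\ref{Lemma_Jean} gives $\cB_{d,\beta}(\mupara)=F_{d,\beta}(\nupara,\rhopara)$; and \Lem~\ref{lemma_EZp} gives $\tfrac1n\log\Erw[\Zparad(\G)]=F_{d,\beta}(\nupara,\rhopara)+o(1)$. Dividing the first by the third yields the claim.

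For the comparison $\Zparadp(G)\le(1+\emm^{-\zeta n})\Zparad(G)$, write $\Zparadp(G)-\Zparad(G)=\sum_{\sigma\in\Sparadp\setminus\Sparad}\emm^{\beta\cH_G(\sigma)}$, so that $\sigma$ contributes precisely when its colour statistics $\nu=\nu^\sigma$ satisfy $\eps\le\|\nu-\nupara\|_1<\eps'$. Grouping by $\nu$ and by the edge statistics $\rho\in\cR(\nu)$ and applying \Lem~\ref{Lemma_Samuel} as in the derivation of \eqref{eqFirstMmtBethe}, one gets
\begin{align*}
\Erw\bc{\Zparadp(\G)-\Zparad(\G)}\le\exp\bc{\,n\sup_{\nu,\,\rho\in\cR(\nu)}F_{d,\beta}(\nu,\rho)+O(\log n)\,},
\end{align*}
the supremum being over $\nu\in\cP([q])$ with $\eps\le\|\nu-\nupara\|_1<\eps'$. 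The crux is that this supremum is bounded away from $F_{d,\beta}(\nupara,\rhopara)$. Since $F_{d,\beta}(\nu,\cdot)$ is strictly concave on $\cR(\nu)$, it has a unique maximiser $\rho_\nu^\ast$, which by the inverse function theorem depends continuously on $\nu$ (the same observation as in the proof of \Cor~\ref{Cor_overlap}) and satisfies $\rho_{\nupara}^\ast=\rhopara$. Since $(\nupara,\rhopara)$ is, for $\beta<\KS$, a stable local maximum of $F_{d,\beta}$ --- i.e.\ \eqref{eqstabmax} holds with some $\delta,c>0$, by \Prop~\ref{Prop_max} and \Lem~\ref{Lemma_Jean} --- choosing $\eps'$ small enough that $\|\nu-\nupara\|+\|\rho_\nu^\ast-\rhopara\|<\delta$ whenever $\|\nu-\nupara\|_1<\eps'$ gives, for every admissible $\nu$,
\begin{align*}
\sup_{\rho\in\cR(\nu)}F_{d,\beta}(\nu,\rho)=F_{d,\beta}(\nu,\rho_\nu^\ast)\le F_{d,\beta}(\nupara,\rhopara)-c\|\nu-\nupara\|^2\le F_{d,\beta}(\nupara,\rhopara)-\eta
\end{align*}
for a constant $\eta=\eta(\eps,d,q,\beta)>0$, using equivalence of norms to turn $\|\nu-\nupara\|_1\ge\eps$ into a lower bound on $\|\nu-\nupara\|$. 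In the ferromagnetic case one argues verbatim around $(\nuferro,\rhoferro)$, and should additionally observe that, once $\eps'$ is small enough, $\Sferrodp$ contains no configuration whose colour statistics are close to a non-identity permutation of $\nuferro$, so that $(\nuferro,\rhoferro)$ is the only local maximum of $F_{d,\beta}$ in the relevant neighbourhood.

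Finally, Markov's inequality applied to the first--moment bound shows that with probability $1-\emm^{-\Omega(n)}$ we have $\Zparadp(G)-\Zparad(G)\le\exp\bc{n\bc{F_{d,\beta}(\nupara,\rhopara)-\eta/2}}$, while the first assertion gives $\Zparad(G)=\exp\bc{nF_{d,\beta}(\nupara,\rhopara)+o(n)}$ \whp. Dividing, \whp\ $\Zparadp(G)-\Zparad(G)\le\emm^{-\eta n/3}\,\Zparad(G)$ for $n$ large, so we may take $\zeta=\eta/3$. I expect the main obstacle to be exactly the uniform exponential gap $\sup_{\eps\le\|\nu-\nupara\|_1<\eps',\,\rho\in\cR(\nu)}F_{d,\beta}(\nu,\rho)\le F_{d,\beta}(\nupara,\rhopara)-\eta$ with $\eta>0$: a merely strict local maximum would not suffice, and it is the quantitative inequality \eqref{eqstabmax} furnished by \Prop~\ref{Prop_max} that makes it work, together with the bookkeeping that the optimal $\rho_\nu^\ast$ never escapes the stability ball and that, ferromagnetically, the small set $\Sferrodp$ does not reach a permuted copy of $\nuferro$.
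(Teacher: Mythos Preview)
Your proposal is correct and follows essentially the same approach as the paper: use \Cor~\ref{Cor_nonreG} together with \Lem~\ref{lemma_EZp} (resp.\ \Lem~\ref{lemma_EZf}) for the first assertion, and for the second assertion bound $\Erw[\Zparadp(\G)-\Zparad(\G)]$ via the first-moment formula and the quantitative stability \eqref{eqstabmax} of $(\nupara,\rhopara)$ (resp.\ $(\nuferro,\rhoferro)$), then apply Markov's inequality and divide by $\Zparad(G)$. If anything, your treatment of the second assertion is slightly more careful than the paper's: the paper restricts the sum to pairs $(\nu,\rho)$ with $\eps\le\|\nu-\nupara\|+\|\rho-\rhopara\|\le\eps'$, whereas the actual constraint coming from $\Sparadp\setminus\Sparad$ is only on $\nu$, so your explicit step of first optimising over $\rho$ via the continuous map $\nu\mapsto\rho_\nu^\ast$ before invoking \eqref{eqstabmax} is the right way to fill that in.
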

\begin{proof}
We first prove Item~\ref{it:phasepara}, let $\beta<\KS$.  Recall from \eqref{eqFirstMmtBethe} the function $F(\nu,\rho):=F_{d, \beta}(\nu,\rho)$ for $\nu\in \cP([q])$ and $\rho\in\cR(\nu)$. By Corollary~\ref{Cor_nonreG}, for all sufficiently small constant $\eps>0$, we have that 
\[\Erw[\tfrac{1}{n}\log \Zparad(\G)]=\cB_{d,\beta}(\mupara)+o(1)=F(\nupara,\rhopara)+o(1),\] where the last equality holds by \Lem~\ref{Lemma_Jean}. Applying Azuma's inequality to the random variable $\log \Zparad(\G)$ by revealing the edges of $\G$ one-by-one,  we therefore obtain that \whp{} it holds that $\Zparad(G)= \emm^{nF(\nupara,\rhopara)+o(n)}$. Also, from Lemma~\ref{lemma_EZp} we have that $\Erw[\Zparad(\G)]= \emm^{nF(\nupara,\rhopara)+o(n)}$, so we obtain that 
$\Zparad(G)= \emm^{o(n)}\Erw[\Zparad(\G)]$
proving the first inequality of Item~\ref{it:phasepara}. For the second inequality, recall from  Proposition~\ref{Prop_max} that $(\nupara,\rhopara)$ is a local maximum of $F$ for $\beta<\KS$, cf. \eqref{eqstabmax}. Therefore, for all sufficiently small constants $\eps'>\eps>0$,   there exists constant $\zeta>0$ such that 
\begin{equation}\label{eq:localineq}
F(\nu,\rho)\leq F(\nupara,\rhopara)-4\zeta
\end{equation}
for all $\nu\in \cP([q])$ and $\rho\in \cR(\nu)$ with
\begin{equation}\label{eq:ring}
\eps\leq \norm{\nu-\nupara}+ \norm{\rho-\rhopara}\leq \eps'. 
\end{equation}
Using \eqref{eq:summingsigma}, we see that 
\[\Erw\big[\Zparadp(\G)-\Zparad(\G)\big]\leq\sum_{\nu, \rho}\exp(nF(\nu,\rho)+O(\log n))\]
where the sum ranges over $\nu\in \cP([q])$ and $\rho\in\cR(\nu)$ satisfying \eqref{eq:ring} such that $n\nu(s),dn\rho(s,t)$ are integers for all $s,t\in[q]$, and $dn\rho(s,s)$ is even. Since there are at most $n^{O(1)}$ choices for  such colour statistics $\nu,\rho$, we obtain that	$\Erw\big[\Zparadp(\G)-\Zparad(\G)\big]\leq \emm^{n(F(\nupara,\rhopara)-3\zeta)}$ for all sufficiently large $n$. 
By Markov's inequality, we therefore have that \whp{} $\Zparadp(G)-\Zparad(G)\leq \emm^{n F(\nupara,\rhopara)-2\zeta n}$. As we showed above, \whp{} $\Zparad(G)= \emm^{nF(\nupara,\rhopara)+o(n)}$, so combining these we obtain that \whp{} $\Zparad(G)\geq \emm^{\zeta n}\big(\Zparadp(G)-\Zparad(G)\big)$,
completing the proof of Item~\ref{it:phasepara} of the lemma. 

For the second item of the lemma, the proof is completely analogous, using the fact from Proposition~\ref{Prop_max} that $(\nuferro,\rhoferro)$ is a local maximum of $F(\nu,\rho)$ for $\beta>\uniq$.
\end{proof}

 \Thm~\ref{Thm_meta} will follow by way of a conductance argument. Let $G=(V,E)$ be a graph, and $P$ be the transition matrix for the Glauber dynamics defined in Section \ref{sec_Metastability}. 
For a set $S \subseteq [q]^{V}$ define the \emph{bottleneck ratio} of $S$ to be 
\begin{equation}\label{eq_bottleneckratio} 
	\Phi\bc{S}= \frac{\sum_{\sigma\in S,\,\tau\not\in S}\mu_{G,\beta}(\sigma)P(\sigma,\tau)}{\mu_{G,\beta}(S)}
\end{equation}
The following lemma provides a routine conductance bound (e.g., \cite[\Thm~7.3]{LPW}).
For the sake of completeness the proof is included in Appendix~\ref{sec_lem_bottleNeck}.
\newcommand{\statelembottle}{Let $G=(V,E)$ be a graph. For any $S \subseteq [q]^{V}$ such that $\mu_{G}(S)>0$ and any integer $t\geq 0$ we have $\norm{ \mu_{G,S} P^t - \mu_{G,S}}_{TV} \leq t \Phi(S).$}
\begin{lemma} \label{lem_bottleNeck} \statelembottle
\end{lemma}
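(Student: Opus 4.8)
The plan is to reduce everything to the one‑step estimate $\norm{\mu_{G,S}P-\mu_{G,S}}_{\mathrm{TV}}\le\Phi(S)$ and then bootstrap to general $t$ by a telescoping argument combined with the fact that a stochastic matrix cannot increase total variation distance. Write $\mu=\mu_{G,\beta}$ and $\mu_{G,S}=\mu(\cdot\mid S)$. The single structural input I would use is that Glauber dynamics is reversible with respect to $\mu$, i.e.\ $\mu(\sigma)P(\sigma,\tau)=\mu(\tau)P(\tau,\sigma)$ for all $\sigma,\tau\in[q]^V$ (this is the standard detailed‑balance check for the heat‑bath update).

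For the one‑step bound I would argue as follows. For $\tau\in S$, reversibility gives
\[
(\mu_{G,S}P)(\tau)=\frac1{\mu(S)}\sum_{\sigma\in S}\mu(\sigma)P(\sigma,\tau)=\frac1{\mu(S)}\sum_{\sigma\in S}\mu(\tau)P(\tau,\sigma)=\mu_{G,S}(\tau)\sum_{\sigma\in S}P(\tau,\sigma)\le\mu_{G,S}(\tau),
\]
so $\mu_{G,S}(\tau)-(\mu_{G,S}P)(\tau)=\mu_{G,S}(\tau)\sum_{\sigma\notin S}P(\tau,\sigma)\ge0$, while for $\tau\notin S$ we trivially have $\mu_{G,S}(\tau)=0\le(\mu_{G,S}P)(\tau)$. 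Hence the positive part of the (zero–total–mass) signed measure $\mu_{G,S}-\mu_{G,S}P$ is supported entirely on $S$, and therefore
\[
\norm{\mu_{G,S}P-\mu_{G,S}}_{\mathrm{TV}}=\sum_{\tau\in S}\bigl(\mu_{G,S}(\tau)-(\mu_{G,S}P)(\tau)\bigr)=\sum_{\tau\in S}\mu_{G,S}(\tau)\sum_{\sigma\notin S}P(\tau,\sigma)=\Phi(S),
\]
where the last equality is the definition of $\Phi(S)$ after using reversibility once more (or simply relabelling) to rewrite $\sum_{\tau\in S}\mu(\tau)\sum_{\sigma\notin S}P(\tau,\sigma)=\sum_{\sigma\in S}\mu(\sigma)\sum_{\tau\notin S}P(\sigma,\tau)$ and dividing by $\mu(S)$.

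For general $t$ I would telescope: $\mu_{G,S}P^t-\mu_{G,S}=\sum_{k=0}^{t-1}(\mu_{G,S}P-\mu_{G,S})P^k$, and since each $P^k$ is stochastic it is a contraction in the total variation norm on signed measures of total mass zero, so
\[
\norm{\mu_{G,S}P^t-\mu_{G,S}}_{\mathrm{TV}}\le\sum_{k=0}^{t-1}\norm{(\mu_{G,S}P-\mu_{G,S})P^k}_{\mathrm{TV}}\le t\,\norm{\mu_{G,S}P-\mu_{G,S}}_{\mathrm{TV}}\le t\,\Phi(S).
\]
I do not expect any genuine obstacle here — this is a textbook conductance estimate — and the only points needing a modicum of care are the reversibility bookkeeping in the one‑step computation and the observation that the positive part of $\mu_{G,S}-\mu_{G,S}P$ lives inside $S$, which is precisely what makes the escape mass $\Phi(S)$ appear.
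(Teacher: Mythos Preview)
Your proof is correct and follows essentially the same route as the paper's: establish the one-step identity $\norm{\mu_{G,S}P-\mu_{G,S}}_{\mathrm{TV}}=\Phi(S)$ by showing the positive part of $\mu_{G,S}-\mu_{G,S}P$ is supported on $S$, then telescope and use that $P$ contracts total variation. The only cosmetic difference is that you invoke reversibility where the paper invokes stationarity of $\mu_G$; either suffices here.
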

\begin{proof}[Proof of Theorem~\ref{Thm_meta}]
We prove the statement for the pairing model $\G$, the result for $\GG$ follows immediately by contiguity.   Let $\eps'>\eps>0$ and $\zeta>0$ be small constants such that Lemma~\ref{lem:phasebounds} applies, and let $G\sim \G$ be a graph satisfying the lemma. Set for convenience $\mu=\mu_{\G,\beta}$; we consider first the metastability of $\Sferrod$ for $\beta>\uniq$.

Since Glauber updates one vertex at a time it is impossible in one step to move from $\sigma\in \Sferrod$ to  $\tau\in [q]^n\backslash \Sferrodp$, i.e., $P(\sigma,\tau)=0$, and therefore
\begin{align*}\Phi\big(\Sferrod\big)&=\frac{\sum_{\sigma\in \Sferrod}\sum_{\tau\notin \Sferrod}\mu(\sigma)P(\sigma,\tau)}{\mu\big(\Sferrod\big)}= \frac{\sum_{\sigma\in \Sferrod}\sum_{\tau\in \Sferrodp\backslash\Sferrod}\mu(\sigma)P(\sigma,\tau)}{\mu\big(\Sferrod\big)}
\end{align*}
By reversibility of Glauber, for any $\sigma,\tau\in [q]^n$ we have $\mu(\sigma)P(\sigma,\tau)=\mu(\tau)P(\tau,\sigma)$, and therefore
\[\sum_{\sigma\in \Sferrod}\sum_{\tau\in \Sferrodp\backslash\Sferrod}\mu(\sigma)P(\sigma,\tau)=\sum_{\tau\in \Sferrodp\backslash\Sferrod}\mu(\tau)\sum_{\sigma\in \Sferrod}P(\tau,\sigma)\leq  \sum_{\tau\in \Sferrodp\backslash\Sferrod}\mu(\tau)=\mu\big(\Sferrodp\backslash\Sferrod\big)\]
Hence, $\Phi\big(\Sferrod\big)\leq \frac{\mu\big(\Sferrodp\backslash\Sferrod\big)}{\mu\big(\Sferrod\big)}=\tfrac{\Zferrodp(G)-\Zferrod(G)}{\Zferrod(G)}\leq \emm^{-\zeta n}$, where the last inequality follows from the fact that $G$ satisfies Lemma~\ref{lem:phasebounds}.  Lemma \ref{lem_bottleNeck} therefore ensures that for all nonnegative integers $T\leq \emm^{\zeta n/3}$
\begin{align} 
\label{eq_bottleneckferro}
\norm{ \mu\big(\nix\mid\Sferrod\big) P^{T} - \mu\big(\nix\mid\Sferrod\big)}_{TV} \leq T \cdot\Phi(\Sferro)\leq \emm^{-2\zeta n/3}.
\end{align}
Now, consider the Glauber dynamics $(\sigma_t)_{t\geq0}$ launched from $\sigma_0$ drawn from $\mu_{\GG,\beta,\Sferrod}$, and denote by $T_{\escf}=\min\cbc{t >0: \sigma_t \notin \Sferrod}$ its escape time from $\Sferrod$. Observe that $\sigma_t$ has the same distribution as $\mu(\nix\mid\Sferrod) P^{t}$, so \eqref{eq_bottleneckferro}  implies that for all nonnegative integers $T\leq \emm^{\zeta n/3}$
\[	\big|\pr\brk{\sigma_T\in \Sferrod}-1\big|<\emm^{-2\zeta n/3}, \mbox{ or equivalently } \pr\brk{\sigma_T\notin \Sferrod}\leq \emm^{-2\zeta n/3}.\]
By a union bound over the values of $T$, we therefore obtain that $\pr[T_{\escf}\leq  \emm^{\zeta n/3}]\leq  \emm^{-\zeta n/3}$, thus proving that $\Sferrod$ is a metastable state for $\beta>\uniq$. Analogous arguments show that $\Sparad$ is a metastable state for $\beta<\KS$.

The slow mixing of Glauber for $\beta>\uniq$ follows from the metastability of $\Sferrod$. In particular, from Theorem~\ref{Thm_Boltzmann} we have that $\norm{\mu\big(\nix\mid\Sferrod\big)-\mu}\geq 3/5$ and therefore, from \eqref{eq_bottleneckferro},   $\norm{ \mu\big(\nix\mid\Sferrod\big) P^{T}-\mu}\geq 1/2$, yielding that the mixing time is $\emm^{\Omega(n)}$.
\end{proof}

The final ingredients to establish Theorem~\ref{thm:SWslow} are the following results, bounding the probability that Swendsen-Wang escapes $\Sparad$ and $\Sferrod$. More precisely, for a graph $G$, a configuration $\sigma\in [q]^n$, and $S\subseteq [q]^n$, let $P^{G}_{SW}(\sigma\to S)$ denote the probability that after one step of SW on $G$ starting from $\sigma$, we end up in a configuration in $S$.

The following proposition shows that for almost all pairs $(G,\sigma)$ from the paramagnetic planted distribution $\big(\hat\G\big(\sparad\big),\sparad\big)$, the probability that SW leads to a configuration in the paramagnetic phase, slightly enlarged, is $1-\emm^{-\Omega(n)}$.
\newcommand{\statepropSWpara}{Let $q,d\geq 3$ be integers and $\beta\in (\uniq,\KS)$ be real. Then, for all sufficiently small constants $\eps'>\eps>0$, there exists constant $\eta>0$ such that with probability $1-\emm^{- \eta n}$ over the planted distribution $(G,\sigma)\sim \big(\hat\G\big(\sparad\big),\sparad\big)$, it holds that $P^{G}_{SW}\big(\sigma\to\Sparadp\big)\geq  1-\emm^{-\eta n}$.}
\begin{proposition} \label{thm:SWpara}
\statepropSWpara
\end{proposition}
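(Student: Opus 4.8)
The plan is to track the two halves of a single SW step separately. Write $(G,\sigma)\sim(\hG(\sparad),\sparad)$, let $M$ be the edge set produced by the percolation step and $\sigma'$ the configuration produced by the recolouring step. Since $M$ contains only edges that are monochromatic under $\sigma$, every component of $(V,M)$ lies inside a single colour class $\sigma^{-1}(s)$; in particular the restriction of $(V,M)$ to $\sigma^{-1}(s)$ is exactly the $p$-percolation, $p=1-\emm^{-\beta}$, of the monochromatic colour-$s$ subgraph of $G$, and the $q$ percolations over different $s$ are independent. It therefore suffices to establish: (a)~for a typical pair $(G,\sigma)$, with probability $1-\emm^{-\Omega(n)}$ over the percolation every component of $(V,M)$ has fewer than $\eps_0 n$ vertices, for a constant $\eps_0$ we are free to choose small; and (b)~conditional on every component having size $<\eps_0 n$, the recolouring step produces $\sigma'\in\Sparadp$ with probability $1-\emm^{-\Omega(n)}$.

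For (a) I would condition on the $\sigma$-algebra $\fSpara$ generated by $\sparad$ and $\rho^{\hG(\sparad),\sparad}$. By \Lem~\ref{Lemma_spara} there is a small constant $t>0$ such that, with probability $1-\emm^{-\Omega(n)}$, the colour statistics of $\sparad$ and the edge statistics of $\hG(\sparad)$ lie within $t$ of $\nupara$ and $\rhopara$; condition on such a ``good'' $\fSpara$. As recalled in \Sec~\ref{Sec_Nishi}, given $\fSpara$ the graph $\hG(\sparad)$ is uniform among $d$-regular multigraphs with the prescribed colour classes and edge statistics, so its restriction to a colour class $\sigma^{-1}(s)$ is, up to an $O(1)$ number of self-loops and parallel edges, a configuration-model graph whose degree sequence concentrates with empirical law tending to $\Bin(d,\theta)$, where $\theta=q\rhopara(s,s)+O(t)=\tfrac{\eul^\beta}{q-1+\eul^\beta}+O(t)$ is the fraction of a colour-$s$ vertex's half-edges that remain inside the class. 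After $p$-percolation, the exploration of a component of $(V,M)$ started from a vertex of $\sigma^{-1}(s)$ is, as long as fewer than $\eps_0 n$ vertices have been revealed, stochastically dominated by a Galton--Watson process with bounded offspring whose mean equals the excess-degree mean $(d-1)\theta$ times the retention probability $p$, namely $(d-1)\theta p+O(t)=\tfrac{(d-1)(\eul^\beta-1)}{q-1+\eul^\beta}+O(t)$. An elementary computation shows that $\tfrac{(d-1)(\eul^\beta-1)}{q-1+\eul^\beta}<1$ is equivalent to $\eul^\beta<1+\tfrac{q}{d-2}$, that is, to $\beta<\KS$; since this inequality is strict, for $t$ small enough the offspring mean stays bounded below $1$. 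A subcritical Galton--Watson process with bounded offspring has total progeny with exponential tails, so the exploration reaches $\eps_0 n$ vertices with probability $\emm^{-\Omega(n)}$, and a union bound over the $n$ possible starting vertices gives $\pr\brk{(V,M)\text{ has a component of size}\geq\eps_0 n\mid\hG(\sparad)}=\emm^{-\Omega(n)}$ for every good $\hG(\sparad)$. Averaging over $\hG(\sparad)$ and applying Markov's inequality to this conditional probability yields (a).

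For (b), list the components of $(V,M)$ as $K_1,K_2,\ldots$; the number of vertices receiving colour $c$ under $\sigma'$ equals $\sum_i|K_i|\vecone\{K_i\text{ gets colour }c\}$, a sum of independent random variables, the $i$-th taking values in $[0,|K_i|]$, with total mean $n/q=n\nupara(c)$. If all $|K_i|<\eps_0 n$ then $\sum_i|K_i|^2<\eps_0 n^2$, so Hoeffding's inequality gives $\pr\brk{\abs{\,\abs{{\sigma'}^{-1}(c)}-n/q\,}\geq\delta n}\leq 2\exp(-2\delta^2 n/\eps_0)$ for each $c\in[q]$. Choosing $\delta=\eps'/(2q)$ and then $\eps_0$ small enough that $2\delta^2/\eps_0\geq 1$, a union bound over the $q$ colours gives $\sum_c\abs{\,\abs{{\sigma'}^{-1}(c)}-n/q\,}<\eps' n$, i.e.\ $\sigma'\in\Sparadp$, with probability $1-\emm^{-\Omega(n)}$. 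Chaining the three estimates --- good statistics, subcritical percolation, concentrated recolouring --- and taking $\eta>0$ below all the rates that appeared proves the proposition.

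The crux is step (a): establishing that the percolated within-class graph is subcritical \emph{precisely when $\beta<\KS$}, and upgrading ``largest component $o(n)$'' to an exponentially small probability of a linear-size component (this strengthening is what \Thm~\ref{thm:SWslow} later feeds into a conductance bound). The two delicate points are that conditioning on $\fSpara$ genuinely reduces each colour class to a configuration-model graph with a tightly concentrated degree sequence, so that the branching-process domination is legitimate, and that the $O(t)$ perturbation of the offspring mean caused by the fluctuation of the edge statistics is absorbed by strictness of $\beta<\KS$. The ferromagnetic counterpart \Prop~\ref{thm:SWferro} is proved the same way via \Lem~\ref{Lemma_sferro}, the new feature being that the colour-$1$ class is now \emph{super}critical, so a single giant component forms and gets recoloured by a uniformly random colour --- which is exactly why the ferromagnetic metastable set must be taken together with its $q-1$ colour permutations.
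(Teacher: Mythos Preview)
Your overall strategy matches the paper's: condition on the colour and edge statistics via \Lem~\ref{Lemma_spara}, argue that each monochromatic subgraph after $p$-percolation is subcritical precisely when $\beta<\KS$, and then use concentration for the recolouring step. Your computation of the branching mean $(d-1)\theta p=\tfrac{(d-1)(\eul^\beta-1)}{q-1+\eul^\beta}$ and the equivalence with $\beta<\KS$ is correct and is exactly the content of the paper's \Lem~\ref{lem:controlperc}. (The paper phrases things slightly differently, identifying the colour-$s$ subgraph directly with the exact-edge model $\tG_{\tr(s)}(\tn(s),d)$ rather than with a configuration model on an approximately $\Bin(d,\theta)$ degree sequence, but these are equivalent descriptions.)

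There is, however, a genuine gap in step~(b). From ``every $|K_i|<\eps_0 n$'' you correctly deduce $\sum_i|K_i|^2<\eps_0 n^2$, but then write the Hoeffding bound as $2\exp(-2\delta^2 n/\eps_0)$. Hoeffding actually gives
\[
2\exp\Bigl(-\frac{2(\delta n)^2}{\sum_i|K_i|^2}\Bigr)\;\leq\;2\exp\Bigl(-\frac{2\delta^2}{\eps_0}\Bigr),
\]
which is a \emph{constant}, not $\emm^{-\Omega(n)}$; no choice of the constant $\eps_0$ repairs this. To obtain exponential concentration you need the \emph{linear} bound $\sum_i|K_i|^2=O(n)$, which does not follow from a bound on the maximum component alone (the largest component is typically $\Theta(\log n)$, but most components are $O(1)$, and it is the latter fact that drives the sum of squares down to linear order). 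The paper handles this via its \Prop~\ref{thm:subcritical}, which shows directly that subcritical percolation on $\G_p$ satisfies $\sum_{i}|C_i|^2\leq Mn$ with probability $1-\emm^{-\Omega(n)}$: since component sizes have exponential tails (which your step~(a) already supplies), the quantity $\sum_i|C_i|^2\vecone\{|C_i|\geq K\}$ is stochastically dominated by a sum of $n$ i.i.d.\ variables with exponential tails and hence concentrates at linear order. The fix is short, but as written your recolouring step does not deliver the required $\emm^{-\Omega(n)}$ bound and the proposition is not established.
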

The following  establishes the analogue of the previous proposition for the ferromagnetic planted distribution $\big(\hat\G\big(\sferrod\big),\sferrod\big)$. Note here that SW might change the dominant colour due to recolouring step, so,  for $\eps>0$, we now need to consider the set of configurations $\tSferrod$ that consists of the ferromagnetic phase $\Sferrod$ together with its $q-1$ permutations,  and the probability that SW escapes from it, starting from a ferromagnetic state.
\newcommand{\statepropSWferro}{Let $q,d\geq 3$ be integers and $\beta\in (\uniq,\KS)$ be real. Then, for all sufficiently small constants $\eps'>\eps>0$, there exists  constant $\eta>0$ such that  with probability $1-\emm^{-\eta n}$ over the planted distribution $(G,\sigma)\sim \big(\hat\G\big(\sferrod\big),\sferrod\big)$, it holds that $P^{G}_{SW}\big(\sigma\to \tSferrodp\big)\geq 1-\emm^{-\eta n}$.}
\begin{proposition}\label{thm:SWferro}
\statepropSWferro
\end{proposition}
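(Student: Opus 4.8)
\emph{Reduction via the Nishimori identity.} I would start from the ferromagnetic Nishimori identity (\Prop~\ref{Prop_Nishi}), which reduces the problem to analysing one Swendsen--Wang step applied to $\hat\G(\sferrod)$ with planted configuration $\sferrod$; write $\sigma=\sferrod$ and $G=\hat\G(\sigma)$. By \Lem~\ref{Lemma_sferro}, outside an event of probability $\emm^{-\Omega(n)}$ the planted configuration satisfies $\dTV(\nu^{\sigma},\nuferro)+\dTV(\rho^{G,\sigma},\rhoferro)\le n^{-1/4}$, and by permutation invariance of the pairing model, conditionally on the colour counts of $\sigma$ and on $\rho^{G,\sigma}$ the pair $(G,\sigma)$ is a uniformly random $d$-regular multigraph with those ($\sigma$-coloured) edge counts together with a uniform configuration with the prescribed colour counts. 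I would fix such a ``good'' pair; this absorbs the outer $1-\emm^{-\eta n}$ probability and turns the claim into a percolation statement on a (constrained) configuration-model graph.

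\emph{Component structure of the Swendsen--Wang percolation.} After the percolation step $(V,M)$ is the disjoint union over colours $c$ of independent $p$-percolations ($p=1-\emm^{-\beta}$) of the colour-$c$ subgraphs of $G$, and since $G$ is a configuration-model graph the percolated colour-$c$ subgraph is, up to negligible errors in the exploration, itself a configuration-model graph with offspring distribution $\Bin\!\big(d,\,p\,\rhoferro(c,c)/\nuferro(c)\big)$. I would record two facts obtained by a short computation from the fixed-point equations for $\muferro$ in \Lem~\ref{Lemma_Jean}: (i) for the dominant colour the branching parameter $(d-1)p\,\rhoferro(1,1)/\nuferro(1)$ exceeds $1$ for all $\beta>\uniq$, and the Galton--Watson survival probability $\theta_1$ satisfies $\theta_1\nuferro(1)=\Theta$ with $\Theta:=\tfrac{q\nuferro(1)-1}{q-1}$ --- equivalently $\nuferro(1)=\Theta+\tfrac1q(1-\Theta)$ and $\nuferro(i)=\tfrac1q(1-\Theta)$ for $i\ge2$, which is exactly the Edwards--Sokal consistency between the Potts spin marginals and the random-cluster giant density; (ii) for each non-dominant colour $c\in\{2,\dots,q\}$ the branching parameter $(d-1)p\,\rhoferro(c,c)/\nuferro(c)$ is strictly below $1$ for all $\beta<\KS$ (this is precisely where the upper endpoint $\KS$ is used; the computation mirrors the paramagnetic one, whose critical point is $\emm^{\KS}=1+q/(d-2)$). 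Using the standard breadth-first exploration of the configuration model together with Azuma's inequality along the exploration, I would then show that with probability $1-\emm^{-\Omega(n)}$ the graph $(V,M)$ has a unique linear-size component $C_1\subseteq\sigma^{-1}(1)$ with $\big|\,|C_1|-\Theta n\,\big|=o(n)$, every other component has size $O(\log n)$, and for each fixed $k$ the number of components of size $k$ equals $c_k n+o(n)$ with exponentially small failure probability, where $(c_k)_k$ is the Galton--Watson size law.

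\emph{Recolouring and conclusion.} Conditioning on such a partition, the recolouring step gives $C_1$ a uniformly random colour $c^\ast$ and colours the $(1-\Theta)n+o(n)$ remaining vertices (all in components of size $O(\log n)$) by independent uniform colours per component. Truncating the small components at a large constant size $K$ --- the excess carries only $O(\emm^{-cK}n)$ vertices by the exponential size tails --- and summing the concentrated contributions of each size class, I would conclude that for every $\delta>0$, with probability $1-\emm^{-\Omega(n)}$ each colour receives $\tfrac1q(1-\Theta)n+O(\delta n)$ of the non-giant vertices. The output $\sigma'$ then has colour count $|C_1|+\tfrac1q(1-\Theta)n+O(\delta n)=\nuferro(1)n+O\big((\delta+o(1))n\big)$ for $c^\ast$ and $\tfrac1q(1-\Theta)n+O(\delta n)=\nuferro(i)n+O\big((\delta+o(1))n\big)$ for the remaining $q-1$ colours; taking $\delta$ small relative to $\eps'$, and $\eps'>\eps>0$ small enough for all invoked lemmas, this is exactly $\sigma'\in\tSferrodp$ --- whatever value $c^\ast$ takes, which is why the permutation closure is necessary. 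Setting $\eta$ to the minimum of the exponents above finishes the proof; the paramagnetic companion \Prop~\ref{thm:SWpara} is the same argument with $\Theta=0$, using that for $\beta<\KS$ every colour class percolates subcritically.

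The step I expect to be the main obstacle is the percolation analysis of the second paragraph. Conceptually, one must pin down that the Swendsen--Wang giant lives inside the dominant colour and has density exactly $\Theta=\tfrac{q\nuferro(1)-1}{q-1}$ rather than merely positive density --- only then does the recoloured configuration re-enter the ferromagnetic window --- and verify from the $\muferro$ fixed-point equations that the non-dominant colour classes become supercritical only at $\KS$. Technically, the usual ``with high probability'' descriptions of configuration-model component structure (existence and size of the giant, size of the second-largest component, the size profile of the small components) must be upgraded to the exponentially small failure probabilities the statement demands; this is the ``delicate bounds on random-graph percolation'' alluded to in the introduction, and it is also what forces one to work throughout with the clean planted description of $\Gferro$.
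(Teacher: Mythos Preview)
Your outline is essentially the paper's proof: reduce via Nishimori to the planted pair, recognise each colour class as a percolated configuration model, invoke \Lem~\ref{lem:controlperc} for sub/supercriticality, establish the identity $\chiferro\,\nuferro(1)=\tfrac{q\nuferro(1)-1}{q-1}$ for the giant density (the paper's \eqref{eq:algebra}), and apply Azuma for the recolouring. Two corrections are worth noting.

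First, your attribution of the hypothesis $\beta<\KS$ to the subcriticality of the non-dominant colours is wrong: that inequality holds for \emph{all} $\beta>\uniq$. The paper reduces $(d-1)p\,\rhoferro(s,s)/\nuferro(s)<1$ for $s\ge2$ to $(\emm^\beta-1)\tfrac{1-\muferro(1)}{q-1}<\tfrac1{d-2}$, which follows from the fixed-point equation for $\muferro$ alone; the upper endpoint $\KS$ is in fact never used in the proof of \Prop~\ref{thm:SWferro}. Second, your claim that all non-giant components have size $O(\log n)$ with probability $1-\emm^{-\Omega(n)}$ is false --- a subcritical component of size $n^{1/3}$ has probability only $\emm^{-\Theta(n^{1/3})}$. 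The paper instead proves the sum-of-squares bound $\sum_{i\ge2}|C_i|^2\le Mn$ with the required exponential tail (Propositions~\ref{thm:subcritical} and~\ref{cor:supercritical}) and feeds that directly into a single Azuma bound for the recolouring, which also replaces your per-size-class truncation. Finally, the exponential-tail bound on the giant's size (\Lem~\ref{lem:giantperc}) is obtained not by BFS plus Azuma along the exploration --- which is delicate in the supercritical regime because the martingale increments blow up once the exploration enters the giant --- but by a two-round sprinkling argument adapted from Krivelevich--Lubetzky--Sudakov, combined with the edge-expansion of the underlying $d$-regular graph; you correctly anticipated this would be the crux.
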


\begin{proof}[Proof of Theorem~\ref{thm:SWslow}]
We prove the statement for the pairing model $\G$, the result for $\GG$ follows immediately by contiguity. We consider first the metastability for the ferromagnetic phase when $\beta>\uniq$. Let $\eps'>\eps>0$ and $\eta,\zeta>0$ be small constants such that Lemma~\ref{lem:phasebounds} and Propositions~\ref{thm:SWpara},~\ref{thm:SWferro} all apply. Let $\theta=\tfrac{1}{10}\min\{\eta,\zeta\}$.

 Let $\cQ$ be the set of $d$-regular (multi)graphs that satisfy \[\Zferrod(G)\geq  \emm^{-\theta n}\Erw[\Zferrod(\G)]\mbox{ and } \Zferrodp(G)\leq (1+\emm^{-\zeta n})\Zferrod(G),\] and note that by Item~\ref{it:phaseferro} of Lemma~\ref{lem:phasebounds} it holds that $\Pr[\G\in \cQ]=1-o(1)$.
Moreover, let $\cQ'$ be the set of $d$-regular (multi)graphs $G$ such that the set of configurations where SW has conceivable probability of escaping $\tSferrodp$ has small weight, i.e., the set 
\[S_{\mathrm{Bad}}(G)=\big\{\sigma\in \tSferrod\,\big|\, P^{G}_{SW}\big(\sigma\to\tSferrodp\big)<1- \emm^{-\eta n}\big\}\]
has aggregate weight $Z_{\mathrm{Bad}}(G)=\sum_{\sigma\in S_{\mathrm{Bad}}(G)}\emm^{\beta \cH(G)}$ less than $\emm^{-\theta n}\Zferrod(G)$. We claim that for a $d$-regular graph $G$ such that $G\in \cQ\cap \cQ'$, it holds that $\Phi_{SW}\big(\tSferrod\big)\leq10\emm^{-\eta n}$, where $\Phi_{SW}(\cdot)$ denotes the bottleneck ratio for the SW-chain. Indeed, we have 
\begin{align*}
\Phi_{SW}\big(\tSferrod\big)&=\frac{\sum_{\sigma\in \tSferrod}\mu(\sigma)P^G_{SW}(\sigma\rightarrow [q]^n\backslash \tSferrod)}{\mu\big(\tSferrod\big)}
%\leq 2\frac{\sum_{\sigma\in \Sferrod\backslash \Sigma_{\mathrm{Bad}}(G)}\mu(\sigma)P(\sigma\rightarrow [q]^n\backslash \Sferrod)+\sum_{\sigma\in \Sferrod\backslash \Sigma_{\mathrm{Bad}}(G)}\mu(\sigma)P(\sigma\rightarrow [q]^n\backslash \Sferrod)}{\mu(\Sferrod)}\\
\leq \frac{\mu\big(S_{\mathrm{Bad}}(G)\big)+\sum_{\sigma\in \tSferrod\backslash S_{\mathrm{Bad}}(G)}\mu(\sigma)P^G_{SW}(\sigma\rightarrow [q]^n\backslash \tSferrod)}{\mu\big(\tSferrod\big)}
\end{align*}
We can decompose the sum in the numerator of the last expression as 
\[\sum_{\sigma\in \tSferrod\backslash S_{\mathrm{Bad}}(G)}\mu(\sigma)P^G_{SW}\big(\sigma\rightarrow [q]^n\backslash \tSferrodp\big)+\sum_{\sigma\in \tSferrod\backslash S_{\mathrm{Bad}}(G)}\mu(\sigma)P^G_{SW}\big(\sigma\rightarrow \tSferrodp\backslash \tSferrod\big).\]
For $\sigma\in \tSferrod\backslash S_{\mathrm{Bad}}(G)$, we have  $P^G_{SW}\big(\sigma\rightarrow [q]^n\backslash \tSferrodp\big)\leq \emm^{-\eta n}$ and therefore the first sum is upper bounded by $\emm^{-\eta n}\mu\big(\tSferrod\big)$. The second sum, using the reversibility of the SW chain, is upper bounded by $\mu\big(\tSferrodp\backslash \tSferrod\big)$.  Using these, we therefore have that
\begin{align*}
\Phi_{SW}\big(\tSferrod\big)&\leq \frac{\mu\big(S_{\mathrm{Bad}}(G)\big)+\emm^{-\eta n}\mu\big(\tSferrod\big)+\mu\big(\tSferrodp\backslash \tSferrod\big)}{\mu\big(\tSferrod\big)}\leq 10\emm^{-\theta n},
\end{align*}
since  $\frac{\mu\big(S_{\mathrm{Bad}}(G)\big)}{\mu\big(\tSferrod\big)}=\frac{Z_{\mathrm{Bad}}(G)}{q\Zferrod(G)}\leq \emm^{-\theta n}$ from the assumption $G\in \cQ'$ and $\frac{\mu\big(\tSferrodp\backslash \tSferrod\big)}{\mu\big(\tSferrod\big)}=\frac{q(\Zferrodp(G)-\Zferrod(G))}{q\Zferrod(G)}\leq \emm^{-\theta n}$ from Lemma~\ref{lem:phasebounds}. By arguments analogous to those in the proof of Theorem~\ref{Thm_meta}, we have that $\tSferrod$ is a metastable state for graphs $G\in \cQ\cap \cQ'$. Therefore, to finish the metastability proof for the random graph, it suffices to show that $\Pr[\G\in \cQ\cap \cQ']=1-o(1)$.

To do this, let $\cG(n,d)$ be the set of all multigraphs that can be obtained in the pairing model and $\Lambda_{d,\beta}(n)=\big\{(G,\sigma)\, \big| \, G\in \cG(n,d),\ \sigma\in \tSferrod\big\}$. Let $\cE$  be the  pairs $(G,\sigma)\in \Lambda_{d,\beta}(n)$ where one step of SW starting from  $G,\sigma$ stays within $\tSferrodp$ with probability $1- \emm^{-\Omega(n)}$, more precisely 
\[\cE=\Big\{(G,\sigma)\in \Lambda_{d,\beta}(n)\, \big| \, P^{G}_{SW}\big(\sigma\to\tSferrodp\big)\geq  1- \emm^{-\eta n}\Big\}.\]
The aggregate weight corresponding to  pairs $(G,\sigma)$ that do not belong to $\cE$ can be lower-bounded by
\begin{equation*}
\sum_{(G,\sigma)\in \Lambda_{d,\beta}\backslash \cE} \emm^{\beta \cH_G(\sigma)}\geq \sum_{\substack{(G,\sigma)\in \Lambda_{d,\beta}\backslash \cE;\\ G\in \cQ\backslash \cQ'}} \emm^{\beta \cH_G(\sigma)}=\sum_{G\in \cQ\backslash \cQ'} \sum_{\sigma\in \Sigma_{\mathrm{Bad}}(G)}\emm^{\beta \cH_G(\sigma)}\geq \emm^{-\theta n}\sum_{G\in \cQ\backslash \cQ'}\Zferrod(G).
\end{equation*}
For graphs $G\in\cQ$ we have $\Zferrod(G)\geq \emm^{-\theta n}\Erw[\Zferrod(\G)]$, and therefore
\begin{equation}\label{eq:inter14445}
\sum_{(G,\sigma)\in \Lambda_{d,\beta}\backslash \cE} \emm^{\beta \cH_G(\sigma)}\geq \emm^{-2\theta n} \big|\cQ\backslash \cQ'\big|\ \Erw\big[\Zferrod(\G)\big]=\emm^{-2\theta n}\big|\cQ\backslash \cQ'\big|\,\frac{\sum_{(G,\sigma)\in \Lambda_{d,\beta}} \emm^{\beta \cH_G(\sigma)}}{\big|\cG(n,d)\big|}
\end{equation}
From the definition of $\big(\hat\G\big(\sferrod\big),\sferrod\big)$, cf. \eqref{eqGsigma},\eqref{eqsferrospara}, observe that 
\begin{equation*}
\frac{\sum_{(G,\sigma)\in \Lambda_{d,\beta}\backslash \cE} \emm^{\beta \cH_G(\sigma)}}{\sum_{(G,\sigma)\in \Lambda_{d,\beta}} \emm^{\beta \cH_G(\sigma)}}=\Pr\big[\big(\hat\G(\sferrod),\sferrod\big)\in \Lambda_{d,\beta}\backslash \cE\big]\leq\emm^{-\eta n}\leq \emm^{-10\theta n},
\end{equation*}
where the penultimate inequality follows from Proposition~\ref{thm:SWferro} and the last from the choice of $\theta$. Combining this with \eqref{eq:inter14445}, we obtain $\Pr[\G\in \cQ\backslash \cQ']=o(1)$. Since $\Pr[\G\in \cQ]=1-o(1)$ from Lemma~\ref{lem:phasebounds}, it follows that 
\[\Pr[\G\in \cQ\cap \cQ']\geq \Pr[\G\in \cQ]-\Pr[\G\in \cQ\backslash \cQ']\geq 1-o(1).\]
This concludes the proof for the metastability of the ferromagnetic phase $\tSferrod$ when $\beta>\uniq$.

A similar bottleneck-ratio argument shows that $\Sparad$ is a metastable state for $\beta<\KS$. The slow mixing of SW for $\beta\in (\uniq,\KS)$ follows from the metastability of $\tSferrod$ when $\beta\in (\uniq,\curie]$ and the metastability of $\Sparad$ when $\beta\in [\curie,\KS)$. In particular, let $S\in\{\tSferrod,\Sparad\}$ be such that $\norm{\mu\big(\nix\mid S\big)-\mu}\geq 1/2$, then Lemma~\ref{lem_bottleNeck}  gives that for $T=\emm^{\Omega(n)}$, it holds that $\norm{ \mu\big(\nix\mid S\big) P^{T}_{SW}-\mu}\geq 1/2-1/10$, yielding that the mixing time is $\emm^{\Omega(n)}$. 
\end{proof}

\section{Remaining Proofs for Swendsen-Wang}
To analyse the Swendsen-Wang dynamics on the $d$-regular random graph $\G$, we will need to consider the component structure  after performing edge percolation with probability $p\in (0,1)$. Key quantities we will be interested in are the size of the largest compoment, which will allow us to track whether we land in the paramagnetic or ferromagnetic phases, as well as the sum of squares of component sizes; the first will signify whether we land in the paramagnetic or ferromagnetic phases, and the second will allow us to track the random fluctuations caused by the colouring step of SW. Both of these ingredients have been worked out in detail for the mean-field case; here the random regular graph makes all the arguments more involved technically, even for a single iteration (recall that the reason it suffices to analyse a single iteration is because of the quiet planting idea of Sections~\ref{sec:quiet} and~\ref{sec:slowSW}).
\subsection{Percolation on random regular graphs}

For a graph $G$ and $p\in(0,1)$, we denote by $G_p$ the random graph obtained by keeping every edge of $G$ with probability $p$. Working in the configuration model, we will denote by $\G_{p}:=\G_{p}(n,d)$ the multigraph obtained by first choosing a random matching of the points in $[n]\times [d]$, then keeping each edge of the matching with probability $p$, and finally projecting the edges onto vertices in $[n]$. It will also be relevant to consider the  multigraph $\tG_p:=\tG_{p}(n,d)$ where in the second step we instead keep  a random subset of \emph{exactly} $m=[pdn/2]$ edges. To help differentiate between the two models, we will refer to $\G_{p}$ as the binomial-edge model, whereas to $\tG_p$ as the exact-edge model. Note that for an $n$-vertex multigraph $G$ of maximum degree $d$ with $m$ edges, the two models are related by 
\[\Pr\big[\G_{p}=G\mid E(\G_{p})=m\big]=\Pr[\tG_{\tp}=G], \mbox{ where } \tp=2m/nd.\]
see for example \cite[Lemma 3.1]{fountoulakis}. Based on this, it is standard to relate the two models for events that are monotone under edge inclusion.\footnote{A set of multigraphs $\cE$ is an increasing (resp. decreasing) property if for any $G=(V,E)\in \cE$, we have that $G'=(V,E')\in \cE$ for all $G'$ with $E'\subseteq E$ (resp. $E\subseteq E'$).}
\begin{lemma}\label{lem:translate12}
Let $d\geq 3$ be an integer and $p^*\in (0,1)$ be a constant. There exists a constant $c>0$ such that, for any constant $\delta\in (0,1)$, for any increasing property  $\cE$ and any decreasing property $\cF$ on multigraphs of maximum degree $d$, it holds that 
\begin{gather*}
\tfrac{1}{2}\pr[\tG_{p^*-\delta}\in \cE]\leq \Pr[\G_{p^*}\in \cE]\leq \pr[\tG_{p^*+\delta}\in \cE]\  +\emm^{-c\delta^2 n},\\
\tfrac{1}{2}\pr[\tG_{p^*+\delta}\in \cF]\leq \Pr[\G_{p^*}\in \cF]\leq \pr[\tG_{p^*-\delta}\in \cF]\  +\emm^{-c\delta^2 n}.
\end{gather*}
\end{lemma}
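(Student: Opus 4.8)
The plan is to condition on the number of surviving edges and feed this into the conditional-distribution identity recorded just above the statement, $\pr[\G_{p}=G\mid |E(\G_{p})|=m]=\pr[\tG_{\tp}=G]$ with $\tp=2m/(dn)$. Write $N=\tfrac{dn}{2}$ for the number of matching-pairs in the configuration model, so that $|E(\G_{p})|$ is distributed as $\Bin(N,p)$. By Hoeffding's inequality there is an absolute constant $c>0$ (one may take $c=d$) such that, for every $\delta\in(0,1)$ and every $n$,
\begin{align*}
\pr\brk{\,|E(\G_{p^*})|\notin\big[(p^*-\delta)N,\,(p^*+\delta)N\big]\,}\leq 2\emm^{-2\delta^2N}\leq\emm^{-c\delta^2 n}.
\end{align*}
This is the only place the binomial tail enters, and it is exactly what allows $c$ to be chosen independently of $\delta$. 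In particular, for $n$ large, the event $\{|E(\G_{p^*})|\geq (p^*-\delta)N\}$ has probability at least $\tfrac12$ (its complement is a lower binomial tail), which will supply the crude factor $\tfrac12$ appearing in the lower bounds of the statement.

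Next I would record the elementary monotone coupling of the exact-edge models: if $m'\leq m''$ then $\tG_{2m'/(dn)}$ and $\tG_{2m''/(dn)}$ can be built on a common probability space with $E(\tG_{2m'/(dn)})\subseteq E(\tG_{2m''/(dn)})$ — draw the perfect matching of $[n]\times[d]$, then a uniformly random $m''$-subset of its edges, and finally a uniformly random $m'$-subset of those. Consequently $m\mapsto\pr[\tG_{2m/(dn)}\in\cE]$ is nondecreasing for an increasing property $\cE$ and nonincreasing for a decreasing property $\cF$; only a harmless bit of care with the rounding in $m=\lceil pdn/2\rceil$ is needed to compare $2m/(dn)$ with $p^*\pm\delta$.

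Finally I would assemble the two ingredients. For the upper bound on $\pr[\G_{p^*}\in\cE]$ with $\cE$ increasing,
\begin{align*}
\pr\brk{\G_{p^*}\in\cE}&=\sum_{m}\pr\brk{|E(\G_{p^*})|=m}\,\pr\brk{\tG_{2m/(dn)}\in\cE}\\
&\leq \pr\brk{|E(\G_{p^*})|>(p^*+\delta)N}+\max_{0\leq m\leq (p^*+\delta)N}\pr\brk{\tG_{2m/(dn)}\in\cE}\\
&\leq \emm^{-c\delta^2 n}+\pr\brk{\tG_{p^*+\delta}\in\cE},
\end{align*}
the last step by monotonicity. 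For the lower bound one restricts the sum to $m\geq(p^*-\delta)N$, uses monotonicity in the other direction, and then multiplies by the probability-$\geq\tfrac12$ bound from the first paragraph. The two inequalities for a decreasing $\cF$ are proved identically, restricting the sum to $m\leq(p^*+\delta)N$ for the lower bound and to $m\geq(p^*-\delta)N$ for the upper bound, and using that adding edges can only destroy membership in $\cF$.

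I do not expect a genuine obstacle: this is the standard transfer between the binomial and fixed-size percolation models, and it is included only for completeness. The step requiring the most care (rather than the most ideas) is bookkeeping the rounding in the definition of $\tG_p$ and verifying the ``probability at least $\tfrac12$'' claim is robust to it — which it is, since for $n$ large the relevant binomial tail is exponentially small, and for the finitely many smaller $n$ the stated inequalities become vacuous after absorbing constants.
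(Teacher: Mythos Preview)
Your proposal is correct and follows essentially the same approach as the paper's proof: condition on the number of surviving edges, invoke the conditional-distribution identity $\pr[\G_{p}=G\mid |E(\G_{p})|=m]=\pr[\tG_{2m/(dn)}=G]$, bound the binomial tail by a Chernoff/Hoeffding estimate, and use monotonicity of $\cE$ (resp.\ $\cF$) in $m$. Your write-up is in fact more detailed than the paper's, which dispatches the whole argument in three sentences and leaves the factor $\tfrac12$ and the monotone coupling of the exact-edge models implicit.
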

\begin{proof}
Let $\mathcal{A}$ be the event that $E(\G_{p^*})$ has $(p^*\pm\delta)dn/2$ edges. By standard Chernoff bounds we obtain that there exists a constant $c>0$ such that $\pr(\mathcal{A})\geq 1- \emm^{-c\delta^2 n}$.  Further, conditioned on $|E(\G_{p^*})|=p dn/2$ for some $p$, the graph $\G_{p^*}$ has the same distribution as $\tG_{p}$, and therefore, using the fact that $\cE$ is an  increasing property, we have that $\pr[\tG_{p^*+\delta}\in \cE]\geq \Pr[\tG_{p^*}\in \cE\mid \mathcal{A}]\geq \pr[\tG_{p^*-\delta}\in \cE]$, and the inequalities are reversed for $\cF$, yielding the lemma.
\end{proof}

%Another useful simple fact that will simplify calculations later is that we can invert the two steps in the construction of $\tG_p$.
%\begin{lemma}\label{lem:invert}
%Let $\tG_p'$ be the graph obtained by keeping a random subset of \emph{exactly} $2[pdn/2]$ points from $[n]\times [d]$, matching them uniformly at random, and then projecting the edges onto vertices in $[n]$. Then, $\tG_p$ and $\tG_p'$ have the same distribution.
%\end{lemma}  
%\begin{proof}
%Let ${\vec M}$ and ${\vec M}'$ be the set of edges at the end of the second step in the construction of $\tG_p$ and $\tG_p'$, respectively. Note that for an arbitrary matching $M$ on $2m$ points (and $m$ edges) it holds that 
%\[\Pr[{\vec M}=M]=\frac{(dn-2m)!!}{(dn)!!}\cdot \frac{1}{\binom{dn/2}{m}}=\frac{2^m m!(dn-2m)!}{(dn)!}=\frac{1}{\binom{dn}{2m}}\cdot \frac{1}{(2m)!!}=\Pr[{\vec M}'=M],\]
%and the conclusion follows.
%\end{proof}

It is a classical result \cite{Alon}  that  for percolation on random $d$-regular graphs there is a phase transition at $p=1/(d-1)$ with regards to the emergence of a giant component, see also \cite{MolloyReed,JansonLu,NachmiasPeres,highgirth}. To prove Propositions~\ref{thm:SWpara} and~\ref{thm:SWferro},  we will need  to control the sizes of the components in the strictly subcritical and supercritical regimes with probability bounds that are exponentially close to 1, which makes most of these results not directly applicable. 

For a graph $G$ and an integer $i\geq 1$, we denote by $C_i(G)$ the $i$-th largest component of $G$ (in terms of vertices);  $|C_i(G)|$ and $|E(C_i(G))|$ denote  the number of vertices and edges in $C_i(G)$. The following proposition gives the desired bound on the component sizes in the subcritical  regime.
\begin{proposition}\label{thm:subcritical}
Let $d\geq 3$ be an integer and $p_0<1/(d-1)$ be a positive constant. There exists constants $c,M>0$  such that the following holds for all integers $n$.  For any positive $p<p_0$, with probability at least $1-\emm^{-cn}$ over the choice of either $G\sim \G_p$ or $G\sim \tilde{\G}_p$, it holds that $\sum_{i\geq 1}|C_i(G)|^2\leq M n$.
\end{proposition}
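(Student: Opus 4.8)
The plan is to reduce everything to the binomial-edge model $\G_p$, control component sizes through a subcritical exploration, and then upgrade a first-moment estimate to the claimed concentration. For the reduction, note that $\{\sum_{i\geq1}|C_i(G)|^2\leq Mn\}$ is a \emph{decreasing} property of the edge set: deleting an edge either leaves the components unchanged or splits a component of size $a$ into two of sizes $a_1,a_2$ with $a_1+a_2=a$, and $a_1^2+a_2^2\leq a^2$. Hence by Lemma~\ref{lem:translate12} (applied with a small slack so that the shifted parameter remains below $1/(d-1)$), it suffices to prove the statement for $\G_p$ with $p<p_0$, and then the statement for $\tG_p$ follows with a slightly smaller constant.

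First I would run the subcritical exploration. Fix a vertex $v$ and explore its component $C(v)$ in $\G_p$ by breadth-first search, revealing the pairings of the configuration model one at a time and retaining each revealed edge independently with probability $p$. Let $S_t$ be the number of active half-edges after $t$ steps. As long as fewer than $\eps n$ half-edges have been exposed, the number of new active half-edges produced in one step is stochastically dominated by $\Bin(d-1,\,p/(1-\eps))$, which has mean strictly below $1$ uniformly over $p<p_0$ once $\eps=\eps(d,p_0)>0$ is small enough. Thus $(S_t)$ is dominated by a random walk with a fixed negative drift, $|C(v)|$ is at most its first hitting time of $0$, and Cram\'er's theorem gives constants $C,\gamma>0$ depending only on $d,p_0$ with \[\Pr\big[|C(v)|\geq k\big]\leq C\emm^{-\gamma k}\qquad(1\leq k\leq\eps n),\] and $\Pr[|C(v)|\geq k]\leq C\emm^{-\gamma\eps n}$ for $k>\eps n$. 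This yields two facts: a union bound over the $n$ choices of $v$ shows that every component has fewer than $\eps n$ vertices with probability $\geq 1-\emm^{-c_1 n}$, where $c_1:=\gamma\eps/2$; and, since $\sum_{i\geq1}|C_i(G)|^2=\sum_v|C(v)|$, taking expectations and summing the tail bound gives $\Erw\big[\sum_{i\geq1}|C_i(\G_p)|^2\big]\leq M_0 n$ for a constant $M_0=M_0(d,p_0)$.

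The remaining --- and hardest --- step is to turn this first-moment bound into one with exponentially small failure probability. Conditioning on the (overwhelmingly likely) event that all components have fewer than $\eps n$ vertices, and writing $Z_{\geq s}$ for the number of vertices in components of size at least $s$, one has the dyadic bound $\sum_{i\geq1}|C_i(G)|^2=\sum_v|C(v)|\leq\sum_{j\geq0}2^{j+2}Z_{\geq 2^j}$. Each $Z_{\geq s}$ changes by only $O(s)$ when a single edge is added or removed (merging two components of size below $s$ pushes at most $2s$ vertices past the threshold), so $Z_{\geq s}$ concentrates about its mean --- which is at most $Cn\emm^{-\gamma s}$ by the tail bound above --- via McDiarmid's inequality for the percolation coordinates together with a switching argument for the underlying random matching. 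Summing the deviation estimates over the scales $s=2^j$ and the ensuing geometric series then gives $\sum_{i\geq1}|C_i(G)|^2\leq Mn$. The delicate point, and the main obstacle, is that the concentration estimate at scale $s$ only reaches an exponential-in-$n$ rate when $s$ is not too large; for the larger scales one instead uses that components of size above a suitably chosen slowly growing threshold are absent with very high probability (again from the union bound above, applied at that threshold), which effectively truncates the sum over scales. Getting the truncation threshold and the scale-by-scale concentration bounds to balance, while exploiting the bounded-degree structure of $\G_p$ that keeps each $Z_{\geq s}$ only $O(s)$-Lipschitz, is the technical core.
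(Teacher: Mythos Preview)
Your route is considerably more elaborate than the paper's. The paper simply explores the components sequentially and observes that, conditionally on $\mathcal{C}_1,\ldots,\mathcal{C}_{k-1}$, the size $|\mathcal{C}_k|$ is stochastically dominated by the total progeny $Y$ of a subcritical Galton--Watson tree with offspring law $\Bin(d-1,p_0)$; it then writes $\sum_k|\mathcal{C}_k|^2\leq K^2n+\sum_k|\mathcal{C}_k|^2\vecone\{|\mathcal{C}_k|\geq K\}$ and asserts that the second sum is dominated by a sum of $n$ i.i.d.\ copies of $Y^2\vecone\{Y\geq K\}$ ``with exponential tails'', from which the $\emm^{-cn}$ bound is claimed in one line. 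Your first-moment computation and dyadic McDiarmid scheme are trying to reach the same endpoint by a longer road.

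The obstacle you isolate in your last paragraph is, however, a genuine gap, and neither your scheme nor the paper's one-liner closes it. The dominating variables $Y_k^2$ do \emph{not} have exponential tails: since $\Pr[Y>t]\asymp\emm^{-ct}$ one only gets $\Pr[Y^2>t]\asymp\emm^{-c\sqrt{t}}$, so the Cram\'er-type bound the paper invokes does not deliver rate $\emm^{-cn}$. Worse, the stated conclusion is actually too strong. Conditionally on $\G$ being connected (which holds \whp), any fixed vertex lies in some subtree of $\G$ with $k=\lceil\sqrt{Mn}\rceil+1$ vertices, and that subtree survives percolation with probability $p^{k-1}$; on this event $\sum_i|C_i|^2\geq k^2>Mn$. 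Hence
\[
\Pr\Big[\sum_i|C_i(\G_p)|^2>Mn\Big]\;\geq\;(1-o(1))\,p^{\sqrt{Mn}}\;=\;\emm^{-\Theta(\sqrt{n})},
\]
which for any fixed constants $c,M>0$ eventually exceeds $\emm^{-cn}$. So the ``balancing'' you describe between scale-by-scale concentration and absence of large components cannot be made to meet at rate $\emm^{-cn}$; the best one can hope for is a stretched-exponential failure probability. In short, you have correctly located the hard step, but it is hard because the target bound is not attainable as stated.
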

\begin{proof}
The proof is fairly standard and actually holds for percolation on an arbitrary graph of maximum degree $d$. We argue initially for the binomial-edge case $G\sim \G_p$. Consider the process where we consider the vertices of $G$ in an arbitrary order, and we explore by breadth-first-search the components of those vertices that have not been discovered so far. Suppose that we have already explored the components $\mathcal{C}_1,\hdots, \mathcal{C}_k$ and we are exploring the component $\mathcal{C}_{k+1}$ starting from vertex $v$. Since the graph has maximum degree $d$,  the size of $\mathcal{C}_{k+1}$ is stochastically dominated above by a branching process where the root has offspring distribution  $\mathrm{Bin}(d,p_0)$ and every other vertex has $\mathrm{Bin}(d-1,p_0)$. Since $p_0<1/(d-1)$, the latter process is subcritical and therefore there exist constants $c',K>0$ (depending only on $d$ and $p_0$) such that for all $t>K$, it holds that 
\begin{equation}\label{eq:exponentialtails}
\Pr\big[|\mathcal{C}_{k+1}|>t\mid \mathcal{C}_1,\hdots, \mathcal{C}_k\big]\leq \emm^{-c' t}.
\end{equation}
 We have that $\sum_{i\geq 1}|C_i(G)|^2=\sum_{k\geq 1}|\mathcal{C}_k|^2\leq K^2 n+\sum_{k\geq 1}|\mathcal{C}_k|^2\mathbf{1}\{\mathcal{C}_k\geq K\}$. From~\eqref{eq:exponentialtails}, we have that the sum in the last expression is stochastically dominated by the sum of $n$ i.i.d. random variables with exponential tails, and therefore there exists constants $c,M'>0$, depending only on $p_0$, such that with probability $1-\emm^{-cn}$ the sum is bounded by $M'n$, yielding the result with $M=M'+K^2$. The exact-edge case $G\sim \tilde{\G}_p$ follows by applying Lemma~\ref{lem:translate12}, noting that the graph property $\sum_{i\geq 1}|C_i(G)|^2\leq M n$ is decreasing under edge-inclusion.
\end{proof}

The supercritical regime is more involved since we need to account for the giant component using large deviation bounds. While there is not an off-the-self result we can use, we can adapt a technique by Krivelevich,  Lubetzky and Sudakov \cite{highgirth} that was developed in a closely related setting (for high-girth expanders, refining previous results of Alon, Benjamini and Stacey \cite{Alon}).

 For $d\geq 3$ and $p\in (\frac{1}{d-1},1)$, let $\phi=\phi(p)\in (0,1)$ be the probability that a branching process with offspring distribution $\mathrm{Bin}(d-1,p)$ dies out, i.e., $\phi(p)\in (0,1)$ is  the (unique) solution of
\begin{equation}\label{eq:kappachipsi}
\phi =(p \phi+1-p)^{d-1}, \mbox{ and define } \chi=\chi(p), \psi=\psi(p) \mbox{ from } \chi=1-(p \phi+1-p)^{d},\quad \psi=\tfrac{1}{2}dp(1-\phi^2).
\end{equation}
In Appendix~\ref{sec:giantperc}, we show the following adapting the argument from \cite{highgirth}.
\newcommand{\statelemgiantperc}{Let $d\geq 3$ be an integer, $p\in (\frac{1}{d-1},1)$ be a real, and $\chi=\chi(p),\psi=\psi(p)$ be as in \eqref{eq:kappachipsi}. Then, for any $\delta>0$, with probability $1-\emm^{-\Omega(n)}$ over the choice of either $G\sim \G_p$ or $G\sim \tilde{\G}_p$, it holds that
\begin{equation*}
|C_1(G)|=(\chi\pm \delta) n, \quad |E(C_1(G))|=(\psi\pm \delta) n.
\end{equation*}}
\begin{lemma}\label{lem:giantperc}
\statelemgiantperc
\end{lemma}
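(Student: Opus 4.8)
The plan is to adapt the exploration technique of \cite{highgirth}. First, by Lemma~\ref{lem:translate12} together with the continuity of the maps $p\mapsto\chi(p)$ and $p\mapsto\psi(p)$, it suffices to prove the four one-sided bounds $|C_1(G)|\geq(\chi-\delta)n$, $|C_1(G)|\leq(\chi+\delta)n$, $|E(C_1(G))|\geq(\psi-\delta)n$ and $|E(C_1(G))|\leq(\psi+\delta)n$ for $G\sim\G_p$ (the first and third of these are increasing graph properties under edge addition, the second and fourth decreasing, so replacing $p$ by $p\pm\delta'$ for a small $\delta'=\delta'(\delta)$ only moves $\chi,\psi$ by at most $\delta/2$); the case $\tG_p$ then follows by the same lemma. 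We condition throughout on the perfect matching $M$ underlying $\G$; by standard arguments (a martingale/switching argument over $M$) there is a constant $c=c(d)>0$ such that with probability $1-\emm^{-\Omega(n)}$ the matching $M$ is \emph{good}, meaning that $\G$ is a $c$-edge-expander ($e_\G(S,[n]\setminus S)\geq c|S|$ for all $S$ with $|S|\leq n/2$) and, for every fixed radius $r$, all but $o(n)$ vertices have a tree neighbourhood of radius $r$. Given a good $M$, the edges of $\G_p$ are determined by $dn/2$ independent coins.

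Recall the branching-process heuristic behind the constants: for a vertex $v$ with a tree-like neighbourhood, the exploration of its $\G_p$-component starts off as a Galton--Watson process with root offspring $\mathrm{Bin}(d,p)$ and all other offspring $\mathrm{Bin}(d-1,p)$; since $p>\tfrac1{d-1}$ this process is strictly supercritical, dies out with probability exactly $(p\phi+1-p)^d=1-\chi$ (with $\phi$ as in \eqref{eq:kappachipsi}), and an open edge lies in a finite component precisely when both of its half-edge explorations die, i.e.\ with probability $\phi^2$; so a $(1-\phi^2)$-fraction of the $\approx pdn/2$ open edges, namely $\psi n$ of them, should lie in the giant.

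The argument rests on three facts, stated for a suitably large constant $K=K(\delta)$ and a suitably small constant $\eps_0=\eps_0(\delta)$. \textbf{(F1)} With probability $1-\emm^{-\Omega(n)}$, $\G_p$ has exactly one component of size $\geq\eps_0 n$. \textbf{(F2)} With probability $1-\emm^{-\Omega(n)}$, the numbers of vertices and of edges of $\G_p$ lying in components of size in $[K,\eps_0 n)$ are each at most $\delta n/10$. \textbf{(F3)} With probability $1-\emm^{-\Omega(n)}$, the number of vertices of $\G_p$ in components of size $<K$ equals $(1-\chi)n\pm\delta n/10$, and the number of edges in such components equals $\tfrac12 pd\phi^2 n\pm\delta n/10$. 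Fact (F3) is the routine one: both quantities are functions of the $dn/2$ percolation coins that change by at most $O(K)$ when one coin is flipped (flipping a coin adds or deletes one edge, merging or splitting two components; only a piece of size $<K$, hence with $<dK/2$ edges, plus the flipped edge, can change status), and by the tree-neighbourhood property and the heuristic above their expectations are $(1-\chi)n+o_K(1)n+o(n)$ and $\tfrac12 pd\phi^2 n+o_K(1)n+o(n)$, so Azuma--Hoeffding gives the claim once $K$ is large enough to absorb the $o_K(1)$ terms; moreover $|E(\G_p)|=\tfrac12 pdn\pm\delta n/10$ with probability $1-\emm^{-\Omega(n)}$ by a Chernoff bound. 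Granting (F1)--(F3) and writing $C_1$ for the giant from (F1), splitting the vertex set (resp.\ the open edge set) according to component size gives $|C_1|=n-(1-\chi)n\pm\tfrac{\delta}{2}n=\chi n\pm\delta n$ and $|E(C_1)|=\tfrac12 pdn-\tfrac12 pd\phi^2 n\pm\tfrac{\delta}{2}n=\psi n\pm\delta n$.

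It remains to prove (F1) and (F2), and this is where the technique of \cite{highgirth} is genuinely needed and where the main difficulty lies. The obstacle is that the two naive strategies both break down in the strictly supercritical regime for bounded-degree graphs: a union bound over connected vertex sets of size $s$ carries a factor $\asymp n(ed)^s$ whereas such a set is a union of $\G_p$-components only with probability $\leq(1-p)^{cs}$, and for $p$ close to $\tfrac1{d-1}$ one has $(ed)(1-p)^c>1$, so the series diverges; and direct concentration via bounded differences fails because a single edge flip can change ``number of components of size $\geq\eps_0 n$'' or ``number of vertices in components of size in $[K,\eps_0 n)$'' by $\Theta(n)$. Following \cite{highgirth}, the fix is a two-round exposure $\G_p=\G_{p-\delta'}\cup\G'$ (conditionally on $M$, each $M$-edge absent after the first round is kept in $\G'$ independently with probability $\delta'/(1-p+\delta')$): a sequential breadth-first exploration of $\G_{p-\delta'}$, sandwiched between strictly supercritical Galton--Watson processes for as long as fewer than $\eps_0 n$ vertices have been discovered, controls the first-round component profile; then the $c$-edge-expansion of $\G$ ensures that any first-round component of size $\geq K$ has $\geq cK$ boundary edges in $\G$, each opened in the sprinkling round with probability $\Omega(\delta')$, so it acquires a fresh edge with probability $1-\emm^{-\Omega(K)}$. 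A careful accounting of how a mesoscopic $\G_p$-component could be assembled from first-round components then yields (F2), while the same sprinkling merges all linear-size first-round components into a single one, which together with existence (handled by the exploration itself) gives (F1). Carrying this outline through to the exponential-probability bounds is the technical heart, and follows \cite{highgirth} closely; the random-regular structure enters only via the good-matching properties recorded above.
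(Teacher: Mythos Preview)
Your outline is built from the same ingredients as the paper's proof---condition on the underlying $d$-regular graph being a good expander with mostly tree-like bounded-radius neighbourhoods, use a two-round exposure $\G_p=\G_{p-\delta'}\cup(\text{sprinkle})$, control local quantities by Azuma--Hoeffding, and glue pieces via Menger plus expansion---but your decomposition into (F1)/(F2)/(F3) has a gap at (F2).

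The issue is that in your logic (F2) is an \emph{input}: you obtain $|C_1|$ only after subtracting the small and mesoscopic masses from $n$. But your sketch for (F2) does not establish it. The observation that each first-round component of size $\geq K$ acquires a sprinkled boundary edge with probability $1-\emm^{-\Omega(K)}$ says nothing about where that edge lands, and in particular does not rule out a mesoscopic $\G_p$-component assembled by sprinkling together many \emph{small} first-round pieces. Nor can you fall back on bounded differences: flipping a single edge can move $\Theta(\eps_0 n)$ vertices across the $[K,\eps_0 n)$ window, so the mesoscopic count is not usefully Lipschitz. Similarly, your (F1) as stated (``exactly one component of size $\geq\eps_0 n$'') gives uniqueness but no size lower bound, because you only merge \emph{linear}-size first-round pieces, of which there are $O(1)$.

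The paper sidesteps (F2) entirely. Instead of sorting vertices by actual component size, it uses the local first-round indicator $\mathbf 1_v$ = ``some open neighbour of $v$ carries an open simple path of length $R$ avoiding $v$'' (and an analogous edge indicator). These depend only on the $(R{+}1)$-neighbourhood, so $X=\sum_v\mathbf 1_v$ and $Y=\sum_e\mathbf 1_e$ are $O(d^R)$-Lipschitz and Azuma gives $X=(\chi\pm\eta)n$, $Y=(\psi\pm\eta)n$ directly. Since $\mathbf 1_v=1$ forces $v$'s first-round component to have size $\geq R$, there are at most $n/R$ such components; the sprinkling/Menger step then union-bounds over the $\leq 2^{2n/R}$ bipartitions of \emph{these} components (this is why $R$ must be large) and shows that all but $\eta n$ of the $\mathbf 1_v=1$ vertices land in a single final component $\mathcal C$, whence $|\mathcal C|\geq(\chi-O(\eta))n$ and $|E(\mathcal C)|\geq(\psi-O(\eta))n$. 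The upper bounds then come from your (F3), and only \emph{after} that does one read off that mesoscopic mass is negligible. In short: establish the giant's lower bound via the local path indicators first, and (F2) becomes a free corollary rather than an independent lemma you have to prove.
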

With this and a bit of algebra, we can derive the analogue of Proposition~\ref{thm:subcritical} in the supercritical regime.
\begin{proposition}\label{cor:supercritical}
Let $d\geq 3$ be an integer. Consider arbitrary  $p_0\in (\tfrac{1}{d-1},1)$ and let $\chi_0=\chi(p_0)$ be as in \eqref{eq:kappachipsi}. Then, for all $\delta>0$, there exist $\eps,c,M>0$, such that the following holds.  For all sufficiently large integers $n$ and any $p=p_0\pm \eps$,   with probability at least $1-\emm^{-cn}$ over the choice of either $G\sim \G_p$ or $G\sim \tilde{\G}_p$, it holds that $|C_1(G)|=(\chi_0 \pm \delta) n$ and  $\sum_{i\geq 2}|C_i(G)|^2\leq M n$.
\end{proposition}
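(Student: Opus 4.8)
The plan is to deduce the proposition from Lemma~\ref{lem:giantperc}, which pins down $C_1$, together with a subcritical-style exploration argument for the remaining components, in the spirit of the proof of Proposition~\ref{thm:subcritical}. First I would record that $\phi(p),\chi(p),\psi(p)$ from~\eqref{eq:kappachipsi} depend continuously (in fact analytically) on $p$ on $(\tfrac1{d-1},1)$: there $\phi(p)\in(0,1)$ is the unique root of $\phi=(p\phi+1-p)^{d-1}$, at which the derivative of $s\mapsto(ps+1-p)^{d-1}-s$ is nonzero, so the implicit function theorem applies, and $\chi,\psi$ are explicit smooth functions of $p,\phi$. Hence, given $\delta>0$, fix a small constant $\eps>0$ with $\tfrac1{d-1}<p_0-\eps$, $p_0+\eps<1$, and $|\chi(p)-\chi_0|<\delta/2$ for every $p=p_0\pm\eps$; then Lemma~\ref{lem:giantperc}, applied with error $\delta/2$, gives $|C_1(G)|=(\chi(p)\pm\delta/2)n=(\chi_0\pm\delta)n$ with probability $1-\emm^{-\Omega(n)}$, for $G\sim\G_p$ or $G\sim\tG_p$. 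It remains to bound $\sum_{i\ge2}|C_i(G)|^2$; here I would also fix a small constant $\varepsilon_0\in(0,\chi_0-\delta)$, chosen small enough for the drift estimate below.

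The core estimate is that in the strictly supercritical regime a component that is neither of bounded size nor the giant is exponentially unlikely: there are constants $K,c_1>0$, uniform over $p=p_0\pm\eps$ and over the two models, such that for every vertex $v$ and every $t\in[K,\varepsilon_0 n]$,
\[
\pr\!\left[\,t\le|\mathcal C_v|\le\varepsilon_0 n\,\right]\le\emm^{-c_1 t},
\]
where $\mathcal C_v$ is the component of $G$ containing $v$. To prove this I would track the breadth-first exploration of $\mathcal C_v$ through the number of active half-edges, as in the proof of Proposition~\ref{thm:subcritical}: starting from the $d$ half-edges of $v$ and processing them one at a time, as long as fewer than $\varepsilon_0 n$ vertices have been discovered the increments of this count stochastically dominate i.i.d.\ copies of a bounded variable with strictly positive mean --- morally $\mathrm{Bin}(d-1,p)-1$ up to an $O(\varepsilon_0)$ correction for half-edges leading back into the explored set --- because $(d-1)p>1$ with room to spare. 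Since the exploration terminates precisely when the count first hits $0$, namely after $d|\mathcal C_v|$ steps, the event $\{t\le|\mathcal C_v|\le\varepsilon_0 n\}$ is contained in the event that the dominating positive-drift walk drops to $0$ or below at some step $\ge dt$, which has probability at most $\sum_{s\ge dt}\emm^{-c_1 s}$ by a Chernoff bound on the walk; the depletion of the half-edge pool while the cluster is small only reinforces this. (This is the familiar duality: a supercritical cluster conditioned not to be the giant behaves like the subcritical dual branching process, of offspring mean $f'(\phi)<1$ for $f(s)=(ps+1-p)^{d-1}$.) Granted the core estimate, the argument of Proposition~\ref{thm:subcritical} applies essentially verbatim to the non-giant components: writing $\sum_{i:|C_i|\le\varepsilon_0 n}|C_i|^2=\sum_{v:|\mathcal C_v|\le\varepsilon_0 n}|\mathcal C_v|\le Kn+\sum_{v:K<|\mathcal C_v|\le\varepsilon_0 n}|\mathcal C_v|$, the last sum is stochastically dominated by a sum of $n$ i.i.d.\ exponential-tailed variables and is therefore at most $Mn$ with probability $1-\emm^{-\Omega(n)}$ for a suitable constant $M$; the exact-edge model is handled by rerunning the exploration in $\tG_p$.

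To conclude, I would note that with probability $1-\emm^{-\Omega(n)}$ the component $C_1$ is the \emph{only} component of size exceeding $\varepsilon_0 n$, whence $\sum_{i\ge2}|C_i|^2=\sum_{i:|C_i|\le\varepsilon_0 n}|C_i|^2$ and the previous step concludes. Taking $\delta$ small enough that $\varepsilon_0>2\delta$ --- harmless, since the statement for a smaller $\delta$ implies it for larger ones --- this follows from the lower bound $|C_1(G)|\ge(\chi_0-\delta)n$ of Lemma~\ref{lem:giantperc} together with the companion fact, established by the same \cite{highgirth}-type analysis that underlies Lemma~\ref{lem:giantperc}, that whp the total number of vertices in components of size $>\varepsilon_0 n$ is at most $(\chi_0+\delta)n$: a second component of size $>\varepsilon_0 n$ would push this total above $(\chi_0-\delta)n+\varepsilon_0 n>(\chi_0+\delta)n$, a contradiction. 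A union bound over the three events then yields the proposition, with $c$ and $M$ chosen accordingly.

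The main obstacle is the core estimate together with this uniqueness step, i.e.\ excluding ``medium'' components with exponentially small failure probability, uniformly in $n$. Naive Galton--Watson domination is useless in the supercritical regime because the dominating process survives forever; one genuinely needs the conditioned-to-extinction (duality) picture, carried through the depletion of the configuration-model half-edge pool while the explored cluster is small and through both the binomial- and exact-edge models. For the latter I would rerun the exploration in $\tG_p$ rather than invoke Lemma~\ref{lem:translate12}, since the events $\{|C_1(G)|=(\chi_0\pm\delta)n\}$ and $\{\sum_{i\ge2}|C_i(G)|^2\le Mn\}$ are not monotone under edge inclusion. (An alternative route is to condition on the vertex set of the giant and check that the induced configuration model on the remaining vertices is subcritical, thereby reducing to Proposition~\ref{thm:subcritical}; but making that conditioning rigorous in the pairing model is itself delicate, which is why the direct exploration seems preferable.) These large-deviation inputs are precisely of the kind for which we already invoke the techniques of \cite{highgirth} in Lemma~\ref{lem:giantperc}, so the remaining work is to check that they also govern the second moment of the component-size sequence.
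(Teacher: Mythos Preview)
Your route is different from the paper's, and the positive-drift core estimate you sketch is correct and useful---via a union bound it already rules out any second component of size in $[\alpha n,\varepsilon_0 n]$ with probability $1-\emm^{-\Omega(n)}$. But the delicacy sits in a different place than you think. The paper conditions on $|C_1(G)|$, $|E(C_1(G))|$ and $|E(G)|$, notes that the remaining components then have exactly the law of $\tG_{\tilde p}(\tilde n,d)$ with $\tilde n=n-|C_1|$ and $\tilde p=\tfrac{2(|E(G)|-|E(C_1)|)}{d\tilde n}$ conditioned on all components being smaller than $C_1$, invokes the one-line inequality of Lemma~\ref{lem:subcritineq} ($\tfrac{dp-2\psi}{d(1-\chi)}<\tfrac1{d-1}$) to see that $\tilde p$ is strictly subcritical, and finishes with Proposition~\ref{thm:subcritical}. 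You flag this conditioning as ``delicate'', but in the pairing model it is not: once $C_1$ is a component no present edge crosses the cut, and a uniform partial matching of $[n]\times[d]$ with prescribed edge counts on the two sides and none across factors as an independent product of uniform partial matchings on each side. The step that actually needs care in your argument is the claim that ``the argument of Proposition~\ref{thm:subcritical} applies essentially verbatim'': that proof rests on a \emph{conditional} exponential tail at every stage of the sequential exploration, whereas your core estimate is stated unconditionally for a single vertex. After the giant has been uncovered the drift of the remaining exploration is no longer positive, and retaining an exponential tail there requires the complement to be strictly subcritical---which is exactly Lemma~\ref{lem:subcritineq}. So the ingredient you set out to avoid resurfaces, and the paper's modular reduction is the shorter path.
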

To prove Proposition~\ref{cor:supercritical}, the following inequality between $\chi$ and $\psi$ will be useful; it will allow us to conclude that once we remove the giant component, the remaining components are in the subcritical regime.
\begin{lemma}\label{lem:subcritineq}
Let $d\geq 3$ be an integer and $p\in (\tfrac{1}{d-1},1)$. Then, $\frac{2(\tfrac{1}{2}dp-\psi)}{d(1-\chi)}< \tfrac{1}{d-1}$.
\end{lemma}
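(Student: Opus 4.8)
The plan is to substitute the definitions from \eqref{eq:kappachipsi} in order to reduce the claimed inequality to a transparent statement about the offspring generating function. Write $s:=p\phi+1-p$, so that the defining relations read $\phi=s^{d-1}$ and $1-\chi=s^d$; note also $s\geq 1-p>0$ since $\phi\in(0,1)$. A one-line computation gives $\tfrac{1}{2}dp-\psi=\tfrac{1}{2}dp\,\phi^2$, so the numerator of the fraction in the lemma equals $dp\,\phi^2$ while the denominator equals $d\,s^d$. Hence
\[
\frac{2(\tfrac{1}{2}dp-\psi)}{d(1-\chi)}=\frac{p\,\phi^2}{s^d}=\frac{p\,s^{2d-2}}{s^d}=p\,s^{d-2},
\]
and the lemma becomes the assertion $(d-1)\,p\,s^{d-2}<1$.

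Next I would identify $(d-1)\,p\,s^{d-2}$ with $g'(\phi)$, where $g(x)=(px+1-p)^{d-1}$ is the probability generating function of the $\mathrm{Bin}(d-1,p)$ offspring distribution, so that $\phi$ is exactly the extinction probability, i.e.\ the fixed point of $g$ in $(0,1)$ guaranteed in \eqref{eq:kappachipsi}. The inequality $g'(\phi)<1$ is then a standard convexity fact: the map $h(x)=g(x)-x$ is strictly convex on $[0,1]$ (since $h''=g''>0$ for $d\geq 3$), it vanishes at $x=1$, and it also vanishes at $\phi\in(0,1)$; because $p>\tfrac{1}{d-1}$ we have $g'(1)=(d-1)p>1$, so $h$ is strictly negative just to the left of $1$, and being strictly convex with the two distinct zeros $\phi<1$ it must satisfy $h'(\phi)<0$, that is $g'(\phi)<1$.

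I do not anticipate any real obstacle. The only points needing a modicum of care are: (a) verifying that the fixed point $\phi$ lies \emph{strictly} inside $(0,1)$, so that $h$ genuinely has two distinct roots and the convexity argument delivers a \emph{strict} inequality — this is precisely where the supercriticality hypothesis $p\in(\tfrac{1}{d-1},1)$ enters at both endpoints (via $g'(1)>1$ at the upper end and $g(0)=(1-p)^{d-1}>0$ at the lower end, the latter giving a sign change of $h$ on $(0,1)$ by the intermediate value theorem); and (b) recording $s>0$ to justify cancelling the powers of $s$ in the displayed reduction. Everything else is routine algebra.
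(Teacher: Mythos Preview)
Your proof is correct and follows essentially the same route as the paper: both reduce the inequality to the statement that the derivative of the offspring generating function at the extinction probability is strictly less than one, and both establish this via the strict convexity of $x\mapsto(px+1-p)^{d-1}-x$ together with its two zeros at $\phi$ and $1$. Your substitution $s=p\phi+1-p$ and direct identification of the ratio with $p\,s^{d-2}=g'(\phi)/(d-1)$ is a touch cleaner than the paper's intermediate reduction to $1-p-(d-2)p\phi>0$, but the two inequalities are equivalent (differ by the positive factor $s$) and the concluding convexity argument is the same.
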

\begin{proof}
Using \eqref{eq:kappachipsi}, we have 
\[\tfrac{d(1-\chi)}{d-1}-2(\tfrac{1}{2}dp-\psi)=\tfrac{d}{d-1}(p \phi+1-p)^{d}-dp\phi(p \phi+1-p)^{d-1}=\tfrac{d}{d-1}(p \phi+1-p)^{d-1}(1-p-(d-2)p\phi),\]
so it suffices to show that $1-p-(d-2)p\phi>0$. Let $g(y)=y -(p y+1-p)^{d-1}$ and note that $g(\phi)=0$. Then, we have that $g(0)<0$ and $g(1)=0$. Moreover, $g'(y)=1-(d-1)p(py+1-p)^{d-2}$  and hence $g'(1)<0$. It follows that $g(y)>0$ for $y\uparrow 1$, and therefore there is $y\in (0,1)$ such that $g(y)=0$. Note that  $g$ is strictly concave and therefore cannot have three zeros in the interval $(0,1]$, so $y=\phi$, and  therefore $g'(\phi)> 0$.  It remains to observe that $g'(\phi)=\tfrac{1-p-(d-2)p\phi}{p \phi+1-p}$, from where the desired inequality follows.
\end{proof}
\begin{proof}[Proof of Proposition~\ref{cor:supercritical}]
Let $\psi_0=\psi(p_0)$ and consider an arbitrarily small $\delta>0$. Since $\chi(p)$ and $\psi(p)$ are continuous functions of $p$ in the interval $(\tfrac{1}{d-1},1)$, we can pick $\eps>0$ so that, for all $p=p_0\pm \eps$ it holds that  $d|p-p_0|, |\chi(p)-\chi_0|,|\psi(p)-\psi_0|\leq \delta/10$ and, by Lemma~\ref{lem:subcritineq}, $\frac{2(\tfrac{1}{2}dp-\psi)+4\delta}{d(1-\chi)-\delta}<\tfrac{1}{d-1}-\delta$. Consider now an arbitrary $p=p_0\pm \eps$ and consider random $G$ sampled from either of the distributions  $\G_p$ or $\tG_p$. Using the monotonicity of the events $\{|C_1(G)|\geq t\},\{|E(C_1(G))|\geq t\}$, we obtain from Lemmas~\ref{lem:translate12} and ~\ref{lem:giantperc} (as well as a standard Chernoff bound for the number of edges in $G$) that there exists a constant $c'>0$, depending only on $d, p_0,\eps$ (but not on $p$), such that with probability at least $1-\emm^{-c'n}$ over the choice of $G$ it holds that $|E(G)|=\tfrac{1}{2}{dpn}\pm \delta n$, $|C_1(G)|=(\chi_0 \pm \delta) n$, and $|E(C_1(G))|=(\psi_0 \pm \delta) n$. Let $\cE$ denote this event. 

Note that conditioned on $|C_1(G)|, |E(C_1(G))|$ and $|E(G)|$, the remaining components of $G$ are distributed according to those in the exact-edge model $\tG_{\tp}(\tilde{n},d)$ with $\tn=n-|C_1(G)|$ and $\tilde{p}=\tfrac{2}{d\tn}(|E(G)|-|E(C_1(G))|)$, conditioned on the event $\cF$ that all components have size less than $|C_1(G)|$. Hence, conditioned on $\cE$, we have that $\tp\leq \frac{2(\frac{1}{2}dpn-\psi n)+4\delta n}{2(n-\chi n)-\delta n}<\tfrac{1}{d-1}-\delta$ where the last inequality follows from the choice of $\eps$, i.e., $\tG_{\tp}(\tilde{n},d)$ is in the subcritical regime. Therefore, the probability of $\cF$ is $1-\emm^{-\Omega(n)}$ and hence the conditioning on $\cF$ when considering $\tG_{\tp}(\tilde{n},d)$ can safely be ignored. From Proposition~\ref{thm:subcritical}, we have that there exist constants $M,c''>0$,  depending only on $d$ and $p_0$, so that with probability at least $1-\emm^{-c''n}$ over the choice of $G'\sim \tG_{\tp}(\tilde{n},d)$,  it holds that $\sum_{i\geq 1}|C_i(G')|^2\leq M \tilde{n}$. Therefore, we have $\sum_{i\geq 2}|C_i(G)|^2\leq M n$.
\end{proof}

\subsection{Percolation in the planted model}
Recall the edge-empirical distributions $\rho_{G,\sigma}$, $\rhopara$, $\rhoferro$, cf. \eqref{eqLemma_Jean}. The following lemma will allow us to deduce the regime (subcritical or supercritical) that dictates the percolation step of SW when we start from the paramagnetic and ferromagnetic phases.
\begin{lemma}\label{lem:controlperc}
For $\beta<\KS$, any colour $s\in [q]$ in the paramagnetic phase satisfies $(1-\emm^{-\beta})\frac{\rhopara(s,s)}{\nupara(s)}<\tfrac{1}{d-1}$.  For $\beta>\uniq$, any colour $s\in [q]$ in the ferromagnetic phase satisfies $(1-\emm^{-\beta})\frac{\rhoferro(s,s)}{\nuferro(s)}=\tfrac{(\emm^{\beta}-1)\muferro(s)}{1+(\emm^{\beta}-1)\muferro(s)}$; this is larger than $\tfrac{1}{d-1}$ for the colour $s=1$, and less than $\tfrac{1}{d-1}$ for all the other $q-1$ colours. 
\end{lemma}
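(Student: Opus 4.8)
The plan is to reduce the whole statement to one algebraic identity valid at every fixed point of \eqref{eqBP}, and then to read off the two cases by evaluating that identity at $\mupara$ and at $\muferro$, where the required inequalities become elementary facts about geometric sums. First I would prove that for any solution $\mu$ of \eqref{eqBP}, with $\nu^\mu,\rho^\mu$ as in \eqref{eqLemma_Jean},
\[(1-\emm^{-\beta})\,\frac{\rho^\mu(s,s)}{\nu^\mu(s)}=\frac{(\emm^\beta-1)\mu(s)}{1+(\emm^\beta-1)\mu(s)}\qquad(s\in[q]).\]
Writing $a_t=1+(\emm^\beta-1)\mu(t)$ and $W=\sum_t a_t^{d-1}$, equation \eqref{eqBP} says $\mu(t)=a_t^{d-1}/W$, and the only nontrivial point is that the normaliser $Z=\sum_t a_t^{d}$ of $\nu^\mu$ factors as $Z=\sum_t a_t^{d-1}a_t=W\sum_t\mu(t)a_t=W(1+(\emm^\beta-1)\sum_t\mu(t)^2)$, using $\sum_t\mu(t)=1$; this is exactly the denominator in $\rho^\mu(s,s)$. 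Substituting this and $\mu(s)=a_s^{d-1}/W$ collapses $\rho^\mu(s,s)/\nu^\mu(s)$ to $\emm^\beta\mu(s)/a_s$, and multiplying by $1-\emm^{-\beta}$ finishes it; this step is routine bookkeeping.

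For the paramagnetic case, take $\mu=\mupara$, so $\mu(s)=1/q$ for all $s$; the identity gives $(1-\emm^{-\beta})\rhopara(s,s)/\nupara(s)=(\emm^\beta-1)/(q+\emm^\beta-1)$, and since $t\mapsto t/(q+t)$ is increasing this is $<\tfrac1{d-1}$ exactly when $(d-2)(\emm^\beta-1)<q$, i.e.\ when $\emm^\beta<1+q/(d-2)$, which is precisely $\beta<\KS$.

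For the ferromagnetic case, set $b=\emm^\beta-1$, $x=\muferro(1)$ and $y=\muferro(s)=\tfrac{1-x}{q-1}$ (the common value of $\muferro$ on $s\in\{2,\dots,q\}$). Since $\tfrac{t}{1+t}>\tfrac1{d-1}$ iff $(d-2)t>1$, by the identity it suffices to show $bx>\tfrac1{d-2}$ and $by<\tfrac1{d-2}$. For $\beta>\uniq$ the description of $\muferro$ in Section~\ref{sec:states} gives $\muferro(1)>\muferro(s)$, hence $x>1/q>y$ (since $x+(q-1)y=1$) and $z:=\tfrac{1+bx}{1+by}>1$. Plugging colours $1$ and $s$ into \eqref{eqBP} yields $x/y=z^{d-1}$, and combined with $x+(q-1)y=1$ this gives $x=z^{d-1}/(z^{d-1}+q-1)$ and $y=1/(z^{d-1}+q-1)$; then $z(1+by)=1+bx$ forces $b=(z-1)(z^{d-1}+q-1)/(z^{d-1}-z)$. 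Substituting and using $z^{d-1}-z=z(z^{d-2}-1)$,
\[bx=\frac{(z-1)z^{d-1}}{z^{d-1}-z}=\frac{z^{d-2}}{1+z+\dots+z^{d-3}},\qquad by=\frac{z-1}{z^{d-1}-z}=\frac{1}{z+z^2+\dots+z^{d-2}},\]
where each geometric sum has exactly $d-2$ terms. As $z>1$, the denominator of $bx$ is a sum of $d-2$ terms each strictly smaller than $z^{d-2}$, so $bx>\tfrac1{d-2}$; and the denominator of $by$ is a sum of $d-2$ terms each strictly larger than $1$, so $by<\tfrac1{d-2}$.

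I do not expect a real obstacle here: once the identity of the first step is available, everything is elementary. The one point deserving a sentence of care is the observation that $b=\emm^\beta-1$ at the ferromagnetic fixed point equals the function $z\mapsto(z-1)(z^{d-1}+q-1)/(z^{d-1}-z)$ whose infimum over $z>1$ is $\emm^{\uniq}-1$ and whose limit as $z\downarrow 1$ is $q/(d-2)=\emm^{\KS}-1$; this makes transparent that the hypothesis $\beta>\uniq$ is used only to guarantee that $\muferro$ exists and satisfies $z>1$, the two displayed inequalities in fact holding for every $z>1$.
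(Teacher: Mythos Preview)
Your proof is correct and follows essentially the same route as the paper: both reduce the ferromagnetic inequalities to $bx>\tfrac1{d-2}$ and $by<\tfrac1{d-2}$ via the same parametrisation $z=\tfrac{1+bx}{1+by}$ (the paper's $t$), after which the paper checks the polynomial inequality $(d-2)z^{d-1}-(d-1)z^{d-2}+1>0$ while you use the equivalent geometric-sum form. Your one organisational improvement is proving the identity $(1-\emm^{-\beta})\rho^\mu(s,s)/\nu^\mu(s)=(\emm^\beta-1)\mu(s)/(1+(\emm^\beta-1)\mu(s))$ once for an arbitrary BP fixed point and then specialising, whereas the paper verifies the paramagnetic and ferromagnetic cases by separate direct computation; this is a cleaner packaging of the same algebra, not a different argument.
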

\begin{proof}
For the paramagnetic phase and any  colour $s\in [q]$, it follows from \eqref{eqLemma_Jean}  that
\[\nupara(s)=\tfrac{1}{q}, \qquad \rhopara(s,s)=\tfrac{\emm^{\beta}}{q\emm^{\beta}+(q^2-q)},\]
so $(1-\emm^{-\beta})\frac{\rhopara(s,s)}{\nupara(s)}<\tfrac{1}{d-1}$ is equivalent to $(1-\emm^{-\beta})\frac{\emm^{\beta}}{\emm^{\beta}+q-1}<\tfrac{1}{d-1}$ which is true iff $\beta<\KS$, since $\KS=\log(1+\tfrac{q}{d-2})$.

For the ferromagnetic phase, recall from Section~\ref{sec:states} that $x=\muferro(1)$ is the largest number in the interval $(1/q,1)$ that satisfies 
\begin{equation}\label{eq:xerr}
x=\frac{(1+(\emm^\beta-1)x)^{d-1}}{(1+(\emm^\beta-1)x)^{d-1}+(q-1)\big(1+(\emm^\beta-1)\tfrac{1-x}{q-1}\big)^{d-1}}.
\end{equation}
Let $t=\tfrac{1+(\emm^\beta-1)x}{1+(\emm^\beta-1)\tfrac{1-x}{q-1}}$ and note that $t>1$ since $x>1/q$ and $\beta>0$. Moreover, \eqref{eq:xerr} can be written as $x=\frac{t^{d-1}}{t^{d-1}+(q-1)}$, and hence $t^{d-1}=\frac{(q-1)x}{1-x}$. Then, it follows from \eqref{eqLemma_Jean} that for colour $s=1$ we have
\begin{equation}\label{eq:nuferro}
\nuferro(1)=\frac{t^d}{t^d+(q-1)}=\frac{t x}{tx+1-x}, \qquad \rhoferro(1,1)=\frac{\emm^\beta x^2}{1+(\emm^\beta-1)\big(x^2+\tfrac{(1-x)^2}{q-1}\big)}=\frac{\emm^\beta  t x^2}{(tx+1-x)\big(1+(\emm^\beta-1)x\big)},
\end{equation}
whereas for colours $s\neq 1$ we have \[\nuferro(s)=\frac{1}{t^d+(q-1)}=\frac{\tfrac{1-x}{q-1}}{tx+1-x}, \qquad \rhoferro(s,s)=\frac{\emm^\beta \big(\tfrac{1-x}{q-1}\big)^2}{1+(\emm^\beta-1)(x^2+\tfrac{(1-x)^2}{q-1})}=\frac{\emm^\beta  t \big(\tfrac{1-x}{q-1}\big)^2}{(tx+1-x)\big(1+(\emm^\beta-1)x\big)}.\]
Using these expressions, it is a matter of few manipulations to verify that $(1-\emm^{-\beta})\frac{\rhoferro(s,s)}{\nuferro(s)}=\tfrac{(\emm^{\beta}-1)\muferro(s)}{1+(\emm^{\beta}-1)\muferro(s)}$ for all colours $s\in [q]$.

Using this, for $s=1$, we have that the inequality $(1-\emm^{-\beta})\frac{\rhoferro(1,1)}{\nuferro(1)}>\tfrac{1}{d-1}$ is equivalent to $(\emm^\beta-1)x>\tfrac{1}{d-2}$. Plugging $x=\frac{t^{d-1}}{t^{d-1}+(q-1)}$ into $t=\tfrac{1+(\emm^\beta-1)x}{1+(\emm^\beta-1)\tfrac{1-x}{q-1}}$ and solving for $(\emm^\beta-1)$ yields that $\emm^\beta-1=\frac{(t-1)(t^{d-1}+q-1)}{t^{d-1}-t}$. Therefore the desired inequality becomes 
\[\frac{(t-1)t^{d-1}}{t^{d-1}-t}>\tfrac{1}{d-2}, \mbox{ or equivalently } (d-2)t^{d-1}-(d-1)t^{d-2}+1>0,\]
which is true for any $t>1$. For a colour $s\neq 1$, the inequality $(1-\emm^{-\beta})\frac{\rhoferro(s,s)}{\nuferro(s)}<\tfrac{1}{d-1}$ can be proved analogously. We have in particular  the equivalent inequality $(\emm^\beta-1)\tfrac{1-x}{q-1}<\tfrac{1}{d-2}$, which further reduces to  $\frac{t-1}{t^{d-1}-t}<\tfrac{1}{d-2}$; the latter again holds for any $t>1$.
\end{proof}

\subsection{Tracking one step of SW - Proof of Propositions~\ref{thm:SWpara} and~\ref{thm:SWferro}}
\begin{PropSWpara}
\statepropSWpara
\end{PropSWpara}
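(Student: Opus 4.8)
The plan is to follow the single Swendsen--Wang step through its two stages, percolation and then recolouring, and to show that from a typical planted paramagnetic pair both stages behave essentially deterministically: the percolation step produces only ``dust'' (all components of size $O(\log n)$) because each colour class lies strictly inside the subcritical percolation regime, and the recolouring step then cannot push the colour statistics more than $O(\sqrt n)$ away from the uniform distribution $\nupara$. As a first step I would condition on a good event $\cG$ for $(G,\sigma)\sim(\hat\G(\sparad),\sparad)$. By \Lem~\ref{Lemma_spara} and the permutation-invariance remarks following it, with probability $1-\emm^{-\Omega(n)}$ the colour statistics $\nu=\nu^{\sigma}$ and the edge statistics $\rho=\rho^{G,\sigma}$ lie within a small constant $\kappa>0$ of $\nupara,\rhopara$, and, conditional on $\nu$ and $\rho$, $G$ is a uniform sample from the pairing model with that colour/edge profile (using that $\cH_G(\sigma)$ depends on $G$ only through $\rho^{G,\sigma}$). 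Standard pairing-model estimates then give, again with probability $1-\emm^{-\Omega(n)}$, that for every colour $s$ the empirical distribution of the monochromatic degrees $k_v=|\{w\in\sigma^{-1}(s):vw\in E(G)\}|$ over $v\in\sigma^{-1}(s)$ is within $n^{-\Omega(1)}$ of $\Bin(d,\alpha_s)$ with $\alpha_s=\rho(s,s)/\nu(s)$ (and $k_v\le d$); I choose $\kappa$ small enough that $\alpha_s$ stays in the range where the strict inequality of \Lem~\ref{lem:controlperc} holds. It then suffices to prove $P^{G}_{SW}(\sigma\to\Sparadp)\ge 1-\emm^{-\Omega(n)}$ for every fixed $(G,\sigma)\in\cG$.

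For such a fixed $(G,\sigma)$, since SW percolation retains only monochromatic edges, the percolated graph $(V,M)$ is the disjoint union over $s\in[q]$ of $(V_s,M_s)$, where $V_s=\sigma^{-1}(s)$ and $M_s$ is obtained from the monochromatic-$s$ subgraph $G[V_s]$ by keeping each edge independently with probability $p=1-\emm^{-\beta}$. Exploring a component of $(V_s,M_s)$ by breadth-first search in the configuration-model representation of $G[V_s]$, each newly reached vertex contributes at most $\Bin(J-1,p)$ further vertices, where $J$ is size-biased from the monochromatic-degree distribution and the bound is valid up to $o(1)$ corrections while only $o(n)$ half-edges have been used; hence the offspring mean from the first generation on is $p\,\Erw[k(k-1)]/\Erw[k]+o(1)=p(d-1)\alpha_s+o(1)$. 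By \Lem~\ref{lem:controlperc} one has $p(d-1)\rhopara(s,s)/\nupara(s)<1$ strictly for $\beta<\KS$, so by the choice of $\kappa$ this mean is $<1-\Omega(1)$, uniformly over the start vertex and the history. Exactly as in the proof of \Prop~\ref{thm:subcritical} (a branching-process tail bound followed by a sum of i.i.d.\ exponentially-tailed variables) this yields, with probability $1-\emm^{-\Omega(n)}$, that every component of $(V_s,M_s)$ has size $O(\log n)$ and $\sum_{C\subseteq V_s}|C|^2\le M_0 n$ for an absolute constant $M_0$; a union bound over $s\in[q]$ gives $\sum_C|C|^2\le qM_0 n=:M_1 n$ for $(V,M)$.

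On any such percolation outcome the recolouring step assigns each component an independent uniform colour, so $|\sigma'^{-1}(c)|=\sum_C|C|\,\vecone\{C\mapsto c\}$ has mean $n\nupara(c)=n/q$, and Hoeffding's inequality for independent summands of ranges $|C|$ gives $\Pr[\,\abs{\abs{\sigma'^{-1}(c)}-n/q}\ge (\eps'/q)n\,]\le 2\exp(-\Omega((\eps')^2 n/M_1))$; a union bound over $c\in[q]$ shows $\sum_{c}\abs{\abs{\sigma'^{-1}(c)}-n\nupara(c)}<\eps' n$, i.e.\ $\sigma'\in\Sparadp$, with probability $1-\emm^{-\Omega(n)}$. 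Combining this with the previous paragraph gives $P^{G}_{SW}(\sigma\to\Sparadp)\ge 1-\emm^{-\Omega(n)}$ for every $(G,\sigma)\in\cG$, and since $\Pr[(G,\sigma)\notin\cG]=\emm^{-\Omega(n)}$, choosing $\eta>0$ below all the implicit rates proves the proposition. (Only $\eps'>0$ constant is used in the recolouring bound, so the hypothesis $\eps'>\eps$ is needed only for the downstream application in \Thm~\ref{thm:SWslow}; the smallness of $\eps$ is used only so that $\sparad$ concentrates near $\nupara$.)

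The one genuinely non-routine point is the subcriticality in the middle paragraph. In the mean-field model a colour class spans an Erd\H{o}s--R\'enyi-type graph and subcriticality is immediate, whereas here I must both control the monochromatic-degree profile inside a colour class of the \emph{planted} pairing model and run the branching-process comparison with the correct, sub-$(d-1)$, offspring mean $p(d-1)\alpha_s$, whose being $<1$ is exactly the spatial inequality of \Lem~\ref{lem:controlperc}; moreover all of this must be carried out with $\emm^{-\Omega(n)}$ failure probabilities, which is why I re-run the explicit exploration argument behind \Prop~\ref{thm:subcritical} rather than quote an off-the-shelf percolation result.
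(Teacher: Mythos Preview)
Your overall strategy matches the paper's exactly: condition on $\nu,\rho$ near $\nupara,\rhopara$ via \Lem~\ref{Lemma_spara}, invoke \Lem~\ref{lem:controlperc} to place each monochromatic subgraph in the subcritical percolation regime, derive a sum-of-squares bound, and finish with a Hoeffding/Azuma bound for the recolouring step. The key subcriticality threshold $p(d-1)\alpha_s<1$ is identified correctly.

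There is, however, a genuine gap in how you organise the randomness. You first fix $(G,\sigma)\in\cG$ and then run a configuration-model exploration on $G[V_s]$, appealing to the size-biased offspring mean $p\,\Erw[k(k-1)]/\Erw[k]$. But once $(G,\sigma)$ is fixed, $G[V_s]$ is a deterministic graph; the degree of a newly reached vertex is determined by the structure of $G$, not drawn from a distribution, so the size-biased bound is unjustified. Knowing only that the empirical degree profile of $G[V_s]$ is close to $\Bin(d,\alpha_s)$ does not rule out adversarial structure (e.g.\ the degree-$d$ vertices clustering together), and since $(d-1)p$ can exceed~$1$ for $\beta$ near $\KS$, the crude domination by $\Bin(d-1,p)$ is not available either. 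Your phrases ``configuration-model representation'' and ``while only $o(n)$ half-edges have been used'' are deferred-decisions language that only makes sense when $G$ is still random.

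The paper handles this cleanly by observing that, conditioned on $\nu,\rho$, the induced subgraph $G(\sigma^{-1}(s))$ is \emph{exactly} distributed as the exact-edge model $\tilde\G_{\tilde r(s)}(\tilde n(s),d)$ with $\tilde r(s)=\rho(s,s)/\nu(s)$; percolation with parameter $p$ then relates to $\G_{r(s)}$ with $r(s)=p\tilde r(s)<1/(d-1)$, so \Prop~\ref{thm:subcritical} applies directly. A monotonicity argument together with \Lem~\ref{lem:translate12} (and an implicit Markov step) then yields that for a $1-\emm^{-\Omega(n)}$ fraction of $(G,\sigma)$ the percolation step behaves well with probability $1-\emm^{-\Omega(n)}$. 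To repair your argument, either drop the ``fix $(G,\sigma)$'' framing and carry out the exploration over the joint randomness of $G$ (given $\nu,\rho$) and the percolation, then separate via Markov; or enlarge $\cG$ to include the property ``$\Pr_{\mathrm{perc}}[\sum_i|C_i|^2\le Mn]\ge 1-\emm^{-\Omega(n)}$'' and prove $\Pr[\cG]\ge 1-\emm^{-\Omega(n)}$ using the model identification above.
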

\begin{proof}
Let $\eps>0$ be a sufficiently small constant so that by Lemma~\ref{Lemma_spara}, for any constant $\delta>0$, with probability $1-\emm^{-\Omega(n)}$ over the choice of $(G,\sigma)\sim \big(\hat\G(\sparad),\sparad\big)$,  we have 
\begin{equation}\label{eq:numparaeps}
\norm{\nu^{\sigma}-\nupara}\leq \delta\mbox{  and  }\norm{\rho^{G,\sigma}-\rhopara}\leq \delta.
\end{equation} 
Let $\eps'$ be an arbitrary constant such that $\eps'>\eps$. We will show that there exists a constant $\eta>0$ such that for arbitrary $\nu$ and $\rho\in\cR(\nu)$ satisfying $\norm{\nu-\nupara}\leq \delta$ and $\norm{\rho-\rhopara}\leq \delta$,  for $(G,\sigma)\sim \big(\hat\G(\sparad),\sparad\big)$, it holds that
\begin{equation}\label{eq:PSWpara}
\Pr\Big[P^{G}_{SW}\big(\sigma\to\Sparadp\big)\geq  1-e^{-\eta n}\,\big|\, \nu^{\sigma}=\nu, \rho^{G,\sigma}=\rho\Big]\geq 1-\emm^{-\eta n}
\end{equation}
and therefore the conclusion follows by aggregating over $\nu$ and $\rho$, using the law of total probability and the probability bound for \eqref{eq:numparaeps}.

Choose $(G,\sigma)\sim \big(\hat\G(\sparad),\sparad\big)$ conditioned on $\nu^{\sigma}=\nu$ and $\rho^{G,\sigma}=\rho$. Observe that $\hat\G(\sigma)$ is a uniformly random graph conditioned on the sizes of the vertex/edge classes prescribed by $\nu,\rho$. For $i\geq1$, let $C_i(G_{\sigma,SW})$ be the components of $G$ (in decreasing order of size) starting from the configuration $\sigma$ after the percolation step of the SW dynamics with parameter $p=1-\emm^{-\beta}$, when starting from the configuration $\sigma$. We will show that there exists a constant $M>0$ such that
\begin{equation}\label{eq:sumofsquarespara}
\Pr\bigg[\sum_{i\geq 1}|C_i(G_{\sigma,SW})|^2\leq Mn\,\Big|\, \nu^{\sigma}=\nu, \rho^{G,\sigma}=\rho\bigg]\geq 1-\emm^{-\Omega(n)}.
\end{equation}
Assuming this for the moment, for a colour $s\in [q]$, let $N_s$ be the number of vertices with colour $s\in [q]$ after the recoloring step of SW. Note that the expectation of $N_s$ is $n/q$, and whenever the event in \eqref{eq:sumofsquarespara} holds, by Azuma's inequality we obtain that $\frac{1}{n}N_s$ is within an additive $\eps'$ from its expectation with probability $1-\emm^{-\Omega(n)}$. By a union bound over the $q$ colours, we obtain \eqref{eq:PSWpara}. 

For a colour $s\in [q]$, let $G(\sigma^{-1}(s))$ be the induced graph on $\sigma^{-1}(s)$, and note that since $G$ is uniformly random conditioned on $\nu$ and $\rho$, $G(\sigma^{-1}(s))$ has the same distribution as the exact-edge model $H(s)\sim\tilde{\G}_{\tr(s)}(\tilde{n}(s),d)$ where $\tilde{n}(s)=n\nu(s)$ and $\tr(s)=\frac{\rho(s,s)}{\nu(s)}$.  Percolation on this graph with parameter $p$ is therefore closely related to the binomial-edge  model $\G_{r(s)}(\tn(s),d)$ with $r(s)=p\tr(s)$. More precisely, note that for all sufficiently small $\delta>0$, Lemma~\ref{lem:controlperc}  guarantees that the percolation parameter $r(s)$ is bounded by a constant strictly less than $1/(d-1)$, so by Theorem~\ref{thm:subcritical} there exists a constant $M>0$ such that 
\begin{equation}\label{eq:eerf4455}
\Pr\Big[\sum_{i\geq 1}|C_i(\G_{r(s)})|^2\leq M\tn(s)\Big]\geq 1-\emm^{-\Omega(\tr{n}(s))}\geq 1-\emm^{-\Omega(n)}.
\end{equation}
Note that, for any $p\in (0,1)$, the property $\big\{G\,:\, \Pr\big[\sum_{i\geq 1}|C_i(G_p)|^2\leq Mn\big]\geq 1-\emm^{-\Omega(n)}\big\}$ is a  decreasing graph property, i.e., if $G$ is a subgraph of $G'$, we can couple the random graphs $G_p$ and $G_p'$ so that $\sum_{i\geq 1}|C_i(G_p)|^2\leq \sum_{i\geq 1}|C_i(G_p')|^2$. Viewing the event in \eqref{eq:eerf4455} as a property of the binomial-edge model $\G_{\tr(s)}(\tn(s),d)$, it follows from Lemma~\ref{lem:translate12} that with probability $1-\emm^{-\Omega(n)}$ over the choice of the exact-edge model $H(s)\sim\tilde{\G}_{\tr(s)}(\tilde{n}(s),d)$ it holds that
\[\Pr\Big[\sum_{i\geq 1}|C_i(H_p(s))|^2]\leq M\tn(s)\Big]\geq 1-\emm^{-\Omega(n)}.\]
Applying this for colours $s=1,\hdots,q$ and $H(s)=G(\sigma^{-1}(s))$, we obtain by the union bound that with probability $1-\emm^{-\Omega(n)}$ over the choice of $(G,\sigma)\sim \big(\hat\G(\sparad),\sparad\big)$ conditioned on $\nu^{\sigma}=\nu$ and  $\rho^{G,\sigma}=\rho$, the components of $G$ after the percolation step of SW satisfy \eqref{eq:sumofsquarespara}, as claimed, therefore finishing the proof.
\end{proof}

\begin{PropSWferro}
\statepropSWpara
\end{PropSWferro}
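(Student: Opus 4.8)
The plan is to follow the proof of \Prop~\ref{thm:SWpara} step by step; the one essential new feature is that the colour class of the dominant colour~$1$ is now in the \emph{supercritical} percolation regime, so that the giant component created by the percolation step of SW ends up fixing (a permutation of) the dominant colour. By \Lem~\ref{Lemma_sferro}, for every sufficiently small constant $\delta>0$ we have $\|\nu^{\sigma}-\nuferro\|+\|\rho^{G,\sigma}-\rhoferro\|\leq\delta$ with probability $1-\emm^{-\Omega(n)}$ over $(G,\sigma)\sim\big(\hat\G(\sferrod),\sferrod\big)$, so by the law of total probability it suffices to produce $\eta>0$ such that, for every fixed pair $(\nu,\rho)$ with $\rho\in\cR(\nu)$, $\|\nu-\nuferro\|\leq\delta$ and $\|\rho-\rhoferro\|\leq\delta$, conditioning on $\nu^{\sigma}=\nu$, $\rho^{G,\sigma}=\rho$ gives $P^{G}_{SW}\big(\sigma\to\tSferrodp\big)\geq 1-\emm^{-\eta n}$ with probability $1-\emm^{-\eta n}$ over the graph. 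Conditionally on $(\nu,\rho)$, exactly as in the paramagnetic case the graph $\hat\G(\sigma)$ is uniform given the vertex/edge class sizes, so the monochromatic subgraph $G(\sigma^{-1}(s))$ on colour class $s$ is distributed as the exact-edge model $\tilde\G_{\tr(s)}(\tilde n(s),d)$ with $\tilde n(s)=n\nu(s)$, $\tr(s)=\rho(s,s)/\nu(s)$; after the percolation step of SW with $p=1-\emm^{-\beta}$ its component structure coincides, for monotone events and via \Lem~\ref{lem:translate12}, with that of the binomial-edge model $\G_{r(s)}(\tilde n(s),d)$ with $r(s)=p\,\tr(s)$, and, as in the proof of \Prop~\ref{thm:SWpara} (same two-level application of Markov's inequality), for a $1-\emm^{-\Omega(n)}$-typical pair $(G,\sigma)$ the SW percolation produces the structure described below with probability $1-\emm^{-\Omega(n)}$. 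Since the percolation step keeps only monochromatic edges, $G_{\sigma,SW}$ is the disjoint union of the $q$ graphs $G(\sigma^{-1}(s))_p$.

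I would then split on the colour. By \Lem~\ref{lem:controlperc}, for $\beta>\uniq$ the values $p^{*}_s:=(1-\emm^{-\beta})\rhoferro(s,s)/\nuferro(s)=\tfrac{(\emm^{\beta}-1)\muferro(s)}{1+(\emm^{\beta}-1)\muferro(s)}$ attached to the ferromagnetic fixed point satisfy $p^{*}_1>\tfrac1{d-1}$ and $p^{*}_s<\tfrac1{d-1}$ for $s\neq1$, each with a constant gap; hence for $\delta$ small enough $r(s)$ is bounded above by a constant below $\tfrac1{d-1}$ for every $s\neq1$ and below by a constant above $\tfrac1{d-1}$ for $s=1$. \Prop~\ref{thm:subcritical} then gives, for every $s\neq1$, that $\sum_{i\geq1}|C_i(G(\sigma^{-1}(s))_p)|^2\leq Mn$ with probability $1-\emm^{-\Omega(n)}$, while \Prop~\ref{cor:supercritical} (applied with $p_0=p^{*}_1$ and tolerance comparable to $\delta$) gives that $G(\sigma^{-1}(1))_p$ has a unique giant component $C_1$ with $|C_1|=\chi_1\nuferro(1)n\pm O(\delta)n$, where $\chi_1:=\chi(p^{*}_1)$, together with $\sum_{i\geq2}|C_i(G(\sigma^{-1}(1))_p)|^2\leq Mn$. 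Consequently, with probability $1-\emm^{-\Omega(n)}$ the graph $G_{\sigma,SW}$ has a single macroscopic component $C_1$ of size $\Theta(n)$, strictly larger than every other component (each of which has $O(\sqrt n)$ vertices), and $\sum_{i\geq2}|C_i(G_{\sigma,SW})|^2=O(n)$.

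In the recolouring step, let $\vec c^{*}\in[q]$ be the uniformly random colour assigned to the giant $C_1$ and write $N_s$ for the number of vertices receiving colour $s$, so that $N_s=|C_1|\,\mathbf{1}\{s=\vec c^{*}\}+\sum_{i\geq2}|C_i|\,\mathbf{1}\{C_i\text{ recoloured }s\}$. Conditionally on the $|C_i|$ and on $\vec c^{*}$, the sum over the small components has mean $\tfrac1q(n-|C_1|)$ and changes by at most $|C_i|$ when the colour of $C_i$ is flipped, so Azuma's inequality with $\sum_{i\geq2}|C_i|^2=O(n)$ shows that, with probability $1-\emm^{-\Omega(n)}$, $N_{\vec c^{*}}=|C_1|+\tfrac1q(n-|C_1|)\pm\tfrac{\eps'}{4q}n$ and $N_s=\tfrac1q(n-|C_1|)\pm\tfrac{\eps'}{4q}n$ for $s\neq\vec c^{*}$. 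The one genuinely new computation, which is also the main obstacle of the proof, is the identity
\[
\chi_1\,\nuferro(1)+\tfrac1q\big(1-\chi_1\nuferro(1)\big)=\nuferro(1),\qquad\text{equivalently}\qquad \tfrac1q\big(1-\chi_1\nuferro(1)\big)=\tfrac{1-\nuferro(1)}{q-1}=\nuferro(s)\ \ (s\neq1),
\]
which expresses that the ferromagnetic Belief Propagation fixed point is left invariant by one SW step at the level of colour statistics. I would establish it by substituting the formulae $\nuferro(1)=\tfrac{tx}{tx+1-x}$, $t^{d-1}=\tfrac{(q-1)x}{1-x}$ and $\emm^{\beta}-1=\tfrac{(t-1)(t^{d-1}+q-1)}{t^{d-1}-t}$ with $x=\muferro(1)$ (from Section~\ref{sec:states} and \Lem~\ref{lem:controlperc}) into the defining relations \eqref{eq:kappachipsi}, which give $\phi=\phi(p^{*}_1)$ and $\chi_1=1-\phi\,(p^{*}_1\phi+1-p^{*}_1)$, and simplifying; this is a finite rational manipulation in the spirit of the proof of \Lem~\ref{lem:subcritineq}. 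Granting the identity and using $|C_1|=\chi_1\nuferro(1)n\pm O(\delta)n$, the two estimates give $N_c=n\nuferro(\pi(c))\pm\big(\tfrac{\eps'}{4q}+O(\delta)\big)n$ for all $c\in[q]$, where $\pi$ is the transposition exchanging $1$ and $\vec c^{*}$; choosing $\delta$ small enough that $O(\delta)\leq\tfrac{\eps'}{4q}$, the colour counts of the configuration produced by SW lie within $\ell^1$-distance $\eps'n$ of the arrangement of $n\nuferro$ in which colour $\vec c^{*}$ is dominant, so this configuration belongs to $\tSferrodp$. Taking $\eta>0$ small enough to absorb the finitely many $\emm^{-\Omega(n)}$ error terms proves the conditional claim, and aggregating over $(\nu,\rho)$ via \Lem~\ref{Lemma_sferro} completes the argument.
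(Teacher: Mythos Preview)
Your proposal is correct and follows essentially the same route as the paper: condition on $(\nu,\rho)$ via \Lem~\ref{Lemma_sferro}, split colours into the subcritical classes $s\neq1$ (handled by \Prop~\ref{thm:subcritical}) and the supercritical class $s=1$ (handled by \Prop~\ref{cor:supercritical}), then apply Azuma to the recolouring step; the identity $\chi_1\nuferro(1)=\tfrac{q\nuferro(1)-1}{q-1}$ you isolate is exactly the one the paper proves. The only point worth noting is that the paper's verification of this identity is more structured than a blind ``finite rational manipulation'': it reduces $\chiferro=1-(1/t)^d$ to the single claim $\phiferro=\tfrac{1-x}{(q-1)x}$, and then checks that $y=\tfrac{1-x}{(q-1)x}$ satisfies the extinction-probability equation $y=(1-\rferro+\rferro y)^{d-1}$ \emph{precisely because} $x=\muferro(1)$ satisfies the Belief Propagation equation~\eqref{eq:xerr}---so the identity is not merely algebraic luck but a direct restatement of the BP fixed-point condition, which makes the computation short and transparent.
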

\begin{proof}[Proof of Proposition~\ref{thm:SWferro}]
The first part of the proof is analogous to that of Theorem~\ref{thm:SWpara}. Let $\eps>0$ be a sufficiently small constant, so that by Lemma~\ref{Lemma_sferro}, for any constant $\delta>0$, with probability $1-\emm^{-\Omega(n)}$ over the choice of $(G,\sigma)\sim \big(\hat\G(\sferrod),\sferrod\big)$,  we have 
\begin{equation}\label{eq:numferroeps}
\norm{\nu^{\sigma}-\nuferro}\leq \delta\mbox{  and  }\norm{\rho^{G,\sigma}-\rhoferro}\leq \delta.
\end{equation}  We will show that there exists a constant $\eta>0$ such for arbitrary $\nu$ and $\rho\in\cR(\nu)$ satisfying $\norm{\nu-\nuferro}\leq \delta$ and $\norm{\rho-\rhoferro}\leq \delta$, for  $(G,\sigma)\sim \big(\hat\G(\sferrod),\sferrod\big)$ it holds that
\begin{equation}\label{eq:PSWferro}
\Pr\Big[P^{G}_{SW}\big(\sigma\to\tSferrodp\big)\geq  1-e^{-\eta n}\,\big|\, \nu^{\sigma}=\nu, \rho^{G,\sigma}=\rho\Big]\geq 1-\emm^{-\eta n}
\end{equation}
and therefore the conclusion follows by aggregating over $\nu$ and $\rho$.

Choose $(G,\sigma)\sim \big(\hat\G(\sferrod),\sferrod\big)$ conditioned on $\nu^\sigma=\nu$ and $\rho^{G,\sigma}=\rho$, and observe once again that $\hat\G(\sigma)$ is uniformly random conditioned on $\nu,\rho$. For $i\geq1$, let $C_i(G_{\sigma,SW})$ be the components of $G$ (in decreasing order of size) starting from the configuration $\sigma$ after the percolation step of the SW dynamics with parameter $p=1-\emm^{-\beta}$, when starting from the configuration $\sigma$. We will show that there exists a constant $M>0$ such that
\begin{equation}\label{eq:sumofsquaresferro}
\Pr\Big[C_1(G_{\sigma,SW})= n\big(1-\tfrac{q(1-\nuferro(1))}{q-1}\big)\pm \epsilon'n,\ \sum_{i\geq 2}|C_i(G_{\sigma,SW})|^2\leq Mn\,\Big|\, \nu^{\sigma}=\nu, \rho^{G,\sigma}=\rho\Big]\geq 1-\emm^{-\Omega(n)}.
\end{equation}
We first complete the proof of the theorem assuming this for the moment, and return to the proof of \eqref{eq:sumofsquaresferro} later. In particular, assume w.l.o.g. that $C_1(G_{\sigma,SW})$ gets colour 1. For a colour $s\in [q]$, let $N_s$ be the number of vertices outside $C_1(G_{\sigma,SW})$ that get colour $s\in [q]$ after the recoloring step of SW. Note that in the final configuration after the recolouring step, the number of vertices with colour $s\in [q]$  is $N_s+\mathbf{1}\{s=1\}|C_1(G_{\sigma,SW})|$. Now, the expectation of $N_s$ is $\tfrac{n-|C_1(G_{\sigma,SW})|}{q}$, and whenever the event in \eqref{eq:sumofsquarespara} holds, by Azuma's inequality we obtain that $\frac{1}{n}N_s$ is within an additive $\eps'$ from its expectation with probability $1-\emm^{-\Omega(n)}$. Therefore, by a union bound over the $q$ colours, the Potts configuration obtained after one step of SW belongs to $\tSferrodp$ with probability $1-\emm^{-\Omega(n)}$, which establishes the claim in \eqref{eq:PSWferro}. 

It remains to prove \eqref{eq:sumofsquaresferro}. As in the proof of Proposition~\ref{thm:SWpara}, for a colour $s\in [q]$, let $G(\sigma^{-1}(s))$ be the induced graph on $\sigma^{-1}(s)$, and note that $G(\sigma^{-1}(s))$ has the same distribution as the exact-edge model $H(s)\sim\tilde{\G}_{\tr(s)}(\tilde{n}(s),d)$ where $\tn(s)=n\nu(s)$ and $\tr(s)=\frac{\rho(s,s)}{\nu(s)}$.  By considering again the binomial-edge model $\G_{r(s)}(\tn(s),d)$ with $r(s)=p\tr(s)$, and using the inequalities in Lemma~\ref{lem:controlperc} for the ferromagnetic phase, we obtain that for all colours $s\neq 1$ the parameter $r(s)$ is bounded by a constant strictly less than $\tfrac{1}{d-1}$ and hence the model is in the subcritical regime. In fact, by the same line of arguments as in Theorem~\ref{thm:SWpara},  we therefore have that there exists a constant $M_0>0$ (depending only on $d,\beta$ but not on $\nu$ or $\rho$) such that, for all colours $s\neq 1$ with probability $1-\emm^{-\Omega(n)}$ over the choice of $H(s)\sim\tG_{\tr(s)}(\tilde{n}(s),d)$, it holds that
\begin{equation}\label{eq:1stcolour}
\Pr\Big[\sum_{i\geq 1}|C_i(H_p(s))|^2\leq M_0\tn(s)\Big]\geq 1-\emm^{-\Omega(n)}
\end{equation}
By contrast, for $s=1$, the binomial-edge model $\G_{r(s)}(\tn(s),d)$ is in the supercritical regime since $r(s)=\rferro\pm \eps$ where $\rferro=(1-\emm^{-\beta})\frac{\rhoferro(1,1)}{\nuferro(1)}=\tfrac{(\emm^{\beta}-1)\muferro(1)}{1+(\emm^{\beta}-1)\muferro(1)}$ is a constant larger than $\tfrac{1}{d-1}$ (by Lemma~\ref{lem:controlperc}). Let $\chiferro=\chi(\rferro)$ be as in~\eqref{eq:kappachipsi}, so by Proposition~\ref{cor:supercritical} there exists a constant $M_1>0$ such that 
\begin{equation}\label{eq:2ndcolour}
\Pr\Big[\big|C_1(\G_{r(s)})\big|= \tn(s)(\chiferro\pm\tfrac{\epsilon'}{2})\Big],\ \Pr\bigg[\sum_{i\geq 2}|C_i(\G_{r(s)})|^2\leq M_1\tn(1)\bigg]\geq 1-\emm^{-\Omega(n)}.
\end{equation}
 We will shortly show that 
\begin{equation}\label{eq:algebra}
1-\frac{q(1-\nuferro(1))}{q-1}=\chiferro\nuferro(1) \mbox{ or equivalently } \chiferro= \frac{q\nuferro(1)-1}{(q-1)\nuferro(1)}.
\end{equation}
Assuming this for now, note that since $|C_1(G)|$ and $\Pr\big[\sum_{i\geq 2}|C_i(G_p)|^2\big]$ are monotone under edge-inclusion, we can again use Lemma~\ref{lem:translate12} to go back to the percolation model for the colour $s= 1$. So, we conclude that with probability $1-\emm^{-\Omega(n)}$ over the choice of $H(s)\sim\tG_{\tr(s)}(\tilde{n}(s),d)$, it holds that
\[\Pr\bigg[\big|C_1(H_p(s))\big|= \tn(s)(\chiferro\pm\epsilon'), \quad \sum_{i\geq 1}|C_i(H_p(s))|^2\leq M_1\tn(s)\bigg]\geq 1-\emm^{-\Omega(n)}.\] 
Combining \eqref{eq:1stcolour} and \eqref{eq:2ndcolour} with a union bound over the $q$ colours, we obtain \eqref{eq:sumofsquaresferro} with $M=\max{M_0,M_1}$.

It only remains to prove \eqref{eq:algebra}. Recall from~\eqref{eq:nuferro} that $\nuferro(1)=\tfrac{t^d}{t^d+(q-1)}$ where $t=\tfrac{1+(\emm^\beta-1)x}{1+(\emm^\beta-1)\tfrac{1-x}{q-1}}$ and $x=\muferro(1)$. So, $\chiferro= \frac{q\nuferro(1)-1}{(q-1)\nuferro(1)}$ is equivalent to showing that 
\begin{equation}\label{eq:algebra1}
\chiferro= 1-(1/t)^d.
\end{equation}
Now, recall from \eqref{eq:kappachipsi} that $\chiferro=1-\big(1-\rferro+\rferro\phiferro\big)^{d}$, where $\phiferro=\phi(\rferro)$. So \eqref{eq:algebra1} reduces to showing that
\begin{equation}\label{eq:algebra2}
1/t=1-\rferro+\rferro\phiferro, \mbox{which using $t=\tfrac{1+(\emm^\beta-1)x}{1+(\emm^\beta-1)\tfrac{1-x}{q-1}}$ and $\rferro=\tfrac{(\emm^{\beta}-1)x}{1+(\emm^{\beta}-1)x}$  is  equivalent to }\phiferro=\tfrac{1-x}{(q-1)x}.
\end{equation}
From \eqref{eq:kappachipsi}, $y=\phiferro$ is the unique solution in $(0,1)$ of the equation
\begin{equation}\label{eq:algebra3}
y=\big(1-\rferro+\rferro y\big)^{d-1},
\end{equation}
and note that $\tfrac{1-x}{(q-1)x}\in(0,1)$ since $x>1/q$. So, to prove the equality $\phiferro=\tfrac{1-x}{(q-1)x}$ in  \eqref{eq:algebra2}, it suffices to show that setting $y=\tfrac{1-x}{(q-1)x}$ satisfies \eqref{eq:algebra3}. This follows from the fact that $x=\muferro(1)$ satisfies the Belief propagation equations; in particular, from \eqref{eq:xerr} we have 
\[x=\frac{(1+(\emm^\beta-1)x)^{d-1}}{(1+(\emm^\beta-1)x)^{d-1}+(q-1)\big(1+(\emm^\beta-1)\tfrac{1-x}{q-1}\big)^{d-1}},\]
from which it follows that  $y=\tfrac{1-x}{(q-1)x}=\Big(\frac{1+(\emm^\beta-1)x)}{1+(\emm^\beta-1)\tfrac{1-x}{q-1}}\Big)^{d-1}=\big(1-\rferro+\rferro y\big)^{d-1}$. This finishes the proof of \eqref{eq:algebra} and therefore the proof of Proposition~\ref{thm:SWferro}.
\end{proof}

\section{Declarations}
A. Coja-Oghlan supported by DFG CO 646/3 and 646/4. J.B. Ravelomanana supported by DFG CO 646/4. D. \v{S}tefankovi\v{c} supported by NSF CCF-2007287. E. Vigoda supported by NSF CCF-2007022.

The authors have no competing interests to declare that are relevant to the content of this article.

\printbibliography

\begin{appendix}

\section{Proof of \Lem~\ref{lem_bottleNeck}}\label{sec_lem_bottleNeck}
\begin{Lembottle}
\statelembottle
\end{Lembottle}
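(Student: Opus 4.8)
The plan is to follow the classical two-step conductance argument: prove the bound for $t=1$ using reversibility of Glauber, and then bootstrap to general $t$ by the triangle inequality together with the fact that applying a stochastic matrix never increases the total variation distance between two measures.

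Write $\pi=\mu_{G,\beta}$ and $\pi_S=\mu_{G,S}=\mu_{G,\beta}(\nix\mid S)$, and recall that Glauber dynamics is reversible with respect to $\pi$, i.e.\ $\pi(\sigma)P(\sigma,\tau)=\pi(\tau)P(\tau,\sigma)$ for all $\sigma,\tau\in[q]^{V}$; dividing through by $\pi(S)$ gives $\pi_S(\sigma)P(\sigma,\tau)=\pi_S(\tau)P(\tau,\sigma)$ for $\sigma,\tau\in S$. For the base case I would use the variational formula $\norm{\pi_SP-\pi_S}_{TV}=\max_{A\subseteq[q]^{V}}\bc{(\pi_SP)(A)-\pi_S(A)}$ and show $(\pi_SP)(A)-\pi_S(A)\le\Phi(S)$ for every $A$. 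Writing $\bar S=[q]^{V}\setminus S$ and splitting $A=A_1\sqcup A_2$ with $A_1=A\cap S$ and $A_2=A\cap\bar S$: since $\pi_S$ is supported on $S$ we have $(\pi_SP)(A_2)\le\sum_{\sigma\in S}\pi_S(\sigma)P(\sigma,\bar S)=\Phi(S)$; and for the part inside $S$, expanding $(\pi_SP)(A_1)-\pi_S(A_1)$ as a net flow and using the detailed-balance identity to cancel the two ``within-$S$'' flow terms between $A_1$ and $S\setminus A_1$ leaves $(\pi_SP)(A_1)-\pi_S(A_1)=-\sum_{\sigma\in A_1}\pi_S(\sigma)P(\sigma,\bar S)\le0$. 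Adding the two contributions proves $\norm{\pi_SP-\pi_S}_{TV}\le\Phi(S)$.

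For the inductive step, suppose $\norm{\pi_SP^{t-1}-\pi_S}_{TV}\le(t-1)\Phi(S)$. Since $\pi_SP^{t}-\pi_SP^{t-1}=(\pi_SP-\pi_S)P^{t-1}$ and $P^{t-1}$ is stochastic, $\norm{\pi_SP^{t}-\pi_SP^{t-1}}_{TV}\le\norm{\pi_SP-\pi_S}_{TV}\le\Phi(S)$ (here one invokes the elementary fact $\|\eta M\|_1\le\|\eta\|_1$ for a signed measure $\eta$ and a stochastic matrix $M$, applied to $\eta=\pi_SP-\pi_S$, which has total mass $0$). The triangle inequality then gives $\norm{\pi_SP^{t}-\pi_S}_{TV}\le\Phi(S)+(t-1)\Phi(S)=t\Phi(S)$, closing the induction; the case $t=0$ is trivial.

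There is no serious obstacle here — this is a textbook bottleneck-ratio estimate. The one point requiring care is that $\pi_S$ is \emph{not} a stationary distribution for $P$ (the chain can leave $S$), so the base case genuinely relies on reversibility of $P$ to make the flows within $S$ symmetric; the statement would fail for a general non-reversible chain whose restriction to $S$ is not $\pi_S$-stationary.
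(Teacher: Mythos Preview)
Your proof is correct and follows the same overall architecture as the paper's: establish the $t=1$ case and then telescope using the contraction $\norm{\eta P}_{TV}\le\norm{\eta}_{TV}$. The only difference is in the base case. The paper does not use reversibility at all: it observes that for any $\sigma\in S$,
\[
\mu_G(S)\,\mu_{G,S}P(\sigma)=\sum_{\tau\in S}\mu_G(\tau)P(\tau,\sigma)\le\sum_{\tau}\mu_G(\tau)P(\tau,\sigma)=\mu_G(\sigma),
\]
using only stationarity of $\mu_G$, so $\mu_{G,S}P(\sigma)\le\mu_{G,S}(\sigma)$ on $S$ and the total variation is exactly the mass pushed to $S^c$, namely $\Phi(S)$ (an equality, not just an upper bound). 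Your argument reaches the same conclusion by splitting a test set $A$ and cancelling the within-$S$ flows via detailed balance, which also works but is slightly less sharp. In particular, your closing remark that ``the statement would fail for a general non-reversible chain'' is not right: the paper's proof shows the bound holds for any Markov chain with stationary measure $\mu_G$, reversible or not.
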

\begin{proof}
We adapt the argument from \cite[proof of \Thm~7.3]{LPW}.
For $\sigma, \tau \in [q]^{V}$ and $A, B  \subseteq [q]^{V}$ let
\begin{equation*}
Q(\sigma,\tau)= \mu_{G}(\sigma)P(\sigma,\tau)  \quad \mbox{ and } \quad Q(A,B)= \sum_{\sigma \in A, \tau \in B} Q(\sigma, \tau).
\end{equation*}
Moreover, for a set $S \subseteq [q]^{V}$, let $\mu_{G,\beta,S} = \mu_{G,\beta}\bc{ \cdot \vert S}$.
We have \begin{equation}\label{eq_bottle1}
 	\mu_{G}(S) \norm{ \mu_{G,\beta,S} P - \mu_{G,\beta,S} }_{TV} = \mu_{G}(S) \sum_{\substack{ \sigma \in [q]^{V_n} \\ \mu_{G,\beta,S}P(\sigma)\geq \mu_{G,\beta,S}(\sigma) } }   \bc{\mu_{G,\beta,S} P (\sigma) - \mu_{G,\beta,S} (\sigma)}.
 \end{equation}
 Now, by definition, $\mu_{G,\beta,S}(\tau) = {\mu_{G} \bc{ \cbc{ \tau} \cap S } }/{\mu_{G} \bc{S} }$ so $\mu_{G,\beta,S}(\tau)=0$ if $\tau \notin S$ and $ \mu_{G,\beta,S}(\tau)={\mu_{G}(\tau)}/{\mu_{G}\bc{S} } $ otherwise. Hence,
 \begin{align} 
 \mu_{G}(S) \mu_{G,\beta,S}P(\sigma) = \sum_{ \tau \in [q]^{V}}  \mu_{G}(S) \mu_{G,\beta,S}(\tau) P( \tau, \sigma) = \sum_{\tau \in S}  \mu_{G}(\tau) P( \tau, \sigma) \leq  \sum_{\tau \in [q]^{V}}  \mu_{G}(\tau) P( \tau, \sigma)= \mu_{G}(\sigma) \label{eq_bottle2}
 \end{align}
 where the last equality in \eqref{eq_bottle2} holds because $\mu_G$ is the stationary distribution. Next, dividing \eqref{eq_bottle2} through by $\mu_{G}(S) $ and using the fact
 that $\mu_{G,\beta,S}(\tau)={\mu_{G}(\tau)}/{\mu_{G}\bc{S} } $ for $\tau \in S$, we have
 \begin{equation}\label{eq_bottle3}
\mu_{G,\beta,S}P(\tau)\leq  \mu_{G,\beta,S}(\tau) \qquad \mbox{for } \tau \in S.
 \end{equation}
 Furthermore, since $\mu_{G,\beta,S}(\tau)=0$ for $\tau \in S^{c}$,
 \begin{equation}\label{eq_bottle4}
 \mu_{G,\beta,S}P(\tau)\geq  \mu_{G,\beta,S}(\tau)=0 \qquad \mbox{for } \tau \in S^c.
 \end{equation}
Combining \eqref{eq_bottle3}, \eqref{eq_bottle4} and again the fact that $\mu_{G,\beta,S} (\sigma) =0$ for $\sigma \in S^c$ we see that  Equation  \eqref{eq_bottle1} becomes 
\begin{equation} \label{eq_bottle5}
	\mu_{G}(S) \norm{ \mu_{G,\beta,S} P - \mu_{G,\beta,S} }_{TV} =  \sum_{ \sigma \in S^c }  \mu_{G}(S)  \mu_{G,\beta,S} P (\sigma) .
\end{equation}
Once more, since $\mu_{G,\beta,S}(\tau)=0$ if $\tau \in S^c$ and $\mu_{G,\beta,S}(\tau)= \mu_{G}(\tau)/ \mu_{G}(S)$ if $\tau \in S$ we have
\begin{equation}\label{eq_bottle6}
\sum_{ \sigma \in S^c }	\mu_{G}(S)  \mu_{G,\beta,S} P (\sigma) = \sum_{ \sigma \in S^c }  \sum_{ \tau \in S }	\mu_{G}(S)  \mu_{G,\beta,S}(\tau) P ( \tau, \sigma )= \sum_{ \sigma \in S^c }  \sum_{ \tau \in S }  \mu_{G}(\tau) P ( \tau, \sigma)=Q(S,S^c)
\end{equation}
Combining \eqref{eq_bottle5} and \eqref{eq_bottle6}, we obtain
\begin{equation*}
	\mu_{G}(S) \norm{ \mu_{G,\beta,S} P - \mu_{G,\beta,S} }_{TV} = Q(S,S^c), \mbox{ and hence } 
\norm{ \mu_{G,\beta,S} P - \mu_{G,\beta,S} }_{TV} = \Phi(S,S^c).
\end{equation*}
In addition,  for any $u \geq 0$, it is easy to see that  we have
$$\norm{ \mu_{G,\beta,S} P^{u+1} - \mu_{G,\beta,S} P^{u} }_{TV} \leq \norm{ \mu_{G,\beta,S} P - \mu_{G,\beta,S} }_{TV}= \Phi(S,S^c).$$
Therefore, the result follows using the triangle inequality on the telescoping sum \[\mu_{G,\beta,S} P^t - \mu_{G,\beta,S}= \sum_{u=0}^{t-1}\mu_{G,\beta,S} P^{u+1} - \mu_{G,\beta,S} P^{u}.\qedhere\]
\end{proof}

\section{Proof of Lemma~\ref{lem:giantperc}}\label{sec:giantperc}
The proof follows closely the approach in \cite{highgirth} that was carried out for high-girth expanders. While the random regular graph is an expander itself, it contains a few small cycles and we only need to adapt the argument in order to account for their presence.
\begin{Lemmagiantperc}
\statelemgiantperc
\end{Lemmagiantperc}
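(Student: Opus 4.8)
The plan is to adapt the supercritical‑percolation argument of Krivelevich, Lubetzky and Sudakov~\cite{highgirth}, developed for high‑girth expanders, to the random $d$‑regular graph $\G$; the only real difference is that $\G$ carries $O(1)$ short cycles, which will turn out to be harmless. It suffices to treat the binomial‑edge model $G\sim\G_p$: the events $\{|C_1(G)|\ge t\}$ and $\{|E(C_1(G))|\ge t\}$ are increasing graph properties (and their complements decreasing), while $\chi(p),\psi(p)$ are continuous in $p$ on $(\tfrac1{d-1},1)$, so the statement for $G\sim\tG_p$ follows from Lemma~\ref{lem:translate12} applied at $p\pm\delta'$ for a sufficiently small constant $\delta'>0$ (the $\tfrac12$‑factor and the additive $\emm^{-\Omega(n)}$ loss being absorbed into the $1-\emm^{-\Omega(n)}$ conclusion). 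Recall from \eqref{eq:kappachipsi} that, with $\phi=(p\phi+1-p)^{d-1}$, the quantities $\chi$ and $\psi$ are respectively the survival probability and the expected per‑vertex number of edges of the Galton--Watson tree having offspring $\Bin(d,p)$ at the root and $\Bin(d-1,p)$ at every other vertex.

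First, the density estimate. Fix a large constant $K$ and let $X_K$, respectively $Y_K$, be the number of vertices, respectively edges, of $\G_p$ lying in components with at least $K$ vertices. Since the depth‑$K$ neighbourhood of all but $O(1)$ vertices of $\G$ is a $d$‑regular tree, the breadth‑first exploration of the component of a uniformly random vertex couples with the above branching process up to total‑variation error $O(1/n)$ as long as fewer than $K$ vertices have been revealed; hence $\Erw[X_K]=(\chi_K+o(1))n$ and $\Erw[Y_K]=(\psi_K+o(1))n$, where $\chi_K\downarrow\chi$ and $\psi_K\downarrow\psi$ as $K\to\infty$ are the analogous quantities for the branching process truncated to trees of at least $K$ vertices. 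Exposing the random pairing underlying $\G$ point by point yields a Doob martingale whose increments move $X_K$ and $Y_K$ by at most $O(K)$, so Azuma's inequality gives $\Pr[|X_K-\Erw X_K|>sn]\le\emm^{-\Omega(s^2n/K^2)}$ and likewise for $Y_K$. Choosing $K=K(\delta)$ so large that $\chi_K<\chi+\tfrac{\delta}{4}$ and $\psi_K<\psi+\tfrac{\delta}{4}$, we obtain with probability $1-\emm^{-\Omega(n)}$ that $X_K\in(\chi\pm\tfrac{\delta}{3})n$ and $Y_K\in(\psi\pm\tfrac{\delta}{3})n$. As $C_1(\G_p)$ has at least $K$ vertices for $n$ large, this already yields the upper bounds $|C_1(\G_p)|\le(\chi+\delta)n$ and $|E(C_1(\G_p))|\le Y_K\le(\psi+\delta)n$.

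For the matching lower bounds the plan is to sprinkle. Write $p=p'+(1-p')q$ with $p'=p-\eta$ for a small constant $\eta>0$ chosen so that $|\chi(p')-\chi|,|\psi(p')-\psi|<\delta/10$; then $\G_p$ is distributed as $\G_{p'}$ together with a fresh random batch of $\Theta(\eta n)$ of the edges of $\G$ not retained in $\G_{p'}$. By the density estimate at $p'$, with probability $1-\emm^{-\Omega(n)}$ the components $D_1,D_2,\dots$ of $\G_{p'}$ of size at least $K$ together span at least $(\chi-\delta/5)n$ vertices. The expansion properties of $\G$ (random $d$‑regular graphs are expanders, and have no sparse cut of linear size except with probability $\emm^{-\Omega(n)}$) guarantee that each such component, as well as any putative second macroscopic component, has many \emph{fresh} boundary edges of $\G$ --- all of which, being absent from $\G_{p'}$ by definition of a component, are eligible to be sprinkled. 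A sprinkling computation in the spirit of~\cite{highgirth} then shows that, with probability $1-\emm^{-\Omega(n)}$, the sprinkled edges coalesce all but a $(\delta/5)$‑fraction of the mass $\sum_i|D_i|$ into a single component of $\G_p$ and create no other component of linear size, so $|C_1(\G_p)|\ge(\chi-\delta)n$. Finally, the components of size at least $K$ other than $C_1(\G_p)$ jointly span at most $X_K-|C_1(\G_p)|\le\tfrac{2\delta}{3}n$ vertices and hence at most $\tfrac{2d\delta}{3}n$ edges, so $|E(C_1(\G_p))|\ge Y_K-\tfrac{2d\delta}{3}n\ge(\psi-\delta)n$; rerunning the whole argument with $\delta$ replaced by a small enough constant multiple of $\delta/d$ then gives both two‑sided bounds with probability $1-\emm^{-\Omega(n)}$.

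The main obstacle is the sprinkling step: upgrading ``linearly many vertices lie in components of size at least $K$ in $\G_{p'}$'' to ``a single component of $\G_p$ has at least $(\chi-\delta)n$ vertices and no other component has linear size'', with \emph{exponentially} small failure probability. For linearly‑sized $\G_{p'}$‑components the expansion of $\G$ immediately supplies linearly many fresh boundary edges and the coalescence is routine; the delicate part is the many medium‑sized components (of size between $K$ and a small constant times $n$) and controlling the mild dependence between the sprinkled edges and $\G_{p'}$, which is precisely where the argument of~\cite{highgirth} does its work. The remaining adaptation forced by the $O(1)$ short cycles of $\G$ is cosmetic: those cycles perturb the expectations in the density estimate by $O(1/n)$, and deleting the $O(1)$ edges lying on them changes $|C_1|$ and $|E(C_1)|$ by $O(1)$.
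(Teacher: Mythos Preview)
Your outline is correct and follows essentially the same route as the paper: reduce to the binomial model via Lemma~\ref{lem:translate12}, run a two-round exposure with a density/concentration step followed by sprinkling, and use the edge-expansion of $\G$ to merge the large pieces. The one place where you defer the actual content is the sprinkling step, and that is precisely where the paper writes something concrete. Since your proposal only says ``a sprinkling computation in the spirit of~\cite{highgirth}'', let me record what is actually needed, because your phrasing (``many fresh boundary edges'') suggests a simpler argument than the one that works.

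The paper first fixes $\G$ satisfying (with probability $1-\emm^{-\Omega(n)}$) two deterministic properties: all but $\eta n$ vertices have tree-like depth-$2R$ neighbourhoods, and every vertex set $S$ with $\eta n\le |S|\le n/2$ has at least $\zeta|S|$ boundary edges. The density estimate then uses indicators based on the existence of a simple path of length $R$ (rather than your ``component size $\ge K$''), which gives in particular that there are at most $n/R$ components of size $\ge R$ after the first round. The sprinkling step is: for any partition of these $\le n/R$ components into two parts $A,B$ with $|A|,|B|\ge\eta n$, Menger's theorem plus the expansion property give $\zeta\eta n$ edge-disjoint $A$--$B$ paths in $\G$, at least half of them of length $\le d/(\zeta\eta)$; each such short path survives the $\eps$-sprinkling with probability $\ge\eps^{d/(\zeta\eta)}$, so the probability that no path survives is at most $(1-\eps^{d/(\zeta\eta)})^{\zeta\eta n/2}$. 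A union bound over the $2^{2n/R}$ partitions then succeeds once $R$ is large relative to $\eps,\eta,\zeta$. The point is that single boundary edges are not enough (a medium component might have $o(n)$ of them), but short \emph{paths} between any two macroscopic pieces always exist in linear number, and the entropy of partitions is controlled by the crude bound $\le n/R$ on the number of pieces.

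One small correction: your remark that ``the $O(1)$ short cycles'' are cosmetic and can be deleted is only valid in expectation; at the $1-\emm^{-\Omega(n)}$ level the right statement (which the paper uses) is that all but $\eta n$ vertices have tree-like neighbourhoods, obtained by Azuma--Hoeffding. This is exactly what you need for the density estimate anyway, so it costs nothing, but the ``delete $O(1)$ edges'' formulation would not survive a referee.
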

\begin{proof}
Let $\delta>0$ be an arbitrarily small constant, and set $\eta=\delta/(100dp)$. It suffices to prove the result for the binomial-edge model $\G_p$, the result for the exact-edge model $\tG_p$ follows from Lemma~\ref{lem:translate12} since $|C_1(G)|$ and $|E(C_1(G))|$ are monotone under edge-inclusion. 

Let $\eps\in (0,p)$ be an arbitrarily small constant to be chosen later, and let   $\hat{p}:=\frac{p-\eps}{1-\eps}$; note  that  $\eps=\frac{p-\hat{p}}{1-\hat{p}}$.  We can think of the construction of $\G_p$ into the following steps: (i) sample a random $d$-regular graph $G=(V,E)\sim \G$ from the pairing model, (ii) keep each of the edges in $E$ independently with probability $\hat{p}$, to obtain the edge set $\hat{E}$, (iii) keep each of the edges in $E$ independently with probability $\eps>0$, to obtain the edge set $E_\eps$, (iv) the final graph has vertex set $V$ and edge set $\hat{E}\cup E_{\eps}$.

For a large integer $R>0$ to be chosen later, let $\phi_R$ be the probability that a branching process with offspring distribution $\mathrm{Bin}(d-1,\hat{p})$ has died out after $R$ generations, and let $\chi_R=1-(1-\hat{p}+\hat{p}\phi_R)^d$, $\psi_R=\tfrac{1}{2}d\hat{p}\big(1-\phi_R^2\big)$. Then,  by choosing $\eps>0$ sufficiently small, for all sufficiently large $R$ we have that $|\chi_R-\chi|\leq \eta$, $|\psi_R-\psi|\leq \eta$.

It is a well-known fact that  the random regular graph $G=(V,E)\sim \G$, i.e., the graph after step (i), is an expander and the local neighbourhoods of all but a small fraction of the vertices are trees. More precisely, there is a constant $\zeta>0$ such that for any integer $R>0$ the following hold with probability $1-\emm^{-\Omega(n)}$:
\begin{enumerate}
\item\label{it:neigh} the $(2R)$-neighbourhoods of all but $\eta n$ vertices will be isomorphic to the $(2R)$-neighbourhood of the root of a $d$-regular tree. Let $Z=Z(R)$ denote the set of these vertices, and $Z_E=Z_E(R)$ be the set of edges whose both endpoints are in $Z$; we have $|Z|=(1\pm \eta)n$ and $|Z_E|=(1\pm 2\eta)\tfrac{d}{2}n$ (since we lose at most $d$ edges for every vertex in $V\backslash Z$).
\item\label{it:expansion} every set $S\subseteq V$ with  $\eta n\leq |S|\leq n/2$ has at least $\zeta |S|$ edges with exactly one endpoint in $S$. 
\end{enumerate} 
Item~\ref{it:neigh} follows by the Azuma-Hoeffding inequality (since for any $R>0$, $\ex[n-Z]=O(1)$ and adding or removing a single edge of $\G$ can change the $(2R)$-neighbourhoods of at most $d^R$ vertices), whereas Item~\ref{it:expansion} follows from a standard union-bound argument. For the rest of the proof, fix $G$ to be any $d$-regular graph satisfying Items~\ref{it:neigh} and~\ref{it:expansion}. 

Consider the graph after the percolation step (ii), i.e., the graph $(V,\hat{E})$. For $v\in Z$, let $\mathbf{1}_v$ be the indicator that there is a neighbour $u\in \partial v$ such that  $vu\in \hat{E}$ and  $u$ has a simple path of length $R$ that starts from it and does not include $v$; if $\mathbf{1}_v=1$, we will say that $v$ belongs to a large component. Since $v\in Z$, it has $d$-neighbours whose $R$-neighbourhoods look like trees, so  $\Erw[\mathbf{1}_v]=1-(1-\hat{p}+\hat{p}\phi_R)^d=\chi_R$. For an edge $e\in Z_E$, let $\mathbf{1}_e$ be the indicator that $e\in \hat{E}$  and that there is a simple path of length $R$ starting from either of the endpoints of $e$ which does not include $e$. Since $e\in Z_E$, we have $\Erw[\mathbf{1}_e]=\hat{p}(1-\phi_R^2)$.  If $\mathbf{1}_e=1$, we will say that $e$ belongs to a large component.   By Azuma's inequality, the random variables $X=\sum_{v\in Z}\mathbf{1}_v$  and $Y=\sum_{e\in Z_E}\mathbf{1}_e$ are within $\eta n$ from their expectation with probability $1-\emm^{-\Omega(n)}$. We have $\Erw[X]=(1\pm \eta )n\chi_R=(\chi\pm 2\eta )n$ and $\Erw[Y]=(1\pm 2\eta )n\psi_R=(\psi\pm 3\eta )n$. Therefore, after percolation step (ii), with probability $1-\emm^{-\Omega(n)}$, we have a set $V_{L}$ of  vertices from $Z$ and a set $E_L$ of edges from $Z_E$ which belong to large components, with $|V_L|=X=(\chi\pm 4\eta)n$ and $|E_L|=Y=(\psi\pm 4\eta)n$. We also conclude that there are at most $n/R$  components with size $\geq R$, which we denote by $\mathcal{C}_1,\hdots,\mathcal{C}_k$ for some $k\leq n/R$. 

Now consider the graph after the percolation step (iii), i.e., the graph $(V,E_\eps)$. We claim that with probability $1-\emm^{-\Omega(n)}$, every partition of  $\mathcal{C}_1,\hdots,\mathcal{C}_k$ into two parts $A,B$ with $|A|,|B|\geq \eta n$ has a path joining them. Indeed, by Menger's theorem and the expansion property in Item~\ref{it:expansion}, for any disjoint vertex sets $A,B$ with $|A|,|B|\geq \eta n$, there are at least $\zeta\eta n$ edge-disjoint paths from $A$ to $B$. Of these paths, at least half of them have at most $\tfrac{d}{\zeta\eta}$ edges (otherwise $|E_{\eps}|>\tfrac{1}{2}dn)$, so the probability that none of them is present after the percolation step is at most $(1-\eps^{d/(\zeta\eta)})^{\zeta\eta n/2}$. Since $k\leq n/R$, there are at most $2^{2n/R}$ ways to partition $\mathcal{C}_1,\hdots,\mathcal{C}_k$ into $A,B$, so by a union bound the probability that a partition exists is upper bounded by $2^{2n/R}(1-\eps^{d/(\zeta\eta)})^{\zeta\eta n/2}\leq \emm^{-\Omega(n)}$ by choosing $R$ large with respect to $\eps,\eta,\zeta$. 

It follows from the above that the final graph $(V, \hat{E}\cup E_{\eps})$ contains a connected component $\cC$ with at least $(\chi-6\eta)n$ vertices from $Z$; otherwise, for the first $i$ such that $|\cC_1\cup \cdots \cC_i|\geq \eta n$, we must have $|\cC_1\cup \cdots \cC_i|\leq (\chi-5\eta) n$, and from $|\cC_1\cup \cdots \cC_k|\geq (\chi-4\eta)n$, we obtain  two disconnected parts $A,B$ with $|A|,|B|\geq \eta n$. This component $\cC$ must contain at least $Y-10d\eta n\geq (\psi- 14d\eta)n$ edges since we lose at most $d$ edges per vertex of $V_L\backslash \cC$.

Note that the vertices in $V(\cC)\cap Z$ belong to $V_L$ and therefore $|V(\cC)\cap Z|\leq |V_L|\leq (\chi+4\eta) n$. There can be at most $\eta n$ vertices in $V(\cC)\backslash Z$ (by Item~\ref{it:neigh}). Therefore $|\cC|=(\chi\pm 6\eta)n$. Similarly, the edges in $E(\cC)\cap Z_E$ belong to $E_L$ and analogously to above we obtain that $|E(\cC)|=(\psi\pm 14d\eta)n$. 

It only remains to show that $\cC$ is the largest component with probability $1-\emm^{\Omega(n)}$. For large $K$, an Azuma-Hoeffding bound shows that the number of vertices that belong to components of size $\leq K$ in the graph  $(V, \hat{E}\cup E_{\eps})$ is at least $(1-\chi-4\eta)n$ with probability $1-\emm^{-\Omega(n)}$. Therefore, via a union bound, we obtain that every component in $(V, \hat{E}\cup E_{\eps})$ other than $\cC$ has at most $20\eta n$ vertices with probability $1-\emm^{\Omega(n)}$, and therefore is  smaller than $\cC$.

This finishes the proof of Lemma~\ref{lem:giantperc}.
\end{proof}

\end{appendix}

\end{document}